\numberwithin{equation}{section}
\newtheorem{theorem}[equation]{Theorem}
\newtheorem{lemma}[equation]{Lemma}
\newtheorem{proposition}[equation]{Proposition}
\newtheorem{corollary}[equation]{Corollary}
\theoremstyle{definition}
\newtheorem{definition}[equation]{Definition}
\theoremstyle{remark}
\newtheorem{remark}[equation]{Remark}
\newtheorem{example}[equation]{Example}
\newtheorem*{acknowledgments}{Acknowledgments}
\let\oldmathcal\mathcal
\renewcommand{\mathcal}{\mathscr}
\renewcommand{\setminus}{\mathbin{\fgebackslash}}
\def\invlim{\mathop{\vtop{\ialign{##\crcr$\hfill{\lim}\hfil$\crcr
  \noalign{\kern1pt\nointerlineskip}\leftarrowfill\crcr\noalign
  {\kern -3pt}}}}\limits}
\def\dirlim{\mathop{\vtop{\ialign{##\crcr$\hfill{\lim}\hfil$\crcr
  \noalign{\kern1pt\nointerlineskip}\rightarrowfill\crcr\noalign
  {\kern -3pt}}}}\limits}
\def\lomapr#1{\smash{\mathop{\relbar\joinrel\longrightarrow}\limits^{#1}}}
\let\emptyset\varnothing
\newcommand{\jcdot}{{\scriptscriptstyle\bullet}}
\newcommand{\breveqp}{\breve{\mathbf{Q}}_p}
\newcommand{\Qp}{\mathbf{Q}_p}
\newcommand{\R}{\mathrm {R} }
\newcommand{\qp}{{\mathbf Q}_{p}}
\newcommand{\ad}{\operatorname{ad} }
\newcommand{\Ind}{\operatorname{Ind} }
\newcommand{\cont}{\operatorname{cont} }
\newcommand{\eet}{\operatorname{\acute{e}t} }
\newcommand{\Spa}{\operatorname{Spa} }
\newcommand{\Hom}{\operatorname{Hom} }
\newcommand{\Ext}{\operatorname{Ext} }
\newcommand{\Gal}{\operatorname{Gal} }
\newcommand{\Cone}{\operatorname{Cone} }
\newcommand{\gr}{\operatorname{gr} }
\newcommand{\sff}{{\mathcal{F}}}
\newcommand{\sg}{{\mathcal{G}}}
\newcommand{\sbb}{{\mathcal{B}}}
\newcommand{\scc}{{\mathcal{C}}}
\newcommand{\sll}{{\mathcal{L}}}
\newcommand{\sn}{{\mathcal N}}
\newcommand{\so}{{\mathcal O}}
\newcommand{\sm}{{\mathcal{M}}}
\newcommand{\spp}{{\mathcal{P}}}
\newcommand{\wt}{\widetilde}
\newcommand{\wh}{\widehat}
\newcommand{\Z}{ {\mathbf Z} }
\newcommand{\Q}{ {\mathbf Q}}
\newcommand{\N}{{\mathbf N}}
\newcommand{\rg}{\R\Gamma}
\DeclareMathOperator{\Ord}{Ord}
\DeclareMathOperator{\car}{char}
\DeclareMathOperator{\supp}{supp}
\DeclareMathOperator{\ev}{ev}
\mathchardef\mhyphen="2D
\newcommand{\ci}{c\mhyphen{}i}
\renewcommand{\twoheadrightarrow}{\rightarrow\mathrel{\mspace{-14mu}}\rightarrow}
\newcommand{\xtwoheadrightarrow}[2][]{\xrightarrow[#1]{#2}\mathrel{\mkern-14mu}\rightarrow}
\newcommand{\iso}{\stackrel{\sim}{\to}}
\newcommand{\HOrd}[1][*]{H^{#1}\!\Ord}
\newcommand{\F}{\mathbf{F}}
\newcommand{\Hb}{\bm{H}}
\newcommand{\Gb}{\bm{G}}
\newcommand{\Gm}{\mathbf{G}_\mathrm{m}}
\newcommand{\Zb}{\bm{Z}}
\newcommand{\Sb}{\bm{S}}
\newcommand{\Tb}{\bm{T}}
\newcommand{\Bb}{\bm{B}}
\newcommand{\Ub}{\bm{U}}
\newcommand{\Pb}{\bm{P}}
\newcommand{\Mb}{\bm{M}}
\newcommand{\Nb}{\bm{N}}
\newcommand{\Zc}{\oldmathcal{Z}}
\newcommand{\Nc}{\oldmathcal{N}}
\newcommand{\Zcb}{\bm{\Zc}}
\newcommand{\Ncb}{\bm{\Nc}}
\newcommand{\Fbar}{\overline{F}}
\newcommand{\Bbar}{\overline{B}}
\newcommand{\Ubar}{\overline{U}}
\newcommand{\ubar}{\bar{u}}
\newcommand{\Pbar}{\overline{P}}
\newcommand{\Nbar}{\overline{N}}
\newcommand{\nbar}{\bar{n}}
\newcommand{\Bbbar}{\bm{\Bbar}}
\newcommand{\Ubbar}{\bm{\Ubar}}
\newcommand{\Pbbar}{\bm{\Pbar}}
\newcommand{\Nbbar}{\bm{\Nbar}}
\newcommand{\Wt}{\widetilde{W}}
\newcommand{\wtw}{\widetilde{w}}
\newcommand{\Wh}{\widehat{W}}
\newcommand{\whw}{\widehat{w}}
\newcommand{\varepsbar}{\bar{\varepsilon}}
\newcommand{\alphat}{\widetilde{\alpha}}
\newcommand{\mub}{\bm{\mu}}
\newcommand{\LC}{\mathrm{LC}}
\newcommand{\LCu}{\LC_\mathrm{u}}
\newcommand{\LCc}{\LC_\mathrm{c}}
\newcommand{\Arm}{\mathrm{A}}
\newcommand{\Erm}{\mathrm{E}}
\newcommand{\GL}{\mathrm{GL}}
\newcommand{\SL}{\mathrm{SL}}
\newcommand{\St}{\mathrm{St}}
\newcommand{\der}{\mathrm{der}}
\newcommand{\ssc}{\mathrm{sc}}
\newcommand{\is}{\mathrm{is}}
\newcommand{\anis}{\mathrm{an}}
\newcommand{\abs}{\mathrm{abs}}
\title[$p$-adic \'etale cohomology of period domains]{$p$-adic \'etale cohomology of period domains}
\author[P. Colmez]{Pierre Colmez}
\address{CNRS, IMJ-PRG, Sorbonne Universit\'e, 4 place Jussieu, 75005 Paris, France}
\email{pierre.colmez@imj-prg.fr}
\author[G. Dospinescu]{Gabriel Dospinescu}
\address{CNRS, UMPA, \'Ecole Normale Sup\'erieure de Lyon, 46 all\'ee d'Italie, 69007 Lyon, France}
\email{gabriel.dospinescu@ens-lyon.fr}
\author[J. Hauseux]{Julien Hauseux}
\address{Univ.\ Lille, CNRS, UMR 8524 - Laboratoire Paul Painlev\'e, F-59000 Lille, France}
\email{julien.hauseux@univ-lille.fr}
\author[W. Nizio{\l}]{Wies{\l}awa Nizio{\l}}
\address{CNRS, IMJ-PRG, Sorbonne Universit\'e, 4 place Jussieu, 75005 Paris, France}
\email{wieslawa.niziol@ens-lyon.fr}
\thanks{The research of J.H. was partially supported by the projects ANR-11-LABX-0007-01 CEMPI and ANR-16-IDEX-0004 ULNE.
The research of P.C., G.D., and W.N. was partially supported by the projects ANR-14-CE25-0002-01 PERCOLATOR and ANR-19-CE40-0015-02 COLOSS}
\date{\today}
\begin{document}

\begin{abstract}
We compute the $p$-torsion and $p$-adic \'etale cohomologies with compact support of period domains over local fields in the case of basic isocrystals for quasi-split reductive groups.
As in the cases of $\ell$-torsion or $\ell$-adic coefficients, $\ell\neq p$, considered by Orlik, the results involve generalized Steinberg representations.

For the $p$-torsion case, we follow the method used by Orlik in his computations of the $\ell$-torsion \'etale cohomology using as a key 
new ingredient the computation of $\Ext$ groups between mod $p$ generalized Steinberg representations of $p$-adic groups.
For the $p$-adic case, we don't use Huber's definition of \'etale cohomology with compact support as Orlik did since it seems to give spaces that are much too big; instead we use continuous \'etale cohomology with compact support.
\end{abstract}

\makeatletter
\def\@@and{\unskip}
\makeatother

\maketitle

\setcounter{tocdepth}{1}
\tableofcontents

\section{Introduction}

   Let $p$ be a prime number.
   One of the main results of \cite{CDN3} and \cite{CDN4} is the computation of the geometric
   $p$-adic \'etale cohomology of Drinfeld $p$-adic symmetric spaces in arbitrary dimension. The final result is analogous to the one in the  case of $\ell$-adic \'etale cohomology with $\ell\neq p$, which was known by the work of Schneider and Stuhler \cite{SS}. The Drinfeld symmetric spaces are among the most classical examples of 
   {\it $p$-adic period domains} but it is well-known that they are very special\footnote{See \cite[Sec.\ 3]{Rfin} for a list of such properties.}.  In fact, the proofs in \cite{CDN3} and \cite{CDN4} use these unique properties of Drinfeld spaces
  hence it was not clear to us whether the results of loc.\ cit.\ would extend to more general $p$-adic period domains.
   
    The purpose of this paper is to show that, for
   {\it compactly supported} $p$-torsion \'etale cohomology, it is possible to treat  fairly general $p$-adic period domains. Moreover, the result is similar to the one for $\ell$-torsion, 
$\ell\neq p$, cohomology with compact support  
obtained\footnote{The Euler characteristic of period domains was known before, thanks to Kottwitz and Rapoport, see \cite{DOR} for a beautiful presentation.} by Dat \cite{Dat} (for the Drinfeld spaces) and  by Orlik \cite{Ol} (in general).  
That this is the case is a little surprising since, as we will explain below, 
the $p$-adic \'etale cohomology with compact support (in the sense of Huber \cite{H2}) of $p$-adic period domains is not at all similar 
to its $\ell$-adic counterpart, $\ell\neq p$,  computed by Orlik \cite{Oc},
and seems to produce not very useful objects.  On the other hand, the continuous
compactly supported cohomology that we define gives reasonable objects
(at least in the situation we consider or in the case of the complement of a
subvariety in a proper analytic variety as considered\footnote{
In both cases this continuous compactly supported cohomology coincides with the naive one.}
 in~\cite{chinois}). 
   
   While the arguments in \cite{CDN3} and \cite{CDN4} are based on $p$-adic Hodge theory (via the syntomic method) and its integral versions \cite{BMS1,BMS2,CK},
 this paper combines a beautiful geometric construction due to Orlik \cite{Ol} with a vanishing result for extensions between mod $p$ representations of $p$-adic reductive groups.
 The proof of the second result is the main difference with the $\ell\neq p$ case.
We are not able to recover the results of \cite{CDN3} and \cite{CDN4} using the methods used here, and conversely the methods in loc.\ cit.\ do not seem to 
give the results obtained in this paper for Drinfeld spaces 
(Poincar\'e duality with $p$-torsion coefficients holds for ``almost proper''
analytic varieties~\cite{chinois}, but probably does not hold for general 
analytic varieties, at least in a naive sense).
 Orlik did recover in \cite{Opro} the computation of $p$-adic pro-\'etale cohomology of Drinfeld spaces from \cite{CDN3} using his method -- which is the one of this paper as well -- but one encounters  considerable technical 
difficulties\footnote{For example,
 the rational $p$-adic pro-\'etale cohomology of an open ball has
a simple description in terms of differential forms~\cite{CN2}, 
but the integrality conditions coming from 
the $p$-adic \'etale cohomology make the computations subtler.}  
when working with the \'etale cohomology instead of the pro-\'etale one. 

 \subsection{Notation}   In order to state the main results of this paper we need to introduce some notation. 
      Let $C$ be the  completion of an algebraic closure  of $\Q_p$ and let 
$(G,[b],\{\mu\})$ be a {\it local Shtuka datum} over 
   $\Q_p$. Here $G$ is a connected reductive group over $\Q_p$, 
   $[b]$ is an element of the Kottwitz set $B(G)$
     of $\sigma$-conjugacy classes in\footnote{ 
   $\breve{\Q}_p$ is the completion of the maximal unramified extension of $\Q_p$ in $C$.} $G(\breve{\Q}_p)$, i.e., an isomorphism class $N_b$ of isocrystals with $G$-structure over $\breve{\Q}_p$, and
   $\{\mu\}$ is a conjugacy class of geometric cocharacters of $G$. Moreover, we ask that 
   $[b]$ lies in the Kottwitz set\footnote{For the main result of the paper it would be enough to assume that 
   $[b]$ belongs to the larger set $A(G,\mu)$, since all we need is that the period domain is nonempty, which is equivalent to $[b]\in A(G,\mu)$ by a result of Fontaine and Rapoport \cite{FR}.} $B(G,\mu)$, a certain finite subset of $B(G)$ defined roughly by a comparison between the 
   Hodge polygon attached to $\mu$ and the Newton polygon attached to $N_b$ (see \cite[Ch.\ 6]{K2} for the precise definition of the set $B(G, \mu)$). This assumption is made so that 
   the period spaces whose cohomology we want to compute are not empty.  
   
   The pair $(G,\{\mu\})$ gives rise to a generalized flag variety\footnote{If $G$ is quasi-split over $\Q_p$, which will be the case in our main result, we can choose 
   $\mu\in \{\mu\}$ defined over $E$ and then $\mathcal{F}=\mathcal{F}(G, \{\mu\})$ is the quotient of $G_E$ by the parabolic subgroup 
$P(\mu)$ associated to $\mu$.}
$\mathcal{F}=\mathcal{F}(G,\{\mu\})$ defined over 
   the field of definition $E$ of $\{\mu\}$, a finite extension of $\Q_p$ and a local analogue of the reflex field in the theory of Shimura varieties. We will consider $\mathcal{F}$ as an adic space over ${\rm Spa}(E,\mathcal{O}_E)$. Letting  
   $\breve{E}=E\breve{\Q}_p$, the 
    {\it $p$-adic period domain} introduced by Rapoport and Zink \cite{RZ} $$\mathcal{F}^{\rm wa}=\mathcal{F}^{\rm wa}(G, [b], \{\mu\})$$ is a partially proper 
open subset of $\mathcal{F}\otimes_E {\breve{E}}$, 
classifying the weakly admissible filtrations of type $\{\mu\}$ on the isocrystal $N_b$. Basic examples of 
$p$-adic period domains are the adic affine spaces, the projective spaces, and  the Drinfeld symmetric spaces (complements of the union of all $\Q_p$-rational hyperplanes in the projective spaces). 

    As we have already mentioned, 
 Orlik computed in \cite{Ol} the $\ell$-adic compactly supported \'etale cohomology of 
these period domains when $G$ is quasi-split over $\Q_p$, 
$[b]$ is a basic class, and $\ell\ne p$ is a sufficiently generic prime number.
We will also assume that $G$ is quasi-split over $\Q_p$ and that 
    $b\in G(\breve{\Q}_p)$ is basic and $s$-decent\footnote{The hypothesis that $b$ is decent is harmless, since any $\sigma$-conjugacy class in $G(\breve{\Q}_p)$ contains an $s$-decent element for some positive integer $s\geq 1$.}. We refer the reader to the main body of the article for these notions, introduced by Kottwitz (for the first one) and Rapoport-Zink (for the second one). 
    This implies, for instance, that $b\in G(\Q_{p^s})$ and that the period domain  $\mathcal{F}^{\rm wa}=\mathcal{F}^{\rm wa}(G, [b], \{\mu\})$ 
 has a canonical model (still denoted $\mathcal{F}^{\rm wa}$) over $E_s=E\Q_{p^s}\subset \overline{\mathbf{Q}}_p$. 
Let $J_b$ be the automorphism group of $N_b$.
It is a connected reductive group over 
$\Q_p$, which is an inner form of $G$ (this is equivalent to  $b$ being basic). The natural action of 
$G(\breve{\Q}_p)$ on the flag variety $\mathcal{F}\otimes_E \breve{E}$ induces an action of 
$J_b(\Q_p)$ on the period domain $\mathcal{F}^{\rm wa}$.   In particular, we obtain an action of 
$J_b(\Q_p)\times \sg_{E_s}$, $\sg_{E_s}=\Gal(\overline{\Q}_p/E_s)$,  on $H^*_{\eet, c}(\mathcal{F}^{\rm wa}_C, \Z/\ell^n)$ and $H^*_{\eet, c}(\mathcal{F}^{\rm wa}_C, \Z_{\ell})$, for any prime $\ell$. 
The main theorem of this paper gives a simple description of these representations in the case $\ell=p$.

Let $T$ be a maximal torus of $G$ such that $\mu$ factors through $T$, and let
$W=N(T)/T$ be the (absolute) Weyl group of $G$ with respect to $T$, which acts naturally on 
$X_*(T)$. Let 
$W^{\mu}$ be the set of Kostant representatives with respect to 
$W/{\rm Stab}(\mu)$, i.e., the representatives of shortest length in their cosets. 
The group $\sg_{E_s}$ acts on $W$ and preserves $W^{\mu}$ since 
$\mu$ is defined over $E_s$. One can associate to each $\sg_{E_s}$-orbit 
$[w]\in W^{\mu}/\sg_{E_s}$ the following objects:

$\bullet$ An integer $l_{[w]}$, the length of any element of $[w]$. 
  
$\bullet$ For any prime $\ell$, a $\Z/\ell^n[\sg_{E_s}]$-module $\rho_{[w]}(\Z/\ell^n)$, which is simply the 
$\Z/\ell^n$-module of $\Z/\ell^n$-valued functions on 
$[w]$, with the obvious $\sg_{E_s}$-action twisted (\`a la Tate) by $-l_{[w]}$.
    
We will simply write $J$ instead of $J_b$ from now on.
Choose a maximal $\qp$-split torus
$S$ of $J_{\rm der}$ contained in $T$ and a minimal $\qp$-parabolic subgroup $P_0$ of $J$ containing $S$. Let 
$\Delta\subset X^*(S)$ be the associated set of relative simple roots. For each subset 
$I$ of $\Delta$, we denote by $P_I$ the corresponding standard $\qp$-parabolic subgroup of 
     $J$, so that $P_{\emptyset}=P_0$ and $P_{\Delta}=J$. Consider the compact $p$-adic manifold 
$$X_I=J(\qp)/P_I(\qp).$$
If $R$ is an abelian group, let 
$$v_{P_I}^J(R)=i_{P_I}^J(R)/\sum_{I\subsetneq I'}i_{P_I}^J(R),\quad i_{P_I}^J(R)=\LC(X_I, R),$$
     be the corresponding generalized Steinberg representation of $J(\qp)$, with coefficients in 
     $R$ (here $\LC(?, R)$ is the space of locally constant functions on 
     $?$ with values in $R$).
    
     Finally, choose an invariant inner product $(-,-)$ on $G$, i.e., an inner product on 
     $X_*(T')\otimes \Q$, for all maximal tori $T'$ of $G$, compatible with the adjoint action of 
     $G(\overline{\mathbf{Q}}_p)$ and the natural action of $\sg_{\qp}$ on maximal tori of $G$. It induces 
     an invariant inner product on $J$ as well. For each $\sg_{E_s}$-orbit $[w]$ 
     of $w\in W^{\mu}$, define ($\nu:=\nu_b$, the Newton map of $b$)
       $$I_{[w]}=\{\alpha\in \Delta\mid (w\mu-\nu, \omega_{\alpha})\leq 0\},
\quad P_{[w]}:= P_{I_{[w]}},$$
       where $\omega_{\alpha}\in X_*(S)\otimes \Q$, $\alpha\in \Delta$,  form  the dual basis of $\Delta$.

\subsection{The main result}Recall that, for $\ell\neq p$, we have the following computation of Orlik.

\begin{theorem}[Orlik \cite{Ol, Oc}] \label{main-Orlik}
 Let $(G,[b], \{\mu\})$ be a local Shtuka datum with $G/\qp$  quasi-split, $b\in G(\breve{\Q}_p)$ basic and $s$-decent. Let $\ell\neq p$ be  sufficiently generic\footnote{See \cite[Sec.\ 1]{Ol} for the definition.} with respect to $G$. 

 There are isomorphisms of $\sg_{E_s}\times J(\qp)$-modules
\begin{align*}
  H^*_{\eet,c}(\sff^{\rm wa}_C,\Z/\ell^n) & \simeq \bigoplus_{[w]\in W^{\mu}/\sg_{E_s}} v^J_{{P_{[w]}}}(\Z/\ell^n)\otimes \rho_{[w]}(\Z/\ell^n)[-n_{[w]}],\\
 H^*_{\eet,{\rm c, Hu}}(\sff^{\rm wa}_C,\Z_\ell) & \simeq \bigoplus_{[w]\in W^{\mu}/\sg_{E_s}} v^J_{{P_{[w]}}}(\Z_\ell)\otimes \rho_{[w]}(\Z_{\ell})[-n_{[w]}],
\end{align*}
where $n_{[w]}=2l_{[w]}+|\Delta\setminus I_{[w]}|$ and   $H^*_{\eet,{\rm c, Hu}}$ denotes  Huber's compactly supported cohomology.
In particular, the action of $J(\Q_p)$ 
on $H^*_{\eet,{\rm c, Hu}}(\sff^{\rm wa}_C,\Z_\ell)$ is smooth.
\end{theorem}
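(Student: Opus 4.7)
The plan is to follow Orlik's strategy, which generalizes the Schneider-Stuhler computation for the Drinfeld tower. The first input is the Rapoport-Zink-Fontaine description of the non-admissible locus $\sff \setminus \sff^{\rm wa}$ as a finite union of closed pseudo-adic subspaces. After using basicness and $s$-decency of $b$ (so that $J$ is an inner form of $G$ with well-defined Newton point $\nu = \nu_b$ over $E_s$), this union is naturally indexed by proper standard $\Q_p$-parabolics $P_I \subsetneq J$: one associates to each $I \subsetneq \Delta$ a closed $J(\Q_p)$-stable subspace of the form $J(\Q_p) \times^{P_I(\Q_p)} Z_I$, where $Z_I$ is a union of closed Schubert-type subvarieties in the partial flag variety $\sff = G/P(\mu)$ cut out by the Hodge-slope inequalities $(w\mu-\nu,\omega_\alpha)\leq 0$ for $\alpha\in I$.

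Second, one builds Orlik's \emph{fundamental complex}: an acyclic $J(\Q_p)$-equivariant resolution of $j_! \Lambda_{\sff^{\rm wa}}$ (for $\Lambda = \Z/\ell^n$ or $\Z_\ell$) whose terms are the constant sheaves on the various intersections of the closed strata above. This yields a $\sg_{E_s}\times J(\Q_p)$-equivariant spectral sequence converging to $H^{*}_{\eet,c}(\sff^{\rm wa}_C,\Lambda)$ whose $E_1$-page collects, for each $I\subsetneq\Delta$, a contribution from the compactly supported cohomology of $Z_I$ compactly induced from $P_I(\Q_p)$ to $J(\Q_p)$. The cohomology of $Z_I$ itself is computed from the Bruhat decomposition of $G/P(\mu)$: the cells are affine spaces indexed by Kostant representatives $w\in W^\mu$, each of dimension $l_w$, and the cell attached to $w$ lies in $Z_I$ exactly when $I\subseteq I_{[w]}$; it contributes a Tate twist $\Lambda(-l_w)$ in degree $2l_w$, while $\sg_{E_s}$ permutes cells within an orbit $[w]$, producing the module $\rho_{[w]}(\Lambda)$.

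The last step is a combinatorial assembly. For a fixed Kostant orbit $[w]$, the contributions from all $I\subseteq I_{[w]}$ combine via the nerve-of-fundamental-complex alternating sum into the standard identity $v_{P_{I_{[w]}}}^J = \sum_{I\subseteq I_{[w]}}(-1)^{|I_{[w]}|-|I|}\,i_{P_I}^J$, producing the generalized Steinberg representation $v_{P_{[w]}}^J(\Lambda)$ with total cohomological shift $n_{[w]} = 2l_{[w]}+|\Delta\setminus I_{[w]}|$. The main obstacle is then the degeneration of the spectral sequence at $E_1$: this is precisely where the genericity hypothesis on $\ell$ enters, through a vanishing statement for $\mathrm{Ext}^i$'s between mod $\ell$ generalized Steinberg representations of $J(\Q_p)$ that would otherwise support non-zero differentials. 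The $\Z_\ell$-statement is then obtained by passing to the inverse limit in Huber's compactly supported cohomology, whose continuity behaviour on the non-quasi-compact adic space $\sff^{\rm wa}$ is what makes the limit tractable; smoothness of the resulting $J(\Q_p)$-action is inherited from the smoothness of each $v_{P_{[w]}}^J(\Z_\ell)$. As emphasized in the introduction, it is exactly this Ext-vanishing input that must be replaced in the $\ell = p$ setting of the present paper.
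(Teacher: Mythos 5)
Your high-level picture is right: excision triangle for $(\sff^{\rm wa},\sff,\partial\sff^{\rm wa})$, Hilbert--Mumford and the affine slope function to get the closed strata $Z_I = J(\Q_p)\cdot Y_I^{\rm ad}$, Orlik's fundamental complex (whose acyclicity comes from contractibility of the relevant subcomplex $T_x$ of the spherical building), the resulting spectral sequence, Bruhat-cell cohomology of $Y_I$, and the emergence of generalized Steinberg representations. But two points are genuinely wrong, and both concern where the real work lies.

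First, the spectral sequence does \emph{not} degenerate at $E_1$, and degeneration is \emph{not} where the genericity of $\ell$ (or the Ext-vanishing) enters. The $E_1$ page has nontrivial differentials: after Proposition~\ref{locally-constant}, the $[w]$-isotypic row of $E_1$ is, up to twist and shift, precisely the induced-representation complex $C_{I_{[w]}}$ from Proposition~\ref{complex2}. Its acyclicity (a representation-theoretic fact, due to Grosse-Kl\"onne/Herzig/Ly mod $p$, standard for generic $\ell$) is what computes $E_2$ and produces the $v_{P_{I_{[w]}}}^J$'s. Degeneration happens at $E_2$, and is proved by a purely Galois-theoretic weight argument — any higher differential would be a map between Tate twists $(\Z/\ell^n(a))_n\to(\Z/\ell^n(b))_n$ with $a\neq b$, hence zero. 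No representation-theoretic vanishing is used here.

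Second, the ``combinatorial assembly'' via the alternating-sum identity for $v_{P_{I_{[w]}}}^J$ only identifies the Euler characteristic, equivalently the associated graded of the abutment filtration. The remaining — and in this approach, hardest — step is to \emph{split} that filtration as $\sg_{E_s}\times J(\Q_p)$-modules. This is exactly where $\ell$-genericity (for Orlik) and the mod-$p$ Ext-vanishing of Theorem~\ref{vanishing-intro} (in the body of the paper) are used: one needs $\Ext^1_{J(\Q_p)}(v_{P_I},v_{P_{I'}})=0$ when $|I|$ and $|I'|$ differ by at least two, together with the $\Hom$-vanishing of Proposition~\ref{prop:HomSt} to make the splitting Galois-equivariant. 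Your proposal omits this entirely.

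A smaller issue: for the $\Z_\ell$ statement with Huber's $H^*_{\eet,{\rm c,Hu}}$ (which is the object in the theorem), Orlik's argument is not an inverse-limit argument. It runs the excision triangle directly with $\Z_\ell^{\scriptscriptstyle\bullet}$-coefficients for Huber's compactly supported cohomology, using Dat's result that these groups are smooth $J(\Q_p)$-modules for $\ell\neq p$. An inverse-limit argument produces the \emph{continuous} compactly supported cohomology, which happens to agree with Huber's when $\ell\neq p$ but — as Remark~\ref{pathology} makes painfully explicit — does not when $\ell=p$; in a proof of the $\Z_\ell$ statement one should say why the two agree, or avoid the issue as Orlik does.
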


 Our main result is the following computation.
\begin{theorem}\label{main0} Let $(G,[b], \{\mu\})$ be a local Shtuka datum with $G/\qp$ quasi-split, $b\in G(\breve{\Q}_p)$ basic and $s$-decent. 
Assume that 
$p \neq 2$.

 There are isomorphisms of $\sg_{E_s}\times J(\qp)$-modules
\begin{align*}
H^*_{\eet,c}(\sff^{\rm wa}_C,\Z/p^n) & \simeq \bigoplus_{[w]\in W^{\mu}/\sg_{E_s}} v^J_{{P_{[w]}}}(\Z/p^n)\otimes \rho_{[w]}(\Z/p^n)[-n_{[w]}],\\
H^*_{\eet,{\rm c}}(\sff^{\rm wa}_C,\Z_p) & \simeq \bigoplus_{[w]\in W^{\mu}/\sg_{E_s}} v^{J,\cont}_{{P_{[w]}}}(\Z_p)\otimes \rho_{[w]}(\Z_p)[-n_{[w]}],
\end{align*}
where $H^*_{\eet,{\rm c}}$ denotes the continuous compactly supported cohomology,  $v^{J,\cont}_{P_I}(\Z_p)=\varprojlim_{n} v_{P_I}^J(\Z/p^n)$ denotes {\em continuous} Steinberg representations, and $\rho_{[w]}(\Z_p)=\varprojlim_{n} \rho_{[w]}(\Z/p^n)$.
\end{theorem}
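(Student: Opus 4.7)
The plan is to follow Orlik's geometric strategy from the $\ell\neq p$ case, replacing his ``genericity of $\ell$'' input by a direct computation of $\Ext$ groups between mod $p$ generalized Steinberg representations of $J(\qp)$ (to be established in an earlier section of the paper), and replacing Huber's compactly supported cohomology by continuous compactly supported cohomology when passing to $\Z_p$-coefficients.

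For the $\Z/p^n$-isomorphism, I would start from Orlik's fundamental acyclic resolution of the complement $Y := (\sff \otimes_E \breve{E}) \setminus \sff^{\rm wa}$ by closed pseudo-adic subsets indexed by pairs $(w, I)$ with $w \in W^{\mu}$ and $I_{[w]} \subseteq I \subsetneq \Delta$, whose combinatorics is dictated by the Hodge--Newton inequalities $(w\mu - \nu, \omega_{\alpha}) \leq 0$. Combined with the excision long exact sequence for the pair $(\sff\otimes_E\breve{E},\sff^{\rm wa})$, this resolution produces a spectral sequence of $\sg_{E_s}\times J(\qp)$-modules whose $E_1$-page is a direct sum, over $[w]$ and $I$, of terms of the form $\rho_{[w]}(\Z/p^n)\otimes i^J_{P_I}(\Z/p^n)$ placed in cohomological degrees controlled by $2 l_{[w]}$ and $|\Delta \setminus I|$, abutting to $H^*_{\eet,c}(\sff^{\rm wa}_C, \Z/p^n)$. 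The horizontal differentials indexed by $I$ are precisely those that appear in the defining presentation $v^J_{P_I} = i^J_{P_I}/\sum_{I \subsetneq I'} i^J_{P_{I'}}$, so that column-by-column this collapses to the generalized Steinbergs $v^J_{P_{[w]}}(\Z/p^n)$ in the expected total degrees $n_{[w]} = 2 l_{[w]} + |\Delta \setminus I_{[w]}|$.

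The key new input, replacing Orlik's genericity hypothesis, is the vanishing of
\[
\Ext^k_{J(\qp)}\bigl(v^J_{P_I}(\F_p),\, v^J_{P_{I'}}(\F_p)\bigr)
\]
in the category of smooth $\F_p$-representations, in the precise range of bidegrees in which nonzero higher differentials or nonsplit extensions could occur on the abutment; by a short dévissage this also gives the analogous vanishing with $\Z/p^n$ coefficients. This is where the hypothesis $p\neq 2$ enters. Once this vanishing is in hand, all higher differentials vanish, all extensions in the abutment split, and reorganizing $E_{\infty}$ along orbits $[w] \in W^{\mu}/\sg_{E_s}$ yields the first isomorphism.

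For the $\Z_p$-statement, the defining Milnor short exact sequence
\[
0 \to \varprojlim{}^1_n\, H^{i-1}_{\eet,c}(\sff^{\rm wa}_C, \Z/p^n) \to H^i_{\eet,c}(\sff^{\rm wa}_C, \Z_p) \to \varprojlim_n H^i_{\eet,c}(\sff^{\rm wa}_C, \Z/p^n) \to 0
\]
for continuous compactly supported cohomology combines with the $\Z/p^n$-isomorphism: the transition maps in the inverse system are the obvious surjections on each summand, so the $\varprojlim^1$-term vanishes and the inverse limit commutes with the finite direct sum, giving $v^{J,\cont}_{P_{[w]}}(\Z_p)\otimes \rho_{[w]}(\Z_p)$ in degree $n_{[w]}$. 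The main obstacle is clearly the $\Ext$-vanishing for mod $p$ Steinbergs: since nontrivial extensions genuinely exist in the smooth $\F_p$-representation category of $J(\qp)$, the vanishing must be proved exactly in Orlik's bidegrees, via a careful combinatorial analysis of pairs of standard parabolics and a use of the assumption $p\neq 2$.
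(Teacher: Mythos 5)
Your overall strategy matches the paper's (Orlik's geometric machine plus an $\Ext$ computation between mod $p$ generalized Steinbergs replacing Orlik's genericity of $\ell$), but there is a genuine gap in how you propose to carry out the representation-theoretic step. You invoke vanishing of
\[
\Ext^k_{J(\qp)}\bigl(v^J_{P_I}(\F_p),\, v^J_{P_{I'}}(\F_p)\bigr)
\]
for general $k$, both to kill higher differentials and to split the abutment filtration. The paper explicitly does \emph{not} prove any such higher $\Ext$ vanishing, and points out that computing these groups for $k\geq 2$ is essentially out of reach with current techniques (it would require, among other things, computing $H^*(J(\qp),\St)$ and proving a conjecture of Emerton on higher ordinary parts). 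All that is proved, and all that is used, is the $\Ext^1$ vanishing of Theorem~\ref{vanishing-intro}, under the combinatorial constraint $|(I\cup I')\setminus(I\cap I')|\geq 2$. So the step in which you appeal to higher $\Ext$ vanishing would fail.

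The way the paper gets around this is important to internalize: the degeneration of the spectral sequence at $E_2$ requires no representation theory at all. Since each $E_2$ term is a free $\Z/p^n$-module and carries a pure Tate twist $\rho_{[w]}(\Z/p^n)$, any nontrivial differential on $E_i$ for $i\geq 2$ would produce a nonzero map between Tate twists $\Z/p^n(a)\to\Z/p^n(b)$ with $a\neq b$ compatibly in $n$, which is impossible; this Galois ``weight argument'' already gives $E_2=E_\infty$. The $\Ext^1$ vanishing is then used only to split the induced filtration on the abutment (and one further needs the $\Hom$ vanishing of Proposition~\ref{prop:HomSt} to upgrade a $J(\qp)$-equivariant splitting to a $\sg_{E_s}\times J(\qp)$-equivariant one). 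Your passage to $\Z_p$-coefficients via the Milnor sequence and Mittag-Leffler is fine and is essentially the paper's argument.
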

\begin{remark} \label{pathology} The result for torsion coefficients in Theorem \ref{main0} is analogous to the one of Orlik quoted above. The analog of Orlik's second isomorphism is false:
in the case of the adic affine space ${\mathbb A}^1_{\Q_p}$, which is a period domain for the group $G={\mathbb G}_{m,\Q_p}\times {\mathbb G}_{m,\Q_p}$, 
we obtain (in the appendix) the isomorphism
$$
H^2_{\eet,{c},{\rm Hu}}({\mathbb A}^1_C,\Z_p(1))
\simeq (\so_{{\mathbb P}^1_C,\infty}/C)\oplus\Z_p\,,
$$
where $\so_{{\mathbb P}^1_C,\infty}$ is the stalk of analytic functions
at $\infty$. This result is to be compared
with the isomorphism 
$H^2_{\eet,{c},{\rm Hu}}({\mathbb A}^1_C,\Z_{\ell}(1))\simeq \Z_{\ell}$, for $\ell\neq p$. Note, moreover, that the 
action of $G(\Q_p)$ on $H^2_{\eet,{c},{\rm Hu}}({\mathbb A}^1_C,\Z_p(1))$ is not smooth. 
\end{remark}
\begin{remark}
The case of ${\mathbb A}^1_{\Q_p}$ suggests that 
 Huber's definition is not the right one for $p$-adic coefficients.
On the other hand, the continuous compactly supported cohomology\footnote{Instead of requiring a proper support for a compatible sequence of global sections we just take sequences of properly supported global sections.}
$$\R\Gamma_{\eet, c}(X, \Z_p):=\R\varprojlim_n\R\Gamma_{\eet,c}(X,\Z/p^n),$$
gives sensible results,
as  Theorem \ref{main0} shows.   
In this particular case,  we have an isomorphism
$$
H^i_{\eet,{\rm c}}(\sff^{\rm wa}_C,\Z_p)\simeq H^i_{\eet,c, {\rm naive}}(\sff^{\rm wa}_C,\Z_{p}):=\varprojlim_nH^i_{\eet,c}(\sff^{\rm wa}_C,\Z/p^n), \quad i\geq 0,
$$
with the naive version of compactly supported cohomology. We note that in a recent preprint \cite{chinois}, Lan-Liu-Zhu  prove a rational Poincar\'e duality for $p$-adic \'etale cohomology of "almost proper" adic spaces  and the compactly supported cohomology that they use is the naive one, which is equal to the continuous  one in their setting because their torsion cohomology groups are finite (hence satisfy the Mittag-Leffler condition).

Note that one could use also the continuous compactly supported cohomology in the $\ell$-adic case, $\ell\neq p$,
 instead of
Huber's version.  One would get continuous generalized Steinberg representations instead of smooth ones
in Theorem~\ref{main-Orlik}, which would fit better with the objects appearing in the $p$-adic Langlands program
such as Emerton's completed cohomology.  That would also make a (topological) Poincar\'e duality possible for
the spaces that we consider. 
\end{remark}

\begin{remark}
Moreover: 
\begin{enumerate}
\item We expect that the hypothesis $p \neq 2$ in Theorem \ref{main0} is not needed.
This hypothesis is made so that we can use Theorem \ref{vanishing-intro} below.
When $G=\mathbb{GL}_{d+1,\Q_p}$, Theorem \ref{main0} holds true for $p=2$ as well (see Remark \ref{vanishing-GLn} below).

\item Let ${\mathbb H}^d_{\Q_p}$ be  the Drinfeld symmetric space of dimension 
  $d$ over $\Q_p$.     Recall that 
  ${\mathbb H}^d_{\Q_p}=\mathbb{P}^{d}_{\Q_p}\setminus \cup_{H\in \mathcal{H}} H$, where $\mathcal{H}$ is the set of $\Q_p$-rational hyperplanes. Set $G:=\mathbb{GL}_{d+1,\Q_p}$.
Theorem \ref{main0} yields  an isomorphism of 
           $\sg_{\Q_p}\times G(\qp)$-modules 
          \begin{equation*}
 H^i_{\eet,c}({\mathbb H}^d_{C},\Z/p^n)\simeq {\rm Sp}_{2d-i}(\Z/p^n)(d-i),
 \end{equation*}
where the generalized Steinberg representations ${\rm Sp}_{j}(\Z/p^n)$  are  as defined in Section \ref{main-ex}.
  Comparing this isomorphism with that of \cite{CDN4}:
$$
 H^i_{\eet}({\mathbb H}^d_{C},\Z/p^n)\simeq{\rm Sp}_{i}(\Z/p^n)^*(-i)
$$
 one finds an abstract
duality of ${\rm GL}_{d+1}(\Q_p)\times\sg_{\Q_p}$-representations:
$$
H^i_{\eet,c}({\mathbb H}^d_{C},\Z/p^n)(d)\simeq H^{2d-i}_{\eet}({\mathbb H}^d_{C},\Z/p^n)^{*}.
$$
It seems likely that this abstract 
duality is induced by the cup-product with values
in $H^{2d}_{\eet,c}({\mathbb H}^d_{C},\Z/p^n(d))\simeq \Z/p^n$ but we did not
verify this.

 This suggests that 
Poincar\'e duality holds for $\sff^{\rm wa}_C$ and that one can
deduce from Theorem \ref{main0}
a description of the \'etale cohomology $H^*_{\eet}(\sff^{\rm wa}_C, \Z/p^n)$
as $J(\Q_p)\times \sg_{\Q_p}$-modules. 

\item Suppose moreover that $\mu$ is minuscule. Thanks to the work of Fargues-Fontaine \cite{FF0}, Kedlaya-Liu \cite{KL}, and Scholze \cite{SW}, 
we can define the {\it admissible locus} $\sff^{a}\subset \sff^{\rm wa}$, a partially proper open subset of 
$\sff$ having the same classical points as $\sff^{\rm wa}$, and a $p$-adic local system over it  interpolating the Galois representations associated to these classical points by the theorem of Colmez-Fontaine. 
In some remarkable situations (which can be completely classified thanks to the work of Chen-Fargues-Shen \cite{CFS} and Goertz-He-Nie \cite{GHN}) we have 
$\sff^{\rm wa}=\sff^a$ and so the above theorem describes the $p$-adic \'etale cohomology with compact support of the admissible locus. For instance, this is the case for the quasi-split group $G={\rm SO}(V,q)$, where $V=\Q_p^n$ endowed with the quadratic form 
$q(x_1,\ldots,x_n)=x_1x_n+x_2x_{n-1}+\cdots+x_nx_1$, the minuscule cocharacter $\mu(z)={\rm diag}(z,1,\ldots,1, z^{-1})$, 
and the basic class $[b]=[1]\in B(G,\mu)$, for which $J=G$. The flag variety is then the quadric $\sff$ over $\breve{\Q}_p$ with equation 
$q(x)=0$ in projective space and we have $\sff^{\rm wa}=\sff^{a}=\sff\setminus G(\Q_p) S$, where 
$S$ is the Schubert variety with equations $x_{\lceil n/2\rceil+1}=\cdots=x_n=0$ inside $\sff$ (we learnt this example from Fargues). 
For $n=21$, we obtain a very concrete description of the $p$-adic period domain for polarized K3 surfaces with supersingular reduction and the previous theorem yields its $p$-adic \'etale cohomology with compact support. 
In general, we do not know how to describe the $\ell$-adic \'etale 
cohomology (with compact support) of $\sff^a$, even for $\ell\ne p$. 

\end{enumerate}
\end{remark}

\subsection{The proof of the main result} 
We will sketch the proof of Theorem \ref{main0} in the torsion case; the continuous case follows by taking limits. 

\subsubsection{The geometric part} As we have already mentioned, the geometric part of the proof (which, however, does use representation theory as well) is analogous to Orlik's proof of the corresponding 
    result with $\ell$-torsion coefficients, for $\ell\neq p$. Our contribution here lies solely in the verification that all $\ell$-torsion statements in Orlik's proof work in the $p$-torsion setting as well. That this was  not guaranteed is shown by the fact that it fails in the $\ell$-adic setting: Orlik's $\ell$-adic proof for $\ell\neq p$ breaks down $p$-adically (as we have already seen in Remark \ref{pathology},  Huber's compactly supported 
    $l$-adic cohomology behaves rather badly for $l=p$, while it behaves as expected for $l\ne p$, and this is crucial for Orlik's argument to work). 
    
    The argument goes as follows. 
    One starts with the distinguished triangle
(associated to the triple $(\sff^{\rm wa},\sff,\partial\sff^{\rm wa})$, $\partial\sff^{\rm wa}:=\sff\setminus \sff^{\rm wa}$)
 \begin{equation*}
  \rg_{\eet,c}(\sff^{\rm wa}_C,\Z/p^n)\longrightarrow \rg_{\eet}(\sff_C,\Z/p^n)\longrightarrow \rg_{\eet}(\partial\sff^{\rm wa}_C,\Z/p^n).
  \end{equation*}
This reduces the computation of $H^*_{\eet,c}(\sff^{\rm wa}_C,\Z/p^n)$  to that of $H^*_{\eet}(\partial\sff^{\rm wa}_C,\Z/p^n)$ and the boundary map 
$\partial: H^*_{\eet}(\partial\sff^{\rm wa}_C,\Z/p^n)\to   H^{*+1}_{\eet,c}(\sff^{\rm wa}_C,\Z/p^n)$: one needs to prove an isomorphism
(we omit the coefficients $\Z/p^n$ in the formula):
 \begin{align}
 \label{reduction11}
 H^*_{\eet}(\partial\sff^{\rm wa}_C)\simeq\Bigg\{
\xymatrix{*\txt{ 
$\bigoplus_{|\Delta\setminus I_{[w]}|=1}\big(i^J_{{P_{[w]}}}\otimes\rho_{[w]} [-2l_{[w]}]\big )$\\ $\bigoplus$ \\
 $\bigoplus_{|\Delta\setminus I_{[w]}|>1}\big(\rho_{[w]}[-2l_{[w]}]
\oplus \big(v^J_{{P_{[w]}}}\otimes\rho_{[w]}[-2l_{[w]}-|\Delta\setminus I_{[w]}|+1]\big) \big)$}}
\end{align}
To do it, one stratifies the complement $\partial\sff^{\rm wa}$ by Schubert varieties whose cohomology is easy to compute.  More precisely, one uses  the Faltings and Totaro  description of weak admissibility as a semistability condition: the period domain $\sff^{\rm wa}$ is the locus of semistability in $\sff$ and the complement $\partial\sff^{\rm wa}$  is the locus in $\sff$, where semistability fails. 
To test semistability one   applies   the Hilbert-Mumford criterion: for a field extension $K/\check{E}$, $x\in \sff(K)$  is semistable (hence $x\in \sff^{\rm wa}(K)$) if and only if $\mu(x,\lambda)\geq 0$, for all $\lambda\in X_*(J)^{\sg_F}$. Here $\mu(-,-)$ is the slope function associated to a  linearization  of the action of $J$.   
    
      The slope function, a priori convex on  each chamber of the spherical building $\sbb(J_{\rm der})$, is actually affine. This  implies that, in the Hilbert-Mumford criterion, it is enough to test the 1-parameter subgroups
 associated to the relative simple roots and their conjugates.  This leads to the stratification $$ \partial\sff^{\rm wa}=Z_1\supset \cdots\supset Z_{i-1}\supset Z_i\supset Z_{i+1}\supset \cdots$$
      that is defined in the following way. For $\lambda\in X_*(J)_{\Q},$ let $Y_{\lambda}$ be   the locus in $\sff$, where $\lambda$ damages the semistability condition. For $I\subset \Delta$, let $Y_I:=\cap_{\alpha\notin I}Y_{\omega_{\alpha}}$ be the associated Schubert variety. Then the locus $Z_i$ of $\partial\sff^{\rm wa}$, where the semistability fails to the degree at least $i$, 
   can be described as 
   $$Z_i=\bigcup_{|\Delta\setminus I|=i} Z_{I},\quad 
Z_{I}:=J(\Q_p)\cdot Y^{\rm ad}_I.$$
We note that  $Z_I$ is a closed pseudo-adic subspace of $\partial\sff^{\rm wa}$.  In particular, so is $\partial\sff^{\rm wa}=Z_1=\bigcup_{|\Delta\setminus I|=1} Z_{I}.$

 Having this stratification, by a procedure akin to a closed Mayer-Vietoris,  one obtains an {\em acyclic} complex
   of sheaves on $\partial\sff^{\rm wa}_C$, called {\em the fundamental complex}, 
   \begin{equation*}
            0\to \Z/p^n\to \bigoplus_{|\Delta\setminus I|=1} (\Z/p^n)_I\to \bigoplus_{|\Delta\setminus I|=2} (\Z/p^n)_I\to\cdots
              \to \bigoplus_{|\Delta\setminus I|=|\Delta|-1} (\Z/p^n)_I\to (\Z/p^n)_{\emptyset}\to 0,
              \end{equation*} 
             where $(\Z/p^n)_I$ denotes  the constant sheaf $\Z/p^n$ evaluated\footnote{We simplify for the sake of the introduction; see Section \ref{strat} for details.} on  $Z_{I,C}$.
        This complex yields      
             a spectral sequence 
              \begin{equation}
              \label{CDG11}
 E_1^{i,j}=\bigoplus_{|\Delta\setminus I|=i+1} H^i_{\eet}(\partial \sff^{\rm wa}_C, (\Z/p^n)_I)
 \Rightarrow H^{i+j}_{\eet}(\partial \sff^{\rm wa}_{C},\Z/p^n).
 \end{equation}
Using the fact that $P_I(\Q_p)$ is the stabilizer of $Y_I$
in $J(\Q_p)$ and $X_I=J(\Q_p)/P_I(\Q_p)$ one computes that
            \begin{align*}
            H^i_{\eet}(\partial \sff^{\rm wa}_C, (\Z/p^n)_I) & \simeq \LC(X_I, H^i_{\eet}(Y_{I,C},\Z/p^n))\simeq i^J_{P_I}(\Z/p^n)\otimes H^i_{\eet}(Y_{I,C},\Z/p^n)\\
             & \simeq  i^J_{P_I}(\Z/p^n)\otimes (\bigoplus_{[w]\in\Omega_I}\rho_{[w]}(\Z/p^n)[-2l_{[w]}]).
             \end{align*}
          Here $\Omega_I$ is a subset of $W^{\mu}/\sg_{E_s}$ (see Section \ref{Schubert1}).   The third isomorphism is obtained by the classical 
               computation of the cohomology of Schubert varieties. Via a simple Galois-theoretic weight argument, this computation implies that the above spectral sequence  degenerates at $E_2$.   Using results of 
 Grosse-Kl\"onne \cite{GK}, Herzig \cite{Her}, and Ly \cite{LySt} on generalized Steinberg representations mod $p$, one can also compute the 
 $E_2$ terms: they are equal to the terms on the right hand side of the formula (\ref{reduction11}).
 
\subsubsection{The group-theoretic part}

It follows from the above section that the grading of $H^{*}_{\eet}(\partial \sff^{\rm wa}_{C},\Z/p^n)$ associated to the filtration induced by the spectral sequence \eqref{CDG11} is isomorphic to the right hand side of \eqref{reduction11}.
It remains to show that this filtration splits.
And this is where things get much harder for $p$-torsion coefficients than for the $\ell$-torsion ones.
Splitting this filtration essentially comes down to understanding $\Ext$ groups between generalized Steinberg representations with $p$-torsion coefficients.
Fortunately, it suffices to deal with $\Ext^1$'s, which are the only ones we can handle, contrary to the usual theory with complex coefficients (adapted to the $\ell$-adic setting by Orlik \cite{OrlikExt} and Dat \cite{Dat}).
It is indeed a well-known phenomenon in the theory of smooth mod $p$ representations of $p$-adic reductive groups that $\Ext$ groups can be very hard to compute, since most of the techniques for complex or $\ell$-adic coefficients fail.

Before stating the key result that allows us to split the filtration, let us briefly explain the argument for complex or $\ell$-torsion coefficients and point out the difficulties occurring for $p$-torsion coefficients.
Let $R$ be one of the rings $\mathbf{C}, \Z/\ell^n, \Z/p^n$ ($\ell$ being sufficiently generic with respect to $J$).
One can construct an acyclic complex
\begin{equation*}
0 \to i_{\Delta}^J(R) \to \bigoplus_{\substack{I \subset I' \subset \Delta \\ |\Delta \setminus I'|=1}} i_{P_{I'}}^J(R) \to \cdots \to \bigoplus_{\substack{I \subset I' \subset \Delta \\ |I' \setminus I| = 1}} i_{P_{I'}}^J \to i_{P_I}^J(R) \to v_{P_I}^J(R) \to 0.
\end{equation*}
For $R = \mathbf{C}$ or $\Z/\ell^n$ this is a rather standard result, and it also works for $\Z/p^n$ thanks to the above-mentioned work of Grosse-Kl\"onne, Herzig, and Ly (the acyclicity of this complex is also crucial in computing the $E_2$ terms of the above spectral sequence).
Suppose that $R\ne \Z/p^n$.
A spectral sequence argument reduces the computation of $\Ext^*_{J(\Q_p)}(v_{P_I}^J(R), v_{P_{I'}}^J(R))$ to the computation of $\Ext^*_{J(\Q_p)}(i_{P_I}^J(R), i_{P_{I'}}^J(R))$ for all $I, I' \subset \Delta$.
The exactness of the Jacquet functor (which fails when $R=\Z/p^n$) reduces the problem to understanding extensions between the Jacquet module of $i_{P_I}^J(R)$ (which can be understood by the Bernstein-Zelevinsky geometric lemma) and the trivial representation.
After several other relatively standard but technical arguments one reduces everything to the computation of $H^*(J(\Q_p), i_{P_I}^J(R)) = H^*(M_I(\Q_p), R)$, where $M_I$ is the Levi quotient of the standard parabolic $P_I$.
Thus we are reduced to computing $H^*(G(\Q_p), R)$ for a reductive group $G$ over $\Q_p$, which can be done using the contractibility of the Bruhat-Tits building of $G$ \emph{and} the fact that $i_K^G(1)$ is injective as smooth representation whenever $K$ is a compact open subgroup of $G$ (since passage to $K$-coinvariants is exact).
This again fails when $R = \Z/p^n$.
Actually, it is an interesting problem to compute $H^*(G(\Q_p), \Z/p^n)$ for a reductive group $G$ over $\Q_p$.
Unfortunately we don't have much to say about this except to mention that the computation of $H^*(\GL_n(\Z_p), \Z/p)$ seems rather complicated: Lazard's theory allows one to compute $H^*(1+p\mathrm{M}_n(\Z_p), \Z/p)$ (at least when $1+p\mathrm{M}_n(\Z_p)$ is a uniform pro-$p$ group, i.e.\ $p \neq 2$), so one is reduced to the computation of $H^*(\GL_n(\Z/p), \Z/p)$, a well-known open problem.

The previous paragraph makes it clear that a new idea is needed in order to compute extensions between generalized Steinberg representations modulo $p$.
We will only focus on the computation of $\Ext^1$'s, which is enough for our needs.
All $\Ext$ groups below are computed in the category of smooth $J(\Q_p)$-representations with coefficients in $\Z/p^n$.
The key result needed to split the abutment filtration of the spectral sequence (\ref{CDG11}) is then:

\begin{theorem} \label{vanishing-intro}
Assume that $p \geq 5$ and let $I, I' \subset \Delta$.
If $|(I \cup I') \setminus (I \cap I')| \geq 2$, then
\begin{equation*}
\Ext_{J(\Q_p)}^1(v_{P_I}^J(\Z/p^n), v_{P_{I'}}^J(\Z/p^n)) = 0.
\end{equation*}
The result holds true when $p = 3$ under the stronger assumption $||I| - |I'|| \geq 2$.
\end{theorem}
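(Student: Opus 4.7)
The plan is to follow the standard three-step strategy for computing $\Ext$ groups between mod $p$ generalized Steinberg representations: resolve $v_{P_I}^J(\Z/p^n)$ by parabolic inductions, apply Emerton's adjunction between parabolic induction and ordinary parts to reduce to Levi subgroups, and then invoke Hauseux-type formulas for derived ordinary parts of Steinbergs. First, the Grosse-Kl\"onne--Herzig--Ly acyclic complex recalled just before the theorem furnishes a bounded resolution of $v_{P_I}^J(\Z/p^n)$ by direct sums of $i_{P_{I''}}^J(\Z/p^n)$ indexed by subsets $I\subseteq I''\subseteq\Delta$. Applying $\Hom_{J(\qp)}(-,v_{P_{I'}}^J(\Z/p^n))$ produces a spectral sequence whose $E_1$-page is built from $\Ext^{\jcdot}_{J(\qp)}(i_{P_{I''}}^J(\Z/p^n), v_{P_{I'}}^J(\Z/p^n))$ and which converges to $\Ext^{\jcdot}_{J(\qp)}(v_{P_I}^J(\Z/p^n), v_{P_{I'}}^J(\Z/p^n))$. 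Since we only need the vanishing in degree $1$, it is enough to prove analogous vanishing of the low-degree Ext groups on the right.

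Next, Emerton's adjunction (after passing to the opposite parabolic) identifies $i_{P_{I''}}^J$ as the left adjoint of the ordinary parts functor $\Ord_{P_{I''}}$, and the resulting Grothendieck spectral sequence
\begin{equation*}
\Ext^a_{M_{I''}(\qp)}\bigl(\Z/p^n,\,R^b\Ord_{P_{I''}}(v_{P_{I'}}^J(\Z/p^n))\bigr)\Longrightarrow \Ext^{a+b}_{J(\qp)}(i_{P_{I''}}^J(\Z/p^n),v_{P_{I'}}^J(\Z/p^n))
\end{equation*}
transports the computation to the Levi $M_{I''}$. We then invoke Hauseux's explicit description of $R^b\Ord_{P_{I''}}(v_{P_{I'}}^J(\Z/p^n))$, itself resting on the mod $p$ geometric lemma (Herzig, Abe--Henniart--Herzig--Vign\'eras): each graded piece is a generalized Steinberg representation of a sub-Levi of $M_{I''}$, twisted by a character whose exact shape is dictated by the Weyl-group relative position of $I''$ and $I'$. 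Everything is thereby reduced to computing $\Ext^{\leq 1}_{M_{I''}(\qp)}(\Z/p^n,v_{Q}^{M_{I''}}(\Z/p^n)\otimes\chi)$ for certain parabolics $Q\subset M_{I''}$ and characters $\chi$.

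Finally, the combinatorial hypothesis enters in the verification that for every $(I'',b)$ whose contribution could a priori survive to give an $\Ext^1$, the associated character $\chi$ is nontrivial on the center of the relevant sub-Levi, so that the low-degree Ext groups against the trivial representation vanish by an Emerton-type computation. The precise numerical threshold $|(I\cup I')\setminus(I\cap I')|\geq 2$ (resp.\ $||I|-|I'||\geq 2$ when $p=3$) is exactly what is required to exclude the ``diagonal'' terms that would otherwise produce a trivial character. I expect the main obstacle to be this last step: the bookkeeping needed to read off the characters appearing in the derived ordinary parts, and the verification that the hypothesis on $(I,I')$ forces all surviving characters to be nontrivial. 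The restriction $p\neq 2$ is needed at this stage to apply Lazard theory to pro-$p$ subgroups of the Levis (since $1+p\mathrm{M}_n(\Z_p)$ is uniform only for $p$ odd), and the stronger assumption at $p=3$ compensates for additional exceptional characters that appear in small residue characteristic.
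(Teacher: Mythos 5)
Your framework — resolving by parabolic inductions, transporting to Levi subgroups via Emerton's adjunction, then using the mod $p$ geometric lemma to describe derived ordinary parts — is broadly the same one the paper uses in Section~\ref{reduction} for the case $J \not\subset I$, and the character-nontriviality observation ($\varepsbar\circ\alphat\neq 1$, cf.\ Lemma~\ref{lemm:H1Ord}(2) and Remark~\ref{rema:assumption}) is indeed the decisive point there. But the overall proposal has two real gaps.

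First, your final step is wrong in the case $J\subsetneq I$: the hypothesis $|(I\cup I')\setminus(I\cap I')|\geq 2$ does \emph{not} force all surviving characters to be nontrivial. When you reduce $\Ext^1(v_{P_I}^J,v_{P_J}^J)$ with $J\subsetneq I$ along the exact sequences, you are left with groups of the shape $\Ext^1_{M_K}(1,\St_{M_K})$ for $K=I\setminus J$ (see~\eqref{Ext1inj}), where the central character of $\St_{M_K}$ is trivial and there is nothing an Emerton-type central character argument can do. The vanishing of $\Ext^1_{M_K}(1,\St_{M_K})$ for $|K|\geq 2$ is the hard core of the proof (Sections~\ref{isogenies}--\ref{vanishing}, culminating in Proposition~\ref{prop:base}): it goes by restricting the extension class to the opposite minimal parabolic, showing the resulting map into $\Ext^1_{\Zc}(1,1)$ is injective via a delicate analysis of $\LCu(\Ubar)/\LCc(\Ubar)$ (Lemma~\ref{lemm:inj}), identifying its image inside the $\epsilon$-isotypic component (Lemma~\ref{lemm:imeps}), reducing to rank-two absolutely almost-simple groups via products and central isogenies, and then killing the image by explicit Galois cohomology of centers using the Tits classification (Lemmas~\ref{lemm:ZZc}, \ref{lemm:HZc}). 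None of this is a "character is nontrivial" argument, and no amount of bookkeeping in the geometric lemma replaces it.

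Second, the explanation you give for the restriction $p\neq 2$ is the one the paper explicitly rejects. The text immediately before the theorem statement makes clear that the route via Lazard theory and $H^*(\GL_n(\Z/p),\Z/p)$ is \emph{not} available for mod $p$ coefficients — that is the entire reason a new method is needed. In the actual proof, $p\neq 2$ enters through the surjectivity of $\phi\mapsto\phi-\phi^\alpha$ in Proposition~\ref{prop:imalpha} and through the $2$-torsion appearing in the Galois-cohomological computations (e.g.\ in Lemmas~\ref{lemm:ZZc}, \ref{lemm:HZc} and Remark~\ref{rema:H1Ord}); the $p=3$ caveat reflects possible $3$-torsion in $\Zb_{\Zcb_\ssc}$ for groups of type $\Arm_{3d-1}$ or $\Erm_6$, not extra characters from Lazard theory. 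A smaller technical point: the full Grothendieck spectral sequence for the derived ordinary parts functor is not at one's disposal (Emerton's conjecture on higher ordinary parts is open), which is why the paper works exclusively with $\Ord$ and $\HOrd[1]$ via the five-term exact sequences of \cite[(3.7.6)]{Em2} and \cite[Cor.\ 2]{JHE} rather than the full spectral sequence you invoke.
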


\begin{remark} \label{vanishing-GLn}
The result also holds true when $p = 2$ and $J$ is an inner form of $\mathbb{GL}_{d+1,\Q_p}$ under the stronger assumption $||I| - |I'|| \geq 2$.
\end{remark}

We refer the reader to Section \ref{stratege} for an overview of the rather technical proof of the theorem.
(Actually we compute the $\Ext^1$'s between generalized Steinberg representations, with coefficients in an artinian commutative ring in which $p$ is nilpotent, for any reductive group over a local field of residue characteristic $p$.)
The most difficult part is to compute $H^1(G(\Q_p), \St)$, where $\St$ denotes the ordinary Steinberg representation, for a reductive group $G$ over $\Q_p$ and one can actually reduce the theorem to this computation by rather painful devissage arguments involving Emerton's ordinary parts functor and its derived functors.
The computation of higher $\Ext$ groups seems much more involved: extending our method would require at least to compute $H^*(G(\Q_p), \St)$ and to prove a conjecture of Emerton on the higher ordinary parts functors (see \cite[Conj.\ 3.7.2]{Em2}).

\begin{acknowledgments}
We would like to thank Sascha Orlik for patiently explaining to us the details of his work.
We also thank Laurent Fargues for helpful discussions concerning the content of this paper.
G.D. would like to thank Shanwen Wang and the Fudan University, where parts of the paper were written, for the wonderful working conditions.
\end{acknowledgments}

\section{Extensions between generalized Steinberg representations} \label{extensions}

Let $F$ be a local field of residue characteristic $p$, $G$ be the group of $F$-points of a connected reductive algebraic $F$-group, and $R$ be an artinian commutative ring in which $p$ is nilpotent.
We compute the $\Ext^1$ groups between generalized Steinberg representations in the category of smooth $G$-representations with coefficients in $R$.
In particular, we will prove Theorem \ref{vanishing-intro} from the introduction.

\subsection{Notation}

Let us fix the notation for this section.
We fix a separable closure $\Fbar$ of $F$ and let $\sg_F = \Gal(\Fbar/F)$.
Let $\varepsilon : \Q_p^* \to \Z_p^*$ denote the $p$-adic cyclotomic character and let $\varepsbar : \Q_p^* \to \F_p^*$ denote its reduction mod $p$.
We let $D$ denote a finite-dimensional central division algebra over $F$.

\subsubsection{Linear algebraic $F$-groups}

A linear algebraic $F$-group will be written with a boldface letter like $\Hb$ and its group of $F$-points will be denoted by the corresponding ordinary letter $H = \Hb(F)$.
We will write $H^1(F, \Hb)$ for the set of isomorphism classes of $\Hb$-torsors over $F$.
If $\Hb$ is smooth (this is always the case when $\car(F) = 0$ by Cartier's theorem), then $H^1(F, \Hb)$ is isomorphic to the Galois cohomology group $H^1(\sg_F, \Hb(\Fbar))$.
We will write $\Zb_{\Hb}$ for the center of $\Hb$.

Let $\Gb$ be a connected reductive algebraic $F$-group.
We fix a maximal split torus $\Sb \subset \Gb$ and a minimal parabolic subgroup $\Bb \subset \Gb$ containing $\Sb$.
Let $\Zcb$ be the centralizer of $\Sb$ in $\Gb$, that is the Levi factor of $\Bb$ containing $\Sb$, and $\Ub$ be the unipotent radical of $\Bb$, so that $\Bb = \Zcb \Ub$.
We write $\Bbbar = \Zcb \Ubbar$ for the opposite minimal parabolic subgroup.
Let $\Ncb$ be the normalizer of $\Sb$ in $\Gb$ and let
\begin{equation*}
W = \Ncb / \Zcb = \Nc / \Zc
\end{equation*}
be the relative Weyl group of $\Gb$.
For $w \in W$, we let
\begin{equation*}
\Ubbar_w = \Ubbar \cap w^{-1} \Ubbar w \quad \text{and} \quad \Bbbar_w = \Zcb \Ubbar_w.
\end{equation*}

Let $X^*(\Sb)$ be the group of characters of $\Sb$, let $X_*(\Sb)$ be the group of cocharacters of $\Sb$, and let $\langle -, - \rangle : X^*(\Sb) \times X_*(\Sb) \to \Z$ denote the natural pairing.
Let
\begin{equation*}
\Phi \supset \Phi^+ \supset \Delta
\end{equation*}
be the subsets of relative roots, positive roots, simple roots in $X^*(\Sb)$.
We let $\Phi^- = -\Phi^+ = \Phi \setminus \Phi^+$.
For $\alpha \in \Delta$, we let $\alpha^\vee \in X_*(\Sb)$ be the corresponding coroot, $s_\alpha \in W$ be the corresponding simple reflection, and $\Ub_\alpha \subset \Ub$ be the corresponding root subgroup.
If $\Ub_\alpha$ is one-dimensional, then $\alpha$ extends to a character of $\Zcb$ which will be denoted $\alphat$.

For $I \subset \Delta$, we let $\Pb_I = \Mb_I \Nb_I$ be the corresponding parabolic subgroup of $\Gb$ containing $\Bb$, where $\Mb_I$ is the Levi factor of $\Pb_I$ containing $\Sb$ and $\Nb_I$ is the unipotent radical of $\Pb_I$, and we let $\Bb_I = \Mb_I \cap \Bb$ and $\Ub_I = \Mb_I \cap \Ub$, so that $\Bb_I = \Zcb \Ub_I$, and we let $\Zb_I$ denote the center of $\Mb_I$.\footnote{
In the case $I = \emptyset$ we have $\Pb_I = \Bb$, $\Mb_I = \Zcb$, $\Nb_I = \Ub$, $\Bb_I = \Zcb$, $\Ub_I = 1$, and $\Zb_I = \Zb_{\Zcb}$.
In the case $I = \Delta$ we have $\Pb_I = \Gb$, $\Mb_I = \Gb$, $\Nb_I = 1$, $\Bb_I = \Bb$, $\Ub_I = \Ub$, and $\Zb_I = \Zb_{\Gb}$.
}
We write $\Pbbar_I = \Mb_I \Nbbar_I$ and $\Bbbar_I = \Zcb \Ubbar_I$ for the opposite parabolic subgroups.
We let $\Ncb_I$ be the normalizer of $\Sb$ in $\Mb_I$ and we let
\begin{equation*}
W_I = \Ncb_I / \Zcb = \Nc_I / \Zc
\end{equation*}
be the relative Weyl group of $\Mb_I$.
We let $\Wt_I \subset W$ be the set of representatives of minimal length of the right cosets $W_I \backslash W$, we let $w_{I, 0} \in W_I$ be the longest element, and we let
\begin{equation*}
\Wh_I = w_{I, 0} \Wt_I \setminus \bigcup_{J \supsetneq I} w_{J, 0} \Wt_J.
\end{equation*}
For $w_I \in W_I$, we let
\begin{equation*}
\Ubbar_{I, w_I} = \Ubbar_I \cap w_I^{-1} \Ubbar_I w_I \quad \text{and} \quad \Bbbar_{I, w_I} = \Zcb \Ubbar_{I, w_I}
\end{equation*}
When $I = \{\alpha\}$ (resp.\ $I = \Delta \setminus \{\alpha\}$), we rather write $\Pb_\alpha$, $\Mb_\alpha$, $\Nb_\alpha$, etc.\ (resp.\ $\Pb^\alpha$, $\Mb^\alpha$, $\Nb^\alpha$, etc.).

\subsubsection{Smooth representations}

All representations will be smooth with coefficients in $R$ and all maps between $R$-modules will be $R$-linear.

Given a locally profinite space $X$, we let $\LC(X)$ be the $R$-module of locally constant functions on $X$ with coefficients in $R$,
we let $\supp(f) = X \setminus f^{-1}(\{0\})$ denote the (open and closed) support of a function $f \in \LC(X)$,
and we let $\LCc(X) \subset \LC(X)$ be the $R$-submodule consisting of those functions with compact support.

Given a closed subgroup $H$ of $G$ and a smooth $H$-representation $\sigma$, we define a smooth $G$-representation by letting $G$ act by right translation on the $R$-module
\begin{equation*}
\Ind_H^G(\sigma) = \{ f : G \to \sigma \mid \text{$\exists K_f \subset G$ open subgroup s.t.\ $f(hgk) = h \cdot f(g)$ $\forall h \in H, g \in G, k \in K_f$} \}.
\end{equation*}
Let $1$ be the trivial representation of any locally profinite group.
For any subset $I \subset \Delta$, let
\begin{equation*}
i_{P_I}^G = \Ind_{P_I}^G(1) \simeq \LC(P_I \backslash G).
\end{equation*}
If $J \supset I$ is another subset, then there is an injection $i_{P_J}^G \hookrightarrow i_{P_I}^G$ which is induced by the natural surjection $P_I \backslash G \twoheadrightarrow P_J \backslash G$.
The generalized Steinberg representation with respect to $I$ is the quotient
\begin{equation*}
v_{P_I}^G = i_{P_I}^G / \sum_{J \supsetneq I} i_{P_J}^G.
\end{equation*}
In the case $I = \emptyset$ we obtain the ordinary Steinberg representation denoted $\St$.
In the case $I = \Delta$ we obtain the trivial representation $1$.

Given a closed subgroup $\Ubar{}' \subset \Ubar$ stable under conjugation by $\Zc$, we endow the $R$-module $\LCc(\Ubar{}')$ with a smooth action of the group $\Bbar{}' = \Zc\Ubar{}'$ defined by
\begin{equation*}
(z \ubar \cdot f)(\ubar') = f(z^{-1} \ubar' z \ubar)
\end{equation*}
for $z \in \Zc$, $\ubar, \ubar' \in \Ubar{}'$, and $f \in \LCc(\Ubar{}')$.

\subsection{The main results}

We prove the following result (cf.\ Theorem \ref{vanishing-intro} in the introduction).

\begin{theorem} \label{theo:vanishing}
Assume that $p \geq 5$ and let $I, J \subset \Delta$.
If $|(I \cup J) \setminus (I \cap J)| \geq 2$, then $\Ext_G^1(v_{P_I}^G, v_{P_J}^G) = 0$.
The result holds true when $p = 3$ under the stronger assumption $||I| - |J|| \geq 2$.
\end{theorem}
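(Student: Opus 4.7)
The plan is to reduce the computation of $\Ext^1_G(v_{P_I}^G, v_{P_J}^G)$ to $H^1$ of Levi subgroups with Steinberg coefficients, via a cascade of devissage steps. First, I would apply $\Ext^*_G(-, v_{P_J}^G)$ to the acyclic resolution
\begin{equation*}
0 \to i_\Delta^G \to \bigoplus_{|\Delta \setminus I'|=1,\, I'\supset I} i_{P_{I'}}^G \to \cdots \to \bigoplus_{|I'\setminus I|=1,\, I'\supset I} i_{P_{I'}}^G \to i_{P_I}^G \to v_{P_I}^G \to 0
\end{equation*}
mentioned in the introduction, which produces a spectral sequence converging to $\Ext^*_G(v_{P_I}^G, v_{P_J}^G)$ whose $E_1$ page is built from $\Ext^*_G(i_{P_{I'}}^G, v_{P_J}^G)$ for $I' \supset I$. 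To extract $\Ext^1$ it suffices to control $\Ext^0$ and $\Ext^1$ on each row, together with the differentials.

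Next, since $i_{P_{I'}}^G = \Ind_{P_{I'}}^G 1$, the natural approach is to use Emerton's ordinary parts functor $\Ord_{P_{I'}}$ (which is right adjoint to parabolic induction on a suitable category, Frobenius reciprocity failing on the nose but on the derived level replaced by the derived ordinary parts $\HOrd_{P_{I'}}$). This rewrites $\Ext^*_G(i_{P_{I'}}^G, v_{P_J}^G)$ in terms of $\Ext^*_{M_{I'}}(1, \HOrd_{P_{I'}}(v_{P_J}^G))$, i.e. in terms of $H^*(M_{I'}, \HOrd^{*}_{P_{I'}}(v_{P_J}^G))$ assembled by a further spectral sequence. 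One now needs explicit knowledge of $\Ord_{P_{I'}}(v_{P_J}^G)$ and a reasonable bound (or vanishing) on $H^1\!\Ord_{P_{I'}}(v_{P_J}^G)$; the Bernstein--Zelevinsky--style geometric lemma description of the Jacquet module of $i_{P_J}^G$, combined with the description of $v_{P_J}^G$ as an alternating sum, yields that $\Ord_{P_{I'}}(v_{P_J}^G)$ is a generalized Steinberg of $M_{I'}$ (up to a character/parabolic of $M_{I'}$ determined by $I'\cap J$ and $I'\cup J$), and parallel vanishing for $H^1\!\Ord$ on the pieces whose relative parabolics are not the whole $M_{I'}$.

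Combined with the previous step, this reduces everything to computing $H^0$ and $H^1$ of Levi subgroups $M$ of $G$ with coefficients in generalized Steinberg representations of $M$, and after a further (painful but formal) devissage against the resolution above applied over $M$, to the single computation $H^1(M', \St_{M'})$ for certain Levi subgroups $M'$ (the case of $H^0$ being handled by the standard irreducibility / socle analysis of generalized Steinbergs). This last calculation is the main obstacle: the classical proof ($K$-coinvariants exact because $i_K^M 1$ is injective) breaks down mod $p$, and one cannot simply invoke the Bruhat--Tits building. One handles it by restricting to a pro-$p$ Iwahori subgroup $I^{(1)} \subset M'$, which is uniform when $p \geq 5$ (respectively under the stronger numerical assumption when $p=3$), applying Lazard's theory to compute $H^1(I^{(1)}, \St_{M'})$, and then tracking the Hecke / Weyl group action; the hypothesis on $p$ enters precisely here through the uniformity of $I^{(1)}$.

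Finally, reading the index $(I,J)$ through the chain of spectral sequences, the assumption $|(I\cup J)\setminus(I\cap J)| \geq 2$ (respectively $||I|-|J||\geq 2$ in the $p=3$ case) guarantees that in every nontrivial term the Levi $M'$ and the Steinberg $\St_{M'}$ occurring are ``far enough'' from the trivial situation that the $H^1$ computation vanishes, giving the conclusion. The hard technical heart of the argument is the interplay between the Lazard computation of $H^1(I^{(1)},\St_{M'})$ and the combinatorics of $I$ and $J$ inside $\Delta$; all other reductions, while long, are formal given Emerton's formalism.
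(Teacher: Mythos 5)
Your opening devissage is sound and matches the paper's in spirit: resolve $v_{P_I}^G$ by parabolically induced representations, use Emerton's ordinary parts as the adjoint of parabolic induction, and reduce via Levi subgroups to the single computation $H^1(M',\St_{M'})$. The paper (Section \ref{reduction}) runs this more tightly, one step at a time through the short exact sequence \eqref{dev}, using \cite[Th.\ 6.1(ii)]{AHV} and a Bruhat-filtration analysis of $\HOrd[1]_{\Pbar^\alpha}(v_{P_J}^G)$ rather than the Bernstein--Zelevinsky geometric lemma (which one cannot invoke verbatim mod $p$, since the Jacquet functor is not exact), arriving at the injection \eqref{Ext1inj} and the isomorphism \eqref{Ext1iso}; but up to this point the two plans are compatible.

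The gap is in the final, crucial step. You propose to compute $H^1(G,\St)$ by restricting to a pro-$p$ Iwahori $I^{(1)}$, applying Lazard's theory, and then tracking the Hecke/Weyl action, with the hypothesis on $p$ entering through uniformity of $I^{(1)}$. This is exactly the naive approach the paper explains does \emph{not} work: even with trivial coefficients, Lazard reduces $H^*(\GL_n(\Z_p),\Z/p)$ to $H^*(\GL_n(\Z/p),\Z/p)$, a well-known open problem, and the descent from compact open subgroups to $G$ breaks down mod $p$ because $i_K^G(1)$ is no longer injective (pro-$p$ coinvariants are not exact). The paper's actual method is genuinely different and avoids group cohomology of compact groups entirely: it builds a comparison map $\Ext_G^1(1,\St)\to\Ext_\Zc^1(1,1)$ via the Bruhat-cell model $\St\simeq\LCc(\Ubar)$ and evaluation at $1$, proves injectivity by an explicit analysis of $\LCu(\Ubar)/\LCc(\Ubar)$ (Lemma \ref{lemm:inj}), shows the image lies in the $\epsilon$-isotypic component (Lemma \ref{lemm:imeps}), identifies the image in rank $1$ (Proposition \ref{prop:imalpha}), and kills it in rank $2$ by case-by-case Galois cohomology using the Tits classification (Proposition \ref{prop:im}, Table \ref{tab:rank2}). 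Moreover the hypotheses on $p$ do not come from uniformity of anything: $p\neq 2$ is needed so that $\phi\mapsto\phi-\phi^\alpha$ surjects onto the $s_\alpha=-1$ eigenspace, and the split between $p\geq 5$ and $p=3$ comes from the condition $\varepsbar\circ\alphat\neq 1$ (Remark \ref{rema:assumption}) and from the orders of $\Zb_{\Gb}$ and $\Zb_{\Zcb_\ssc}$ in the rank-$2$ classification (Lemmas \ref{lemm:ZZc}, \ref{lemm:HZc}). So while your reductions would set up the problem correctly, the heart of the argument as proposed would not close.
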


\begin{remark}
We expect Theorem \ref{theo:vanishing} to hold true for all $p$.
Actually, we prove the result in almost all cases when $p = 3$ and in some cases when $p = 2$.
Moreover, it follows from our computations that the above $\Ext^1$ is $3$-torsion when $p = 3$ and $8$-torsion when $p = 2$ (see Remarks \ref{rema:H1Ord} and \ref{rema:base}).
In particular, if $\car(F) = 0$ and $E / F$ is a finite extension, then the analogous $\Ext^1$ in the category of admissible unitary continuous $G$-representations on $E$-Banach spaces vanishes for all $p$ (see \cite[Prop.\ 5.3.1]{JHD} and \cite[Lemme 3.3.3]{JHC}).
\end{remark}

When $I = J$, the $R$-module $\Ext_G^1(v_{P_I}^G, v_{P_J}^G)$ has been computed in \cite{HSS2} without any assumption (see Proposition 8 in loc.\ cit.).
In the course of the proof of Theorem \ref{theo:vanishing}, we also treat the case $J = I \sqcup \{\alpha\}$ under a very mild assumption (which is always satisfied if $p \geq 5$) and we reduce the remaining case $I = J \sqcup \{\alpha\}$ to the special case where $\Delta = \{\alpha\}$.
We treat the latter case under some assumption on $G$.
In particular, we obtain the following result.

\begin{theorem} \label{theo:GLn}
Assume that $G = \GL_n(D)$ with $D \neq \Q_2$.
Let $I, J \subset \Delta$.
\begin{enumerate}
\item If $J = I \sqcup \{\alpha\}$, then the $R$-module $\Ext_G^1(v_{P_I}^G, v_{P_J}^G)$ is free of rank $1$.
\item If $I = J \sqcup \{\alpha\}$, then there is an $R$-linear isomorphism
\begin{equation*}
\Ext_G^1(v_{P_I}^G, v_{P_J}^G) \simeq \Hom(F^*, R).
\end{equation*}
\item If $|(I \cup J) \setminus (I \cap J)| \geq 2$, then $\Ext_G^1(v_{P_I}^G, v_{P_J}^G) = 0$.
\end{enumerate}
\end{theorem}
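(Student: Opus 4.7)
The plan is to derive Theorem \ref{theo:GLn} from Theorem \ref{theo:vanishing} supplemented by explicit rank-one computations for $G = \GL_2(D)$. Part (3) follows from Theorem \ref{theo:vanishing} when $p \geq 5$, and from Theorem \ref{theo:vanishing} combined with Remark \ref{vanishing-GLn} whenever $||I| - |J|| \geq 2$. For $p \in \{2, 3\}$ with $|I| = |J|$ and $|(I \cup J) \setminus (I \cap J)| = 2$, I would adapt the devissage behind Theorem \ref{theo:vanishing}: since the Levi subgroups of $\GL_n(D)$ are products of smaller $\GL_{m_i}(D)$'s, a K\"unneth-type decomposition reduces the nontrivial contribution to rank-one factors $\GL_2(D)$, where the relevant $H^1$ with $\St$-coefficients can be shown to vanish once $D \neq \Q_2$.

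For parts (1) and (2), I would first reduce to the rank-one case $G = \GL_2(D)$ by the ``painful devissage'' alluded to in the introduction: the acyclic Grosse-Kl\"onne/Herzig/Ly resolution of $v_{P_J}^G$ by parabolic inductions $i_{P_K}^G$ (for $K \supseteq J$), together with Emerton's ordinary parts functor $\Ord_{\Pbar_K}$ and the adjunction $\Ext^*_G(-, i_{P_K}^G) \simeq \Ext^*_{M_K}(\Ord_{\Pbar_K}(-), 1)$ and its spectral sequence refinement for higher derived ordinary parts, converts $\Ext$-groups on $G$ into $\Ext$-groups on proper Levi subgroups. By K\"unneth only the rank-one factor $\GL_2(D)$ attached to the distinguished root $\alpha$ contributes nontrivially, which reduces case (1) to computing $\Ext^1_{\GL_2(D)}(\St, 1)$ and case (2) to computing $\Ext^1_{\GL_2(D)}(1, \St)$.

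For these rank-one computations, I would apply the long exact sequences coming from $\Hom_G(-, 1)$ and $\Hom_G(1, -)$ to the defining short exact sequence $0 \to 1 \to i_B^G \to \St \to 0$, for $G = \GL_2(D)$ and $B$ the upper-triangular Borel. Frobenius reciprocity identifies $\Ext^*_G(?, i_B^G)$ with $H^*(B, R)$, which in degrees up to $1$ is governed by the abelianization $B^{\mathrm{ab}} \simeq F^* \times F^*$ (using $D^{*,\mathrm{ab}} \simeq F^*$ via the reduced norm, which requires $D \neq \Q_2$). Case (1) then yields $\Ext^1_G(\St, 1) \simeq R$, generated by the class of the extension $i_B^G$ itself. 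For case (2), the analogous sequence reads
\begin{equation*}
0 \to \Hom(G^{\mathrm{ab}}, R) \to \Hom(B^{\mathrm{ab}}, R) \to \Ext^1_G(1, \St) \to H^2(G, R) \to H^2(B, R),
\end{equation*}
and since $G^{\mathrm{ab}} \simeq F^*$ via the global reduced norm, the cokernel of the first arrow is exactly $\Hom(F^*, R)$.

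The main obstacle will be proving injectivity of the restriction $H^2(G, R) \to H^2(B, R)$ for $G = \GL_2(D)$ with $D \neq \Q_2$; this is what distills the exceptional hypothesis in the theorem, since the analogous injectivity fails for $\GL_2(\Q_2)$ with mod-$2$ coefficients, as is familiar from $p$-adic Langlands pathologies. I expect to establish it by combining Lazard theory for uniform pro-$p$ subgroups of $G$ (available precisely when $D \neq \Q_2$) with a geometric analysis of $H^*(G, R)$ via the action of $G$ on its Bruhat-Tits building.
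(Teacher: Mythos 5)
There is a genuine gap in your treatment of part (2), and it is precisely the obstruction the paper's authors go to considerable lengths to circumvent. Your long exact sequence from $0 \to 1 \to i_B^G \to \St \to 0$ is correct, and it reduces part (2) to showing that the connecting map $\Ext_G^1(1,\St) \to H^2(G,R)$ vanishes, for which you propose to prove $H^2(G,R) \to H^2(B,R)$ injective via Lazard theory and the contractibility of the Bruhat--Tits building. But this is exactly the strategy the introduction of this paper identifies as failing mod $p$: the Bruhat--Tits argument requires $i_K^G(1)$ to be injective in the category of smooth $R$-representations for compact open $K$, which is false when $p$ is nilpotent in $R$, and Lazard theory only lands you back at $H^*(\GL_n(\F_p),\F_p)$, a notoriously hard problem. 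The paper avoids $H^2(G,R)$ entirely: it constructs a map $\Ext_G^1(1,\St) \to \Ext_\Zc^1(1,1)$ (the map \eqref{comp}), proves its injectivity by a concrete analysis of the $\Bbar$-module $\LCu(\Ubar)/\LCc(\Ubar)$ (Lemma \ref{lemm:inj} and the exact sequence \eqref{LES}), and establishes surjectivity onto $\Ext_\Zc^1(1,1)^{s_\alpha = -1}$ by the explicit cocycle computation of Lemma \ref{lemm:projalpha}; that pair of statements is Proposition \ref{prop:basealpha}, and it is what bypasses the $H^2$ wall. Without an argument of comparable content your part (2) is not a proof.

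Two further points. For part (1) and the case $J \not\subset I$ of part (3), the paper does not pass through a K\"unneth reduction to $\GL_2(D)$ at all: it applies directly the exact sequence built from $\Ind_{P^\alpha}^G$, the ordinary parts adjunction, and Lemma \ref{lemm:H1Ord}, obtaining \eqref{isoExt} in one stroke once the running hypothesis $\varepsbar\circ\alphat \neq 1$ is checked (vacuous when $\dim\Ub_\alpha > 1$, and automatic for $\GL_n$ with $p=3$ since $\alpha \notin 2X^*(\Sb)$; the single excluded case is $G = \GL_n(\Q_2)$). Your description of the residual cases of part (3) for $p \in \{2,3\}$ is also incomplete: you only discuss $|I| = |J|$ with symmetric difference exactly $2$, but cases like $||I|-|J||=1$ with $|(I\cup J)\setminus(I\cap J)| \geq 3$ are not covered by Theorem \ref{theo:vanishing} under the $||I|-|J||\geq 2$ hypothesis either, and would need a separate argument in your scheme — whereas the paper's unified treatment of $J \not\subset I$ versus $J \subsetneq I$ handles all of them.
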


In contrast, we do not know how to compute the $R$-module $\Ext_G^1(\St, 1)$ when $G = \GL_2(\Q_2)$, or $\Ext_G^1(v_{P_1}^G, v_{P_2}^G)$ when $G = \GL_3(\Q_2)$ and $P_1, P_2$ denote the two maximal proper standard parabolic subgroups of $G$.

\begin{remark}
A locally analytic version of part (2) of Theorem \ref{theo:GLn} (when $D = F$ and $\car(F) = 0$) is established in the work of Ding \cite{Ding} and generalized to split reductive groups by Gehrmann \cite{Lennart}.
Higher $\Ext$ groups are computed by Orlik and Strauch \cite{Orlik-Strauch}, for split reductive groups and in a suitable category of locally analytic representations (but not in the category of admissible locally analytic representations).
We note that a vanishing result for $\Ext^1$ in the locally analytic world gives a corresponding vanishing result in the context of admissible Banach representations, since the continuous generalized Steinberg representations are the universal unitary completions of their locally analytic vectors.
We thank Lennart Gehrmann for pointing out the references above.
Let us mention though that the vanishing mod $p$ in Theorem \ref{vanishing-intro} is crucial for the proof of Theorem \ref{main0}: the corresponding result for Banach representations is not sufficient for our needs.
\end{remark}

\subsection{The proof of the main results} \label{stratege}

We fix two subsets $I, J \subset \Delta$.
First, we recall the computation of the $R$-module $\Hom_G(v_{P_I}^G, v_{P_J}^G)$.

\begin{proposition}[Grosse-Kl\"onne, Herzig, Ly] \label{prop:HomSt}
There is an $R$-linear isomorphism
\begin{equation*}
\Hom_G(v_{P_I}^G, v_{P_J}^G) \simeq
\begin{cases}
R &\text{if $I = J$,} \\
0 &\text{otherwise.}
\end{cases}
\end{equation*}
\end{proposition}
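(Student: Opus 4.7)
The plan is to reduce the Hom computation to an elementary question about $P_I$-invariants, then exploit the Bruhat decomposition together with a combinatorial inclusion-exclusion. First, I would apply $\Hom_G(-, v_{P_J}^G)$ to the defining presentation
$$\sum_{K \supsetneq I} i_{P_K}^G \hookrightarrow i_{P_I}^G \twoheadrightarrow v_{P_I}^G$$
and invoke Frobenius reciprocity $\Hom_G(i_{P_K}^G, \pi) \simeq \pi^{P_K}$. Writing the left submodule as the image of $\bigoplus_{\alpha \in \Delta \setminus I} i_{P_{I \cup \{\alpha\}}}^G$ via the sum map, this identifies $\Hom_G(v_{P_I}^G, v_{P_J}^G)$ with the common kernel of the natural restriction maps $(v_{P_J}^G)^{P_I} \to (v_{P_J}^G)^{P_{I \cup \{\alpha\}}}$ as $\alpha$ runs over $\Delta \setminus I$.

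Next I would compute the $P_K$-invariants of $i_{P_J}^G$ via the Bruhat decomposition: for any $K$, the space $(i_{P_J}^G)^{P_K}$ is the free $R$-module with basis the finite set of double cosets $W_J \backslash W / W_K$. The image of $i_{P_L}^G$ in $i_{P_J}^G$ for $L \supsetneq J$ gets identified, after taking $P_K$-invariants, with the submodule spanned by those double cosets that factor through $W_L \backslash W / W_K$. An inclusion-exclusion on the subsets $L \supsetneq J$ then cuts out $(v_{P_J}^G)^{P_K}$, with a surviving basis indexed by a subset of the Kostant representatives $\Wt_J$, refined via the sets $\Wh_J$ introduced in the paper.

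Finally, I would trace the restriction maps at the level of Weyl-group double cosets. A class in $(v_{P_J}^G)^{P_I}$ killed in every $(v_{P_J}^G)^{P_{I \cup \{\alpha\}}}$ should be supported on a single distinguished double coset, which survives in the common kernel precisely when $I = J$: in that case it spans a free $R$-module of rank one, generated by the identity endomorphism of $v_{P_I}^G$; for $I \neq J$ the combinatorial constraints are incompatible and the kernel vanishes.

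The main obstacle is the combinatorial heart of the middle step: showing that exactly the right classes survive the inclusion-exclusion, and that their images under the restriction maps can be tracked explicitly. This is the technical content of the cited works of Grosse-Kl\"onne, Herzig, and Ly. That the argument extends uniformly to an arbitrary artinian coefficient ring $R$ in which $p$ is nilpotent comes essentially for free, since all the relevant $R$-modules are free with bases indexed by double cosets and every structure map is defined over $\mathbf{Z}$, so one can base change from a universal (e.g., mod-$p$) version.
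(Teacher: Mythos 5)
Your opening step applies Frobenius reciprocity in the wrong direction. For a proper parabolic $P_K$ of a $p$-adic reductive group $G$, the smooth induction $\Ind_{P_K}^G$ is a \emph{right} adjoint to restriction, giving $\Hom_G(\pi, i_{P_K}^G) \simeq \Hom_{P_K}(\pi, 1)$; it is \emph{not} a left adjoint, and there is no general identification $\Hom_G(i_{P_K}^G, \pi) \simeq \pi^{P_K}$ (the relevant right adjoint of $\Ind_{P_K}^G$ on admissible representations is Emerton's ordinary part $\Ord_{\Pbar_K}$, as used in Section~\ref{reduction}). To see the gap concretely, take $G = \GL_2(\Q_p)$ and $P_K = B$: then $1^{B} = R$, but $\Hom_G(i_B^G, 1) = 0$, because the extension $0 \to 1 \to i_B^G \to \St \to 0$ is nonsplit over $\F_p$, hence over $R$.

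Your second step is also incorrect: $(i_{P_J}^G)^{P_K}$ is \emph{not} free on the double cosets $W_J \backslash W / W_K$. The Bruhat cells in $P_J \backslash G$ are not open-and-closed, so a locally constant function which is constant on each cell is forced, by density of the big cell, to take a single value everywhere. For instance $(i_B^G)^{B} = R$ (only the constants), not $R^{|W|}$. The double-coset basis you have in mind is correct for \emph{finite} reductive groups, where induction is self-adjoint and every subset of $P_J \backslash G / P_K$ is open, but this picture does not transfer to the $p$-adic setting.

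Given these two errors, the argument is unsound as a proof. The paper's own proof is of a completely different nature and much lighter: for $I = J$ it simply cites \cite[Cor.~5]{HSS2}, and for $I \neq J$ it observes that a devissage along $R \twoheadrightarrow R/\mathfrak{m}$ reduces the vanishing to the case where $R$ is a field of characteristic $p$, where the claim is exactly that the $v_{P_I}^G$ are pairwise non-isomorphic irreducibles --- this is the content of the theorems of Grosse-Kl\"onne \cite{GK}, Herzig \cite{Her}, and Ly \cite{LySt}. Your final paragraph defers the key combinatorics to those works, but what they prove is the irreducibility and classification of mod-$p$ generalized Steinberg representations, not the double-coset invariant computation you sketch, which is false in the $p$-adic setting.
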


\begin{proof}
If $I = J$, then the result is a special case of \cite[Cor.\ 5]{HSS2}.
If $I \neq J$, then by devissage the result reduces to the case where $R$ is a field of characteristic $p$, which is proved by Grosse-Kl\"onne \cite{GK}, Herzig \cite{Her}, and Ly \cite{LySt}.
\end{proof}

Now, using the results of the next sections, we compute the $R$-module $\Ext_G^1(v_{P_I}^G, v_{P_J}^G)$ when $I \neq J$.
In particular, we prove Theorems \ref{theo:vanishing} and \ref{theo:GLn}.
We treat the cases $J \not \subset I$ and $J \subsetneq I$ separately.

\subsubsection{The case $J \not \subset I$}

In this case we can compute directly the $R$-module $\Ext_G^1(v_{P_I}^G, v_{P_J}^G)$ using the results of Section \ref{reduction}.
Let $\alpha \in (\Delta \setminus I) \cap J$.
If $F = \Q_p$, $\dim \Ub_\alpha = 1$, and $J \setminus \{\alpha\} = I \cap \{\alpha\}^\perp$ assume that $\varepsbar \circ \alphat \neq 1$.
This assumption is satisfied in the following cases: $p \geq 5$; $p = 3$ and $||I| - |J|| \geq 2$; $p = 2$, $||I| - |J|| \geq 2$, and $G = \GL_n(D)$ (see Remark \ref{rema:assumption}).
Then there is an $R$-linear isomorphism (see \eqref{isoExt})
\begin{equation*}
\Ext_G^1(v_{P_I}^G, v_{P_J}^G) \simeq
\begin{cases}
R & \text{if $J = I \sqcup \{\alpha\}$,} \\
0 & \text{otherwise.}
\end{cases}
\end{equation*}
In the case $J = I \sqcup \{\alpha\}$ the $R$-module $\Ext_G^1(v_{P_I}^G, v_{P_J}^G)$ is generated by the class of $\Ind_{P^\alpha}^G(v_{M^\alpha \cap P_I}^{M^\alpha})$ (see \eqref{dev}).
This proves Theorem \ref{theo:vanishing} and part (3) of Theorem \ref{theo:GLn} in the case $J \not \subset I$, as well as part (1) of Theorem \ref{theo:GLn}.

\subsubsection{The case $J \subsetneq I$}

In this case we can reduce the computation of the $R$-module $\Ext_G^1(v_{P_I}^G, v_{P_J}^G)$ to the computation of the $R$-modules $\Ext_{M_K}^1(1, \St_{M_K})$ for certain subsets $K \subset \Delta$, where $\St_{M_K}$ denotes the Steinberg representation of $M_K$, using the results of Section \ref{reduction}:
for all $\alpha \in \Delta \setminus I$ there is an exact sequence of $R$-modules (see \eqref{redI})
\begin{equation*}
0 \to \Ext_G^1(v_{P_I}^G, v_{P_J}^G) \to \Ext_{M^\alpha}^1(v_{M^\alpha \cap P_I}^{M^\alpha}, v_{M^\alpha \cap P_J}^{M^\alpha}) \to \Ext_G^1(v_{P_{I \sqcup \{\alpha\}}}^G, v_{P_J}^G),
\end{equation*}
and for all $\alpha \in J$ there is an exact sequence of $R$-modules (see \eqref{redJ})
\begin{equation*}
0 \to \Ext_G^1(1, v_{P_J}^G) \to \Ext_{M^\alpha}^1(1, v_{M^\alpha \cap P_{J \setminus \{\alpha\}}}^{M^\alpha}) \to \Ext_G^1(1, v_{P_{J \setminus \{\alpha\}}}^G).
\end{equation*}
Using these exact sequences recursively (i.e.\ for $\Gb$ and its Levi subgroups $\Mb_K$) yields an $R$-linear injection
\begin{equation} \label{Ext1inj}
\Ext_G^1(v_{P_I}^G, v_{P_J}^G) \hookrightarrow \Ext_{M_{I \setminus J}}^1(1, \St_{M_{I \setminus J}}).
\end{equation}
Moreover, in the case $I = J \sqcup \{\alpha\}$ the above injection is an $R$-linear isomorphism
\begin{equation} \label{Ext1iso}
\Ext_G^1(v_{P_I}^G, v_{P_J}^G) \iso \Ext_{M_\alpha}^1(1, \St_{M_\alpha})
\end{equation}
provided that $\Ext_{M_{\{\alpha, \beta\}}}^1(1, \St_{M_{\{\alpha, \beta\}}}) = 0$ for all $\beta \in \Delta \setminus \{\alpha\}$.

In Section \ref{conclusion}, using the results of Sections \ref{isogenies}, \ref{comparison}, and \ref{vanishing}, we compute the $R$-module $\Ext_G^1(1, \St)$.
Replacing $\Gb$ by its Levi subgroups $\Mb_K$ we obtain the following results.
Assume that $p \neq 2$.
\begin{itemize}
\item If $K = \{\alpha\}$ and the adjoint action of $\Zc$ on $\Ubar_\alpha \setminus \{1\}$ is transitive, then there is an $R$-linear isomorphism (see Proposition \ref{prop:basealpha})
\begin{equation} \label{basealpha}
\Ext_{M_\alpha}^1(1, \St_{M_\alpha}) \iso \Ext_{\Zc}^1(1, 1)^{s_\alpha = -1}.
\end{equation}
(Here $\Ext_{\Zc}^1(1, 1)^{s_\alpha = -1}$ denotes the $R$-submodule of $\Ext_{\Zc}^1(1, 1)$ consisting of those extensions on which $s_\alpha$ acts by multiplication by $-1$.)
\item If $|K| \geq 2$, then $\Ext_{M_K}^1(1, \St_{M_K}) = 0$ (see Proposition \ref{prop:base}).
\end{itemize}
When $G = \GL_n(D)$, these results hold true for $p = 2$ as well, and \eqref{basealpha} induces an $R$-linear isomorphism
\begin{equation*}
\Ext_{M_\alpha}^1(1, \St_{M_\alpha}) \simeq \Hom(F^*, R).
\end{equation*}
(see Remarks \ref{rema:basealpha} and \ref{rema:base}).

Finally, we can compute the $R$-module $\Ext_G^1(v_{P_I}^G, v_{P_J}^G)$.
Assume that either $p \neq 2$ or $G = \GL_n(D)$.
If $I = J \sqcup \{\alpha\}$, then $\Ext_{M_{\{\alpha, \beta\}}}^1(1, \St_{M_{\{\alpha, \beta\}}}) = 0$ for all $\beta \in \Delta \setminus \{\alpha\}$, so that composing \eqref{Ext1iso} and \eqref{basealpha} yields an $R$-linear isomorphism
\begin{equation*}
\Ext_G^1(v_{P_I}^G, v_{P_J}^G) \simeq \Ext_{\Zc}^1(1, 1)^{s_\alpha = -1}.
\end{equation*}
When $G = \GL_n(D)$, this isomorphism induces an $R$-linear isomorphism
\begin{equation*}
\Ext_G^1(v_{P_I}^G, v_{P_J}^G) \simeq \Hom(F^*, R).
\end{equation*}
This proves part (2) of Theorem \ref{theo:GLn}.
If $|I \setminus J| \geq 2$, then $\Ext_{M_{I \setminus J}}^1(1, \St_{M_{I \setminus J}}) = 0$, so that \eqref{Ext1inj} yields $\Ext_G^1(v_{P_I}^G, v_{P_J}^G) = 0$.
This proves Theorem \ref{theo:vanishing} and part (3) of Theorem \ref{theo:GLn} in the case $J \subsetneq I$.

\subsection{Reduction to the case $I = \Delta$ and $J = \emptyset$} \label{reduction}

First we recall some results on the parabolic induction functor and its right adjoint.
The parabolic induction functor $\Ind_{P_I}^G$, from the category of smooth $M_I$-representations to the category of smooth $G$-representations, is exact and preserves admissibility.
When $\car(F) = 0$, Emerton \cite{Em1} constructed a functor $\Ord_{\Pbar_I}$ (the ordinary part), from the category of smooth $G$-representations to the category of smooth $M_I$-representations, which is left-exact, preserves admissibility, and is the right adjoint of $\Ind_{P_I}^G$ when restricted to admissible representations.
His construction was generalized by Vign\'eras \cite{VigAdj} to include the case $\car(F) = p$.
When $\car(F) = 0$, Emerton \cite{Em2} extended the functor $\Ord_{\Pbar_I}$ (which is not exact as soon as $I \neq \Delta$) to a sequence of functors $\HOrd[n]_{\Pbar_I}$ which preserve admissibility and form a $\delta$-functor when restricted to admissible representations.
When $\car(F) = p$, the restriction of the functor $\Ord_{\Pbar_I}$ to admissible representations is exact (this is due to one of us, see \cite[Th.\ 1]{JHE}) and we let $\HOrd[n]_{\Pbar_I} = 0$ for $n \geq 1$.

\subsubsection{Reduction to the case $I = \Delta$}

Assume that $I \neq \Delta$ and let $\alpha \in \Delta \setminus I$.
By exactness and transitivity of parabolic induction, there is a short exact sequence of $G$-representations
\begin{equation} \label{dev}
0 \to v_{P_{I \sqcup \{\alpha\}}}^G \to \Ind_{P^\alpha}^G(v_{M^\alpha \cap P_I}^{M^\alpha}) \to v_{P_I}^G \to 0
\end{equation}
which induces an exact sequence
\begin{align*}
0 & \to \Hom_G(v_{P_I}^G, v_{P_J}^G) \to \Hom_G(\Ind_{P^\alpha}^G(v_{M^\alpha \cap P_I}^{M^\alpha}), v_{P_J}^G) \to \Hom_G(v_{P_{I \sqcup \{\alpha\}}}^G, v_{P_J}^G) \\
& \to \Ext_G^1(v_{P_I}^G, v_{P_J}^G) \to \Ext_G^1(\Ind_{P^\alpha}^G(v_{M^\alpha \cap P_I}^{M^\alpha}), v_{P_J}^G) \to \Ext_G^1(v_{P_{I \sqcup \{\alpha\}}}^G, v_{P_J}^G).
\end{align*}
The adjunction between $\Ind_{P^\alpha}^G$ and $\Ord_{\Pbar^\alpha}$ yields an isomorphism
\begin{equation*}
\Hom_G(\Ind_{P^\alpha}^G(v_{M^\alpha \cap P_I}^{M^\alpha}), v_{P_J}^G) \simeq \Hom_{M^\alpha}(v_{M^\alpha \cap P_I}^{M^\alpha}, \Ord_{\Pbar^\alpha}(v_{P_J}^G)).
\end{equation*}
Moreover, there is a short exact sequence
\begin{align*}
0 & \to \Ext_{M^\alpha}^1(v_{M^\alpha \cap P_I}^{M^\alpha}, \Ord_{\Pbar^\alpha}(v_{P_J}^G)) \to \Ext_G^1(\Ind_{P^\alpha}^G(v_{M^\alpha \cap P_I}^{M^\alpha}), v_{P_J}^G) \\
& \to \Hom_{M^\alpha}(v_{M^\alpha \cap P_I}^{M^\alpha}, \HOrd[1]_{\Pbar^\alpha}(v_{P_J}^G))
\end{align*}
(see \cite[(3.7.6)]{Em2} if $\car(F) = 0$ and \cite[Cor.\ 2]{JHE} if $\car(F) = p$).
By \cite[Th.\ 6.1(ii)]{AHV}, there is an $M^\alpha$-equivariant isomorphism
\begin{equation*}
\Ord_{\Pbar^\alpha}(v_{P_J}^G) \simeq
\begin{cases}
v_{M^\alpha \cap P_J}^{M^\alpha} & \text{if $\alpha \notin J$,}\\
0 & \text{otherwise.}
\end{cases}
\end{equation*}
Using Proposition \ref{prop:HomSt} and Lemma \ref{lemm:H1Ord} below, we obtain the following results.
\begin{itemize}
\item Assume that $\alpha \in J$.
If $F = \Q_p$, $\dim \Ub_\alpha = 1$, and $J \setminus \{\alpha\} = I \cap \{\alpha\}^\perp$ assume moreover that $\varepsbar \circ \alphat \neq 1$.
Then there is an isomorphism
\begin{equation} \label{isoExt}
\Ext_G^1(v_{P_I}^G, v_{P_J}^G) \simeq
\begin{cases}
R & \text{if $J = I \sqcup \{\alpha\}$,} \\
0 & \text{otherwise.}
\end{cases}
\end{equation}
In the case $J = I \sqcup \{\alpha\}$ the $R$-module $\Ext_G^1(v_{P_I}^G, v_{P_J}^G)$ is generated by the class of \eqref{dev}.
\item Assume that $\alpha \notin J$.
Then there is an exact sequence
\begin{equation} \label{redI}
0 \to \Ext_G^1(v_{P_I}^G, v_{P_J}^G) \to \Ext_{M^\alpha}^1(v_{M^\alpha \cap P_I}^{M^\alpha}, v_{M^\alpha \cap P_J}^{M^\alpha}) \to \Ext_G^1(v_{P_{I \sqcup \{\alpha\}}}^G, v_{P_J}^G).
\end{equation}
\end{itemize}

\begin{remark} \label{rema:assumption}
Let us discuss the assumption in the case $\alpha \in J$.
\begin{enumerate}
\item Assume that $F = \Q_p$ and $\dim \Ub_\alpha = 1$.
We have $\varepsbar \circ \alphat \neq 1$ if $p \geq 5$.
More precisely, $\varepsbar \circ \alpha = 1$ if and only if either $p = 3$ and $\alpha \in 2X^*(\Sb)$, in which case the irreducible component of $\Phi$ containing $\alpha$ must be of type $\mathrm{A}_1$ or $\mathrm{C}_n$ ($n \geq 2$) with $\alpha$ being the long root, or $p = 2$.\footnote{
Indeed: if $p \geq 5$ then $\varepsbar \circ \alpha \circ \alpha^\vee = \varepsbar^2 \neq 1$; if $p = 3$ and $\alpha \notin 2X^*(\Sb)$ then there exists $\omega_\alpha \in X_*(\Sb)$ such that $\langle \alpha, \omega_\alpha \rangle = 1$ hence $\varepsbar \circ \alpha \circ \omega_\alpha = \varepsbar \neq 1$; if $p = 3$ and $\alpha \in 2X^*(\Sb)$ then $\varepsbar \circ \alpha = \varepsbar^2 \circ (\frac12 \alpha) = 1$; if $p = 2$ then $\varepsbar = 1$.}
\item Assume that $\alpha \in J$ and $J \setminus \{\alpha\} = I \cap \{\alpha\}^\perp$, i.e.\ $J \setminus I = \{\alpha\}$, $I \setminus J \subset \Delta \setminus \{\alpha\}^\perp$, and $I \cap J \subset \{\alpha\}^\perp$.
Then either $J = I \sqcup \{\alpha\}$ with $I \subset \{\alpha\}^\perp$, or $0 \leq |I| - |J| \leq |\Delta \setminus \{\alpha\}^\perp| - 1$.
In particular, we have $||I| - |J|| \leq 1$ unless the irreducible component of $\Phi$ containing $\alpha$ is of type $\mathrm{D}_n$ ($n \geq 4$), $\Erm_6$, $\Erm_7$, or $\Erm_8$ with $\alpha$ being the root with $3$ neighbors.
\end{enumerate}
\end{remark}

Now we turn to the key computation of this section.

\begin{lemma} \label{lemm:H1Ord}
\begin{enumerate}
\item We have $\HOrd[1]_{\Pbar^\alpha}(v_{P_J}^G) \neq 0$ if and only if $F = \Q_p$, $\dim \Ub_\alpha = 1$, $\alpha \in J$, and $J \setminus \{\alpha\} \subset \{\alpha\}^\perp$.
\item Assume that $F = \Q_p$, $\dim \Ub_\alpha = 1$, $\alpha \in J$, and $J \setminus \{\alpha\} \subset \{\alpha\}^\perp$.
Then
\begin{equation*}
\Hom_{M^\alpha}(v_{M^\alpha \cap P_I}^{M^\alpha}, \HOrd[1]_{\Pbar^\alpha}(v_{P_J}^G)) \neq 0
\end{equation*}
only if $J \setminus \{\alpha\} = I \cap \{\alpha\}^\perp$ and $\varepsbar \circ \alphat = 1$.
\end{enumerate}
\end{lemma}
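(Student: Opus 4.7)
The plan is to compute $\HOrd[1]_{\Pbar^\alpha}(v_{P_J}^G)$ by first reducing to parabolic inductions of the trivial representation and then invoking Hauseux's explicit description \cite{JHD, JHC} of higher ordinary parts of such inductions.

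First, iterating the short exact sequence \eqref{dev} for varying $\beta \in \Delta \setminus J$ and using the $\delta$-functor property of $\HOrd[\bullet]_{\Pbar^\alpha}$ on admissible representations (\cite[(3.7.6)]{Em2} when $\car(F) = 0$ and \cite[Cor.\ 2]{JHE} when $\car(F) = p$), one obtains long exact sequences reducing the computation of $\HOrd[n]_{\Pbar^\alpha}(v_{P_J}^G)$ for $n \leq 1$ to that of $\HOrd[n]_{\Pbar^\alpha}(\Ind_{P_K}^G(1))$ for various $K \supseteq J$. The $n = 0$ contributions are already determined by \cite[Th.\ 6.1(ii)]{AHV}, so the only non-trivial task is to control the $n = 1$ inputs.

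Next, I would invoke Hauseux's filtration on $\HOrd[1]_{\Pbar^\alpha}(\Ind_{P_K}^G(1))$, whose graded pieces are indexed by certain Weyl-group double-coset representatives $w$ and are isomorphic to parabolic inductions (from standard parabolics of $M^\alpha$) of continuous $H^1$-groups with coefficients in characters of $\Zc$ attached to $w$. For trivial inducing data and $n = 1$, these $H^1$-groups are non-zero only when an $F^*$-factor in the torus admits a non-zero continuous $R$-valued character of the right form; together with the dimension constraint coming from the Bruhat decomposition, this forces $F = \Q_p$ and $\dim \Ub_\alpha = 1$, and isolates the graded piece attached to $s_\alpha$. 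Tracking which $K \supseteq J$ give surviving (uncancelled) contributions through the devissage of $v_{P_J}^G$ then extracts the remaining conditions: $\alpha \in J$ (otherwise the $s_\alpha$-piece is already present in $\Ind_{P_{J \sqcup \{\alpha\}}}^G(1)$ and is killed upon passage to the quotient), and $J \setminus \{\alpha\} \subset \{\alpha\}^\perp$ (so that $s_\alpha$ centralizes $W_{J \setminus \{\alpha\}}$ and the graded piece survives as a genuine induction from a standard parabolic of $M^\alpha$). This yields part (1). For part (2), under these assumptions the surviving piece identifies $\HOrd[1]_{\Pbar^\alpha}(v_{P_J}^G)$ with a twist of $v_{M^\alpha \cap P_{J \setminus \{\alpha\}}}^{M^\alpha}$ by the character $\varepsbar \circ \alphat$ of $M^\alpha$ (the twist coming from the adjoint $\Zc$-action on the root line $\Ub_\alpha \simeq \Q_p$ through the explicit formula for $H^1$ of $\Q_p^*$). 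Proposition \ref{prop:HomSt} applied to the Levi $M^\alpha$ then shows that $\Hom_{M^\alpha}(v_{M^\alpha \cap P_I}^{M^\alpha}, \cdot)$ is non-zero only when the Steinberg parameters coincide, i.e.\ $I \cap \{\alpha\}^\perp = J \setminus \{\alpha\}$, and the twist is trivial, i.e.\ $\varepsbar \circ \alphat = 1$.

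The main technical obstacle is the combinatorial bookkeeping in Hauseux's filtration: many Weyl elements a priori contribute to each $\HOrd[1]_{\Pbar^\alpha}(\Ind_{P_K}^G(1))$, and one must verify that the cancellations across the devissage of $v_{P_J}^G$ leave precisely the single piece attached to $s_\alpha$ and that the character which appears is exactly $\varepsbar \circ \alphat$. The orthogonality constraint $J \setminus \{\alpha\} \subset \{\alpha\}^\perp$ is the most delicate point: when it fails, the $s_\alpha$-piece formally appears but only as a twisted non-standard parabolic induction, which after further reductions is absorbed by other terms of the filtration.
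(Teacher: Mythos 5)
Your proposal shares the key technical input with the paper — Hauseux's computation of higher ordinary parts of Bruhat cells (\cite[Th.\ 2.6]{JHB}, equivalently \cite[Th.\ 3.3.3]{JHD}) — but the reduction to that computation goes along a genuinely different route, and it is along that route that the gaps lie.

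The paper does \emph{not} devissage $v_{P_J}^G$ into the parabolic inductions $\Ind_{P_K}^G(1)$. Instead, it filters $v_{P_J}^G$ directly as a $\Bbar$-representation by the Bruhat decomposition, with graded pieces $I_{\whw_J}$ indexed by $\Wh_J$ (using \cite[Sec.\ 2.1]{JH} and the proof of \cite[Prop.\ 2.3.6]{JHC}). It then cites results of \cite{JHC} to know that this filtration passes to a filtration on $\HOrd[1]_{\Pbar^\alpha}(v_{P_J}^G)$ whose graded pieces are \emph{exactly} the $\HOrd[1]_{\Pbar^\alpha}(I_{\whw_J})$; the Weyl-group combinatorics then reduce part (1) to Lemma \ref{lemm:combinatoire}(1). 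Part (2) proceeds by applying $\Ord_{M^\alpha\cap\Pbar_I}$ and left-exactness of $\Hom$, again going back to \cite[Th.\ 2.6]{JHB} on each graded piece, and Lemma \ref{lemm:combinatoire}(2). This avoids any spectral sequence with potentially non-zero differentials.

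Your route — resolving $v_{P_J}^G$ by the complex $C_J$ of Proposition \ref{complex2} (or, step-by-step, iterating \eqref{dev}) and then computing $\HOrd[1]_{\Pbar^\alpha}(\Ind_{P_K}^G(1))$ — produces a hypercohomology spectral sequence in which both $\HOrd[0]$ and $\HOrd[1]$ terms of many $\Ind_{P_K}^G(1)$ appear, and you would have to \emph{prove} that the differentials of that spectral sequence effect exactly the cancellations you assert (e.g.\ that the $s_\alpha$-contribution to $\HOrd[1]_{\Pbar^\alpha}(\Ind_{P_{J\sqcup\{\alpha\}}}^G(1))$ is killed when $\alpha\notin J$). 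You flag this bookkeeping as the ``main technical obstacle,'' but you do not supply the computation of any connecting map, and that is precisely what is missing. Moreover, when you iterate \eqref{dev} with $\beta\neq\alpha$ the middle terms are $\Ind_{P^\beta}^G(v^{M^\beta}_{M^\beta\cap P_J})$, so along the way you also need a Mackey/geometric-lemma argument to handle $\HOrd_{\Pbar^\alpha}$ of inductions from a different parabolic $P^\beta$; this step is not in the proposal.

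A further overreach: in part (2) you claim to \emph{identify} $\HOrd[1]_{\Pbar^\alpha}(v_{P_J}^G)$ with a single twist of $v^{M^\alpha}_{M^\alpha\cap P_{J\setminus\{\alpha\}}}$. The paper deliberately does not prove this (see Remark \ref{rema:H1Ord}(1), where the full description is stated only as an expectation); it only establishes the stated necessary condition, by left-exactness of $\Ord$ and $\Hom(1,-)$ applied to the Bruhat filtration. Your stronger identification would require showing that all the other graded pieces $\HOrd[1]_{\Pbar^\alpha}(I_{\whw_J})$ with $\whw_J \in \Wh_J\cap s_\alpha W^\alpha$ assemble into exactly one generalized Steinberg, and that the resulting extension of $\Bbar^\alpha$-representations is in fact $M^\alpha$-equivariant and splits in the required way — none of which is argued. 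So while the overall strategy could in principle be carried out, as written it has real gaps that the paper's direct-filtration approach is specifically designed to avoid.
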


\begin{proof}
First, we review a filtration on $v_{P_J}^G$.
As explained in \cite[Sec.\ 2.1]{JH}\footnote{
In loc.\ cit.\ $\Gb$ is split but the results extend verbatim to any $\Gb$.
},
the Bruhat decomposition
\begin{equation*}
G = \bigsqcup_{w \in W} B w \Bbar
\end{equation*}
induces a filtration on $i_B^G$ by $\Bbar$-subrepresentations with graded pieces $I_w$ indexed by $W$, and there is a $\Bbar_w$-equivariant isomorphism $I_w \simeq \LCc(\Ubar_w)$ for all $w \in W$.
Using the $G$-equivariant morphisms
\begin{equation*}
i_B^G \hookleftarrow i_{P_J}^G \twoheadrightarrow v_{P_J}^G,
\end{equation*}
the filtration on $i_B^G$ induces a filtration on $v_{P_J}^G$ by $\Bbar$-subrepresentations whose graded pieces are the $\Bbar$-representations $I_{\whw_J}$ for $\whw_J \in \Wh_J$ (see the proof of \cite[Prop.\ 2.3.6]{JHC}).

Now, the filtration on $v_{P_J}^G$ induces a filtration on $\HOrd[1]_{\Pbar^\alpha}(v_{P_J}^G)$ by $\Bbar^\alpha$-subrepresentations whose graded pieces are exactly the $\Bbar^\alpha$-representations $\HOrd[1]_{\Pbar^\alpha}(I_{\whw_J})$ for all $\whw_J \in \Wh_J$ (see the proofs of \cite[Prop.\ 2.3.6, Th.\ 2.3.7]{JHC}).
We partially compute the latter using \cite[Th.\ 2.6]{JHB}\footnote{
In loc.\ cit.\ $\Gb$ is split but the results extend to any $\Gb$ if one replaces $\ell(w)$ by $\dim(\Ubbar / \Ubbar_w)$.
Alternatively, these results can be recovered from \cite[Th.\ 3.3.3]{JHD}.
}.
If $F = \Q_p$ and $\dim \Ub_\alpha = 1$, then for any $\whw_J \in \Wh_J$, which we write $\whw_J = (\wtw^\alpha)^{-1} w^\alpha$ with $\wtw^\alpha \in \Wt^\alpha$ and $w^\alpha \in W^\alpha$, there is a $\Bbar_{w^\alpha}^\alpha$-equivariant isomorphism
\begin{equation} \label{H1Ord}
\HOrd[1]_{\Pbar^\alpha}(I_{\whw_J}) \simeq
\begin{cases}
\LCc(\Ubar^\alpha_{w^\alpha}) \otimes (\varepsilon \circ \alphat) & \text{if $\wtw^\alpha = s_\alpha$,} \\
0 & \text{otherwise.}
\end{cases}
\end{equation}
In particular, $\HOrd[1]_{\Pbar^\alpha}(v_{P_J}^G) \neq 0$ if and only if $\Wh_J \cap s_\alpha W^\alpha \neq \emptyset$, and the latter condition is equivalent to $\alpha \in J$ and $J \setminus \{\alpha\} \subset \{\alpha\}^\perp$ by Lemma \ref{lemm:combinatoire}(1) below.
If either $F \neq \Q_p$ or $\dim \Ub_\alpha \neq 1$, then $\HOrd[1]_{\Pbar^\alpha}(I_{\whw_J}) = 0$ for all $\whw_J \in \Wh_J$ so that $\HOrd[1]_{\Pbar^\alpha}(v_{P_J}^G) = 0$.
This proves (1).

We turn to (2).
Assume that $F = \Q_p$, $\dim \Ub_\alpha = 1$, $\alpha \in J$, and $J \setminus \{\alpha\} \subset \{\alpha\}^\perp$.
The $G$-equivariant surjection $i_{M^\alpha \cap P_I}^{M^\alpha} \twoheadrightarrow v_{M^\alpha \cap P_I}^{M^\alpha}$ induces an injection
\begin{equation*}
\Hom_{M^\alpha}(v_{M^\alpha \cap P_I}^{M^\alpha}, \HOrd[1]_{\Pbar^\alpha}(v_{P_J}^G)) \hookrightarrow \Hom_{M^\alpha}(i_{M^\alpha \cap P_I}^{M^\alpha}, \HOrd[1]_{\Pbar^\alpha}(v_{P_J}^G))
\end{equation*}
and the adjunction between $\Ind_{M^\alpha \cap P_I}^{M^\alpha}$ and $\Ord_{M^\alpha \cap \Pbar_I}$ yields an isomorphism
\begin{equation*}
\Hom_{M^\alpha}(i_{M^\alpha \cap P_I}^{M^\alpha}, \HOrd[1]_{\Pbar^\alpha}(v_{P_J}^G)) \simeq \Hom_{M_I}(1, \Ord_{M^\alpha \cap \Pbar_I}(\HOrd[1]_{\Pbar^\alpha}(v_{P_J}^G))).
\end{equation*}
We partially compute the $\Bbar_I$-representation $\Ord_{M^\alpha \cap \Pbar_I}(\HOrd[1]_{\Pbar^\alpha}(I_{\whw_J}))$ using the isomorphism \eqref{H1Ord} and, once again, \cite[Th.\ 2.6]{JHB}.
For any $\whw_J \in \Wh_J$ such that $\whw_J = s_\alpha w^\alpha$ with $w^\alpha \in W^\alpha$, which we write $w^\alpha = \wtw_I^{-1}w_I$ with $\wtw_I \in \Wt_I$ and $w_I \in W_I$, there is a $\Bbar_{I, w_I}$-equivariant isomorphism
\begin{equation*}
\Ord_{M^\alpha \cap \Pbar_I}(\HOrd[1]_{\Pbar^\alpha}(I_{\whw_J})) \simeq
\begin{cases}
\LCc(\Ubar_{I, w_I}) \otimes (\varepsilon \circ \alphat) & \text{if $\wtw_I = 1$,} \\
0 & \text{otherwise.}
\end{cases}
\end{equation*}
For any $w_I \in W_I$, we have $\LCc(\Ubar_{I, w_I})^{\Ubar_{I, w_I}} \neq 0$ if and only if $\Ubar_{I, w_I} = \{1\}$, i.e.\ $w_I = w_{I, 0}$, hence an isomorphism
\begin{equation*}
\Hom_{\Bbar_{I, w_I}}(1, \LCc(\Ubar_{I, w_I})) \otimes (\varepsilon \circ \alphat)) \simeq
\begin{cases}
\Hom_\Zc(1, \varepsilon \circ \alphat) & \text{if $w_I = w_{I, 0}$,} \\
0 & \text{otherwise.}
\end{cases}
\end{equation*}
Since $\Ord_{M^\alpha \cap \Pbar_I}$ and $\Hom_{M_I}(1, -)$ are left-exact, we deduce that
\begin{equation*}
\Hom_{M^\alpha}(v_{M^\alpha \cap P_I}^{M^\alpha}, \HOrd[1]_{\Pbar^\alpha}(v_{P_J}^G)) \neq 0
\end{equation*}
only if $s_\alpha w_{I,0} \in \Wh_J$, and the latter condition is equivalent to $\alpha \in J$ and $J \setminus \{\alpha\} = I \cap \{\alpha\}^\perp$ by Lemma \ref{lemm:combinatoire}(2) below.
In this case, there is an injection
\begin{equation} \label{HomH1Ord}
\Hom_{M^\alpha}(v_{M^\alpha \cap P_I}^{M^\alpha}, \HOrd[1]_{\Pbar^\alpha}(v_{P_J}^G)) \hookrightarrow \Hom_\Zc(1, \varepsilon \circ \alphat).
\end{equation}
Finally, we see by devissage that $\Hom_\Zc(1, \varepsilon \circ \alphat) \neq 0$ if and only if $\varepsbar \circ \alphat \neq 1$.
This proves (2).
\end{proof}

\begin{remark} \label{rema:H1Ord}
Assume that $F = \Q_p$, $\dim \Ub_\alpha = 1$, and $\varepsbar \circ \alphat = 1$.
\begin{enumerate}
\item We expect the $R$-module
\begin{equation*}
\Hom_{M^\alpha}(v_{M^\alpha \cap P_I}^{M^\alpha}, \HOrd[1]_{\Pbar^\alpha}(v_{P_J}^G))
\end{equation*}
to be non-zero if and only if $J \setminus I = \{\alpha\}$, $I \setminus J = \Delta \setminus \{\alpha\}^\perp$, and $I \cap J \subset \{\alpha\}^\perp$.
\item In general, this $R$-module is killed by $(\varepsilon \circ \alphat)(\Zc) - 1$ (see \eqref{HomH1Ord}), hence by $(\varepsilon \circ \alpha)(S) - 1$, which is equal to $\Z_p^* - 1$ if $\alpha \notin 2X^*(\Sb)$ and to ${\Z_p^*}^2 - 1$ if $\alpha \in 2X^*(\Sb)$.
We deduce that it is $2$-torsion when $p = 2$ and $\alpha \notin 2X^*(\Sb)$, $8$-torsion when $p = 2$ and $\alpha \in 2X^*(\Sb)$, $3$-torsion when $p = 3$ and $\alpha \in 2X^*(\Sb)$, and $0$ otherwise.
\end{enumerate}
\end{remark}

\begin{lemma} \label{lemm:combinatoire}
\begin{enumerate}
\item $\Wh_J \cap s_\alpha W^\alpha \neq \emptyset$ if and only if $\alpha \in J$ and $J \setminus \{\alpha\} \subset \{\alpha\}^\perp$.
\item $s_\alpha w_{I,0} \in \Wh_J$ if and only if $\alpha \in J$ and $J \setminus \{\alpha\} = I \cap \{\alpha\}^\perp$.
\end{enumerate}
\end{lemma}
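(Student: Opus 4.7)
The plan is to extract from the definition of $\Wh_J$ a clean characterization in terms of descent sets, and then reduce both parts of the lemma to root-system bookkeeping.

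First, writing $D(w) := \{\beta \in \Delta : w^{-1}(\beta) \in \Phi^-\}$ for the (left) descent set, I claim $w \in \Wh_J$ if and only if $D(w) = J$. Indeed, $\wtw \in \Wt_K$ is characterized by $\wtw^{-1}(K) \subset \Phi^+$, and $w_{K,0}$ sends positive roots supported on $K$ to negative ones. Decomposing $w = w_K \wtw$ with $w_K \in W_K$ and $\wtw \in \Wt_K$, one checks as in the standard Bruhat calculus that $D(w) \cap K = D_{W_K}(w_K)$, so $w \in w_{K,0}\Wt_K$ amounts to $K \subset D(w)$. Hence $w \in w_{J,0}\Wt_J \setminus \bigcup_{K \supsetneq J} w_{K,0}\Wt_K$ translates into $J \subset D(w)$ together with $\alpha \notin D(w)$ for every $\alpha \in \Delta \setminus J$, i.e.\ $D(w) = J$.

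For part (1), the key input is that $W^\alpha$ is generated by $\{s_\beta : \beta \in \Delta \setminus \{\alpha\}\}$ and hence preserves the $\alpha$-coefficient of every root. Writing $(s_\alpha w^\alpha)^{-1}(\beta) = (w^\alpha)^{-1}(s_\alpha(\beta))$ for $\beta \in \Delta$, the cases split as follows: (i) for $\beta = \alpha$, $s_\alpha(\alpha) = -\alpha$ and $(w^\alpha)^{-1}(\alpha)$ has strictly positive $\alpha$-coefficient, hence is positive, forcing $\alpha \in D(s_\alpha w^\alpha)$; (ii) for $\beta \in \Delta \setminus (\{\alpha\} \cup \{\alpha\}^\perp)$, $s_\alpha(\beta) = \beta + c\alpha$ with $c \geq 1$, so its image under $(w^\alpha)^{-1}$ still has strictly positive $\alpha$-coefficient and is positive, giving $\beta \notin D(s_\alpha w^\alpha)$; (iii) for $\beta \in \{\alpha\}^\perp \cap (\Delta \setminus \{\alpha\})$, $s_\alpha$ fixes $\beta$ and the sign of $(w^\alpha)^{-1}(\beta)$ is dictated by $w^\alpha$. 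This gives the necessity of $\alpha \in J$ and $J \setminus \{\alpha\} \subset \{\alpha\}^\perp$. For sufficiency, set $K := J \setminus \{\alpha\} \subset \Delta \setminus \{\alpha\}$ and take $w^\alpha := w_{K,0} \in W_K \subset W^\alpha$, whose descent set within $\Delta \setminus \{\alpha\}$ is exactly $K$; then $D(s_\alpha w_{K,0}) = \{\alpha\} \cup K = J$.

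For part (2), I work under the implicit assumption $\alpha \notin I$, which is forced by the application in Lemma~\ref{lemm:H1Ord} (where $w^\alpha = w_{I,0}$ must lie in $W^\alpha$). The same template computes $D(s_\alpha w_{I,0})$ via $w_{I,0}(s_\alpha(\beta))$: using that $w_{I,0}$ is an involution sending positive roots supported on $I$ to negative ones and preserving, on every root, the coefficient of each simple root in $\Delta \setminus I$, one finds $\alpha \in D$ (since $w_{I,0}(\alpha)$ has $\alpha$-coefficient $1$, hence is a positive root); for $\beta \in \{\alpha\}^\perp \cap (\Delta \setminus \{\alpha\})$, $\beta \in D$ iff $w_{I,0}(\beta) < 0$ iff $\beta \in I$; and for $\beta$ outside $\{\alpha\}^\perp \cup \{\alpha\}$, the $\alpha$-coefficient of $w_{I,0}(\beta) + c\,w_{I,0}(\alpha)$ is strictly positive (since $\alpha \notin I$), so $\beta \notin D$. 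Hence $D(s_\alpha w_{I,0}) = \{\alpha\} \cup (I \cap \{\alpha\}^\perp)$, which matches the stated condition. No serious obstacle is expected: the main subtlety is the initial reformulation of $\Wh_J$ via descent sets, after which both parts reduce to uniform bookkeeping on simple roots, exploiting the single principle that both $W^\alpha$ and $w_{I,0}$ (for $\alpha \notin I$) preserve $\alpha$-coefficients.
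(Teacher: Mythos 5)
Your proof is correct and follows essentially the same route as the paper: the same reformulation of $\Wh_J$ via descent sets (which the paper states without proof and you usefully justify), the same three-way case split on $\beta \in \Delta$ using $(s_\alpha w^\alpha)^{-1}(\beta)=(w^\alpha)^{-1}(s_\alpha\beta)$, the same positivity argument for $(w^\alpha)^{-1}(\alpha)$, and the same identification $\Delta\cap w_{I,0}(\Phi^-)=I$ for part (2). The only cosmetic difference is that you package the positivity arguments uniformly via preservation of the $\alpha$-coefficient, whereas the paper states $(w^\alpha)^{-1}(\alpha)\in\Phi^+$ directly and, for the sufficiency in (1), invokes the factorization $w_{J,0}=s_\alpha w_{J\setminus\{\alpha\},0}$.
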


\begin{proof}
We will use the following characterization:
\begin{equation*}
\Wh_J = \{w \in W \mid w^{-1}(J) \subset \Phi^- \ \text{and} \ w^{-1}(\Delta \setminus J) \subset \Phi^+\}.
\end{equation*}
Let $w^\alpha \in W^\alpha$.
For $\beta \in \Delta$, we have
\begin{equation*}
(s_\alpha w^\alpha)^{-1}(\beta) =
\begin{cases}
(w^\alpha)^{-1}(-\alpha) & \text{if $\beta = \alpha$,} \\
(w^\alpha)^{-1}(\beta) & \text{if $\beta \perp \alpha$,} \\
(w^\alpha)^{-1}(\beta - \langle \beta, \alpha^\vee \rangle \alpha) \ \text{with} \ \langle \beta, \alpha^\vee \rangle < 0 & \text{otherwise.}
\end{cases}
\end{equation*}
Since $(w^\alpha)^{-1}(\alpha) \in \Phi^+$, we deduce that
\begin{equation*}
(s_\alpha w^\alpha)^{-1}(\beta) \in \Phi^- \Leftrightarrow \beta = \alpha \text{ or } \beta \in (\Delta \cap w^\alpha(\Phi^-)) \cap \{\alpha\}^\perp.
\end{equation*}
Therefore, $s_\alpha w^\alpha \in \Wh_J$ if and only if $\alpha \in J$ and $J \setminus \{\alpha\} = (\Delta \cap w^\alpha(\Phi^-)) \cap \{\alpha\}^\perp$.
We deduce that $\Wh_J \cap s_\alpha W^\alpha \neq \emptyset$ only if $\alpha \in J$ and $J \setminus \{\alpha\} \subset \{\alpha\}^\perp$.
Conversely, if $\alpha \in J$ and $J \setminus \{\alpha\} \subset \{\alpha\}^\perp$, then $w_{J,0} = s_\alpha w_{J \setminus \{\alpha\}, 0} \in \Wh_J \cap s_\alpha W^\alpha$.
This proves (1).
Now $w_{I, 0} \in W^\alpha$ since $\alpha \notin I$, and $\Delta \cap w_{I, 0}(\Phi^-) = I$, hence (2).
\end{proof}

\subsubsection{Reduction to the case $J = \emptyset$}

Assume that $J \neq \emptyset$ and let $\alpha \in J$.
Taking into account Proposition \ref{prop:HomSt}, the short exact sequence \eqref{dev} with $J \setminus \{\alpha\}$ instead of $I$ induces an exact sequence
\begin{equation*}
0 \to \Ext_G^1(1, v_{P_J}^G) \to \Ext_G^1(1, \Ind_{P^\alpha}^G(v_{M^\alpha \cap P_{J \setminus \{\alpha\}}}^{M^\alpha})) \to \Ext_G^1(1, v_{P_{J \setminus \{\alpha\}}}^G).
\end{equation*}
Using Lemma \ref{lemm:Shapiro} below with $\pi = v_{M^\alpha \cap P_{J \setminus \{\alpha\}}}^{M^\alpha}$, we obtain an exact sequence
\begin{equation} \label{redJ}
0 \to \Ext_G^1(1, v_{P_J}^G) \to \Ext_{M^\alpha}^1(1, v_{M^\alpha \cap P_{J \setminus \{\alpha\}}}^{M^\alpha}) \to \Ext_G^1(1, v_{P_{J \setminus \{\alpha\}}}^G).
\end{equation}

\begin{lemma} \label{lemm:Shapiro}
Let $\pi$ be a smooth $M^\alpha$-representation.
There is an isomorphism
\begin{equation*}
\Ext_{M^\alpha}^1(1, \pi) \simeq \Ext_G^1(1, \Ind_{P^\alpha}^G(\pi)).
\end{equation*}
\end{lemma}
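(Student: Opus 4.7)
The plan is a Shapiro-type argument: we transfer the computation from $G$-cohomology of the induction down to $P^\alpha$-cohomology via Frobenius reciprocity, and then from $P^\alpha$ down to $M^\alpha$ via the Hochschild--Serre spectral sequence attached to $1 \to N^\alpha \to P^\alpha \to M^\alpha \to 1$.

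The first step uses the fact that $\Ind_{P^\alpha}^G$ is the right adjoint of the exact restriction functor $\mathrm{Rep}^{\mathrm{sm}}(G) \to \mathrm{Rep}^{\mathrm{sm}}(P^\alpha)$, and therefore preserves injective objects. Applying this derived Frobenius reciprocity to $V = 1$ (whose restriction to $P^\alpha$ is again $1$), one gets a natural isomorphism
\begin{equation*}
\Ext_G^n(1, \Ind_{P^\alpha}^G(\pi)) \simeq \Ext_{P^\alpha}^n(1, \pi)
\end{equation*}
for every $n \geq 0$, where on the right $\pi$ is viewed as a smooth $P^\alpha$-representation via inflation along $P^\alpha \twoheadrightarrow M^\alpha$.

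The second step invokes the Hochschild--Serre spectral sequence
\begin{equation*}
E_2^{p,q} = \Ext_{M^\alpha}^p(1, H^q(N^\alpha, \pi)) \Rightarrow \Ext_{P^\alpha}^{p+q}(1, \pi).
\end{equation*}
Since $N^\alpha$ acts trivially on $\pi$, the group $H^1(N^\alpha, \pi)$ coincides with $\Hom_{\mathrm{sm}}(N^\alpha, \pi)$, the smooth group homomorphisms from $N^\alpha$ to the additive group of $\pi$. Any such $\phi$ factors through a quotient $N^\alpha/K$ by a compact open subgroup $K$. Since $R$ is artinian with $p$ nilpotent, $\pi$ has bounded $p$-power torsion, say $p^m \pi = 0$; on the other hand $N^\alpha$ consists of $F$-points of a unipotent algebraic group, and in characteristic $0$ the exponential identifies $N^\alpha$ with an $F$-vector space, so $N^\alpha$ (and hence $N^\alpha/K$ up to a finite piece) is $p$-divisible. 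A homomorphism from a $p$-divisible group to a group of bounded $p$-power torsion is zero, so $\phi = 0$ and $H^1(N^\alpha, \pi) = 0$.

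With that vanishing, the five-term exact sequence of the spectral sequence collapses to give the inflation isomorphism
\begin{equation*}
\Ext_{M^\alpha}^1(1, \pi) \simeq \Ext_{P^\alpha}^1(1, \pi),
\end{equation*}
and combining with Step 1 yields the lemma. The principal difficulty is the vanishing of $H^1(N^\alpha, \pi)$ in Step 3; the argument sketched above works cleanly when $\mathrm{char}(F) = 0$, and the equal-characteristic case (if needed) would require exploiting the smooth structure of $N^\alpha$ and the $p$-power torsion of $\pi$ by a more refined filtration argument, rather than through naive $p$-divisibility.
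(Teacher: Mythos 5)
Your Step 1 (derived Frobenius reciprocity, since smooth $\Ind_{P^\alpha}^G$ is right adjoint to the exact restriction functor and hence preserves injectives) matches the paper's use of \cite[Lemma 4.3.3]{Em2}, and your Step 2 (Hochschild--Serre for $1 \to N^\alpha \to P^\alpha \to M^\alpha \to 1$, then the five-term sequence) is essentially the paper's inflation-restriction argument. The difference, and the gap, is in Step 3.

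The five-term sequence only requires you to kill the $M^\alpha$-invariants $E_2^{0,1} = H^0\big(M^\alpha, H^1(N^\alpha,\pi)\big) = \Hom_{\rm cont}(N^\alpha,\pi)^{M^\alpha}$, but you instead try to prove the much stronger statement $H^1(N^\alpha,\pi)=0$. In characteristic $0$ this is true by your $p$-divisibility argument and the proof goes through. However, when $\car(F)=p$ the stronger statement is simply \emph{false}: for instance if $N^\alpha \simeq \mathbb{G}_a(F) = F$ then $F$ is an $\mathbf{F}_p$-vector space, so it admits plenty of nonzero continuous homomorphisms to a nonzero $\mathbf{F}_p$-module $\pi$. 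No filtration refinement will rescue the claim $H^1(N^\alpha,\pi)=0$ because its target has been overshot. What does hold in all characteristics, and what the paper uses, is that the \emph{$M^\alpha$-invariant} part vanishes: if $\phi : N^\alpha \to \pi$ is continuous and $M^\alpha$-invariant, then $\ker(\phi)$ is an open subgroup of $N^\alpha$ stable under $M^\alpha$-conjugation; since the center of $M^\alpha$ contains elements that contract $N^\alpha$, any such conjugation-stable open subgroup must be all of $N^\alpha$, hence $\phi = 0$. This argument is uniform in $\car(F)$ and is the right replacement for your Step 3.
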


\begin{proof}
By a straightforward generalization (to any $\Gb$) of a special case of \cite[Lemma 4.3.3]{Em2}, there is an isomorphism
\begin{equation*}
\Ext_{P^\alpha}^1(1, \pi) \simeq \Ext_G^1(1, \Ind_{P^\alpha}^G(\pi))
\end{equation*}
where $P^\alpha$ acts on $\pi$ by inflation.
Thus it remains to prove that there is an isomorphism
\begin{equation*}
\Ext_{M^\alpha}^1(1, \pi) \simeq \Ext_{P^\alpha}^1(1, \pi).
\end{equation*}
By \cite[Prop.\ 2.2.2]{Em2}, such an isomorphism can be rewritten in terms of group cohomology, computed using continuous cochains, as follows:
\begin{equation*}
H^1(M^\alpha, \pi) \simeq H^1(P^\alpha, \pi).
\end{equation*}
Using the fact that $N^\alpha$ acts trivially on $\pi$, the inflation-restriction exact sequence for continuous group cohomology yields an exact sequence
\begin{equation*}
0 \to H^1(M^\alpha, \pi) \to H^1(P^\alpha, \pi) \to \Hom(N^\alpha, \pi)^{M^\alpha}.
\end{equation*}
We prove that the last term is zero.
Let $\phi : N^\alpha \to \pi$ be a continuous group homomorphism.
The action of $m \in M^\alpha$ on $\phi$ is given by $m \cdot \phi : n \mapsto m \cdot \phi(m^{-1}nm)$.
Since $\phi$ is continuous, $\ker(\phi)$ is open in $N^\alpha$.
If $\phi$ is $M^\alpha$-invariant, then $\ker(\phi) = \ker(m \cdot \phi) = m \ker(\phi) m^{-1}$ for all $m \in M^\alpha$, hence $\ker(\phi) = N^\alpha$, i.e.\ $\phi = 0$.
\end{proof}

\emph{We assume from now on that $I = \Delta$, $J = \emptyset$, and $\Delta \neq \emptyset$.
The goal of the next sections is to compute the $R$-module $\Ext_G^1(1, \St)$.
Using \cite[Prop.\ 2.2.2]{Em2}, the latter can be expressed in terms of group cohomology, computed using continuous cochains, as follows:
\begin{equation*}
\Ext_G^1(1, \St) \simeq H^1(G, \St).
\end{equation*}}

\subsection{Vanishing and isogenies} \label{isogenies}

In this section we give two technical lemmas.

\begin{lemma} \label{lemm:product}
Assume that $\Gb = \Gb_1 \times \Gb_2$ for some connected reductive algebraic $F$-groups $\Gb_1$ and $\Gb_2$ such that $\Gb_1$ has positive relative semisimple rank.
\begin{enumerate}
\item If $\Gb_2$ also has positive relative semisimple rank, then $\Ext_G^1(1, \St) = 0$.
\item If $\Gb_2$ has relative semisimple rank $0$, then $\St$ is isomorphic to the Steinberg representation of $G_1$, with $G_2$ acting trivially on it, and there is an isomorphism
\begin{equation*}
\Ext_G^1(1, \St) \iso \Ext_{G_1}^1(1, \St).
\end{equation*}
\end{enumerate}
\end{lemma}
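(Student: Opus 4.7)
The approach is to decompose $\St$ as an external tensor product of Steinberg representations and then analyze $H^1(G, \St)$ via the inflation-restriction sequence for the product structure $G = G_1 \times G_2$.

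With $\Bb = \Bb_1 \times \Bb_2$ a minimal parabolic of $\Gb$, the simple roots split as $\Delta = \Delta_1 \sqcup \Delta_2$, every standard parabolic takes the form $\Pb_{I_1 \sqcup I_2} = \Pb_{I_1} \times \Pb_{I_2}$, and $i_{P_{I_1 \sqcup I_2}}^G \simeq i_{P_{I_1}}^{G_1} \otimes_R i_{P_{I_2}}^{G_2}$ (using $\LC(X \times Y) \simeq \LC(X) \otimes_R \LC(Y)$ for compact totally disconnected $X, Y$). A short computation using the right exactness of tensor product then yields a $G$-equivariant isomorphism
\begin{equation*}
\St \simeq \St_{G_1} \otimes_R \St_{G_2},
\end{equation*}
with the convention $\St_{G_i} = 1$ if $\Delta_i = \emptyset$. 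This proves the first assertion of part (2) directly.

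For both parts, invoke $\Ext_G^1(1, \St) \simeq H^1(G, \St)$ (continuous cohomology, cf.\ \cite[Prop.\ 2.2.2]{Em2}) and apply the inflation-restriction sequence for the normal subgroup $G_2 \triangleleft G$:
\begin{equation*}
0 \to H^1(G_1, \St^{G_2}) \to H^1(G, \St) \to H^1(G_2, \St)^{G_1} \to H^2(G_1, \St^{G_2}).
\end{equation*}
In case (2), the trivial $G_2$-action gives $\St^{G_2} = \St_{G_1}$ and $H^1(G_2, \St_{G_1}) = \Hom_{\cont}(G_2, \St_{G_1})$ (since all coboundaries vanish); taking $G_1$-invariants yields $\Hom_{\cont}(G_2, \St_{G_1}^{G_1}) = 0$, since $\St_{G_1}^{G_1} = \Hom_{G_1}(1, \St_{G_1})$ vanishes by Proposition \ref{prop:HomSt} (as $\Delta_1 \neq \emptyset$). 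Hence the inflation map induces the desired isomorphism.

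In case (1), Proposition \ref{prop:HomSt} applied to $G_2$ (using $\Delta_2 \neq \emptyset$) gives $\St_{G_2}^{G_2} = 0$, so $\St^{G_2} = \St_{G_1} \otimes_R \St_{G_2}^{G_2} = 0$, and the inflation-restriction sequence collapses to an isomorphism $H^1(G, \St) \simeq H^1(G_2, \St)^{G_1}$. The plan is then to establish a $G_1$-equivariant Künneth-type identification
\begin{equation*}
H^1(G_2, \St_{G_1} \otimes_R \St_{G_2}) \simeq \St_{G_1} \otimes_R H^1(G_2, \St_{G_2}),
\end{equation*}
in which $G_1$ acts only on the $\St_{G_1}$-factor, so that taking $G_1$-invariants gives $\St_{G_1}^{G_1} \otimes_R H^1(G_2, \St_{G_2}) = 0$ by the same vanishing. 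The main obstacle is to justify this Künneth step, since $\St_{G_1}$ is not obviously $R$-flat; the plan is to resolve $\St_{G_1}$ by the $R$-free parabolic inductions $i_{P_J}^{G_1} \simeq \LC(P_J \backslash G_1, R)$ (which are free on clopen partitions of a compact profinite set), apply the Künneth formula on the resolution level (where it holds on the nose), and chase the resulting long exact sequence of $G_2$-cohomology.
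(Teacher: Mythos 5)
Your proposal follows the same strategy as the paper: decompose $\St \simeq \St_1 \otimes_R \St_2$ via $i_B^G \simeq i_{B_1}^{G_1} \otimes_R i_{B_2}^{G_2}$, then run inflation--restriction for the normal subgroup $G_2$ of $G$. Case (2) is correct and somewhat more explicit than the paper's treatment: the trivial $G_2$-action gives $H^1(G_2, \St_1) = \Hom(G_2, \St_1)$ (continuous homomorphisms), whose $G_1$-invariants vanish because $\St_1^{G_1} = 0$ by Proposition~\ref{prop:HomSt}, so the inflation map from $H^1(G_1, \St_1)$ is an isomorphism.

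In case (1) your setup is also right---$\St_2$ is $R$-free by \cite[Cor.\ 5.6]{LySt} and $\St_2^{G_2} = 0$, so $\St^{G_2} = 0$ and the five-term sequence gives $H^1(G, \St) \simeq H^1(G_2, \St)^{G_1}$---but there are two remaining points, only the first of which you flag. (i) The K\"unneth step $H^1(G_2, \St_1 \otimes_R \St_2) \simeq \St_1 \otimes_R H^1(G_2, \St_2)$ can be obtained without your resolution gambit: $\St_1$ is itself $R$-free, so $\St_1 \otimes_R \St_2$ is a direct sum of copies of $\St_2$ as a $G_2$-module, and $H^1(G_2, -)$ commutes with this (possibly infinite) direct sum because $G_2 = \Gb_2(F)$ is compactly generated, so a continuous cocycle has image inside a finite sub-sum; the resulting identification is natural in the first tensor factor, hence $G_1$-equivariant. (ii) You then implicitly use $(\St_1 \otimes_R H^1(G_2, \St_2))^{G_1} = \St_1^{G_1} \otimes_R H^1(G_2, \St_2)$, which is not automatic because $H^1(G_2, \St_2)$ need not be $R$-flat. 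This is repaired by passing to finitely generated $R$-submodules of $H^1(G_2, \St_2)$ and a d\'evissage on length, using $(\St_1 \otimes_R R/\mathfrak{m})^{G_1} = 0$ (the left-hand side is the Steinberg of $G_1$ over $R/\mathfrak{m}$, whose invariants vanish by Proposition~\ref{prop:HomSt} applied over $R/\mathfrak{m}$) as the base case. The paper's own proof is stated very tersely and ultimately rests on the same two facts, so your argument is sound once (i) and (ii) are filled in.
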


\begin{proof}
Let $\St_i$ denote the Steinberg representation of $G_i$ for $i \in \{1, 2\}$.
We write $\Bb = \Bb_1 \times \Bb_2$.
There is a $G$-equivariant isomorphism $i_B^G \simeq i_{B_1}^{G_1} \otimes_R i_{B_2}^{G_2}$ which induces a $G$-equivariant isomorphism $\St \simeq \St_1 \otimes_R \St_2$.
Note that the $R$-module $\St_2$ is free by \cite[Cor.\ 5.6]{LySt}.
Thus $\St^{G_1} \simeq (\St_1)^{G_1} \otimes_R \St_2 = 0$ by Proposition \ref{prop:HomSt}.
Therefore, since the $R$-module $\St_1$ is also free, the inflation-restriction exact sequence for continuous group cohomology yields an isomorphism
\begin{equation*}
H^1(G, \St) \iso H^1(G_1, \St_1 \otimes_R (\St_2)^{G_2}).
\end{equation*}
If $\Gb_2$ has positive semisimple rank, then $(\St_2)^{G_2} = 0$ by Proposition \ref{prop:HomSt}, hence (1).
If $\Gb_2$ has relative semisimple rank $0$, then $\St_2 = 1$ so that $\St_1 \otimes_R (\St_2)^{G_2} \simeq \St_1$, hence (2).
\end{proof}

\begin{lemma} \label{lemm:isogeny}
Let $\varphi : \Gb' \to \Gb$ be a central isogeny of connected reductive algebraic $F$-groups.
Let $\St'$ denote the Steinberg representation of $G'$.
\begin{enumerate}
\item If $\Ext_{G'}^1(1, \St') = 0$, then $\Ext_G^1(1, \St) = 0$.
\item The converse holds true if $|\Delta| \geq 2$.
\end{enumerate}
\end{lemma}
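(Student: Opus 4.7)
The strategy is to compare $H^1(G, \St)$ and $H^1(G', \St')$ through the cohomology of the intermediate group $G'' := \varphi(G')$, applying Hochschild--Serre twice. Set $Z := \ker \varphi$, a finite central subgroup of $\Gb'$; then $G'/Z \iso G''$, and $G''$ is a normal subgroup of $G$ of finite index.

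First I identify $\varphi^* \St$ with $\St'$ as smooth $G'$-representations. Centrality of $\varphi$ gives $\varphi^{-1}(\Pb_I) = \Pb_I'$ and $\varphi(\Pb_I') = \Pb_I \cap G''$ for every $I \subset \Delta$, so that $\varphi$ induces a $G'$-equivariant bijection $P_I' \backslash G' \iso (P_I \cap G'') \backslash G''$. Using that $B \cdot G'' = G$---obtained by comparing the images of $B(F)$ and $G(F)$ in $H^1(F, Z)$ via the Galois cohomology long exact sequences of $1 \to Z \to \Bb' \to \Bb \to 1$ and $1 \to Z \to \Gb' \to \Gb \to 1$---the right action of $G''$ on $P_I \backslash G$ is transitive, giving a $G'$-equivariant isomorphism $\varphi^* i_{P_I}^G \iso i_{P_I'}^{G'}$; passing to quotients yields $\varphi^* \St \iso \St'$.

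Now $Z$ acts trivially on $\St'$ (being central) and $(\St')^{G'} = 0$ by Proposition~\ref{prop:HomSt} (as $\Delta \neq \emptyset$), so the five-term inflation--restriction exact sequence for $1 \to Z \to G' \to G'' \to 1$ reduces to an isomorphism $H^1(G'', \St|_{G''}) \iso H^1(G', \St')$, the obstruction term $\Hom(Z, (\St')^{G'}) = 0$ vanishing. Similarly, $\St^{G''} = (\St')^{G'} = 0$, so the five-term sequence for $1 \to G'' \to G \to G/G'' \to 1$ gives $H^1(G, \St) \iso H^1(G'', \St|_{G''})^{G/G''}$. Combining these,
\begin{equation*}
H^1(G, \St) \iso H^1(G', \St')^{G/G''},
\end{equation*}
from which part~(1) is immediate.

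For part~(2), one has to show that when $|\Delta| \geq 2$ the $G/G''$-action on $H^1(G', \St')$ is trivial, so that the invariants coincide with the whole module. This is the main obstacle. The action arises from lifting the conjugation action of $G$ on its normal subgroup $G''$ to an automorphism of $G'$, well defined modulo inner automorphisms of $G'$ since $Z$ is central. The plan is to realize these lifts concretely using cocharacters of a maximal torus of $\Gb$ (so that they act by ``diagonal'' outer automorphisms of $\Gb'$) and to verify, by a cocycle-level computation, that such automorphisms act trivially on $H^1(G', \St')$ whenever the relative root system has rank at least $2$.
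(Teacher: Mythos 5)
Your treatment of part (1) is correct and follows essentially the same route as the paper: identify $\varphi^*\St$ with $\St'$, observe that the relevant invariants vanish by Proposition~\ref{prop:HomSt}, and run inflation--restriction twice to arrive at $H^1(G,\St) \simeq H^1(G',\St')^{G/\varphi(G')}$. (The paper simply cites the argument of \cite[II.8 Lemma]{AHHV} for $\varphi^*\St \simeq \St'$ rather than redoing it via Galois cohomology; your route is a reasonable substitute, though the equality of the two kernels in $H^1(F,Z)$ coming from $\Bb'$ and from $\Gb'$ is asserted rather than justified.)

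For part (2), however, there is a genuine gap. You correctly identify that everything reduces to showing that $G/\varphi(G')$ acts trivially on $H^1(G',\St')$ when $|\Delta| \geq 2$, and you even flag this as ``the main obstacle,'' but you then offer only a plan (``realize the lifts using cocharacters\dots and verify by a cocycle-level computation'') rather than a proof. That plan does not explain where the hypothesis $|\Delta| \geq 2$ would enter, and it is not clear how a direct cocycle computation could succeed. The paper's argument for this step is substantial and of a different nature: it first establishes (deferring to the end of Subsection~\ref{injectivity}) that the comparison map $H^1(\varphi(G'),\St) \to \Hom(\varphi(\Zc'),R)$ is injective --- this is exactly where $|\Delta| \geq 2$ is used, via Lemma~\ref{lemm:inj} and Proposition~\ref{prop:inj} --- and then shows that any continuous homomorphism $\varphi(\Zc') \to R$ vanishes on $[\Zc, \varphi(\Zc')]$. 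The latter requires the chain $[\Zc,\varphi(\Zc')] \subset [\Zc,\Zc] \subset \iota(\Zc_\ssc) \subset \varphi(\Zc')$ together with Riehm's theorem that the abelianization of $\SL_1(D)$ has prime-to-$p$ order, so that $\Hom(\Zc_\ssc, R) = 0$. None of these ingredients appear in your plan, and without something of comparable strength part (2) remains unproven.
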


\begin{proof}
Proceeding as in the proof of \cite[II.8 Lemma]{AHHV}, we see that the $R$-module $\St$ endowed with the action of $G'$ via $\varphi$ is isomorphic to $\St'$.
In particular, $\St^{\varphi(G')} \simeq (\St')^{G'} = 0$ by Proposition \ref{prop:HomSt} and $\varphi$ induces an isomorphism
\begin{equation} \label{iso-varphi}
H^1(\varphi(G'), \St) \iso H^1(G', \St').
\end{equation}
(Here we also use the fact that $\varphi$ is central and that the center of $G'$ acts trivially on $\St'$.)
Therefore, the inflation-restriction exact sequence for continuous group cohomology yields an isomorphism
\begin{equation*}
H^1(G, \St) \iso H^1(\varphi(G'), \St)^{G / \varphi(G')},
\end{equation*}
hence (1).
For (2), it is enough to show that the action of $G$ on $H^1(\varphi(G'), \St)$ is trivial when $|\Delta| \geq 2$.
We defer the proof of this fact to the end of Subsection \ref{injectivity}.
\end{proof}

\subsection{Comparison of $\Ext^1$ for $G$ and for $\Bbar$} \label{comparison}

In this section we construct a map $\Ext_G^1(1, \St) \to \Ext_\Zc^1(1, 1)$ which factors through $\Ext_{\Bbar}^1(1, \St)$;
under some assumptions, we prove that it is injective and we identify its image.

\subsubsection{Construction of a map}

We construct a map
\begin{equation} \label{comp}
\Ext_G^1(1, \St) \to \Ext_\Zc^1(1, 1).
\end{equation}
Recall that the $R$-module $\LCc(\Ubar)$ is endowed with a smooth action of $\Bbar$ defined by
\begin{equation} \label{LCc}
(z \ubar \cdot f)(\ubar') = f(z^{-1} \ubar' z \ubar)
\end{equation}
for $z \in \Zc$, $\ubar, \ubar' \in \Ubar$, and $f \in \LCc(\Ubar)$.
We deduce from the Bruhat filtration that there is a $\Bbar$-equivariant isomorphism $\jmath : \LCc(\Ubar) \iso \St$ (see the proof of Lemma \ref{lemm:H1Ord} and use the fact that $\Wh_\emptyset = \{1\}$), which is the composite
\begin{equation} \label{jmath}
\LCc(\Ubar) \hookrightarrow i_B^G \twoheadrightarrow \St
\end{equation}
where the first map is induced by the open immersion $\Ubar \hookrightarrow B \backslash G$ and the second map is the $G$-equivariant surjection.
The evaluation at $1 \in \Ubar$ yields a $\Zc$-equivariant surjection $\ev_1 : \LCc(\Ubar) \twoheadrightarrow 1$.
The $\Zc$-equivariant surjection $\ev_1 \circ \jmath^{-1} : \St \twoheadrightarrow 1$ induces a map
\begin{equation} \label{ev1}
\Ext_{\Bbar}^1(1, \St) \to \Ext_\Zc^1(1, 1).
\end{equation}
Finally, we define \eqref{comp} as the map obtained by composing \eqref{ev1} with the restriction map
\begin{equation} \label{res}
\Ext_G^1(1, \St) \to \Ext_{\Bbar}^1(1, \St).
\end{equation}
In terms of group cohomology, \eqref{comp} corresponds to the map
\begin{equation} \label{comp-coh}
H^1(G, \St) \to H^1(\Zc, 1) \simeq \Hom(\Zc, R)
\end{equation}
defined by $\phi \mapsto \ev_1 \circ \jmath^{-1} \circ \phi_{|\Zc}$.

\subsubsection{Injectivity of the map} \label{injectivity}

First, the map \eqref{res} is injective by a straightforward generalization (to any $\Gb$) of a special case of \cite[Lemma 4.3.5]{Em2}.
In order to prove that the map \eqref{ev1} is also injective, we generalize the results of \cite[Sec.\ A.2]{Em2}.

We define an action of $\Bbar$ on $\LC(\Ubar)$ using the formula \eqref{LCc}.
This action is not smooth and we let $\LCu(\Ubar) \subset \LC(\Ubar)$ denote the $R$-submodule consisting of those functions on which the action of $\Bbar$ is smooth.
We keep the notation $\LCu(\Ubar)$ from \cite[Sec.\ A.2]{Em2} although it is \emph{not} the $R$-module of uniformly locally constant functions on $\Ubar$.\footnote{
A function $f \in \LC(\Ubar)$ is uniformly locally constant if and only if the action of $\Ubar$ on $f$ is smooth, but, contrary to what is claimed in \cite[Sec.\ A.2]{Em2}, the action of $\Zc$ on such a function is not necessarily smooth.
}
The homeomorphism $\Ubar \iso \Zc \backslash \Bbar$ induces a $\Bbar$-equivariant isomorphism
\begin{equation} \label{isoInd}
\LCu(\Ubar) \simeq \Ind_\Zc^{\Bbar}(1).
\end{equation}
We consider the long exact sequence
\begin{align*}
0 & \to \Hom_{\Bbar}(1, \LCc(\Ubar)) \to \Hom_{\Bbar}(1, \LCu(\Ubar)) \to \Hom_{\Bbar}(1, \LCu(\Ubar) / \LCc(\Ubar)) \\
& \to \Ext_{\Bbar}^1(1, \LCc(\Ubar)) \to \Ext_{\Bbar}^1(1, \LCu(\Ubar)).
\end{align*}
Clearly $\Hom_{\Bbar}(1, \LCc(\Ubar)) = 0$, while Frobenius reciprocity and \eqref{isoInd} induce isomorphisms
\begin{gather*}
\Hom_{\Bbar}(1, \LCu(\Ubar)) \simeq \Hom_\Zc(1, 1), \\
\Ext_{\Bbar}^1(1, \LCu(\Ubar)) \simeq \Ext_\Zc^1(1, 1).
\end{gather*}
(For the latter see the proof of \cite[Lemma 4.3.3]{Em2}.)
By a straightforward generalization (to any $\Gb$) of a special case of \cite[Lemma 4.3.4]{Em2}, there is an isomorphism
\begin{equation*}
\Hom_{\Bbar}(1, \LCu(\Ubar) / \LCc(\Ubar)) \simeq \Hom_\Zc(1, \LCu(\Ubar) / \LCc(\Ubar)).
\end{equation*}
Finally, the $\Bbar$-equivariant injection $\LCc(\Ubar) \hookrightarrow \Ind_\Zc^{\Bbar}(1)$ induced by the inclusion $\LCc(\Ubar) \subset \LCu(\Ubar)$ and \eqref{isoInd} corresponds via Frobenius reciprocity to the $\Zc$-equivariant surjection $\ev_1 : \LCc(\Ubar) \twoheadrightarrow 1$.
Therefore, using the isomorphism $\jmath$ (which is the composite \eqref{jmath}), the above long exact sequence can be rewritten as
\begin{align} \label{LES}
0 & \to \Hom_\Zc(1, 1) \to \Hom_\Zc(1, \LCu(\Ubar) / \LCc(\Ubar)) \to \Ext_{\Bbar}^1(1, \St) \xrightarrow{\eqref{ev1}} \Ext_\Zc^1(1, 1).
\end{align}

\begin{lemma} \label{lemm:inj}
Assume that the adjoint action of $\Zc$ on $\Ubar \setminus \{1\}$ is transitive or that $|\Delta| \geq 2$.
Then the $R$-module $\Hom_\Zc(1, \LCu(\Ubar) / \LCc(\Ubar))$ is free of rank $1$.
\end{lemma}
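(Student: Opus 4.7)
The constant function $\mathbf 1 \colon \Ubar \to R$ lies in $\LCu(\Ubar)$ and is fixed by the whole $\Bbar$-action; since $\Ubar$ is non-compact, its image $[\mathbf 1]$ in $Q := \LCu(\Ubar)/\LCc(\Ubar)$ is nonzero. Via the identification $\Hom_\Bbar(1, Q) \simeq \Hom_\Zc(1, Q)$ established just above the lemma, this yields a free rank-one $R$-submodule $R \cdot [\mathbf 1] \subset \Hom_\Zc(1, Q)$. What remains is the reverse inclusion: given any $f \in \LCu(\Ubar)$ satisfying $\bar b \cdot f - f \in \LCc(\Ubar)$ for all $\bar b \in \Bbar$, produce $c \in R$ with $f - c\, \mathbf 1 \in \LCc(\Ubar)$.

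The candidate $c$ is the ``value of $f$ at infinity''. To extract it, pick a cocharacter $\lambda \in X_*(\Sb)$ dominant for $\Bb$, let $\varpi \in F$ be a uniformizer, and set $z_n := \lambda(\varpi)^{-n} \in \Zc$ for $n \geq 1$. Then $(z_n \cdot f)(\ubar) = f(\lambda(\varpi)^n \ubar \lambda(\varpi)^{-n})$, and for every $\ubar \in \Ubar \setminus \{1\}$ the argument $\lambda(\varpi)^n \ubar \lambda(\varpi)^{-n}$ escapes every compact subset of $\Ubar$ as $n \to \infty$. Combining the hypotheses $z_n \cdot f - f \in \LCc(\Ubar)$ with local constancy of $f$, one sees that for $\ubar$ outside the compact set $\supp(z_1 \cdot f - f)$, the sequence $f(\lambda(\varpi)^n \ubar \lambda(\varpi)^{-n})$ is eventually constant in $n$, with eventual value independent of $\ubar$; call this common value $c$.

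It then remains to show $f - c\, \mathbf 1 \in \LCc(\Ubar)$, equivalently that $f(\ubar) = c$ outside some compact subset of $\Ubar$. In the transitive case, every point of $\Ubar \setminus \{1\}$ is a $\Zc$-conjugate of any other, and the equality $f = c$ propagates from a single ray-to-infinity to the complement of a compact set built from finitely many $\supp(z \cdot f - f)$. In the case $|\Delta| \geq 2$, transitivity may fail; but each simple root $\alpha \in \Delta$ furnishes an independent dominant cocharacter $\lambda_\alpha \in X_*(\Sb)$, and an induction on the height of roots in $\Phi^-$, using the decomposition of $\Ubar$ as a product of root subgroups indexed by $\Phi^-$ and combining the analyses along the various $\lambda_\alpha$, again produces a single compact set outside of which $f \equiv c$.

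The main obstacle lies in this last step in the case $|\Delta| \geq 2$: the supports $\supp(z \cdot f - f)$ depend on $z$ with no uniform compactness, so producing a single compact set outside of which $f \equiv c$ requires combining several cocharacters (one per simple root) and a careful induction along the root filtration of $\Ubar$. The hypothesis ``transitivity or $|\Delta| \geq 2$'' is precisely what ensures the cocharacters of $\Zc$ are rich enough to control every direction at infinity in $\Ubar$.
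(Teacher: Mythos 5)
Your overall strategy matches the paper's — extract a ``value of $f$ at infinity'' and then propagate — but two essential steps in your sketch are missing or circular. First, once $\ubar$ lies outside a suitable $\lambda(\varpi)$-stable compact open neighborhood of $\supp(z_1\cdot f-f)$, the sequence $f(\lambda(\varpi)^n\ubar\lambda(\varpi)^{-n})$ is in fact \emph{already} constant, equal to $f(\ubar)$; so ``eventual value independent of $\ubar$'' is exactly the assertion that $f$ is constant outside a compact set, i.e.\ the conclusion of the lemma, and nothing in the preceding sentence establishes it. What the paper proves in its place is the key identity $f_\infty(z\ubar z^{-1})=f_\infty(\ubar)$, valid for \emph{arbitrary} $z\in\Zc$ with both $\ubar$ and $z\ubar z^{-1}$ in a fixed annulus $\Ubar_0\setminus z_+\Ubar_0 z_+^{-1}$, obtained by evaluating $z^{-1}\cdot f-f$ far enough out that only a single shell in the decomposition $f=f_0+\sum_{i\ge1}z_+^{-i}\cdot f_\infty$ contributes. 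This identity is where the hypothesis on $z\cdot f-f$ actually enters, and it makes the transitive case immediate.

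Second, in the case $|\Delta|\ge2$ — which you correctly flag as the hard part — ``an induction on the height of roots in $\Phi^-$'' is not an argument, and it does not resemble what the paper does. The paper's mechanism is: for each $\alpha\in\Delta$, decompose $\ubar=\ubar_\alpha\nbar_\alpha$ with $\ubar_\alpha\in\Ubar_\alpha$ and $\nbar_\alpha\in\Nbar_\alpha$; conjugate by a central element $z_{\alpha,+}\in Z_\alpha$ (which contracts $\Nbar_\alpha$ but centralizes $\Ubar_\alpha$) and pass to the limit, using the key identity together with local constancy of $f_\infty$, to see that $f_\infty(\ubar)$ depends only on the $\Ubar_\alpha$-component; then fix $\nbar_\alpha\ne1$ in the annulus of $\Nbar_\alpha$ — possible precisely because $|\Delta|\ge2$ makes $\Nbar_\alpha$ nontrivial — and push $\ubar_\alpha$ to infinity along $z_+^i\ubar_\alpha z_+^{-i}\nbar_\alpha$, so that local constancy forces $f_\infty(\nbar_\alpha)=f_\infty(\ubar_\alpha)$ for \emph{every} $\ubar_\alpha$, hence constancy. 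Your sketch does not exhibit this coupling between a root group $\Ubar_\alpha$ and its complement $\Nbar_\alpha$, which is the precise place where $|\Delta|\ge2$ is used; I do not see how to produce it from a height filtration of $\Phi^-$ and several dominant cocharacters alone.
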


\begin{proof}
Let $f \in \LCu(\Ubar)$ such that $z \cdot f - f \in \LCc(\Ubar)$ for all $z \in \Zc$.
We will prove that $f \in R + \LCc(\Ubar)$, where we identify $R$ with the $R$-submodule of $\LCu(\Ubar)$ consisting of the constant functions.

We fix a compact open subgroup $\Ubar_0 \subset \Ubar$ and an element $z_+ \in Z_\Zc$ strictly contracting $\Ubar_0$, i.e.\ such that $z_+ \Ubar_0 z_+^{-1} \subset \Ubar_0$ and $\bigcap_{i \geq 1} z_+^i \Ubar_0 z_+^{-i} = \{1\}$, or equivalently, $\bigcup_{i \geq 1} z_+^{-i} \Ubar_0 z_+^i = \Ubar$.
We have a decomposition of $\Ubar$ into a disjoint union of open subsets
\begin{equation*}
\Ubar = \Ubar_0 \sqcup \bigsqcup_{i \geq 1} z_+^{-i}(\Ubar_0 \setminus z_+ \Ubar_0 z_+^{-1})z_+^i.
\end{equation*}
Correspondingly, we can write
\begin{equation*}
f = f_0 + \sum_{i \geq 1} z_+^{-i} \cdot f_i
\end{equation*}
with $\supp(f_0) \subset \Ubar_0$ and $\supp(f_i) \subset \Ubar_0 \setminus z_+ \Ubar_0 z_+^{-1}$ for all $i \geq 1$.
We have
\begin{equation*}
z_+ \cdot f - f = z_+ \cdot f_0 - f_0 + f_1 + \sum_{i \geq 1} z_+^{-i} \cdot(f_{i+1} - f_i).
\end{equation*}
Since $z_+ \cdot f - f \in \LCc(\Ubar)$, there exists $j \geq 1$ such that $f_{i+1} = f_i$ for all $i \geq j$.
Thus we can rewrite
\begin{equation*}
f = f_0 + \sum_{i \geq 1} z_+^{-i} \cdot f_\infty
\end{equation*}
for some $f_0, f_\infty \in \LCc(\Ubar)$ with $\supp(f_\infty) \subset \Ubar_0 \setminus z_+ \Ubar_0 z_+^{-1}$.
Note that $f \in R + \LCc(\Ubar)$ if and only if $f_\infty$ is constant on $\Ubar_0 \setminus z_+ \Ubar_0 z_+^{-1}$.

We prove a key identity.
Let $\ubar \in \Ubar_0 \setminus z_+ \Ubar_0 z_+^{-1}$ and $z \in \Zc$ such that $z \ubar z^{-1} \in \Ubar_0 \setminus z_+ \Ubar_0 z_+^{-1}$.
For any $j \geq 1$ large enough so that $z_+^{-j} \ubar z_+^j \notin \supp(z^{-1} \cdot f - f)$ and $z_+^{-j} \ubar z_+^j \notin \supp(z^{-1} \cdot f_0 - f_0)$, we have
\begin{align*}
0 & = (z^{-1} \cdot f - f)(z_+^{-j} \ubar z_+^j) \\
& = (z^{-1} \cdot f_0 - f_0)(z_+^{-j} \ubar z_+^j)+ \sum_{i \geq 1}(z^{-1} \cdot f_\infty - f_\infty)(z_+^{i-j} \ubar z_+^{j-i}) \\
& = \sum_{i \geq 1} f_\infty(z_+^{i-j}(z \ubar z^{-1})z_+^{j-i}) - \sum_{i \geq 1} f_\infty(z_+^{i-j} \ubar z_+^{j-i}) \\
& = f_\infty(z \ubar z^{-1}) - f_\infty(\ubar),
\end{align*}
the last equality resulting from the fact that $\ubar \in \Ubar_0 \setminus z_+ \Ubar_0 z_+^{-1}$ and $z \ubar z^{-1} \in \Ubar_0 \setminus z_+ \Ubar_0 z_+^{-1}$ whereas $\supp(f_\infty) \subset \Ubar_0 \setminus z_+ \Ubar_0 z_+^{-1}$, so that $f_\infty(z_+^{i-j} (z \ubar z^{-1}) z_+^{j-i}) = f_\infty(z_+^{i-j} \ubar z_+^{j-i}) = 0$ if $j \neq i$.
Therefore,
\begin{equation} \label{key}
f_\infty(z \ubar z^{-1}) = f_\infty(\ubar).
\end{equation}
In particular, we deduce that $f_\infty$ is constant on $\Ubar_0 \setminus z_+ \Ubar_0 z_+^{-1}$ if the adjoint action of $\Zc$ on $\Ubar \setminus \{1\}$ is transitive.

Now let $\alpha \in \Delta$.
We let $\Ubar_{\alpha, 0} = \Ubar_\alpha \cap \Ubar_0$ and $\Nbar_{\alpha, 0} = \Nbar_\alpha \cap \Ubar_0$.
Replacing $\Ubar_0$ by $\Ubar_{\alpha, 0} \Nbar_{\alpha, 0}$, we can assume that $\Ubar_0 = \Ubar_{\alpha, 0} \Nbar_{\alpha, 0}$, hence $z_+ \Ubar_0 z_+^{-1} = (z_+ \Ubar_{\alpha, 0} z_+^{-1}) (z_+ \Nbar_{\alpha, 0} z_+^{-1})$, and we fix an element $z_{\alpha, +} \in Z_\alpha$ strictly contracting $\Nbar_{\alpha, 0}$.
Let $\ubar \in \Ubar_0 \setminus z_+ \Ubar_0 z_+^{-1}$.
We write $\ubar = \ubar_\alpha \nbar_\alpha$ with $\ubar_\alpha \in \Ubar_{\alpha, 0}$ and $\nbar_\alpha \in \Nbar_{\alpha, 0}$.
Let $i \geq 0$ such that $z_+^{-i} \ubar_\alpha z_+^i \in \Ubar_{\alpha, 0} \setminus z_+ \Ubar_{\alpha, 0} z_+^{-1}$.
For any $j \geq 0$ large enough so that $z_{\alpha, +}^j z_+^{-i} \nbar_\alpha z_+^i z_{\alpha, +}^{-j} \in \Nbar_{\alpha, 0}$, we have
\begin{equation*}
z_{\alpha, +}^j z_+^{-i} \ubar z_+^i z_{\alpha, +}^{-j} = (z_+^{-i} \ubar_\alpha z_+^i)(z_{\alpha, +}^j z_+^{-i} \nbar_\alpha z_+^i z_{\alpha, +}^{-j}) \in \Ubar_0 \setminus z_+ \Ubar_0 z_+^{-1},
\end{equation*}
hence using \eqref{key} we obtain
\begin{equation*}
f_\infty(\ubar) = f_\infty((z_+^{-i} \ubar_\alpha z_+^i)(z_{\alpha, +}^j z_+^{-i} \nbar_\alpha z_+^i z_{\alpha, +}^{-j})).
\end{equation*}
Since $f_\infty$ is locally constant, we deduce by passing to the limit as $j \to +\infty$ that
\begin{equation} \label{alpha}
f_\infty(\ubar) = f_\infty(z_+^{-i} \ubar_\alpha z_+^i).
\end{equation}
Therefore, $f_\infty$ is constant on $\Ubar_0 \setminus z_+ \Ubar_0 z_+^{-1}$ if and only if it is constant on $\Ubar_{\alpha, 0} \setminus z_+ \Ubar_{\alpha, 0} z_+^{-1}$.

Finally, assume that $|\Delta| \geq 2$.
Let $\nbar_\alpha \in \Nbar_{\alpha, 0} \setminus z_+ \Nbar_{\alpha, 0} z_+^{-1}$.
For any $\ubar_\alpha \in \Ubar_{\alpha, 0} \setminus z_+ \Ubar_{\alpha, 0} z_+^{-1}$ and for any $i \geq 0$, we have $z_+^i \ubar_\alpha z_+^{-i} \nbar_\alpha \in \Ubar_0 \setminus z_+ \Ubar_0 z_+^{-1}$, hence using \eqref{alpha} we obtain
\begin{equation*}
f_\infty(z_+^i \ubar_\alpha z_+^{-i} \nbar_\alpha) = f_\infty(\ubar_\alpha).
\end{equation*}
Since $f_\infty$ is locally constant, we deduce by passing to the limit as $i \to +\infty$ that
\begin{equation*}
f_\infty(\nbar_\alpha) = f_\infty(\ubar_\alpha).
\end{equation*}
Therefore, $f_\infty$ is constant on $\Ubar_{\alpha, 0} \setminus z_+ \Ubar_{\alpha, 0} z_+^{-1}$.
\end{proof}

Using the exact sequence \eqref{LES}, we deduce from Lemma \ref{lemm:inj} the following result.

\begin{proposition} \label{prop:inj}
Assume that the adjoint action of $\Zc$ on $\Ubar \setminus \{1\}$ is transitive or that $|\Delta| \geq 2$.
Then \eqref{comp} is injective.
\end{proposition}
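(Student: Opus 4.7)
The plan is to combine the long exact sequence \eqref{LES} with Lemma \ref{lemm:inj}, so that \eqref{comp} is injective once we know its factor \eqref{ev1} is. Since \eqref{comp} is by construction the composite of \eqref{res} and \eqref{ev1}, and \eqref{res} is already injective by the generalization of \cite[Lemma 4.3.5]{Em2} recalled above, it suffices to prove that \eqref{ev1} is injective. By exactness of \eqref{LES}, the injectivity of \eqref{ev1} is equivalent to the surjectivity of the map
\begin{equation*}
\Hom_\Zc(1, 1) \longrightarrow \Hom_\Zc(1, \LCu(\Ubar)/\LCc(\Ubar)).
\end{equation*}

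The source of this map is canonically $R$. By Lemma \ref{lemm:inj} (which is where the hypotheses of the proposition are used), the target is $R$-free of rank one. To identify a generator in the image, recall that under Frobenius reciprocity applied to the isomorphism $\LCu(\Ubar) \simeq \Ind_\Zc^{\Bbar}(1)$ of \eqref{isoInd}, the element $1 \in \Hom_\Zc(1, 1) = R$ corresponds to the $\Bbar$-equivariant map sending $1$ to the constant function $1 \in \LCu(\Ubar)$. Its image in $\LCu(\Ubar)/\LCc(\Ubar)$ is nonzero: because $\Delta \neq \emptyset$, the group $\Ubar$ is non-compact, so the constant function $1$ does not lie in $\LCc(\Ubar)$.

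Moreover, the proof of Lemma \ref{lemm:inj} established precisely that every $\Zc$-invariant element of $\LCu(\Ubar)/\LCc(\Ubar)$ admits a representative in $R + \LCc(\Ubar) \subset \LCu(\Ubar)$, i.e.\ is an $R$-multiple of the class of the constant function $1$. Hence the displayed map above is not merely injective but surjective, in fact an isomorphism $R \iso \Hom_\Zc(1, \LCu(\Ubar)/\LCc(\Ubar))$. By exactness of \eqref{LES} this forces \eqref{ev1} to be injective, and composing with the injection \eqref{res} yields the asserted injectivity of \eqref{comp}.

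The entire difficulty lies in Lemma \ref{lemm:inj} itself; the present proposition is a purely formal diagram chase once one knows the shape of the $R$-module of $\Zc$-invariants of $\LCu(\Ubar)/\LCc(\Ubar)$ and matches the generator produced there with the image of a generator of $\Hom_\Zc(1, 1)$. There is thus no substantive new obstacle at this step.
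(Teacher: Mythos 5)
Your proof is correct and follows the same route as the paper, whose entire argument is the one-line remark that the proposition ``follows from the exact sequence \eqref{LES} and Lemma \ref{lemm:inj}''. You have unpacked this: injectivity of \eqref{comp} reduces to that of \eqref{ev1} (since \eqref{res} is injective), \eqref{LES} converts this to surjectivity of $\Hom_\Zc(1,1)\to\Hom_\Zc(1,\LCu(\Ubar)/\LCc(\Ubar))$, and the proof of Lemma \ref{lemm:inj} shows every $\Zc$-invariant class is an $R$-multiple of the class of the constant function $1$, which is exactly the image of $1\in\Hom_\Zc(1,1)$. (One could alternatively invoke only the statement of the lemma: an injective $R$-linear map between free rank-one modules over an artinian commutative ring is automatically an isomorphism, since a non-zerodivisor is a unit; your version avoids the artinian hypothesis by using the content of the lemma's proof.)
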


\begin{remark}
If $\Delta = \{\alpha\}$ and the adjoint action of $\Zc$ on $\Ubar \setminus \{1\}$ is not transitive, then \eqref{ev1} need not be injective.
For example, if $G = \SL_2(F)$, then the $R$-module in Lemma \ref{lemm:inj} is free of rank $|F^* / {F^*}^2| \geq 4$ so that \eqref{ev1} is not injective.
However, we do not know whether \eqref{comp} remains injective in general.
\end{remark}

Using Proposition \ref{prop:inj}, we can now complete the proof of Lemma \ref{lemm:isogeny}.

\begin{proof}[End of the proof of Lemma \ref{lemm:isogeny}]
Assume that $|\Delta| \geq 2$.
It remains to show that the action of $G / \varphi(G')$ on $H^1(\varphi(G'), \St)$ is trivial.
Note that $G = \Zc \varphi(G')$ (since $\varphi(G')$ contains $U$ and $\Ubar$) and that $\Zc \cap \varphi(G') = \varphi(\Zc')$, hence an isomorphism $\Zc / \varphi(\Zc') \iso G / \varphi(G')$.
Now \eqref{iso-varphi} composed with \eqref{comp-coh} with $\Gb'$ instead of $\Gb$ induces a map
\begin{equation} \label{comp-coh-varphi}
H^1(\varphi(G'), \St) \to H^1(\varphi(\Zc'), 1) \simeq \Hom(\varphi(\Zc'), R)
\end{equation}
which is injective by Proposition \ref{prop:inj} with $\Gb'$ instead of $\Gb$.
The $R$-module $\Hom(\varphi(\Zc'), R)$ is endowed with the action of $\Zc / \varphi(\Zc')$ given by $z \cdot \phi : z' \mapsto \phi(z^{-1} z' z)$ and \eqref{comp-coh-varphi} commutes with the action of $\Zc / \varphi(\Zc')$ on its source and target.
Therefore, it is enough to prove that the action of $\Zc / \varphi(\Zc')$ on $\Hom(\varphi(\Zc'), R)$ is trivial, i.e.\ that any continuous group homomorphism $\phi : \varphi(\Zc') \to R$ is trivial on the commutator subgroup $[\Zc, \varphi(\Zc')]$.

Let $\Zcb_\ssc$ denote the simply connected covering of the derived subgroup $\Zcb_\der$ of $\Zcb$ and $\iota$ denote the natural homomorphism $\Zcb_\ssc \to \Zcb$.
Then $[\Zc, \Zc] \subset \iota(\Zc_\ssc)$ (this holds true for any connected reductive algebraic $F$-group).
Moreover, $\iota$ factors through $\Zcb'_\der \twoheadrightarrow \Zcb_\der$ by the universal property of the simply connected covering, hence $\iota(\Zc_\ssc) \subset \varphi(\Zc'_\der) \subset \varphi(\Zc')$.
In conclusion, we have inclusions of subgroups $[\Zc, \varphi(\Zc')] \subset [\Zc, \Zc] \subset \iota(\Zc_\ssc) \subset \varphi(\Zc')$.
Finally, the abelianization of $\Zc_\ssc$ is a finite group of prime-to-$p$ order (this will be explained in detail in Subsection \ref{devissage}) so that any group homomorphism $\Zc_\ssc \to R$ is trivial.
In particular, any continuous group homomorphism $\phi : \varphi(\Zc') \to R$ is trivial on $\iota(\Zc_\ssc)$, hence on $[\Zc, \varphi(\Zc')]$.
\end{proof}

\subsubsection{Image of the map}

We begin with a preliminary result on the image of \eqref{ev1}.

\begin{lemma} \label{lemm:imev1}
\begin{enumerate}
\item Any extension in the image of \eqref{ev1} is trivial on $Z_\alpha$ for all $\alpha \in \Delta$.
\item If $|\Delta| \geq 3$ or $\Delta = \{\alpha, \beta\}$ with $\alpha \perp \beta$, then \eqref{ev1} is zero.
\end{enumerate}
\end{lemma}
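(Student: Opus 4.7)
The plan for part (1) is to represent a class in $\Ext_{\Bbar}^1(1, \St)$ by a $1$-cocycle $\phi \colon \Bbar \to \St$ and set $f_b := \jmath^{-1}(\phi(b)) \in \LCc(\Ubar)$, so that the image under \eqref{ev1} becomes the homomorphism $\psi(z) = f_z(1)$ on $\Zc$. For fixed $\alpha \in \Delta$, $z \in Z_\alpha$, and $u \in \Ubar_\alpha$, since $z \in Z(\Mb_\alpha)$ and $u \in \Mb_\alpha$ these elements commute, so the cocycle identity applied to $zu = uz$ produces $(1-u)\phi(z) = (1-z)\phi(u)$. Transferring this to $\LCc(\Ubar)$ via $\jmath$ and evaluating at $1 \in \Ubar$ using the explicit action \eqref{LCc} (which gives $(u \cdot f_z)(1) = f_z(u)$ and $(z \cdot f_u)(1) = f_u(z^{-1} \cdot 1 \cdot z) = f_u(1)$) yields $f_z(u) = f_z(1)$ for every $u \in \Ubar_\alpha$. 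Since $f_z$ has compact support in $\Ubar$ while $\Ubar_\alpha$ is noncompact, this common value must be zero.

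For part (2), the key observation is that the inclusion $\Zcb \subset \Mb_\alpha$ forces $Z_\alpha \subset Z(\Zcb)$ for every $\alpha \in \Delta$, so the subgroups $Z_\alpha$ pairwise commute in $\Zc$ and their product $H := \prod_{\alpha \in \Delta} Z_\alpha$ is a well-defined subgroup. By part (1), the image of \eqref{ev1} is contained in the subgroup $\Hom(\Zc/H, R)$ of $\Hom(\Zc, R) = \Ext_\Zc^1(1,1)$; since $R$ is $p$-power torsion, it suffices to show that under either hypothesis the quotient $\Zc/H$ is torsion of exponent prime to $p$. After absorbing the compact anisotropic part of $\Zcb_{\rm der}$ and using that $Z(\Gb) \subset \bigcap_\alpha Z_\alpha$, the claim reduces to a cocharacter-lattice computation on the maximal split torus $\Sb \subset \Zcb$: one needs $\sum_{\alpha \in \Delta} \alpha^\perp = X_*(\Sb)$, where $\alpha^\perp := \{c \in X_*(\Sb) : \alpha(c) = 0\}$ is the cocharacter lattice of $(Z_\alpha \cap \Sb)^\circ$.

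If $\Delta = \{\alpha, \beta\}$ with $\alpha \perp \beta$, then $\alpha^\vee \in \beta^\perp$ and $\beta^\vee \in \alpha^\perp$, so $\alpha^\perp + \beta^\perp$ exhausts the coroot lattice. If $|\Delta| \geq 3$, reduce to the irreducible case and verify the claim case-by-case on Dynkin diagrams of rank $\geq 3$: for each simple root $\alpha$, either some other simple root $\beta$ is orthogonal to $\alpha$, giving $\alpha^\vee \in \beta^\perp$ directly, or $\alpha$ is a trivalent node (as at the central vertex of $D_4$), in which case $\alpha^\vee$ is recovered by subtracting already-secured coroots from an explicit element of $\alpha^\perp$. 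The main technical difficulty will be this Dynkin-diagram combinatorics, together with carefully controlling the transition from the lattice-level statement to one about actual $F$-points of $\Zc$ in the potentially non-split setting, where one must verify that the finite cohomological obstructions encountered are of order coprime to $p$.
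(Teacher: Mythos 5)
Your argument for part (1) is correct and gives a genuine alternative to the paper's. The paper notes that $Z_I$ is central in $\Bbar_I$ and acts trivially on $\St_{M_I}$, so the restriction to $Z_I$ of an extension in $\Ext_{\Bbar_I}^1(1, \St_{M_I})$ is a $\Bbar_I$-equivariant homomorphism $Z_I \to \St_{M_I}$, hence lands in $\Hom_{\Bbar_I}(1, \St_{M_I}) = 0$; taking $I = \{\alpha\}$ and using that \eqref{ev1} factors through \eqref{ev1I} gives (1). Your cocycle computation instead exploits the commutation of $Z_\alpha$ with $\Ubar_\alpha$ to force $f_z$ to be constant on the noncompact group $\Ubar_\alpha$, hence identically zero there, and reaches the same conclusion directly at the level of $\Bbar$.

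For part (2) there is a genuine gap. You reduce the vanishing of the image of \eqref{ev1} to the vanishing of the whole $R$-module $\Hom(\Zc/H, R)$ with $H = \prod_{\alpha \in \Delta} Z_\alpha$, but this module does not vanish in general. The devissage \eqref{ZcF} exhibits $\Hom(H_\Zc, R)$ as an $R$-submodule of $\Hom(\Zc, R)$, and every homomorphism in it kills the image of $\Zc_\ssc \times Z_\Zc$, in particular $H \subset Z_\Zc$; thus $\Hom(H_\Zc, R) \hookrightarrow \Hom(\Zc/H, R)$. Whenever $p$ divides $|\Zb_{\Zcb_\ssc}|$ this can be nonzero: for $G = \SL_4(D)$ with $D$ a central division algebra over $F$ of reduced degree $p$, one has $|\Delta| = 3$, $\Zb_{\Zcb_\ssc} \simeq \mub_p^4$, and computing as in case~(a) of the proof of Lemma \ref{lemm:HZc} gives $H_\Zc \simeq \{(x_1, \dots, x_4) \in (F^*/(F^*)^p)^4 : x_1 \cdots x_4 = 1\}$, a nontrivial elementary abelian $p$-group, so $\Hom(\Zc/H, R) \neq 0$. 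This obstruction is arithmetic (a Galois $H^1$ coming from the anisotropic kernel of $\Zcb$), and no Dynkin-diagram lattice combinatorics on the $\alpha^\perp$ can repair it; indeed the lattice-level identity itself is not the bottleneck (and your proposed dichotomy already fails for the middle node of $\mathrm{A}_3$, which is neither trivalent nor orthogonal to another simple root). The paper avoids losing this information by never restricting all the way down to $\Zc$: it factors \eqref{ev1} through $\Ext_{\Bbar_I}^1(1, \St_{M_I})$ for $I = \{\alpha, \beta\}$ with $\alpha \perp \beta$ (such a pair exists whenever $|\Delta| \geq 3$, since a Dynkin diagram on at least three vertices is not a complete graph), and shows this $\Ext$ group already vanishes, as in Lemma \ref{lemm:product}(1).
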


\begin{proof}
Let $I \subset \Delta$ be non-empty and $\St_{M_I}$ denote the Steinberg representation of $M_I$.
Since $\ev_1$ factors through the $\Bbar_I$-equivariant surjection $\LCc(\Ubar) \twoheadrightarrow \LCc(\Ubar_I)$ given by the restriction to $\Ubar_I$, \eqref{ev1} factors through the map
\begin{equation} \label{ev1I}
\Ext_{\Bbar_I}^1(1, \St_{M_I}) \to \Ext_\Zc^1(1, 1)
\end{equation}
induced by the $\Bbar_I$-equivariant isomorphism $\St_{M_I} \iso \LCc(\Ubar_I)$ and the $\Zc$-equivariant surjection $\LCc(\Ubar_I) \twoheadrightarrow 1$ given by the evaluation at $1 \in \Ubar_I$.
Since $\Hom_{\Bbar_I}(1, \St_{M_I}) = 0$ (as can be seen from the previous isomorphism), any extension in the source of \eqref{ev1I} is trivial on $Z_I$, hence (1) by taking $I = \{\alpha\}$.
Now assume that there exist $\alpha, \beta \in \Delta$ such that $\alpha \perp \beta$ (e.g.\ $|\Delta| \geq 3$).
Proceeding as in the proofs of Lemmas \ref{lemm:product}(1) and \ref{lemm:isogeny}(1), we see that the source of \eqref{ev1I} with $I = \{\alpha, \beta\}$ is zero, hence (2).
\end{proof}

Now we turn to the image of \eqref{comp}.
The $R$-module $\Ext_\Zc^1(1, 1)$ is endowed with the action of $W$ induced by the action of $\Nc$ on $\Zc$ by conjugation.
The corresponding action of $W$ on the $R$-module $\Hom(\Zc, R)$ is given by $w \cdot \phi : z \mapsto \phi(n^{-1}zn)$, where $n \in \Nc$ is any representative of $w$.
We also write $\phi^w = w^{-1} \cdot \phi$.
When $w = s_\alpha$ for some $\alpha \in \Delta$, we rather write $\phi^\alpha$.
We let $\epsilon : W \to \{\pm1\}$ be the character defined by $\epsilon(w) = (-1)^{l_w}$, where $l_w$ denotes the length of $w$, and we write $\Ext_\Zc^1(1, 1)^\epsilon$ for the $\epsilon$-isotypic component of $\Ext_\Zc^1(1, 1)$, that is the $R$-submodule consisting of those $\phi$ such that $w \cdot \phi = \epsilon(w) \phi$ for all $w \in W$.
When $\Delta = \{\alpha\}$, we rather write $\Ext_\Zc^1(1, 1)^{s_\alpha = -1}$.

\begin{lemma} \label{lemm:imeps}
The image of \eqref{comp} lies in $\Ext_\Zc^1(1, 1)^\epsilon$.
\end{lemma}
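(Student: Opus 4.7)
The plan is to reduce the lemma to the following identity for the $\Zc$-equivariant map $\psi_0 = \ev_1 \circ \jmath^{-1} : \St \to 1$: for every representative $n \in \Nc$ of a simple reflection $s_\alpha \in W$ and every $f \in \St$, one has $\psi_0(n \cdot f) = -\psi_0(f)$. Granting this, I would represent a class in $\Ext_G^1(1, \St) \simeq H^1(G, \St)$ by a continuous cocycle $\phi : G \to \St$, and, for $z \in \Zc$ and $n$ a representative of $s_\alpha$, expand $\phi(n^{-1}zn)$ via the cocycle identity. Applying $\psi_0$, its $\Zc$-equivariance (with trivial action on $1$) combined with $\phi(n^{-1}) = -n^{-1} \cdot \phi(n)$ makes the terms involving $\phi(n)$ cancel, yielding $(s_\alpha \cdot \psi)(z) = \psi_0(n^{-1} \cdot \phi(z))$; the identity then gives $(s_\alpha \cdot \psi)(z) = -\psi(z)$. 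Since $W$ is generated by simple reflections, this shows that the image of \eqref{comp} lies in $\Ext_\Zc^1(1, 1)^\epsilon$.

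To establish the identity, my strategy is to describe $\psi_0$ as an alternating sum over the Weyl group. For any lift $F \in i_B^G = \LC(B \backslash G)$ of $f \in \St$, set
\begin{equation*}
\Lambda(F) := \sum_{w \in W} (-1)^{\ell(w)} F(B\widetilde{w}),
\end{equation*}
where $\widetilde{w} \in \Nc$ is any representative of $w$ (well defined because $\Zc \subset B$). The key point is that $\Lambda$ descends to a map $\St \to R$: if $F \in i_{P_\beta}^G$ for some $\beta \in \Delta$, then since a representative of $s_\beta$ can be chosen inside $P_\beta$, left-$P_\beta$-invariance of $F$ gives $F(B\widetilde{s_\beta w}) = F(B\widetilde{w})$ for all $w \in W$; pairing $w$ with $s_\beta w$ in the sum then produces cancellation, thanks to $\ell(s_\beta w) = \ell(w) \pm 1$. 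On the other hand, taking $F = \jmath(g)$ to be the extension by zero of $g \in \LCc(\Ubar)$ to $B \backslash G$ (valid because the big cell $B \backslash B\Bbar$ is open), all terms of $\Lambda(F)$ vanish except the $w = 1$ term, which gives $F(B) = g(1) = \psi_0(f)$. Thus $\Lambda = \psi_0$ as maps $\St \to R$.

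The identity then follows by a change of variables. Using the right action $(n \cdot F)(Bx) = F(Bxn)$ and the fact that $\widetilde{w}n$ represents $ws_\alpha$, we compute
\begin{equation*}
\psi_0(n \cdot f) = \sum_{w \in W} (-1)^{\ell(w)} F(B\widetilde{ws_\alpha}) = -\sum_{w' \in W} (-1)^{\ell(w')} F(B\widetilde{w'}) = -\psi_0(f),
\end{equation*}
where the substitution $w' = ws_\alpha$ together with $\ell(w's_\alpha) = \ell(w') \pm 1$ produces the overall sign. The main obstacle is verifying the well-definedness of $\Lambda$ on $\St$, via the pairing argument sketched above; the rest is formal manipulation with cocycles and a routine reindexing.
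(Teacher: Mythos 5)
Your proof is correct. The reduction to the key identity $\psi_0(n\cdot f)=-\psi_0(f)$ for $n\in\Nc$ representing $s_\alpha$ (via the cocycle computation, or equivalently via the paper's remark that $\phi$ and $n_\alpha\cdot\phi$ are cohomologous because conjugation acts trivially on group cohomology) is the same as in the paper. Where you differ is in the proof of that identity. The paper compares the canonical lift $f_\alpha=\jmath^{-1}(n_\alpha\cdot v)\in\LCc(\Ubar)\subset i_B^G$ with $n_\alpha\cdot f$ where $f=\jmath^{-1}(v)$, and uses the intersection formulas of \cite[Lemma 7.8, Prop.\ 7.11]{AHV} to conclude that $f_\alpha-n_\alpha\cdot f$ lies in the left-$P_\alpha$-invariant functions supported on $P_\alpha\Bbar$; evaluating that difference at $1$ and at $n_\alpha^{-1}$ and using the support condition on $f,f_\alpha$ then yields the sign. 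You instead realize $\psi_0$ globally as the alternating Weyl sum $\Lambda(F)=\sum_{w\in W}(-1)^{\ell(w)}F(B\widetilde w)$ on $i_B^G$: you check $\Lambda$ annihilates each $i_{P_\beta}^G$ by choosing the representative of $s_\beta$ inside $P_\beta$ and pairing $w\leftrightarrow s_\beta w$, then compute $\Lambda$ on the extension-by-zero lift (only $w=1$ survives by the Bruhat decomposition $G=\bigsqcup_w Bw\Bbar$) to identify $\Lambda=\psi_0$, and finally obtain the sign by the reindexing $w'=ws_\alpha$. Your formula is precisely the one the paper states without proof in the Remark following Lemma~\ref{lemm:projalpha}, so your argument simultaneously proves the lemma and that remark. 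The trade-off: your route is more elementary and self-contained (no appeal to the \cite{AHV} intersection machinery) at the cost of summing over all of $W$, while the paper's argument is more local and reuses tools it needs elsewhere. Both are valid.
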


\begin{proof}
Recall the explicit description \eqref{comp-coh} of the map \eqref{comp} in terms of continuous cochains.
It is enough to prove that $s_\alpha$ acts by multiplication by $-1$ on the image of \eqref{comp-coh} for all $\alpha \in \Delta$.

Let $\alpha \in \Delta$ and $n_\alpha \in \Nc$ be a representative of $s_\alpha$.
If $\phi : G \to \St$ is a continuous cochain, then $\phi$ and $n_\alpha \cdot \phi : g \mapsto n_\alpha \cdot \phi(n_\alpha^{-1}gn_\alpha)$ are cohomologous since $n_\alpha \in G$, thus their images in $\Hom(\Zc, R)$ are equal, i.e.\ $\jmath^{-1}(\phi(z))(1) = \jmath^{-1}(n_\alpha \cdot \phi(n_\alpha^{-1} z n_\alpha))(1)$ for all $z \in \Zc$, where $\jmath$ is the composite \eqref{jmath}.
Therefore, it is enough to prove that $\jmath^{-1}(n_\alpha \cdot v)(1) = -\jmath^{-1}(v)(1)$ for all $v \in \St$.

Let $v \in \St$.
We let $f = \jmath^{-1}(v)$ and $f_\alpha = \jmath^{-1}(n_\alpha \cdot v)$; that is, $f$ (resp. $f_\alpha$) is the unique lift of $v$ (resp.\ $n_\alpha \cdot v$) in $i_B^G$ with support in $B\Ubar$.
Since $f_\alpha$ and $n_\alpha \cdot f$ have the same image in $\St$ (which is $n_\alpha \cdot v$), we have $f_\alpha - n_\alpha \cdot f \in \sum_{I \neq \emptyset} i_{P_I}^G$.
Moreover, since $n_\alpha \in \Pbar_\alpha$, we have $\supp(f_\alpha - n_\alpha \cdot f) \subset B \Pbar_\alpha = P_\alpha \Bbar$.
We let $\ci_B^{B \Pbar_\alpha}$ (resp.\ $\ci_{P_\alpha}^{P_\alpha \Bbar}$) denote the $\Pbar_\alpha$-subrepresentation of $i_B^G$ (resp.\ $i_{P_\alpha}^G$) consisting of those functions with support in $B \Pbar_\alpha = P_\alpha \Bbar$, so that $f_\alpha - n_\alpha \cdot f \in \ci_B^{B \Pbar_\alpha}$.
By \cite[Lemma 7.8]{AHV}, we have
\begin{equation*}
(\sum_{I \neq \emptyset} i_{P_I}^G) \cap \ci_B^{B \Pbar_\alpha} = \sum_{I \neq \emptyset}(i_{P_I}^G \cap \ci_B^{B \Pbar_\alpha})
\end{equation*}
and we deduce from \cite[Prop.\ 7.11]{AHV} that for $I \neq \emptyset$, we have
\begin{equation*}
i_{P_I}^G \cap \ci_B^{B \Pbar_\alpha} =
\begin{cases}
\ci_{P_\alpha}^{P_\alpha \Bbar} & \text{ if $I = \{\alpha\}$,} \\
0 & \text{otherwise.}
\end{cases}
\end{equation*}
Therefore, $f_\alpha - n_\alpha \cdot f \in \ci_{P_\alpha}^{P_\alpha \Bbar}$.
Now we compute:
\begin{equation*}
\jmath^{-1}(n_\alpha \cdot v)(1) = f_\alpha(1) = (f_\alpha - n_\alpha \cdot f)(1) = (f_\alpha - n_\alpha \cdot f)(n_\alpha^{-1}) = - f(1) = -\jmath^{-1}(v)(1).
\end{equation*}
(The first and last equalities follow from the definition of $f$ and $f_\alpha$, the second and next to last ones result from the fact that $n_\alpha \notin B\Ubar$ and $n_\alpha^{-1} \notin B\Ubar$ whereas $\supp(f) \subset B\Ubar$ and $\supp(f_\alpha) \subset B\Ubar$, and the middle equality is a consequence of the fact that $n_\alpha^{-1} \in P_\alpha$ and $f_\alpha - n_\alpha \cdot f \in i_{P_\alpha}^G$.)
\end{proof}

In order to determine the image of \eqref{comp} in the case $\Delta = \{\alpha\}$, we generalize the proof of \cite[Prop.\ 4.3.22(2)]{Em2}.
We consider the composite
\begin{equation} \label{projeps}
\Ext_\Zc^1(1, 1) \hookrightarrow \Ext_G^1(i_B^G, i_B^G) \to \Ext_G^1(1, \St) \xrightarrow{\eqref{comp}} \Ext_\Zc^1(1, 1)^\epsilon
\end{equation}
where the first map is induced by the exact functor $\Ind_B^G$, the second map is induced by the $G$-equivariant injection $1 \hookrightarrow i_B^G$ and the $G$-equivariant surjection $i_B^G \twoheadrightarrow \St$, and the third map is \eqref{comp} (which takes values in $\Ext_\Zc^1(1, 1)^\epsilon$ by Lemma \ref{lemm:imeps}).

\begin{lemma} \label{lemm:projalpha}
Assume that $\Delta = \{\alpha\}$.
Then \eqref{projeps} is given by $\phi \mapsto \phi - \phi^\alpha$.
\end{lemma}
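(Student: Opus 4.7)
The plan is to trace through \eqref{projeps} an arbitrary class $\phi \in \Ext_\Zc^1(1,1) \simeq \Hom(\Zc, R)$ using an explicit $1$-cocycle representing its image in $H^1(G, \St) \simeq \Ext_G^1(1, \St)$. Represent $\phi$ by the $\Zc$-extension $E_\phi$ with underlying $R$-module $R e_0 \oplus R e_1$ and action $z \cdot e_1 = e_1 + \phi(z) e_0$, inflated to $B$ via the trivial $U$-action. One can identify $\Ind_B^G(E_\phi)$ with the set of pairs $(f_0, f_1)$ of locally constant functions $G \to R$ satisfying $f_1 \in i_B^G$ and $f_0(bg) = f_0(g) + \phi(z(b)) f_1(g)$ for all $b \in B$, $g \in G$ (where $z : B \twoheadrightarrow \Zc$ is the projection modulo $U$). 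Pulling back along $1 \hookrightarrow i_B^G$ restricts to pairs with $f_1$ constant, and pushing forward along $i_B^G \twoheadrightarrow \St$ then amounts to taking $f_0$ modulo the constant functions.

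To exhibit an explicit cocycle, I would fix a locally constant section $\sigma : B \backslash G \to G$ of the projection with $\sigma(B) = 1$ (such a section exists since the compact $p$-adic manifold $B \backslash G$ admits a finite cover by compact opens over which the $B$-torsor $G \to B \backslash G$ smoothly trivializes, and local constancy can be arranged thanks to $\phi$ taking values in the discrete ring $R$). Set $\xi(g) := \phi(z(g \sigma(Bg)^{-1}))$ for $g \in G$, so that $\xi(bg) = \xi(g) + \phi(z(b))$ and $\xi(b) = \phi(z(b))$ for $b \in B$. The pair $(\xi, 1)$ is then a set-theoretic lift of $1 \in 1$ in the $G$-extension obtained above, and the corresponding $1$-cocycle $G \to \St$ is $g \mapsto \bar c(g)$, where $\bar c(g)$ is the class modulo constants of the function $c(g) : h \mapsto \xi(hg) - \xi(h)$; one checks that $c(g)$ is $B$-left-invariant, hence lies in $i_B^G$.

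Applying \eqref{comp-coh} amounts to computing $\jmath^{-1}(\bar c(z))(1)$ for $z \in \Zc$. For $\Delta = \{\alpha\}$, the Bruhat decomposition yields $B \backslash G = \Ubar \sqcup \{Bs_\alpha\}$, and $\LCc(\Ubar)$ identifies with the subspace of $i_B^G$ of functions vanishing at the point $Bs_\alpha$; hence $\jmath^{-1}(\bar c(z))(1) = c(z)(1) - c(z)(s_\alpha)$, with $c(z)$ viewed as a function on $B \backslash G$. Since $z \in B$ one gets immediately $c(z)(1) = \xi(z) - \xi(1) = \phi(z)$. For the other term, writing $s_\alpha z = (s_\alpha z s_\alpha^{-1}) s_\alpha$ and using that $s_\alpha z s_\alpha^{-1} \in \Zc \subset B$ (as $\Nc$ normalizes $\Zc$), one obtains $\xi(s_\alpha z) = \phi(s_\alpha z s_\alpha^{-1}) + \xi(s_\alpha) = \phi^\alpha(z) + \xi(s_\alpha)$, whence $c(z)(s_\alpha) = \phi^\alpha(z)$. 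Combining yields $\eqref{projeps}(\phi)(z) = \phi(z) - \phi^\alpha(z)$, as required. The only technical subtlety is the construction of the locally constant section $\sigma$ and the ensuing local constancy of $\xi$; this is standard for $p$-adic groups and mirrors the corresponding step in the proof of \cite[Prop.\ 4.3.22(2)]{Em2}, which the present lemma generalizes.
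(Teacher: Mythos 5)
Your proof is correct and follows essentially the same route as the paper's: both trace $\phi$ through an explicit Bruhat-cell computation, and the key identity $\jmath^{-1}(\bar{c}(z))(1) = c(z)(1) - c(z)(n_\alpha)$ is exactly the paper's observation that $\pi(f) = f(1) - f(n_\alpha)$ for $f \in i_B^G$ (exploiting that in rank one the $\Bbar$-Bruhat decomposition of $B\backslash G$ is $\Ubar \sqcup \{Bn_\alpha\}$). The only real difference is presentational: where the paper simply fixes some $f = f_1 e_1 + e_2$ in the pullback $I$ and notes the transformation law $f_1(zug) = f_1(g) + \phi(z)$, you build $f_1$ explicitly as $\xi(g) = \phi(z(g\sigma(Bg)^{-1}))$ from a locally constant section $\sigma$ of $G \to B\backslash G$; this costs a small existence lemma for $\sigma$ but buys nothing extra, since existence of such an $f_1$ is immediate from the fact that $\Ind_B^G(E_\phi) \to i_B^G$ is surjective.
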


\begin{proof}
First, we consider the $\Zc$-equivariant surjection
\begin{equation*}
\pi : i_B^G \twoheadrightarrow \St \xrightarrow{\jmath^{-1}} \LCc(\Ubar) \xtwoheadrightarrow{\ev_1} 1
\end{equation*}
where the first map is the $G$-equivariant surjection, the second map is the $G$-equivariant isomorphism $\jmath^{-1}$ (which is the inverse of the composite \eqref{jmath}), and the third map is the evaluation at $1 \in \Ubar$ (which is $\Zc$-equivariant).
Let $n_\alpha \in \Nc$ be a representative of $s_\alpha$.
For any $f \in i_B^G$, the function $f - f(n_\alpha) \in i_B^G$ has the same image as $f$ in $\St$ and $\supp(f - f(n_\alpha)) \subset B\Ubar$, hence $\pi(f) = f(1) - f(n_\alpha)$.

Now, using $\pi$, we give an explicit description of the map \eqref{projeps} in terms of continuous cochains.
Let $\phi : \Zc \to R$ be a continuous group homomorphism.
We let $E$ be the corresponding extension of $1$ by $1$, i.e.\ $E = R e_1 \oplus R e_2$ as an $R$-module and the smooth action of $\Zc$ is given by $z \cdot e_1 = e_1$ and $z \cdot e_2 = e_2 + \phi(z) e_1$.
We consider the $G$-representation $\Ind_B^G(E)$, which is an extension of $i_B^G$ by $i_B^G$.
Any $f \in \Ind_B^G(E)$ can be uniquely written $f = f_1 e_1 + f_2 e_2$ with $f_1, f_2 \in \LC(G)$ satisfying $f_1(zug) = f_1(g) + \phi(z) f_2(g)$ and $f_2(zug) = f_2(g)$ for all $z \in \Zc$, $u \in U$, and $g \in G$.
We let $I$ denote the extension of $1$ by $i_B^G$ obtained from $\Ind_B^G(E)$ by pullback along the $G$-equivariant injection $1 \hookrightarrow i_B^G$, i.e.\ $I$ is the $G$-subrepresentation of $\Ind_B^G(E)$ consisting of those functions $f = f_1 e_1 + f_2 e_2$ with $f_2$ constant.
The image of $E$ under \eqref{projeps} is the extension $E'$ of $1$ by $1$ obtained from $I$ by pushforward along the $\Zc$-equivariant surjection $\pi$.
We fix $f \in I$ such that $f = f_1 e_1 + e_2$, so that $f_1(zug) = f_1(g) + \phi(z)$ for all $z \in \Zc$, $u \in U$, and $g \in G$.
Therefore, the continuous group homomorphism $\phi' : \Zc \to R$ corresponding to $E'$ is given by
\begin{align*}
\phi'(z) & = \pi(z \cdot f_1 - f_1) \\
& = (z \cdot f_1 - f_1)(1) - (z \cdot f_1 - f_1)(n_\alpha) \\
& = (f_1(z) - f_1(1)) - (f_1((n_\alpha z n_\alpha^{-1})n_\alpha) - f_1(n_\alpha)) \\
& = \phi(z) - \phi(n_\alpha z n_\alpha^{-1}),
\end{align*}
hence $\phi' = \phi - \phi^\alpha$.
\end{proof}

\begin{remark}
In general, one can prove that $\pi(f) = \sum_{w \in W} \epsilon(w) f(n_w)$ for all $f \in i_B^G$, where $n_w \in \Nc$ is any representative of $w$.
Therefore, \eqref{projeps} is given by $\phi \mapsto \sum_{w \in W} \epsilon(w) \phi^w$.
\end{remark}

We deduce from Lemma \ref{lemm:projalpha} that \eqref{projeps} is surjective if $\Delta = \{\alpha\}$ and $p \neq 2$.
Since \eqref{projeps} factors through \eqref{comp}, we obtain the following result.

\begin{proposition} \label{prop:imalpha}
Assume that $\Delta = \{\alpha\}$ and $p \neq 2$.
Then the image of \eqref{comp} is $\Ext_\Zc^1(1, 1)^{s_\alpha = -1}$.
\end{proposition}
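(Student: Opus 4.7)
The plan is to combine Lemma \ref{lemm:imeps} (which controls the image from above) with Lemma \ref{lemm:projalpha} (which produces enough elements in the image from below) and exploit invertibility of $2$ in $R$.

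First, I would recall that Lemma \ref{lemm:imeps} already gives the inclusion $\mathrm{Im}\,\eqref{comp} \subset \Ext_\Zc^1(1,1)^{\epsilon}$, which in the present case $\Delta = \{\alpha\}$ is exactly $\Ext_\Zc^1(1,1)^{s_\alpha = -1}$ (since $W = \{1, s_\alpha\}$ and $\epsilon(s_\alpha) = -1$). So only the reverse inclusion requires argument.

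For the reverse inclusion, I would use the factorization of \eqref{projeps} through \eqref{comp}. By Lemma \ref{lemm:projalpha}, the composite \eqref{projeps} is given by $\phi \mapsto \phi - \phi^\alpha$. Now the key observation is that for any $\psi \in \Ext_\Zc^1(1,1)^{s_\alpha = -1}$ one has $\psi^\alpha = -\psi$, so setting $\phi = \tfrac{1}{2}\psi$ (which makes sense since $p$ is nilpotent in $R$ and $p \neq 2$, hence $2 \in R^\times$) gives $\phi - \phi^\alpha = \tfrac{1}{2}\psi - \tfrac{1}{2}(-\psi) = \psi$. Thus \eqref{projeps} surjects onto $\Ext_\Zc^1(1,1)^{s_\alpha = -1}$.

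Since \eqref{projeps} factors through \eqref{comp}, the image of \eqref{comp} contains $\Ext_\Zc^1(1,1)^{s_\alpha = -1}$, and combined with Lemma \ref{lemm:imeps} this yields equality. There is no real obstacle here: the entire content sits in Lemmas \ref{lemm:imeps} and \ref{lemm:projalpha}, and the only new ingredient is the invertibility of $2$, which is precisely the role played by the hypothesis $p \neq 2$.
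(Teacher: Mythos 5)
Your proposal is correct and follows essentially the same route as the paper: Lemma \ref{lemm:imeps} gives the inclusion of the image in $\Ext_\Zc^1(1,1)^{s_\alpha=-1}$, while Lemma \ref{lemm:projalpha} together with invertibility of $2$ in $R$ (which indeed follows from $p$ being nilpotent in $R$ and $p\neq 2$) shows that \eqref{projeps}, and hence \eqref{comp}, surjects onto that submodule.
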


\begin{remark} \label{rema:imalpha}
Assume that $\Delta = \{\alpha\}$ and $p = 2$.
Using Lemma \ref{lemm:projalpha}, we see that \eqref{projeps} is surjective if $G = \GL_2(D)$, so that the result holds true in this case too.
On the other hand, we see that \eqref{projeps} is zero if $G = \SL_2(F)$ and $\car(R) = 2$.
\end{remark}

\subsection{A vanishing result} \label{vanishing}

This section is devoted to the fairly technical proof of the following vanishing result.

\begin{proposition} \label{prop:im}
Assume that $\Gb$ is absolutely almost-simple, $|\Delta| = 2$, and $p \neq 2$.
If $p \nmid |\Zb_{\Gb}|$, then \eqref{comp} is zero.
\end{proposition}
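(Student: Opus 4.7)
Since \eqref{comp} is by construction the composite $\eqref{ev1} \circ \eqref{res}$, it suffices to show that any $\phi$ in the image of \eqref{ev1} vanishes. Via \eqref{comp-coh}, view such a $\phi$ as a continuous homomorphism $\Zc \to R$. Writing $\Delta = \{\alpha, \beta\}$, Lemma \ref{lemm:imev1}(1) applied with $I = \{\alpha\}$ and then with $I = \{\beta\}$ shows $\phi|_{Z_\alpha} = \phi|_{Z_\beta} = 0$, hence $\phi$ factors through $\Zc/\Lambda$, where $\Lambda \subset \Zc$ is the closed subgroup topologically generated by $Z_\alpha$ and $Z_\beta$. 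The plan is to exhibit $\Zc/\Lambda$ as a profinite group all of whose finite continuous quotients have order prime to $p$, which forces $\phi = 0$ since $p$ is nilpotent in $R$.

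The first step is an abelianization reduction: by exactly the argument used at the end of the proof of Lemma \ref{lemm:isogeny}, $\phi$ vanishes on $\iota(\Zc_\ssc) \supset [\Zc, \Zc]$, because $\Zc_\ssc$ has finite abelianization of prime-to-$p$ order. Hence $\phi$ factors through the abelian quotient $\Zc/\iota(\Zc_\ssc)$, which is a finite (prime-to-$p$) extension of the $F$-points of the quotient torus $\overline{\Zcb} := \Zcb/\Zcb_\der$. It therefore suffices to bound the cokernel of $Z_\alpha \cdot Z_\beta$ inside $\overline{\Zcb}(F)$ by a prime-to-$p$ finite group.

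The decisive algebraic ingredient, specific to $|\Delta| = 2$ and $\Gb$ absolutely almost-simple, is that $\Zb_\alpha \cap \Zb_\beta = \Zb_\Gb$ as group schemes: an element of $\Zcb$ annihilated by $\alpha$ and $\beta$ is annihilated by every root, since the Weyl orbits of $\{\alpha, \beta\}$ cover all roots of $\Gb$. A dimension count shows moreover that $\Zb_\alpha \cdot \Zb_\beta$ surjects onto $\overline{\Zcb}$ as algebraic groups. Taking Galois cohomology of
\begin{equation*}
1 \to \Zb_\Gb \to \Zb_\alpha \times \Zb_\beta \to \overline{\Zcb} \to 1
\end{equation*}
then produces an injection
\begin{equation*}
\overline{\Zcb}(F)/(Z_\alpha \cdot Z_\beta) \hookrightarrow H^1(F, \Zb_\Gb),
\end{equation*}
whose target is $|\Zb_\Gb|$-torsion and hence of order prime to $p$ by hypothesis. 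Combined with step 1, $\Zc/\Lambda$ is profinite with only prime-to-$p$ finite continuous quotients, so $\phi = 0$. The main obstacle is justifying the algebraic surjection $\Zb_\alpha \cdot \Zb_\beta \twoheadrightarrow \overline{\Zcb}$ uniformly over all (possibly non-quasi-split) forms of $\Gb$, where the anisotropic derived part $\Zcb_\der$ must be handled carefully before the reduction of step 1 takes effect; once this equality is in hand, the Galois-cohomology bound is routine.
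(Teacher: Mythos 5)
Your proposal correctly identifies two of the paper's ingredients (the abelianization reduction via Riehm's theorem on $\SL_1(D)$, and the algebraic short exact sequence with $\Zb_\Gb$ in the kernel controlling the cokernel of $Z_\alpha \cdot Z_\beta$), but it omits a third ingredient that is essential, and as a result the argument has a genuine gap.

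The problem is the assertion that $\Zc/\iota(\Zc_\ssc)$ is ``a finite (prime-to-$p$) extension'' of $\overline{\Zcb}(F)$. This is not true in general, and in fact it fails precisely in the cases the proposition must cover. Your Step 1 shows $\phi$ vanishes on $\iota(\Zc_\ssc)$, $Z_\alpha$, and $Z_\beta$, so it factors through $\Zc/\langle\iota(\Zc_\ssc), Z_\alpha, Z_\beta\rangle$. But this quotient is an extension involving two pieces: the cokernel of $Z_\alpha Z_\beta$ in (roughly) the torus part, which your Step 2 correctly bounds by $H^1(F,\Zb_\Gb)$; and a second piece $H_\Zc = \ker\big(H^1(F,\Zb_{\Zcb_\ssc}) \to H^1(F,\Zb_\Zcb)\big)$ coming from the failure of $\iota(\Zc_\ssc) \cdot Z_\Zc$ to exhaust $\Zc$. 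The group $H_\Zc$ is $|\Zb_{\Zcb_\ssc}|$-torsion, and $p$ can divide $|\Zb_{\Zcb_\ssc}|$ even when $p\nmid |\Zb_\Gb|$. Concretely, for $\Gb$ adjoint of type $\prescript{1}{}{\Arm}_{3d-1,2}^{(d)}$ with $p=3\mid d$ one has $|\Zb_\Gb|=1$ yet $H_\Zc$ carries nontrivial $3$-torsion (it is essentially $\{(x,y,z)\in(F^*/{F^*}^d)^3 : xyz=1\}$); similarly for type $\prescript{1}{}{\Erm}_{6,2}^{16}$ with $p=3$. In these cases $\Hom(H_\Zc,R)\neq 0$ and your argument cannot conclude $\phi=0$.

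What you are missing is Lemma \ref{lemm:imeps}: the image of \eqref{comp} lies in the $\epsilon$-isotypic component $\Hom(\Zc,R)^\epsilon$, i.e.\ $s_\alpha$ and $s_\beta$ both act by $-1$ on $\phi$. This Weyl constraint is strictly stronger than the vanishing on $Z_\alpha$ and $Z_\beta$ that you extract from Lemma \ref{lemm:imev1}(1) (on which $s_\alpha$, resp.\ $s_\beta$, acts trivially, so vanishing there when $p\neq 2$ is a consequence, not an equivalent). The paper's Lemma \ref{lemm:HZc} then shows $\Hom(H_\Zc,R)^\epsilon=0$ by a case-by-case analysis of the Tits indices, and it is precisely the $\epsilon$-condition that kills the $3$-torsion in the problematic cases above. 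Without that constraint, the quotient you are left with is not a prime-to-$p$ group, and the proof does not close.
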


Thanks to Lemma \ref{lemm:imeps}, this result is a consequence of the short exact sequence \eqref{HomZceps} and of Lemmas \ref{lemm:ZZc} and \ref{lemm:HZc} below.

\subsubsection{Devissage} \label{devissage}

Let $\Zcb_\ssc$ be the simply connected covering of the derived subgroup $\Zcb_\der$ of $\Zcb$.
The action of $\Ncb$ on $\Zcb$ by conjugation stabilizes $\Zcb_\der$ and $\Zb_{\Zcb}$.
The induced action of $\Ncb$ on $\Zcb_\der$ lifts uniquely to $\Zcb_\ssc$ by the universal property of the simply connected covering, and stabilizes $\Zb_{\Zcb_\ssc}$.
Moreover, the morphisms of the canonical short exact sequence of algebraic $F$-groups
\begin{equation} \label{Zc}
1 \to \Zb_{\Zcb_\ssc} \to \Zcb_\ssc \times \Zb_{\Zcb} \to \Zcb \to 1
\end{equation}
are $\Ncb$-equivariant.
Note that the action of $\Ncb$ on $\Zb_{\Zcb}$ and $\Zb_{\Zcb_\ssc}$ factors through $W$.

Recall that $\Zcb_\ssc$ is the direct product of its almost-simple factors, each of which is an anisotropic simply connected almost-simple algebraic $F$-groups, hence
\begin{equation*}
\Zc_\ssc \simeq \SL_1(D_1) \times \cdots \times \SL_1(D_r)
\end{equation*}
for some finite-dimensional division algebras $D_1, \ldots, D_r$ over $F$ and $H^1(F, \Zcb_\ssc) = 1$.
These results are due to Kneser \cite{KneI, KneII} when $\car(F) = 0$ and Bruhat-Tits \cite{BTIII} in general.
Thus passing to $F$-points in \eqref{Zc} yields an exact sequence of topological $\Nc$-modules
\begin{equation} \label{ZcF}
1 \to Z_{\Zc_\ssc} \to \Zc_\ssc \times Z_\Zc \to \Zc \to H^1(F, \Zb_{\Zcb_\ssc}) \to H^1(F, \Zb_{\Zcb}).
\end{equation}
By a result of Riehm \cite{Rie}, the abelianization of $\SL_1(D_i)$ is a finite group of prime-to-$p$ order (see the corollary to Theorem 21 and Theorem 7(iii) in loc.\ cit.), hence $\Hom(\Zc_\ssc, R) = 0$.
Therefore, \eqref{ZcF} induces an exact sequence
\begin{equation} \label{HomZc}
0 \to \Hom(H_\Zc, R) \to \Hom(\Zc, R) \to \Hom(Z_\Zc, R)
\end{equation}
where $H_\Zc$ is the topological abelian group defined by
\begin{equation*}
H_\Zc = \ker(H^1(F, \Zb_{\Zcb_\ssc}) \to H^1(F, \Zb_{\Zcb})).
\end{equation*}
Note that $Z_\Zc$ and $H_\Zc$ are both topological $W$-modules.
Taking the $\epsilon$-isotypic components in \eqref{HomZc} yields an exact sequence
\begin{equation} \label{HomZceps}
0 \to \Hom(H_\Zc, R)^\epsilon \to \Hom(\Zc, R)^\epsilon \to \Hom(Z_\Zc, R)^\epsilon.
\end{equation}

\subsubsection{Vanishing}

From now on we assume that $\Gb$ is absolutely almost-simple, $|\Delta| = 2$, and $p \neq 2$.
Moreover, we will write $\Delta = \{\alpha, \beta\}$.
In Table \ref{tab:rank2} below, we list the possible Tits indices \cite{Tits66} for $\Gb$ (extracted from Table 2 in loc.\ cit.) with the corresponding groups $(\Zb_{\Gb_\ssc})_{\Fbar}$ and $(\Zb_{\Zcb_\ssc})_{\Fbar}$.

\begin{table}[ht]
\centering
\caption{Tits indices of relative rank $2$ over a non-archimedean local field}
\label{tab:rank2}
\includegraphics{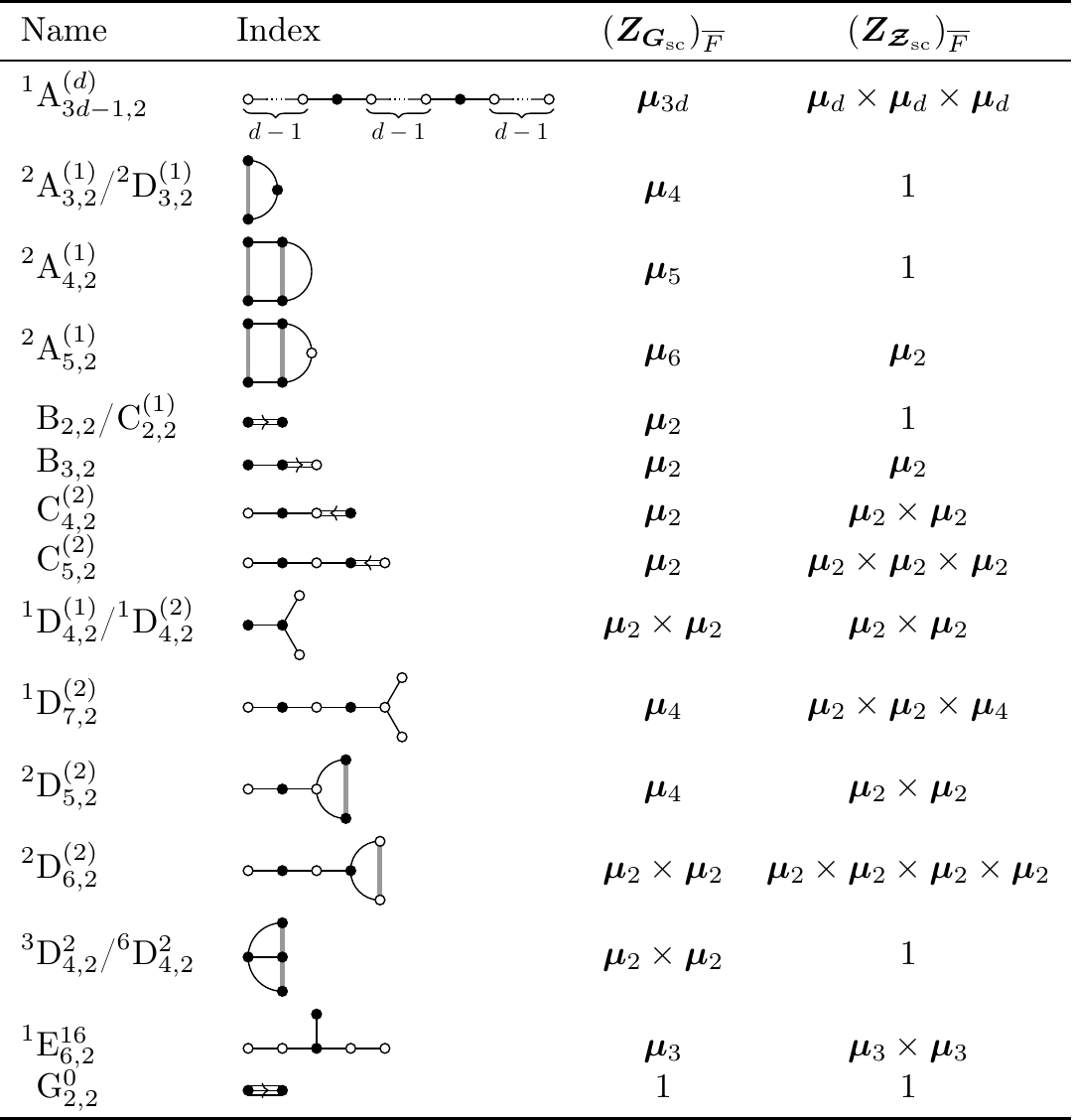}
\end{table} 

\begin{lemma} \label{lemm:ZZc}
Assume that $p \nmid |\Zb_{\Gb}|$.
Then $\Hom(Z_\Zc, R)^\epsilon = 0$.
\end{lemma}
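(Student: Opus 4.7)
The plan is to reduce this to a vanishing statement in the rank $2$ reflection representation. Because $|\Delta| = 2$ and $\Gb$ is absolutely almost-simple, the relative root system $\Phi$ is irreducible of rank $2$, hence of type $A_2$, $B_2 = C_2$, $BC_2$, or $G_2$; in each case $X^*(\Sb) \otimes_\Z \mathbf{Q}$ is the unique $2$-dimensional irreducible representation of a dihedral group, which does not contain the sign character $\epsilon$. Thus $(X^*(\Sb) \otimes \mathbf{Q})^\epsilon = 0$.

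First I would show that the restriction map $\Hom(Z_\Zc, R)^\epsilon \to \Hom(S, R)^\epsilon$ induced by $\Sb \hookrightarrow \Zb_{\Zcb}$ is injective. Its kernel consists of $\epsilon$-isotypic homomorphisms factoring through $Z_\Zc / S \simeq (\Zb_{\Zcb}/\Sb)(F)$, which is compact since $\Zb_{\Zcb}/\Sb$ is anisotropic. A case-by-case inspection of Table \ref{tab:rank2} shows that in every line, at least one simple reflection $s_\alpha$ acts trivially on $\Zb_{\Zcb}/\Sb$: a representative $n_\alpha$ can be chosen in the Levi $\Mb_\alpha \supset \Zcb$ of the standard rank-$1$ parabolic, and therefore centralizes $\Zb_{\Mb_\alpha} \subset \Zb_{\Zcb}$; the explicit form of each Tits index then shows that $\Zb_{\Mb_\alpha}$ together with $\Sb$ covers $\Zb_{\Zcb}$ up to a piece on which $s_\alpha$ also acts trivially. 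On such a simple reflection, the $\epsilon$-isotypic condition forces $2 \phi = 0$ on $(\Zb_{\Zcb}/\Sb)(F)$, hence $\phi = 0$ there because $p \neq 2$.

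Next, for $\Hom(S, R)^\epsilon$: identifying $S \simeq X_*(\Sb) \otimes_{\Z} F^*$ and noting that $W$ acts trivially on $F^*$, we have
\begin{equation*}
\Hom(S, R)^\epsilon \simeq \bigl(X^*(\Sb) \otimes_{\Z} \Hom(F^*, R)\bigr)^\epsilon.
\end{equation*}
Since $X^*(\Sb)$ is torsion-free with $(X^*(\Sb) \otimes \mathbf{Q})^\epsilon = 0$, the endomorphism $N_\epsilon = \sum_{w \in W} \epsilon(w) w$ of $X^*(\Sb)$ vanishes, and the standard sign-projector argument gives $|W| \cdot \Hom(S, R)^\epsilon = 0$. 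Since $|W| \in \{6, 8, 12\}$ and $p \neq 2$, the only obstructive prime is $p = 3$, occurring only in types $A_2$ and $G_2$. In simply connected type $A_2$ one has $|\Zb_\Gb| = 3$, so the hypothesis $p \nmid |\Zb_\Gb|$ excludes $p = 3$ there; in the remaining cases (adjoint $A_2$ and $G_2$, in which $|\Zb_\Gb| = 1$), one verifies directly by an explicit $2 \times 2$ matrix calculation with the Cartan matrix that $(X^*(\Sb) \otimes \mathbf{F}_3)^\epsilon = 0$, and the same sign-projector argument then gives the vanishing.

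The main obstacle is the first step: carrying out, line by line in Table \ref{tab:rank2}, the verification that at least one simple reflection acts trivially on the anisotropic quotient $\Zb_{\Zcb}/\Sb$. Once this is granted, the remainder of the argument is a uniform sign-projector computation and a finite mod-$3$ check.
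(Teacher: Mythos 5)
Your approach genuinely differs from the paper's, but it has a gap that you yourself flag at the end: you never carry out the case-by-case verification that at least one simple reflection acts trivially on $\Zb_{\Zcb}/\Sb$, and the heuristic you offer for it (that $\Sb$ and $\Zb_{\Mb_\alpha}$ cover $\Zb_{\Zcb}$ ``up to a piece on which $s_\alpha$ also acts trivially'') is not a proof. A dimension count already shows this is delicate: $\Sb \cdot \Zb_\alpha$ has rank $\dim\Zb_\alpha + 1$, which equals $\dim\Zb_{\Zcb}$ only when $|r^{-1}(\alpha)| = 1$, so in e.g.\ quasi-split $\mathrm{SU}_5$ (type $\prescript{2}{}{\Arm}_{4,2}^{(1)}$, where $|r^{-1}(\alpha)| = |r^{-1}(\beta)| = 2$) neither $\Sb\cdot\Zb_\alpha$ nor $\Sb\cdot\Zb_\beta$ covers $\Tb$, and whether a reflection nonetheless acts trivially on $\Tb/\Sb$ is a genuine computation in the Tits index. (It happens to hold there --- $s_\beta$ acts trivially --- but you haven't established it anywhere.) Since the table of rank-2 Tits indices is exactly what this reduction is supposed to dispense with, leaving that step as ``the main obstacle'' means the argument is incomplete.

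The paper avoids this entirely by a more uniform device: it constructs a short exact sequence of $F$-groups of multiplicative type
\begin{equation*}
1 \to \Zb_{\Gb} \to \Zb_\alpha \times \Zb_\beta \to \Zb_{\Zcb} \to 1
\end{equation*}
(via a clean computation with the sublattices $\Z[\Delta^\abs_\alpha], \Z[\Delta^\abs_\beta]$ of the absolute character lattice), passes to $F$-points, and observes that $s_\alpha$ acts trivially on $Z_\alpha$ and $s_\beta$ trivially on $Z_\beta$, so any $\epsilon$-isotypic $\phi$ dies on $Z_\alpha Z_\beta$ once $p\neq 2$; the remaining quotient sits inside $H^1(F,\Zb_{\Gb})$, which is $|\Zb_{\Gb}|$-torsion, and the hypothesis $p\nmid|\Zb_{\Gb}|$ finishes. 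Note that this uses $p\nmid|\Zb_{\Gb}|$ for \emph{all} $p$, not just $p=3$ as in your sketch --- that discrepancy alone should be a red flag. Your analysis of $\Hom(S,R)^\epsilon$ via the sign projector and the absence of $\epsilon$ in the rank-2 reflection representation is correct and is a perfectly good observation, but it only treats the split part, and the paper's sequence handles split and anisotropic parts simultaneously without any appeal to the classification.

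A secondary point: your $p=3$ analysis for $\Hom(S,R)^\epsilon$ is also not airtight as stated. You reduce to ``adjoint $A_2$ and $G_2$'' on the grounds that $|\Zb_{\Gb}|=1$ there, but $X^*(\Sb)$ depends on the isogeny type and on the inner form (for $\SL_3(D)$ it is not the root lattice of $A_2$), so the explicit mod-$3$ check would have to be done for the actual lattice $X^*(\Sb)$ arising in each entry of the table, not just for the two split adjoint cases. This is again a case-by-case burden that the paper's argument sidesteps.
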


\begin{proof}
We claim that there is a short exact sequence of algebraic $F$-groups of multiplicative type
\begin{equation} \label{ZZc}
1 \to \Zb_{\Gb} \to \Zb_\alpha \times \Zb_\beta \to \Zb_{\Zcb} \to 1.
\end{equation}
Let $X$ be the group of characters of a maximal torus of $\Gb_{\Fbar}$ containing $\Sb_{\Fbar}$ and $\Delta^\abs$ be the set of absolute simple roots in $X$ relative to a Borel subgroup of $\Gb_{\Fbar}$ contained in $\Bb_{\Fbar}$.
The restriction map
\begin{equation*}
r : X \to X^*(\Sb_{\Fbar}) \simeq X^*(\Sb)
\end{equation*}
induces a map
\begin{equation*}
r : \Delta^\abs \to \{\alpha, \beta, 0\}.
\end{equation*}
We let $\Delta_\alpha^\abs = r^{-1}(\{\alpha, 0\})$, $\Delta_\beta^\abs = r^{-1}(\{\beta, 0\})$, and $\Delta_0^\abs = r^{-1}(\{0\})$.
These are the subsets of absolute simple roots corresponding to the Levi subgroups $(\Mb_{\alpha})_{\Fbar}$, $(\Mb_{\beta})_{\Fbar}$, and $\Zcb_{\Fbar}$ respectively.
We let $X_\alpha = \Z[\Delta_\alpha^\abs]$ and $X_\beta = \Z[\Delta_\beta^\abs]$.
Note that $X_\alpha \cap X_\beta = \Z[\Delta_0^\abs]$ and $X_\alpha + X_\beta = \Z[\Delta^\abs]$.
Therefore, \eqref{ZZc} corresponds to the short exact sequence of $\sg_F$-modules
\begin{equation*}
0 \to X / (X_\alpha \cap X_\beta) \to (X / X_\alpha) \oplus (X / X_\beta) \to X / (X_\alpha + X_\beta) \to 0
\end{equation*}
via the duality between algebraic $F$-groups of multiplicative type and finitely generated abelian groups endowed with a continuous action of $\sg_F$ (see \cite[§12g]{AG}).

Now let $\phi : Z_\Zc \to R$ be a continuous group homomorphism such that $s_\alpha \cdot \phi = s_\beta \cdot \phi = -\phi$.
Passing to $F$-points in \eqref{ZZc} yields an exact sequence of topological abelian groups
\begin{equation} \label{ZZcF}
1 \to Z_G \to Z_\alpha \times Z_\beta \to Z_\Zc \to H^1(F, \Zb_{\Gb}) \to H^1(F, \Zb_\alpha) \times H^1(F, \Zb_\beta).
\end{equation}
The actions of $s_\alpha$ and $s_\beta$ on $Z_\alpha$ and $Z_\beta$ respectively are trivial.
Since $p \neq 2$, we deduce that $\phi$ is trivial on $Z_\alpha$ and $Z_\beta$.
Therefore, $\phi$ factors through $\ker(H^1(F, \Zb_{\Gb}) \to H^1(F, \Zb_\alpha) \times H^1(F, \Zb_\beta))$, which is $|\Zb_{\Gb}|$-torsion.
We conclude that $\phi = 0$ since $p \nmid |\Zb_{\Gb}|$.
\end{proof}

\begin{remark} \label{rema:SL3-ZZc}
Assume that $G = \SL_3(D)$ and let $d$ denote the reduced degree of $D$ over $F$.
Then $Z_\Zc \simeq \{(x, y, z) \in (F^*)^3 \mid  (xyz)^d = 1\}$.
A simple computation shows that this isomorphism composed with the map $(F^*)^3 \to F^*$ given by  $(x, y, z) \mapsto xz^{-1}$ induces an isomorphism $Z_\Zc / Z_\alpha Z_\beta \simeq F^* / \mu_d {F^*}^3$.
\end{remark}

\begin{lemma} \label{lemm:HZc}
If $p = 3$ and $\Gb$ is of type $\prescript{1}{}{\Arm}_{3d-1,2}^{(d)}$ assume that $3 \nmid |\Zb_{\Gb}|$.
Then $\Hom(H_\Zc, R)^\epsilon = 0$.
\end{lemma}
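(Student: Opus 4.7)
The plan is to carry out a case analysis over the Tits indices listed in Table \ref{tab:rank2}. Since $\Zcb_\ssc$ is a product of anisotropic simply connected almost-simple factors $\SL_1(D_i)$ for finite-dimensional division algebras $D_i$ of reduced degrees $d_i$, we have $\Zb_{\Zcb_\ssc} \simeq \prod_i \mub_{d_i}$, so $H_\Zc$ is killed by $\mathrm{lcm}_i(d_i)$. Because $R$ is $p$-nilpotent, $\Hom(H_\Zc, R)$ already vanishes whenever none of the $d_i$ is divisible by $p$. Combined with the hypothesis $p \neq 2$, inspection of Table \ref{tab:rank2} shows that this disposes of all Tits indices apart from those of type $\Arm$ (both inner and outer forms) and certain forms of $\Erm_6$.

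For the remaining types I would compute $H_\Zc$ explicitly together with its $W$-equivariant structure. The critical case is $\Gb = \SL_3(D)$. Here $\Zcb_\ssc = \SL_1(D)^3$ and $\Zb_{\Zcb_\ssc} = \mub_d^3$, while $W \simeq S_3$ acts by permutation of the three blocks. Combining the exact sequence \eqref{ZcF} with the reduced norm computation behind Remark \ref{rema:SL3-ZZc}, one identifies $H_\Zc$ with the quotient $\{(x,y,z) \in (F^*)^3 : xyz = 1\}/\{(u,v,w) \in ({F^*}^d)^3 : uvw = 1\} \simeq (F^*/{F^*}^d)^2$ carrying an explicit $S_3$-action (the first generator swaps the two coordinates, while the second sends $(x,y)$ to $(x,(xy)^{-1})$). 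A direct computation shows that the two sign conditions $\phi^{s_\alpha} = -\phi$ and $\phi^{s_\beta} = -\phi$ imposed on $\phi \in \Hom(H_\Zc, R)$ force $3\phi = 0$, and thus $\phi = 0$ as soon as $p \neq 3$.

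When $p = 3$, it remains to rule out nonzero $\phi$ taking values in $R[3]$. Under the hypothesis $3 \nmid |\Zb_\Gb|$, the defining injection $H_\Zc \hookrightarrow \ker(H^1(F, \Zb_{\Zcb_\ssc}) \to H^1(F, \Zb_{\Zcb}))$, combined with the explicit description of $\Zb_\Gb$ inside $\Zb_{\Zcb}$ furnished by \eqref{ZZc}, traps the $3$-primary sign component of $H_\Zc$ inside a group annihilated by $|\Zb_\Gb|$, forcing the vanishing. The same mechanism, applied to the analogue of \eqref{ZZc} for the relevant Levi, handles the residual exceptional cases (forms of $\Erm_6$ and the outer forms of type $\prescript{2}{}{\Arm}$). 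The main obstacle is the combinatorial bookkeeping of the $\sg_F$- and $W$-equivariant structures on $\Zb_{\Zcb_\ssc}$ for each individual Tits index from Table \ref{tab:rank2}, and in particular the careful matching, in the critical $\SL_3(D)$ case, between the abstract hypothesis on $|\Zb_\Gb|$ and the concrete vanishing of the $3$-primary sign isotypic part of $H_\Zc$.
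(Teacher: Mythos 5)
Your initial filtering step and the $\SL_3(D)$ computation are both correct and essentially reproduce the paper's approach: the identification $H_\Zc \simeq \{(x,y,z)\in(F^*/{F^*}^d)^3 : xyz=1\}\simeq(F^*/{F^*}^d)^2$ with its $S_3$-action, and the linear-algebra computation showing that the two sign conditions force $3\phi=0$, match the paper's computation of $H_\Zc/H_\Zc^\epsilon\simeq(F^*/{F^*}^d)/({F^*}^3/{F^*}^{3d})$. Two side issues in the filtering: the paper reads off from Table~\ref{tab:rank2} that for $p\neq 2$ the only indices with $p\mid|\Zb_{\Zcb_\ssc}|$ are $\prescript{1}{}{\Arm}_{3d-1,r}^{(d)}$ with $p\mid d$ and $\prescript{1}{}{\Erm}_{6,2}^{16}$ with $p=3$ -- the outer forms $\prescript{2}{}{\Arm}$ you list do not survive -- and the surviving $\Erm_6$ case is disposed of by the one-line observation that $s_\beta$ acts trivially on $\Zb_{\Zcb_\ssc}\simeq\mub_3\times\mub_3$ (so any $\epsilon$-isotypic $\phi$ satisfies $2\phi=0$), not by the mechanism you invoke.

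The genuine gap is the $p=3$ step with $3\nmid|\Zb_\Gb|$. The sentence ``traps the $3$-primary sign component of $H_\Zc$ inside a group annihilated by $|\Zb_\Gb|$'' is not an argument, and I do not see how it could be made one: the hypothesis $3\nmid|\Zb_\Gb|$ does not say $|\Zb_\Gb|$ is small, only that it is prime to $3$, so $|\Zb_\Gb|$-annihilation by itself tells you nothing about the $3$-primary part. What the hypothesis actually buys, and what the paper uses, is that $\Gb_\ssc\to\Gb$ factors through $\Gb_\ssc/\mub_{3^{r+1}}$ (where $d=3^rd'$, $3\nmid d'$). Writing $\Zcb''$ for the preimage of $\Zcb$ in that intermediate quotient, one gets a chain $H_{\Zc'}\subset H_{\Zc''}\subset H_\Zc$; the crucial point is that $H^1(F,\Zb_{\Zcb''})\simeq F^*/{F^*}^{d'}$ has no $3$-torsion, so the kernel condition cutting out $H_{\Zc''}$ inside $H^1(F,\Zb_{\Zcb_\ssc})\simeq(F^*/{F^*}^d)^3$ is vacuous on the $3$-primary part, giving $H_{\Zc''}[3^\infty]\simeq(F^*/{F^*}^{3^r})^3$ -- the \emph{full} triple product, not the sum-zero subgroup. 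For the full triple product, the $\epsilon$-isotypic computation forces $2\phi_1=0$ rather than $3\phi_1=0$, hence $\phi=0$ for $p\neq 2$. This distinction between the constrained and unconstrained $S_3$-modules is exactly what your sketch misses, and it is where the hypothesis $3\nmid|\Zb_\Gb|$ does its work.
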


\begin{proof}
Since $H_\Zc$ is $|\Zb_{\Zcb_\ssc}|$-torsion (as $H_\Zc \subset H^1(F, \Zb_{\Zcb_\ssc})$), we have $\Hom(H_\Zc, R) = 0$ if $p \nmid |\Zb_{\Zcb_\ssc}|$.
Looking at Table \ref{tab:rank2}, we see that $p \mid |\Zb_{\Zcb_\ssc}|$ if and only if either $\Gb$ is of type $\prescript{1}{}{\Arm}_{3d-1,r}^{(d)}$ with $p \mid d$, or $\Gb$ is of type $\prescript{1}{}{\Erm}_{6,2}^{16}$ and $p = 3$ (recall that $p \neq 2$).

\begin{enumerate}[wide, label=\emph{(\alph*}), labelindent=0pt]

\item \label{case:SL3-HZc}
Assume that $G = \SL_3(D)$ and let $d$ denote the reduced degree of $D$ over $F$.
Then $\Zb_{\Zcb_\ssc} \simeq (\mub_d)^3$ and $\Zb_{\Zcb} \simeq \ker((\Gm)^3 \xrightarrow{\scriptscriptstyle\prod} \Gm \xrightarrow{\scriptscriptstyle(-)^d} \Gm)$, with $W$ acting via the isomorphism $W \simeq S_3$.
We let $\Tb = \ker((\Gm)^3 \xrightarrow{\scriptscriptstyle\prod} \Gm)$ be the maximal subtorus of $\Zb_{\Zcb}$.
There is a short exact sequence of algebraic $F$-groups of multiplicative type
\begin{equation*}
1 \to \Tb \to \Zb_{\Zcb} \xrightarrow{\scriptscriptstyle\prod} \mub_d \to 1.
\end{equation*}
Since $\Tb$ is a split torus, the product map induces a $W$-equivariant isomorphism
\begin{equation*}
H^1(F, \Zb_{\Zcb}) \iso H^1(F, \mub_d).
\end{equation*}
We deduce that the action of $W$ is trivial on $H^1(F, \Zb_{\Zcb})$ and we obtain a $W$-equivariant isomorphism
\begin{equation} \label{HZc}
H_\Zc \simeq \{(x, y, z) \in (F^* / {F^*}^d)^3 \mid xyz = 1\}
\end{equation}
with $W$ acting via the isomorphism $W \simeq S_3$.
Now there is an isomorphism
\begin{equation*}
\Hom(H_\Zc, R)^\epsilon \simeq \Hom(H_\Zc / H_\Zc^\epsilon, R)
\end{equation*}
where $H_\Zc^\epsilon$ is the subgroup of $H_\Zc$ generated by the elements of the form $h(s_\alpha \cdot h)$ and $h(s_\beta \cdot h)$ with $h \in H_\Zc$.
The subgroup of $(F^* / {F^*}^d)^3$ corresponding to $H_\Zc^\epsilon$ via \eqref{HZc} is generated by the elements of the form $(x, x, x^{-2})$ and $(x^{-2}, x, x)$ with $x \in F^* / {F^*}^d$.
A simple computation shows that \eqref{HZc} composed with the map $(F^* / {F^*}^d)^3 \to F^* / {F^*}^d$ given by $(x, y, z) \mapsto xz^{-1}$ induces an isomorphism
\begin{equation*}
H_\Zc / H_\Zc^\epsilon \simeq (F^* / {F^*}^d) / ({F^*}^3 / {F^*}^{3d}) \simeq
\begin{cases}
F^* / {F^*}^3 & \text{if $3 \mid d$,} \\
1 & \text{otherwise.}
\end{cases}
\end{equation*}
Therefore, $\Hom(H_\Zc, R)^\epsilon = 0$ if $3 \nmid d$ or $p \neq 3$.

\item
Assume that $\Gb$ is of type $\prescript{1}{}{\Arm}_{3d-1,2}^{(d)}$, i.e.\ $G_\ssc = \SL_3(D)$ with $D$ of reduced degree $d$ over $F$.
Let $\Zcb'$ denote the inverse image of $\Zcb$ in $\Gb_\ssc$.
Since the homomorphism $\Zcb_\ssc \to \Zcb$ factors through $\Zcb'$, there is an inclusion $H_{\Zc'} \subset H_\Zc$, hence an exact sequence
\begin{equation*}
0 \to \Hom(H_\Zc / H_{\Zc'}, R)^\epsilon \to \Hom(H_\Zc, R)^\epsilon \to \Hom(H_{\Zc'}, R)^\epsilon.
\end{equation*}
By \ref{case:SL3-HZc}, we know that $\Hom(H_{\Zc'}, R)^\epsilon = 0$ if $3 \nmid d$ or $p \neq 3$, and that $W$ acts trivially on $H^1(F, \Zb_{\Zcb'})$, so that $W$ acts trivially on the subquotient $H_\Zc / H_{\Zc'}$, hence $\Hom(H_\Zc / H_{\Zc'}, R)^\epsilon = 0$ since $p \neq 2$.
Therefore, $\Hom(H_\Zc, R)^\epsilon = 0$ if $3 \nmid d$ or $p \neq 3$.

\item
Assume now that $p = 3$ and write $d = 3^r d'$ with $3 \nmid d'$, so that $\Zb_{\Gb_\ssc} \simeq \mub_{3^{r+1}} \times \mub_{d'}$.
Since $3 \nmid |\Zb_{\Gb}|$ by assumption, the homomorphism $\Gb_\ssc \to \Gb$ factors through $\Gb_\ssc / \mub_{3^{r+1}}$.
Let $\Zcb''$ denote the inverse image of $\Zcb$ in $\Gb_\ssc / \mub_{3^{r+1}}$.
There are inclusions $H_{\Zc'} \subset H_{\Zc''} \subset H_\Zc$, hence an exact sequence
\begin{equation*}
0 \to \Hom(H_\Zc / H_{\Zc''}, R)^\epsilon \to \Hom(H_\Zc, R)^\epsilon \to \Hom(H_{\Zc''}, R)^\epsilon.
\end{equation*}
By \ref{case:SL3-HZc}, we know that $W$ acts trivially on $H^1(F, \Zb_{\Zcb'})$.
Thus $W$ acts trivially on the subquotient $H_\Zc / H_{\Zc''}$, hence $\Hom(H_\Zc / H_{\Zc''}, R)^\epsilon = 0$ since $p \neq 2$.
Let $\Tb'$ (resp. $\Tb''$) denote the maximal subtorus of $\Zb_{\Zcb'}$ (resp. $\Zb_{\Zcb''}$).
There is a commutative diagram of algebraic $F$-groups of multiplicative type
\begin{equation*}
\xymatrix{
\mub_3 \ar@{^(->}[r] \ar@{^(->}[d] & \mub_{3^{r+1}} \ar@{->>}[r] \ar@{^(->}[d] & \mub_{3^r} \ar@{^(->}[d] \\
\Tb' \ar@{^(->}[r] \ar@{->>}[d] & \Zb_{\Zcb'} \ar@{->>}[r] \ar@{->>}[d] & \mub_d \ar@{->>}[d] \\
\Tb'' \ar@{^(->}[r] & \Zb_{\Zcb''} \ar@{->>}[r] & \mub_{d'}
}
\end{equation*}
whose rows and columns are exact.
Since $\Tb''$ is a split torus, the surjection $\Zb_{\Zcb''} \twoheadrightarrow \mub_{d'}$ induces an isomorphism
\begin{equation*}
H^1(F, \Zb_{\Zcb''}) \iso H^1(F, \mub_{d'}).
\end{equation*}
Thus $H^1(F, \Zb_{\Zcb''})$ is $3$-torsion-free.
We deduce that the $3$-torsion subgroup $H_{\Zc''}[3^\infty]$ of $H_{\Zc''}$ is equal to that of $H^1(F, \Zb_{\Zcb_\ssc})$, hence a $W$-equivariant isomorphism
\begin{equation*}
H_{\Zc''}[3^\infty] \simeq (F^* / {F^*}^{3^r})^3
\end{equation*}
with $W$ acting on the right-hand side via the isomorphism $W \simeq S_3$.
A simple computation as in \ref{case:SL3-HZc} shows that $\Hom(H_{\Zc''}[3^\infty], R)^\epsilon = 0$ (using the fact that $p \neq 2$).
Since $H_{\Zc''} / H_{\Zc''}[3^\infty]$ is $d'$-torsion, we deduce that $\Hom(H_{\Zc''}, R)^\epsilon = 0$.
Therefore, $\Hom(H_\Zc, R)^\epsilon = 0$.

\item
Assume that $\Gb$ is of type $\prescript{1}{}{\Erm}_{6,2}^{16}$ and $p = 3$.
Let $\alpha$ denote the relative simple root corresponding to the distinguished vertex of the Tits index of $\Gb$ with $3$ neighbors and let $\beta$ denote the other relative simple root (see Table \ref{tab:rank2}).
Then $\Zb_{\Zcb_\ssc} \simeq \mub_3 \times \mub_3$ and the action of $s_\alpha$ permutes the two copies of $\mub_3$ whereas $s_\beta$ acts trivially.
Thus $s_\beta$ acts trivially on $H_\Zc$ so that $\Hom(H_\Zc, R)^\epsilon = 0$ since $p \neq 2$.
\qedhere

\end{enumerate}
\end{proof}

\begin{remark} \label{rema:im}
Keep the assumptions on $\Gb$ but assume that $p = 2$.
\begin{enumerate}
\item The image of \eqref{comp} is zero in the following cases: $G = \SL_3(D)$; $\Gb$ is quasi-split and $2 \nmid |\Zb_{\Gb}|$; and $\Gb$ is of type $\prescript{1}{}{\Erm}_{6,2}^{16}$.
Indeed, we see that any extension in the image of \eqref{comp} is trivial on $Z_\Zc$ using Lemma \ref{lemm:imev1}(1) and the fact that $Z_\Zc / Z_\alpha Z_\beta$ is $2$-torsion-free (see Remark \ref{rema:SL3-ZZc} in the case $G = \SL_3(D)$ and use \eqref{ZZcF} together with the fact that $2 \nmid |\Zb_{\Gb}|$ in the other cases), and we have $\Hom(H_\Zc, R)^\epsilon = 0$ (see \ref{case:SL3-HZc} in the proof of Lemma \ref{lemm:HZc} in the case $G = \SL_3(D)$ and use the fact that $H_\Zc$ is $2$-torsion-free since $2 \nmid |\Zb_{\Zcb_\ssc}|$ in the other cases).
\item In general, we deduce from \eqref{HomZceps} and the proofs of Lemmas \ref{lemm:ZZc} and \ref{lemm:HZc} that the image of \eqref{comp} is killed by the highest power of $2$ dividing the product of the exponents of the groups $\Zb_{\Gb}$ and $\Zb_{\Zcb_\ssc}$.
Looking at Table \ref{tab:rank2}, we see that if $\Gb$ is not of type $\prescript{1}{}{\Arm}_{3d-1,2}^{(d)}$, then this power is at most $16$ (and even $4$ if $\Gb$ is adjoint).
\end{enumerate}
\end{remark}

\subsection{Conclusion} \label{conclusion}

Finally, we compute the $R$-module $\Ext_G^1(1, \St)$.

\begin{proposition} \label{prop:basealpha}
Assume that $\Delta = \{\alpha\}$ and $p \neq 2$.
If the adjoint action of $\Zc$ on $\Ubar \setminus \{1\}$ is transitive, then there is an isomorphism
\begin{equation*}
\Ext_G^1(1, \St) \iso \Ext_{\Zc}^1(1, 1)^{s_\alpha = -1}.
\end{equation*}
\end{proposition}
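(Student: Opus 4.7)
The plan is to obtain the isomorphism directly from the results already assembled in Section \ref{comparison}, by showing that the canonical map \eqref{comp} is an isomorphism onto the $s_\alpha = -1$ part of $\Ext_\Zc^1(1,1)$. Recall that \eqref{comp} is constructed as the composite of the restriction map $\Ext_G^1(1, \St) \to \Ext_{\Bbar}^1(1, \St)$ with the map \eqref{ev1} induced by the $\Zc$-equivariant surjection $\ev_1 \circ \jmath^{-1} : \St \twoheadrightarrow 1$.

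First I would invoke Proposition \ref{prop:inj}: since we are assuming that the adjoint action of $\Zc$ on $\Ubar \setminus \{1\}$ is transitive, the map \eqref{comp} is injective. Next I would apply Proposition \ref{prop:imalpha}: under the hypothesis that $\Delta = \{\alpha\}$ and $p \neq 2$, the image of \eqref{comp} is exactly $\Ext_\Zc^1(1,1)^{s_\alpha = -1}$. Combining these two facts, \eqref{comp} induces the desired $R$-linear isomorphism
\begin{equation*}
\Ext_G^1(1, \St) \iso \Ext_{\Zc}^1(1, 1)^{s_\alpha = -1}.
\end{equation*}

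There is no real obstacle here — the entire content has already been absorbed into the preceding lemmas and propositions. Strictly speaking, one should also note that when $\Delta = \{\alpha\}$, the sign character $\epsilon$ of $W = \{1, s_\alpha\}$ is precisely the character sending $s_\alpha$ to $-1$, so that Lemma \ref{lemm:imeps} confirms that \eqref{comp} does take values in $\Ext_\Zc^1(1,1)^{s_\alpha = -1}$, which is consistent with Proposition \ref{prop:imalpha}. The content of the proposition is thus simply that the injectivity of Proposition \ref{prop:inj} and the surjectivity (onto the $s_\alpha = -1$ isotypic component) of Proposition \ref{prop:imalpha} hold simultaneously under the joint hypotheses, which is immediate.
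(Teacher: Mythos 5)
Your proof is correct and is exactly the paper's own proof: the paper's one-line argument also cites Propositions \ref{prop:inj} and \ref{prop:imalpha} and combines injectivity with the identification of the image. Your additional remark about the sign character $\epsilon$ when $W = \{1, s_\alpha\}$ is a harmless consistency check and does not change the argument.
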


\begin{proof}
This follows from Propositions \ref{prop:inj} and \ref{prop:imalpha}.
\end{proof}

\begin{remark} \label{rema:basealpha}
If $G = \GL_2(D)$, then the assumptions of Proposition \ref{prop:basealpha} are satisfied when $p \neq 2$, but the result holds true when $p = 2$ (see Remark \ref{rema:imalpha}).
Since the derived subgroup of $D^*$ is $\SL_1(D)$, the reduced norm $\operatorname{Nrd} : D^* \twoheadrightarrow F^*$ induces an isomorphism $\Hom(F^*, R) \iso \Hom(D^*, R)$.
Using Lemma \ref{lemm:product}(2), we conclude that if $G \simeq \GL_2(D) \times (D^*)^r$, then there is an isomorphism
\begin{equation*}
\Ext_G^1(1, \St) \simeq \Hom(F^*, R).
\end{equation*}
When $D = \Q_p$, this result is originally due to one of us (see \cite[Th.\ VII.4.18]{Col}).
\end{remark}

\begin{proposition} \label{prop:base}
Assume that $|\Delta| \geq 2$ and $p \neq 2$.
Then $\Ext_G^1(1, \St) = 0$.
\end{proposition}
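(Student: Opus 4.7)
The plan is to combine the injectivity of the map \eqref{comp}$\colon \Ext_G^1(1,\St) \hookrightarrow \Ext_\Zc^1(1,1)$, which holds by Proposition~\ref{prop:inj} since $|\Delta| \geq 2$, with a vanishing statement for its image. The easiest situation is when $\Delta$ contains a pair of perpendicular simple roots --- in particular whenever $|\Delta| \geq 3$, because every connected Dynkin diagram of rank $\geq 3$ has non-adjacent vertices and distinct irreducible components of $\Delta$ are automatically orthogonal. In this case Lemma~\ref{lemm:imev1}(2) asserts that \eqref{ev1} is zero; since \eqref{comp} factors through \eqref{ev1}, we conclude $\Ext_G^1(1,\St) = 0$.

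The only remaining case is $|\Delta| = 2$ with $\Delta = \{\alpha,\beta\}$ and $\alpha \not\perp \beta$. Here the goal is to reduce to a setting where Proposition~\ref{prop:im} applies, namely $\Gb$ absolutely almost-simple with $p \nmid |\Zb_{\Gb}|$. I would proceed by four successive reductions, each preserving the rank-$2$ non-perpendicular configuration. First, apply Lemma~\ref{lemm:isogeny}(1) to the central isogeny $\Gb_\ssc \times \Zb_{\Gb}^\circ \to \Gb$ (where $\Gb_\ssc$ denotes the simply connected cover of $\Gb_\der$) to replace $\Gb$ by its source. Second, strip off the central torus factor, which has semisimple rank $0$, via Lemma~\ref{lemm:product}(2), reducing to $\Gb_\ssc$. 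Third, decompose $\Gb_\ssc$ into its simply connected almost-simple factors $\operatorname{Res}_{F_i/F}(\Hb_i)$: any configuration with two factors of positive relative rank would fall under Lemma~\ref{lemm:product}(1), so a single factor must carry both simple roots and hence have relative rank $2$; since restriction of scalars preserves both $F$-points and the relative root system, we replace $F$ by $F_i$ and reduce to the absolutely almost-simple simply connected group $\Hb_i$. Fourth, to kill the possibly $p$-torsion center, invoke Lemma~\ref{lemm:isogeny}(2) on the central isogeny $\Hb_i \to \Hb_{i,\ad}$ --- this step crucially uses $|\Delta| \geq 2$ --- and pass to the adjoint quotient, whose center is trivial. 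Proposition~\ref{prop:im} now applies and forces \eqref{comp} to vanish for $\Hb_{i,\ad}(F_i)$; together with Proposition~\ref{prop:inj} this yields $\Ext_{\Hb_{i,\ad}(F_i)}^1(1,\St) = 0$, and retracing the reductions gives $\Ext_G^1(1,\St) = 0$.

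The bulk of the technical work has already been absorbed into the vanishing Proposition~\ref{prop:im}, so this proposition amounts to an assembly of previously established tools. The main delicate point is bookkeeping: at each reduction one must check that the hypothesis $|\Delta| \geq 2$ (needed both for Proposition~\ref{prop:inj} and for Lemma~\ref{lemm:isogeny}(2)) and the non-perpendicularity of $\alpha,\beta$ are preserved, and that the only configurations escaping Lemma~\ref{lemm:imev1}(2) are precisely those absorbed by Proposition~\ref{prop:im}.
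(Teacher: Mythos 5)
Your proof is correct and follows essentially the same route as the paper's: handle $|\Delta|\ge 3$ or a perpendicular pair via Lemma~\ref{lemm:imev1}(2) and Proposition~\ref{prop:inj}, and in the remaining non-orthogonal rank-$2$ case reduce by a central isogeny and Lemma~\ref{lemm:product} to an absolutely almost-simple relative-rank-$2$ group over a finite extension, pass to the adjoint form by Lemma~\ref{lemm:isogeny}(2), and apply Proposition~\ref{prop:im}. The only differences are cosmetic --- you use the isogeny $\Gb_\ssc \times \Zb_{\Gb}^\circ \to \Gb$ in place of the paper's $\Gb_\is \times \Gb_\anis \times \Zb_{\Gb} \twoheadrightarrow \Gb$, and you invoke Lemma~\ref{lemm:product}(1) to rule out two isotropic factors where the paper just observes that $\alpha \not\perp \beta$ forces a single one.
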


\begin{proof}
If $|\Delta| \geq 3$ or $\Delta = \{\alpha, \beta\}$ with $\alpha \perp \beta$, then \eqref{comp} is zero by Lemma \ref{lemm:imev1}(2) and the result follows from Proposition \ref{prop:inj}.
Assume that $\Delta = \{\alpha, \beta\}$ with $\alpha$ and $\beta$ non-orthogonal.
Let $\Gb_\ssc$ be the simply connected covering of the derived subgroup $\Gb_\der$ of $\Gb$.
Recall that $\Gb_\ssc$ is the direct product of its almost-simple factors.
Thus $\Gb_\ssc = \Gb_\is \times \Gb_\anis$ where $\Gb_\is$ (resp.\ $\Gb_\anis$) is the direct product of the isotropic (resp.\ anisotropic) almost-simple factors of $\Gb_\ssc$.
By assumption, $\Gb_\is$ is almost-simple of relative rank $2$.
Using Lemmas \ref{lemm:product}(2) and \ref{lemm:isogeny}(1) with the canonical isogeny $\Gb_\is \times \Gb_\anis \times \Zb_{\Gb} \twoheadrightarrow \Gb$, we can reduce to the case where $\Gb = \Gb_\is$.
Thus we can assume that $\Gb$ is almost-simple of relative rank $2$.
Replacing $F$ by a finite separable extension, we can assume that $\Gb$ is absolutely almost-simple of relative rank $2$.
Finally, using Lemma \ref{lemm:isogeny}(2) with the canonical isogeny $\Gb \twoheadrightarrow \Gb_\mathrm{ad}$, we can assume that $\Gb$ is adjoint, i.e.\ $\Zb_{\Gb} = 1$.
Then the result follows from Propositions \ref{prop:inj} and \ref{prop:im}.
\end{proof}

\begin{remark} \label{rema:base}
Assume that $|\Delta| \geq 2$ and $p = 2$.
Proceeding as in the proof of Proposition \ref{prop:base} and using Remark \ref{rema:im}, we see that $\Ext_G^1(1, \St) = 0$ if $G$ is a Levi subgroup of $\GL_n(D)$ or $\Gb$ is quasi-split.
Moreover, we see that $\Ext_G^1(1, \St)$ is $4$-torsion in general.
\end{remark}

\section{The Kottwitz set $B(G)$}\label{BG}

This section is devoted to a brief review of basic facts concerning the theory of $\sigma$-conjugacy classes, due to Kottwitz.
We fix in the sequel a finite extension $F$ of $\Qp$ and an algebraic closure 
$ \overline{F}$ of $F$. Let $C$ be the completion of $ \overline{F}$, let $F^{\rm nr}\subset  \overline{F}$ be the maximal unramified extension of $F$ and finally let $\overline{\breve{F}}\simeq \breve{F}\otimes_{F^{\rm nr}} \overline{F}$ be the algebraic closure of 
$\breve{F}$ inside $C$. Let $\sigma$ be the (relative to $F$) Frobenius automorphism on 
$F^{\rm nr}$ and $\breve{F}$. Let 
$\sg_F={\rm Gal}( \overline{F}/F)$, with inertia subgroup $I_F=\sg_{F^{\rm nr}}\simeq \sg_{\breve{F}}:={\rm Gal}(\overline{\breve{F}}/\breve{F})$ and Weil subgroup $
W_F$. Finally, let $G$ be a connected reductive group defined over $F$.

\subsection{Various incarnations of $B(G)$} 
  In  \cite{K1} and \cite{K2} Kottwitz attached to $G$ a pointed set $B(G)$. This set  has now several incarnations which nicely complement each other. We will review them briefly. 
  
   \subsubsection{Via Galois cohomology}  
     The first definition is a cohomological one \cite[1.7]{K1}, \cite[1.1-1.5]{K2}: $$B(G):=H^1(\sigma^{\Z}, G(\breve{F}))$$
      is 
     the (pointed) set of {\it $\sigma$-equivalence classes} in $G(\breve{F})$, two elements 
     $b,b'\in G(\breve{F})$ being in the same equivalence class if $b'=gb\sigma(g)^{-1}$ for some 
     $g\in G(\breve{F})$, or equivalently if the elements $b\sigma, b^{\prime}\sigma\in G(\breve{F})\rtimes\langle\sigma\rangle$ are conjugate under $G(\breve{F})$. 
     We will write $[b]=\{gb\sigma(g)^{-1}|\, g\in G(\breve{F})\}\in B(G)$ for  the 
     $\sigma$-equivalence class of $b\in G(\breve{F})$. 
     
      We have a natural isomorphism\footnote{Here and below, to simplify the notation, we will write 
        $H^1(\breve{F}, G)$ etc.\ for the Galois cohomology $H^1(\sg_{\breve{F}}, G(\overline{\breve{F}}))$.} 
       $$B(G)\stackrel{\sim}{\to}H^1(W_F, G(\overline{\breve{F}}))$$
       induced by the inflation-restriction sequence associated to
       the exact sequence of topological groups 
     $
     1\to I_F\to W_{F}\to \sigma^{\Z}\to 1
     $ and the vanishing $H^1({\breve{F}}, G)=0$, the latter being a consequence of Steinberg's theorem
 \cite[Th.\ 1.9]{RS}, since $G$ is connected and 
        $\breve{F}$ has cohomological dimension $1$.
      The natural map, obtained using the restriction $W_F\to \sg_F$ and the inclusion $G(\overline{F})\subset G(\overline{\breve{F}})$,
       $$H^1(F,G)\to H^1(W_F, G(\overline{\breve{F}}))\stackrel{\sim}{\leftarrow}B(G)$$ is injective. 
   
     \begin{example}[Torus] \label{Newton}
     A fundamental result of Kottwitz \cite[Sec.\ 2]{K1} gives a 
     functorial isomorphism $X_*(T)_{\sg_F}\stackrel{\sim}{\to} B(T)$ for 
     $F$-tori $T$, uniquely pinned down by the requirement that in the induced isomorphism 
     $B({\mathbb G}_m)\simeq \Z$ the element $1\in \Z$ corresponds to the $\sigma$-conjugacy class in $\breve{F}^*$ consisting of elements with normalized valuation $1$. Concretely, the isomorphism $X_*(T)_{\sg_F}\stackrel{\sim}{\to} B(T)$ sends the class of $\mu\in X_*(T)$ to 
     $[N_{E/E_0}(\mu(\pi_E))]$, where
     $E$ is a finite Galois extension of $F$ inside $\overline{F}$ splitting 
     $T$, $\pi_E$ is a uniformizer of $E$, 
     $E_0$ is the maximal subfield of $E$ unramified over $F$ and finally 
     $N_{E/E_0}: T(E)\to T(E_0)$ is the norm map.
     
          \end{example}
          
 \subsubsection{Via $G$-isocrystals}    
     The second incarnation (see \cite[Sec.\ 3]{K1}) of $B(G)$ is as the set of isomorphism classes of {\it $G$-isocrystals} (relative to $\breve{F}/F$), i.e.,  exact, faithful
     $F$-linear tensor functors\footnote{$G$-isocrystals  can be defined for any linear algebraic group $G$ over $F$. In that case one adds  an assumption that the defining functor  is  strictly compatible with the fiber functors. If  the group $G$ is connected, as it is the case in this paper,  this assumption is not necessary by the vanishing theorem of Steinberg \cite[Lemma 9.1.5]{DOR}. } ${\rm Rep}_F(G)\to {\rm Mod}_{\breve{F}}(\varphi)$, where 
${\rm Rep}_F(G)$ is the category of finite dimensional algebraic
     $F$-rep\-re\-sen\-ta\-tions of $G$ and ${\rm Mod}_{\breve{F}}(\varphi)$ is the category of isocrystals relative to 
     $\breve{F}/F$.      
     
       There is an equivalence (even an  isomorphism) of categories between the groupoid of $G$-isocrystals and the groupoid having as objects 
     the set $G(\breve{F})$, the set of morphisms $b\to b'$ being the set of $g\in G(\breve{F})$ with 
     $gb\sigma(g)^{-1}=b'$. Specifically, every $b\in G(\breve{F})$ yields a 
     $G$-isocrystal $N_b$:
   $$ N_b: {\rm Rep}_F(G)\to  {\rm Mod}_{\breve{F}}(\varphi),\quad 
    (V,\rho)\mapsto (V\otimes_F \breve{F}, b\sigma:=\rho(b) ({\rm id}_V\otimes \sigma)),
    $$ whose isomorphism class depends only on 
     $[b]$. Sending $[b]$ to the isomorphism class of $N_b$ yields a bijection between 
     $B(G)$ and the set of isomorphism classes of $G$-isocrystals.
     For instance, $B(\mathbb{GL}_n)$ is identified with the set of isomorphism classes of $n$-dimensional isocrystals \cite[Rem.\ 3.4 (ii)]{RR}. 
     
        \subsubsection{Via the Fargues-Fontaine curve} 
        This incarnation of $B(G)$, due to Fargues \cite{FG}, will not be used in the rest of the paper, but is particularly appealing and we feel that it gives a better understanding of many of the constructions to come. We fix an embedding of the residue field $\mathbf{F}_q$ of $F$ into the residue field
      $\overline{\mathbf{F}}_p$ of $C$ and consider the Fargues-Fontaine curve $X=X_{F,\overline{\mathbf{F}}_p}$ over ${\rm Spec}(F)$ attached to $F$ and $\overline{\mathbf{F}}_p$. 
        There exists an exact, faithful, $F$-linear tensor functor 
      $$\mathcal{E}: {\rm Mod}_{\breve{F}}(\varphi)\to {\rm Bun}_X,$$
      where ${\rm Bun}_X$ is the category of vector bundles on $X$. While 
      {\it not} fully faithful, this functor is essentially surjective and induces an equivalence of categories between 
      the isoclinic isocrystals and the semistable vector bundles on $X$ as well as a bijection
      $$\mathcal{E}: |{\rm Mod}_{\breve{F}}(\varphi)|\simeq |{\rm Bun}_X|$$ between the sets of 
        isomorphism classes of the corresponding objects\footnote{Despite its innocuous-looking character, this is one of the most difficult results in the book of Fargues and Fontaine \cite{FF0}.}.
          Every $b\in G(\breve{F})$ yields therefore an exact, faithful, $F$-linear tensor functor 
      $$\mathcal{E}_b=\mathcal{E}\circ N_b: {\rm Rep}_F(G)\to {\rm Bun}_X,$$ i.e., a 
      $G$-bundle on $X$ in the Tannakian sense\footnote{There is a natural equivalence
       between the category of $G$-bundles on $X$ 
      and 
      the category of $G$-torsors on $X$ locally trivial for the \'etale topology: if $Y$ is $G$-torsor \'etale locally trivial,
      we obtain a $G$-bundle by sending $(V,\rho)\in {\rm Rep}_F(G)$ to $Y\times_{G,\rho} V$; conversely, each $G$-bundle $\omega$ yields a locally trivial $G$-torsor ${\rm Isom}^{\otimes}(\omega_{\rm can},\omega)$, where $\omega_{\rm can}(V,\rho)=V\otimes_F \mathcal{O}_X$.}
       and we have  the following beautiful result:
                  \begin{theorem}[Fargues, {\cite[Th.\ 5.1]{FG}}]
       The construction $b\to \mathcal{E}_b$ yields bijections of pointed sets 
       $$B(G)\simeq |{\rm Bun}_G|\simeq H^1_{\eet}(X,G),$$
       where $|{\rm Bun}_G|$ is the set of isomorphism classes of $G$-bundles on $X$.
      \end{theorem}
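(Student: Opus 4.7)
The plan is to treat the two bijections separately. The second, $|{\rm Bun}_G|\simeq H^1_{\eet}(X,G)$, is the Tannakian dictionary recalled in the footnote preceding the theorem: one sends an \'etale $G$-torsor $Y$ to the tensor functor $(V,\rho)\mapsto Y\times^{G,\rho}V$, with quasi-inverse $\omega\mapsto\underline{\rm Isom}^{\otimes}(\omega_{\rm can},\omega)$; this torsor is \'etale-locally trivial since $X$ is Noetherian of dimension one and $G$ is smooth affine. The construction $[b]\mapsto\mathcal{E}_b$ is well-defined: if $b'=gb\sigma(g)^{-1}$ then left multiplication by $g$ on each $V\otimes_F\breve{F}$ intertwines $b\sigma$ and $b'\sigma$, giving $N_b\iso N_{b'}$ and hence $\mathcal{E}_b\iso\mathcal{E}_{b'}$. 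It remains to show $[b]\mapsto\mathcal{E}_b$ is a bijection.

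For $G=\mathbb{GL}_n$ this is exactly the fundamental theorem of Fargues-Fontaine invoked just before the statement: the Dieudonn\'e-Manin decomposition of an isocrystal matches the Harder-Narasimhan filtration of the corresponding bundle, inducing a bijection between $B(\mathbb{GL}_n)$ and rank-$n$ vector bundles on $X$. For general $G$ the strategy is to run parallel Newton / Harder-Narasimhan stratifications. On the arithmetic side, each $[b]\in B(G)$ has a slope morphism $\nu_b:\mathbb{D}\to G_{\breve{F}}$ whose centralizer $M_b$ is a Levi of a canonical parabolic $P_b\subset G_{\breve{F}}$ with $b$ basic in $M_b(\breve{F})$; Kottwitz identifies each basic stratum $B(M)_{\rm basic}$ with $\pi_1(M)^{\sg_F}$. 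On the geometric side, the Harder-Narasimhan theorem for $G$-bundles on $X$, due to Fargues, produces for every $G$-bundle a canonical reduction to a parabolic $P$ with semistable induced $M$-bundle, and the connected components of the moduli of semistable $M$-bundles are indexed by $\pi_1(M)^{\sg_F}$. Checking that $\mathcal{E}_b$ sends the Newton stratum of $[b]$ into the HN stratum determined by $P_b$ then reduces the theorem to a stratum-wise claim: for each Levi $M$, the map $B(M)_{\rm basic}\to\pi_0({\rm Bun}_M^{\rm ss})$ is a bijection.

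This stratum-wise claim is established via a $z$-extension $1\to Z\to M'\to M\to 1$ with $M'_{\rm der}$ simply connected, reducing the problem to the cases of a torus $T$ and of a simply connected semisimple group $H$. For the torus $T$, both $B(T)$ and $H^1_{\eet}(X,T)$ are canonically identified with $X_*(T)_{\sg_F}$: the former by Kottwitz's isomorphism recalled in Example \ref{Newton}, the latter by a direct computation from the local structure of $X$ at its unique closed point with residue field $\breve{F}$, and the two identifications match under $b\mapsto\mathcal{E}_b$. For $H$ simply connected and semisimple, both sides vanish: $B(H)_{\rm basic}=\{1\}$ by the theorem of Kneser / Bruhat-Tits already invoked in Section \ref{BG}, and $H^1_{\eet}(X,H)=\{1\}$ is the analogue for $X$ of Steinberg's vanishing, proved by Fargues. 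I expect this last vanishing to be the main obstacle: it is the deepest input and requires nontrivial information on the cohomology of affine opens in $X$. Granting it, the torus case, Kottwitz's parametrization of basic classes by $\pi_1$, and the compatibility of the two stratifications assemble into the full theorem.
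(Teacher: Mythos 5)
The paper does not prove this theorem; it quotes it from Fargues's \emph{$G$-torseurs en th\'eorie de Hodge $p$-adique} \cite{FG}, so there is no internal proof to compare against. Your sketch does track Fargues's strategy in broad outline: reduce through the Newton and Harder--Narasimhan stratifications to matching basic $\sigma$-conjugacy classes with semistable $G$-bundles, and handle the semistable case via a $z$-extension reducing to a torus and a simply connected semisimple factor.

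Two concrete points are off as written. You put $\pi_1(M)^{\sg_F}$ for the target of Kottwitz's bijection on basic classes and for $\pi_0$ of semistable $M$-bundles, but the correct object in both cases is the set of Galois \emph{coinvariants} $\pi_1(M)_{\sg_F}$ (cf.\ Section~\ref{BG}); you do write coinvariants $X_*(T)_{\sg_F}$ in the torus case, so this looks like a slip rather than a conceptual confusion, but the two identifications have to match to assemble the argument. More substantively, the torus case is not obtained ``by a direct computation from the local structure of $X$ at its unique closed point with residue field $\breve{F}$'': the Fargues--Fontaine curve $X$ has infinitely many closed points, and their residue fields are complete algebraically closed extensions of $F$ (untilts), never $\breve{F}$. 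Fargues computes $H^1_{\eet}(X,T)$ by resolving $T$ by induced tori and using $\Pic(X_E)\simeq\Z$ for the base change of $X$ to finite Galois extensions $E/F$ splitting $T$ --- a global argument on the curve, not a local one. You rightly flag the vanishing $H^1_{\eet}(X,H)=1$ for $H$ simply connected as the deepest input; it occupies a substantial part of Fargues's paper and cannot be dispatched as an ``analogue of Steinberg's theorem'' in one line, so your outline has a real gap there, which you acknowledge.
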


          \subsection{The structure of  $B(G)$}
        In order to describe $B(G)$ Kottwitz defined two rather technical but very important maps, which we briefly review. 
        
  \subsubsection{The Newton map}     
          Let $b\in G(\breve{F})$ and let $N_b$ be the associated $G$-isocrystal. If 
          $\mathbb{D}$ is the pro-torus over $F$ with character group $\Q$ (the ``slope torus'')
          there is a unique morphism $$\nu_b: \mathbb{D}_{\breve{F}}\to G_{\breve{F}}$$ such that for all 
          $(V,\rho)\in {\rm Rep}_F(G)$ the composition $\rho\circ \nu_b: \mathbb{D}_{\breve{F}}\to {\rm GL}(V\otimes_F \breve{F})$ corresponds to the (Dieudonn\'e-Manin) slope decomposition of $N_b(V)$, considered as a $\Q$-grading on $V\otimes_F \breve{F}$. The homomorphism $\nu_b$ is the {\it Newton or slope map attached to $b$} \cite[Sec.\ 4]{K1}. It satisfies $$\nu_{\sigma(b)}=\sigma(\nu_b), \, \nu_b={\rm Int}(b)\circ\sigma(\nu_b),\,
          \nu_{gb\sigma(g)^{-1}}={\rm Int}(g)\circ\nu_b=g\nu_b g^{-1},
          $$
          hence the $G(\breve{F})$-conjugacy class $[\nu_b]$ of $\nu_b$ depends only on $[b]\in B(G)$ and is 
          $\sigma$-invariant, thus
           $$[\nu_b]\in\mathcal{N}(G):=[{\rm Hom}_{\breve{F}}(\mathbb{D}_{\breve{F}}, G_{\breve{F}})/{\rm Int } \,G(\breve{F}) ]^{\sigma=1}.$$
           The elements of $\mathcal{N}(G)$ are called 
          {\em Newton vectors}. 
          
          \begin{remark} By choosing 
          a maximal torus $T$ of $G$ defined over $F$, we obtain a more concrete description
          $\mathcal{N}(G)\simeq (X_*(T)_{\Q}/W)^{\sg_F}$, where 
          $W$ is the absolute Weyl group of $G$ with respect to $T$. 
          If we also choose a basis $\Delta$ for the root system, 
          each element of 
          $\mathcal{N}(G)$ has a unique representative 
          in $X_*(T)_{\Q}^+:=\{\nu \in X_*(T)_{\Q}|\, \langle \nu, \alpha\rangle\geq 0\, \forall\, 
          \alpha\in \Delta\}$. One infers from this a partial order on 
          $\mathcal{N}(G)$, induced from the natural partial order on 
          $X_*(T)_{\Q}^+$ (for which $\nu_1\leq \nu_2$ if 
          $\nu_2-\nu_1\in \sum_{\alpha\in \Delta} \Q_{\geq 0} \alpha^{\vee}$).
          Pulling back along the Newton map we deduce a partial order on 
          $B(G)$. 
          \end{remark}

          Sending 
          $[b]\in B(G)$ to $[\nu_b]\in \mathcal{N}(G)$ yields the 
           {\it Newton map} 
          $$\nu: B(G)\to \mathcal{N}(G).$$ 
                    Kottwitz proves \cite[4.5]{K1} that 
          $\nu_b$ is trivial if and only if $[b]\in H^1(F,G)\subset B(G)$.
          
          \begin{example} If $G=T$ is an $F$-torus then 
          $\mathcal{N}(T)\simeq\Hom_{F}(\mathbb{D},T)\simeq X_*(T)_{\Q}^{\sg_F}$ and 
          $\nu_b$ (for $b\in T(\breve{F})$) is the unique element of $X_*(T)^{\sg_F}_{\Q}$ such that 
          $\langle \lambda, \nu_b\rangle=v_F(\lambda(b))$
          for $\lambda\in X^*(T)^{\sg_F}$, where $v_F$ is the normalized valuation on $\breve{F}$. 
                    The  composite map\footnote{Cf.\ Example \ref{Newton} for the isomorphism in the displayed formula.} $$X_*(T)\to X_*(T)_{\sg_F}\stackrel{\sim}{\to} B(T)\stackrel{\nu}{\to} X_*(T)^{\sg_F}_{\Q}$$ sends 
          $\mu\in X_*(T)$ to its Galois average $\mu^{\diamond}$.
                   We have an exact sequence 
          $
          0\to  H^1(F,G)\to B(T) \stackrel{\nu}{\to} X_*(T)^{\sg_F}_{\Q}.
          $
               
          \end{example}

          \begin{example} 
           For $G=\mathbb{GL}_n$ the partially ordered set 
          $\sn(G)$ is naturally in bijection with 
            $$(\mathbb{Q}^n)_+:=\{(x_1,\dots, x_n)\in \mathbb{Q}^n|\, x_1\geq\dots\geq x_n\},$$ endowed with the usual dominance order, for which 
            $(y_1,\dots,y_n)\leq (x_1,\dots,x_n)$ if and only if 
            $x_1+\cdots+x_i\geq y_1+\cdots+y_i$ for all $i$, with equality for $i=n$.
           An element $x=(x_1^{(n_1)},\dots, x_r^{(n_r)})\in\sn(G)$ is thus given by rational numbers 
           $
           x_1 > x_2 >\cdots >x_r
           $
           and multiplicities $n_1,\ldots, n_r\in \Z_{>0}$ such that $
           \sum_{i=1}^{r}n_i=n.
           $ 
           If $[b]\in B(G)$ and $x_1,\dots,x_r$ are the slopes of the isocrystal $N_b$ (in decreasing order), then
           $\nu([b])=(x_1^{(n_1)},\dots, x_r^{(n_r)})$, where $n_i$ are the dimensions of the isotypic parts of $N_b$. The Newton map 
            $\nu$ is injective with image consisting of those $x=(x_1^{(n_1)},\dots,x_r^{(n_r)})\in(\mathbb{Q}^n)_+$ such that $
           n_ix_i\in\Z,  i=1,\ldots,r 
           $ (see \cite[Ex.\ 1.19]{RR}).           \end{example}

              \subsubsection{The Kottwitz map $\kappa$}        
             Let $\pi_1(G)$ 
          be the (Borovoi) algebraic fundamental group of $G$ \cite{Bor}, \cite[1.13]{RR}. 
          It is a finitely generated discrete $\sg_F$-module, functorial and exact in $G$, and isomorphic (as abelian group) to 
          $X_*(T)/\sum_{\alpha\in\Phi(G,T)}\Z\alpha^{\vee}$, where $T$ is a maximal torus of $G_{\overline{F}}$ and $\Phi(G,T)$ is the set of roots of $T$, $\alpha^{\vee}$ being the co-root corresponding to 
          $\alpha\in \Phi(G,T)$. For example, 
          $\pi_1({\mathbb{GL}}_n)\simeq {\Z}$ and $\pi_1(T)\simeq X_*(T)$ for an 
          $F$-torus $T$. We also note \cite[1.14]{RR} that $\pi_1(G)\simeq \pi_1(G^{\prime})$ for any inner form $G^{\prime}$ of $G$, and that $
           \pi_1(G)\stackrel{\sim}{\to} \pi_1(G_{\rm ab}:=G/G_{\rm der})$ whenever 
           $G_{\rm der}$ is simply connected. 
          
        By a theorem of Kottwitz (see \cite[Sec.\ 6]{K2}), \cite[Th.\ 1.15]{RR})
        there is a unique natural  map\footnote{Kottwitz formulated his theorem in \cite{K1}, \cite{K2}  in terms of the center $Z(\wh{G})$ of the Langlands dual group. The formulation we present here in terms of the algebraic fundamental group is due to  Rapoport-Richartz \cite{RR}. It has better functoriality properties than the original one.} $$\kappa: B(G)\to \pi_1(G)_{\sg_F}$$ making the following diagram commute:
       $$
       \xymatrix{\breve{F}^{*} \ar[r]^{v_F}\ar[d] & \Z\ar@{=}[d]\\
       B(\mathbb{G}_m) \ar[r]^-{\kappa}&  \pi_1(\mathbb{G}_m)_{\sg_F}.
       }
       $$
   For instance for $G=\mathbb{GL}_n$ the map $\kappa$ sends $ [b]\in B(G)$ to $v_F(\det b)$.
    In general, the induced map $(\nu, \kappa): B(G)\to \mathcal{N}(G)\times \pi_1(G)_{\sg_F}$ is injective and 
there exists a natural map of exact sequences
   $$
   \xymatrix{
  H^1(F,G)\ar@{^(->}[r] \ar[d]^{\kappa}_{\wr}&  B(G)\ar[r]^{\nu}\ar[d]^{\kappa}  & \sn(G)\ar[d]^{\delta}\\
 \pi_1(G)_{\sg_F, {\rm tors}} \ar@{^(->}[r] &  \pi_1(G)_{\sg_F} \ar[r]^-{\mu\mapsto\mu^{\diamond}}   & \pi_1(G)^{\sg_F}_{\Q}
   }
   $$

         Fargues \cite[Sec.\ 8]{FG}, inspired by Labesse's \cite{Lab} reinterpretation of the constructions of Kottwitz, also gave a geometric interpretation of the Kottwitz map $\kappa$ using the Fargues-Fontaine curve $X$:
        \begin{enumerate}
        \item Using cohomology theory of  crossed modules \cite{Lab}, he  defined a map 
        $\kappa^F: H^1_{\eet}(X,G)\to \pi_1(G)_{\sg_F}
        $ which agrees with the Kottwitz map $\kappa$ after the identification $B(G)\stackrel{\sim}{\to} H^1_{\eet}(X,G)$. 
        \item Then, using the  universal $G$-torsor and  cohomology of cross modules again, he defined 
       a Chern class map $c_1^G:  H^1_{\eet}(X,G)\to \widehat{\pi_1(G)}_{\sg_F}$ (the profinite completion) and showed that
       $$\kappa^F(b)=-c_1^G(\mathcal{E}_{b}),\quad b\in G(\breve{F}).$$ 
       \end{enumerate}

          \subsubsection{Automorphism groups}         
           Let $b\in G(\breve{F})$ and let $J_b$ be the automorphism group of the $G$-isocrystal $N_b$, i.e., the connected reductive group over $F$ such that for any 
           $F$-algebra $R$ 
              $$J_b(R):=\{g\in G(\breve{F}\otimes_FR)| \, gb\sigma(g)^{-1}=b\}.$$ 
              Clearly, if $b=1$ then $J_b=G$. 
              Suppose that $G$ is quasi-split and let $M_{b}$ be the centralizer of $\nu_{b}$, a Levi component of an $F$-parabolic of $G$. By results of Kottwitz \cite[Sec.\ 6]{K1}
              the group $J_b$ is an inner form of $M_b$.

           \subsubsection{Basic and decent elements}    
                     The subset $B(G)_{\rm basic}$ of {\it basic elements of $B(G)$} consists of those
          $[b]\in B(G)$ for which $\nu_b$ factors through $Z_{\breve{F}}$, where $Z$ is the center of $G$. If $[b]\in B(G)_{\rm basic}$ the Newton map $\nu_b: {\mathbb D}_{\breve{F}}\to Z_{\breve{F}}$ is defined over $F$, since 
          its conjugacy class is $\sigma$-invariant. It follows from results of Kottwitz (see 
          \cite{K1} and \cite{RR}) that an element $[b]\in B(G)$ is in $B(G)_{\rm basic}$ if and only if it satisfies one of the following equivalent conditions:
                      
            $\bullet$ The automorphism group $J_b$ is an inner form of $G$. 
            
            $\bullet$ $[b]$ is a minimal element for the partial order on $B(G)$.
            
            $\bullet$ The associated $G$-bundle $\mathcal{E}_b$ on the Fargues-Fontaine curve $X$ is semistable (i.e., $\mathcal{E}_b({\rm Lie}(G), {\rm Ad})$ is a semistable vector bundle on $X$, or equivalently\footnote{This uses the deep fact that semistable vector bundles on $X$ are stable under tensor product.} $\mathcal{E}_b(V,\rho)$ is semistable for any homogeneous representation $\rho$ of $G$). 
          For $G=\mathbb{GL}_n$ the set $B(G)_{\rm basic}$ corresponds to that of isoclinic isocrystals of dimension $n$.
            
          Kottwitz \cite[Ch.\ 5]{K1} proved that the Kottwitz map $\kappa$ induces a bijection
       $$\kappa: B(G)_{\rm basic}\simeq \pi_1(G)_{\sg_F}.$$
       Since $\kappa$ identifies $H^1(F,G)$ and $\pi_1(G)_{\sg_F, {\rm tors}}$, it follows that $H^1(F,G)\subset B(G)_{\rm basic}$. It also follows that when $G_{\rm der}$ is simply connected the map $G\to G_{\rm ab}:=G/G_{\rm der}$ induces a bijection $B(G)_{\rm basic}\stackrel{\sim}{\to} B(G_{\rm ab})$. In particular, $B(G)_{\rm basic}$ is trivial if $G$ is semisimple and simply connected.

         Let now $F=\qp$ and let $s$ be a positive integer. We say that 
         $b\in G(\breve{\Q}_p)$ is {\it $s$-decent} if $s{\nu}_b$ factors through the quotient $\mathbb{G}_{m \breve{\Q}_p}$ of 
         $\mathbb{D}_{\breve{\Q}_p}$ and we have an equality in $G(\breve{\Q}_p)\rtimes \sigma^{\mathbf{Z}}$:
         $$(b\sigma)^{s}=(s{\nu}_b)(p)\sigma^s.$$
        By \cite[Cor.\ 1.9]{RZ} this implies that ${\nu}_b$ is defined over $\Q_{p^s}$ and 
         $b\in G(\Q_{p^s})$.  We say that $[b]\in B(G)$ is {\it decent} if it contains an $s$-decent element for some positive integer $s$.
                  Since $G$ is connected and the residue field of $\breve{\Q}_p$ is algebraically closed
         \cite[Lemma 9.1.33]{DOR} implies that any $[b]\in B(G)$ is decent (see also \cite{K1}).
         Moreover, by \cite[Lemma 9.6.19]{DOR}, if $G$ is quasi-split each $[b]\in B(G)$ contains an element
         $b$ which is $s$-decent for some $s$ and such that ${\nu}_b$ is defined over $\Q_{p^s}$.

                       \section{Period domains}   
    The period domains we are interested in classify weakly admissible filtrations on isocrystals. We briefly review here the relevant facts. A beautiful reference for everything that follows is \cite[Ch.\ I, IV, V, VI, VIII]{DOR}. 
  
    \subsection{Filtrations}We start by reviewing some basic facts concerning filtrations of Tannaka fiber functors, following \cite{DOR}. 
  
\subsubsection{Filtrations on vector spaces}

   If $K$ is a field we denote by ${\rm Vec}_K$ the category of finite dimensional $K$-vector spaces. 
   An $\mathbf{R}$-filtration $FV$ on $V\in {\rm Vec}_K$ is 
   a decreasing, exhaustive, separated filtration $(F^xV)_{x\in \mathbf{R}}$ by $K$-vector subspaces of $V$, such that $F^xV_K=\cap_{y<x} F^yV_K$ for all $x$. We denote by
   ${\rm gr}^x(FV)=F^xV/\sum_{y>x} F^yV$ and ${\rm gr}(FV)=\oplus_{x\in \mathbf{R}} {\rm gr}^x(FV)$ the associated 
   $\mathbf{R}$-graded vector space. 
      The {\it type of $FV$} is the nonincreasing sequence 
   $\nu(FV)=(x_1^{(n_1)},\dots,x_r^{(n_r)})\in \mathbf{R}^n$,
   where $x_1>\dots>x_r$ are the jumps of the filtration (i.e., 
   those $x$ for which ${\rm gr}^x(FV)\ne 0$) and 
      $x_i$ is repeated $n_i:=\dim {\rm gr}^{x_i}(FV)$ times. 
      We say that $FV$ is a {\it $\Q$-filtration} if 
  $\nu(FV)\in \Q^n$.

Let $K/k$ be a field extension and let ${\rm Fil}_k^K$ be the 
    category of pairs $(V,FV_K)$, where $V\in {\rm Vec}_k$ and $FV_K$ is an $\mathbf{R}$-filtration on the $K$-vector space $V_K:=V\otimes_k K$. This is a quasi-abelian category endowed with the degree, rank and slope  functions, defined, for $V:=(V,FV_K)$, by 
 $$ \deg(V)=\sum_{x} x\dim_K {\rm gr}^x(V_K),\, {\rm rk}(V)=\dim_k V, \,
  \mu(V)=\frac{\deg(V)}{{\rm rk}(V)}, \,V\ne 0.$$
 
  One has a good Harder-Narasimhan formalism with respect to this slope function. In particular, there is a notion of semistability for objects of 
  ${\rm Fil}_k^K$ and the tensor product of semistable objects is semistable when $K/k$ is separable, thanks to
  the theorems of Faltings \cite{Fal} and Totaro \cite{Tot} (this fails when $K/k$ is not separable). Moreover, one can characterize the semistable objects in terms of the inner product (this will be a recurrent theme in the sequel) as follows. Recall that, if 
      $FV, F'V$ are $\mathbf{R}$-filtrations on $V\in {\rm Vec}_K$, their {\em inner product} is defined by
      $$\langle FV, F'V\rangle:=\sum_{x,y\in \mathbf{R}} xy \dim_K {\rm gr}_{FV}^x({\rm gr}_{F'V}^y),$$
         where ${\rm gr}^x_{FV}({\rm gr}^y_{F'V})$ is the $x$th graded piece of the 
      $K$-vector space ${\rm gr}^y(F'V)$ endowed with the filtration induced by $FV$. The semistability criterion is then:

   \begin{proposition}[{\cite[Cor.\ 1.2.6]{DOR}}] \label{dublin1}
 The pair $(V, FV_K)$ is semistable if and only if  $$\langle FV_K,F^{\prime}V_K\rangle\leq \mu(V,FV_K)\deg(V,F^{\prime}V_K)$$  for all $\Z$-filtrations $F^{\prime}V_K$ of $V_K$. Moreover, it suffices to check this inequality when 
  $\deg(V, F^{\prime}V_K)=0$. 
  
  \end{proposition}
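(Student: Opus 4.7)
The plan is to reduce everything to a cleaner inequality involving $K$-subspaces of $V_K$, via Abel summation, and then use Galois/Harder–Narasimhan descent for the direction that requires extending semistability from $k$-subspaces to $K$-subspaces.

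First I would settle the second claim: the shift $F'V_K \mapsto F'V_K[c]$ (translating every jump of $F'$ by $c$) sends $\deg(V, F'V_K)$ to $\deg(V, F'V_K) + c\dim V$ and $\langle FV_K, F'V_K\rangle$ to $\langle FV_K, F'V_K\rangle + c\deg(V, FV_K)$; since $\deg(V,FV_K) = \mu(V,FV_K)\dim V$, the inequality is equivalent for $F'V_K$ and its shift, and one can always choose $c$ so that $\deg(V, F'V_K[c]) = 0$.

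Next, I would let $y_1 > y_2 > \cdots > y_r$ be the jumps of $F'V_K$, set $V_i := (F'V_K)^{y_i}$, so that $V_1 \subsetneq V_2 \subsetneq \cdots \subsetneq V_r = V_K$, and endow each $V_i$ with the filtration induced by $FV_K$. A direct Abel summation gives
\begin{equation*}
\deg(V, F'V_K) = y_r \dim V + \sum_{i=1}^{r-1}(y_i - y_{i+1})\dim_K V_i,
\end{equation*}
\begin{equation*}
\langle FV_K, F'V_K\rangle = y_r \deg(V, FV_K) + \sum_{i=1}^{r-1}(y_i - y_{i+1})\deg(V_i, F|_{V_i}),
\end{equation*}
so that
\begin{equation*}
\langle FV_K, F'V_K\rangle - \mu(V, FV_K)\deg(V, F'V_K) = \sum_{i=1}^{r-1}(y_i - y_{i+1})\dim_K V_i \cdot \bigl(\mu(V_i, F|_{V_i}) - \mu(V, FV_K)\bigr).
\end{equation*}
Since $y_i - y_{i+1} > 0$, and since one can realize any proper $K$-subspace $W_K \subsetneq V_K$ as $V_1$ of a two-step $\Z$-filtration, the inequality for all $\Z$-filtrations $F'V_K$ is equivalent to the apparently stronger statement that $\mu(W_K, FV_K|_{W_K}) \leq \mu(V, FV_K)$ for every $K$-subspace $W_K \subsetneq V_K$.

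The direction $(\Leftarrow)$ is now immediate: applied to $W_K = W \otimes_k K$ for any $k$-subspace $W \subset V$, the above inequality reads $\mu(W, FW_K) \leq \mu(V, FV_K)$, which is the semistability condition (dimensions over $k$ and $K$ coincide after base change). For the direction $(\Rightarrow)$, assuming $(V, FV_K)$ is semistable over $k$, I would argue by contradiction: if some $K$-subspace $W_K \subsetneq V_K$ strictly destabilized $FV_K$, then the Harder–Narasimhan formalism applied to $(V_K, FV_K)$ would furnish a \emph{canonical} maximal destabilizing subspace $W_K^\ast$, unique by standard slope arguments. The uniqueness forces $W_K^\ast$ to be stable under all $k$-automorphisms of $K$ (or, replacing $K$ by its Galois closure over $k$, of the enlarged field), hence to descend to a $k$-subspace $W \subset V$ of the same slope, contradicting semistability over $k$. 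The main obstacle is precisely this descent step: what makes it work is the uniqueness of the maximal destabilizer together with the compatibility of the slope function with the separable base change $K/k$, which is the essence of Faltings' and Totaro's results and the reason separability is needed in their tensor-product theorem.
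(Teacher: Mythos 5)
Your Abel--summation identity
\begin{equation*}
\langle FV_K, F'V_K\rangle - \mu(V, FV_K)\deg(V, F'V_K) \;=\; \sum_{i=1}^{r-1}(y_i - y_{i+1})\,\dim_K V_i \,\bigl(\mu(V_i, F|_{V_i}) - \mu(V, FV_K)\bigr),
\end{equation*}
together with the shift argument for the second claim, is correct and is the real content here; the paper itself offers no proof, it simply cites \cite[Cor.~1.2.6]{DOR}, and that reference's argument is exactly this reduction.

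The gap is in your $(\Rightarrow)$ direction, and it originates in reading ``all $\Z$-filtrations $F'V_K$ of $V_K$'' as arbitrary filtrations by $K$-subspaces, which commits you to showing that semistability against $k$-subspaces forces semistability against $K$-subspaces. That implication is false. Take $k=\mathbf{R}$, $K=\mathbf{C}$, $V=\mathbf{R}^2$, and $FV_K$ with $F^1V_K=\mathbf{C}\cdot(1,i)$ and jumps at $1$ and $0$, so $\mu(V,FV_K)=1/2$. No real line $W$ satisfies $W_K=\mathbf{C}\cdot(1,i)$, so every $k$-subobject has slope $0$ and $(V,FV_K)$ is semistable in ${\rm Fil}_{\mathbf{R}}^{\mathbf{C}}$; yet the $K$-line $\mathbf{C}\cdot(1,i)$ has slope $1$, and by your own reduction the corresponding two-step $\Z$-filtration of $V_K$ violates the inequality. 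Your proposed descent of the maximal destabilizer cannot rescue this: uniqueness does \emph{not} make $W_K^{\ast}$ stable under $k$-automorphisms of $K$, because those automorphisms do not fix $FV_K$ --- here conjugation carries $\mathbf{C}\cdot(1,i)$ to $\mathbf{C}\cdot(1,-i)$, matching the fact that the destabilizer of $\sigma(FV_K)$ is $\sigma(W_K^{\ast})$, not $W_K^{\ast}$. There is also no Galois closure to pass to when $K/k$ is not algebraic, which is exactly the situation occurring in the paper ($K$ an extension of $\breve{E}$, possibly $C$). And the appeal to Faltings--Totaro is misplaced: their theorems are about semistability of tensor products, not about preservation of semistability under field extension, which as the example shows fails. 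The intended reading --- consistent with Example~\ref{general} and Theorem~\ref{totaro1}, where the tested $\lambda$ always lie in $X_*(J)^{\sg_F}$, i.e.\ are defined over the base field --- is that $F'V_K$ denotes the base change of a $\Z$-filtration $F'V$ of $V$ over $k$. With that reading every $V_i$ in your identity is $k$-rational, the reduction lands directly on the definition of semistability in ${\rm Fil}_k^K$, and the descent step disappears entirely.
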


      Let $\Q-{\rm Fil}_k^K$
  be the full subcategory of those $(V,FV_K)$ for which $FV_K$ is a $\Q$-filtration on $V_K$ and let 
  $\Q-{\rm Grad}_K$ be the category of $\Q$-graded finite dimensional $K$-vector spaces.
  There are natural functors, the first being $(V,FV)\to {\rm gr}(FV)$ and the second being the forgetful functor
  $$\Q-{\rm Fil}_k^K\to \Q-{\rm Grad}_K\to {\rm Vec}_K,$$
  as well as a functor $\Q-{\rm Grad}_K\to \Q-{\rm Fil}_k^K$ sending $V=\oplus_{a\in \Q} V_a$ to the filtration $FV_K$ such that $F^xV_K=\sum_{a\geq x} V_a$.

 \subsubsection{Filtrations on ${\rm Rep}_k(G)$} 
 
 Let $K/k$ be a field extension and let $G$ be a linear algebraic group over $k$. Let 
   $\omega^G: {\rm Rep}_k(G)\to {\rm Vec}_k$ be the standard fiber functor, sending $(V,\rho)$ to $V$. 
  Recall that 
  $\mathbb{D}_K$ is the pro-torus over $K$ with character group $\Q$. The functor 
  sending $(V,\rho)\in {\rm Rep}_K(\mathbb{D}_K)$ to $V=\oplus_{a\in \Q} V_a$, where 
   $V_a$ is the weight space of $V$ corresponding to $a\in \Q=X^*(\mathbb{D}_K)$, induces an equivalence of neutral Tannakian categories over $K$ 
    $${\rm Rep}_K (\mathbb{D}_K)\simeq \Q-{\rm Grad}_K.$$
    Any morphism of $K$-group schemes $\mathbb{D}_K\to G_K$ induces therefore a 
  {\it $\Q$-grading of $\omega^G$ over $K$}, i.e., a tensor functor 
      $F: {\rm Rep}_k(G)\to \Q{\rm-Grad}_K$ whose composition with the forgetful  functor 
      $\Q-{\rm Grad}_K\to {\rm Vec}_K$ is $(V,\rho)\to V\otimes_k K$. All 
      $\Q$-gradings of $\omega^G$ over $K$ are thus obtained, thanks to the Tannakian formalism. If
  $G=\mathbb{GL}(V)$ with $V\in {\rm Vec}_k$, giving $F$ comes down to giving a $\Q$-grading of $V\otimes_k K$.   
   
    A {\it $\Q$-filtration of $\omega^G$ over $K$} is a tensor functor 
  $F: {\rm Rep}_k(G)\to \Q-{\rm Fil}_k^K$ whose composition with $\Q-{\rm Fil}_k^K\to \Q-{\rm Grad}_K$ is exact\footnote{The exactness condition is imposed so that filtrations can be described using gradings.} and whose composition with the forgetful functor
  $\Q-{\rm Fil}_k^K\to {\rm Vec}_k$ is $\omega^G$. When 
  $G=\mathbb{GL}(V)$ with $V\in {\rm Vec}_k$,  giving $F$ comes down to giving a $\Q$-filtration of $V\otimes_k K$ \cite[Rem.\ 4.2.11]{DOR}.  
  
   Let ${\rm Grad}_K(\omega^G)$ (resp.\ ${\rm Fil}_K(\omega^G)$) be the set of $\Q$-gradings (reps. $\Q$-filtrations) of $\omega^G$ over $K$. 
  Composition with the natural functor $\Q-{\rm Grad}_K\to \Q-{\rm Fil}_k^K$ yields a natural map
  $${\rm Hom}_K(\mathbb{D}_K, G_K)\simeq {\rm Grad}_K(\omega^G)\to 
  {\rm Fil}_K(\omega^G),$$
 which is surjective when $G$ is reductive or $k$ has characteristic $0$ (see \cite[Th.\ 4.2.13]{DOR}). 
   
    {\it We assume in the sequel that $G$ is reductive}. Two morphisms 
     $\lambda,\lambda':\mathbb{D}_K\to G_K$ are 
 in the same fiber of the map ${\rm Hom}_K(\mathbb{D}_K, G_K)\to 
  {\rm Fil}_K(\omega^G)$ (in which case we say that they are {\it par-equivalent}) if and only if 
 $\lambda'={\rm Int}(g)\lambda$ for some 
 $g\in P(\lambda)(K)$, where 
 $P(\lambda)$ is the parabolic subgroup of $G$ defined over $K$ consisting of those $g\in G$ for which $\lim_{t\to 0} {\rm Int}(\lambda(t)) g$ exists. If $U(\lambda)$ is the unipotent radical of $P(\lambda)$,   
 $\lambda$ and $\lambda'$ are par-equivalent if and only if there is a unique 
    $g\in U(\lambda)(K)$ such that $\lambda'={\rm Int}(g)\lambda$, and then $P(\lambda')=P(\lambda)$
    (however the latter equality does not imply that $\lambda, \lambda'$ are 
           par-equivalent).
    To summarize, we obtain maps
    $$ {\rm Hom}_K(\mathbb{D}_K, G_K)/{\rm par-equivalence}\simeq 
    {\rm Fil}_K(\omega^G) \twoheadrightarrow K-{\rm Par}(G),$$
    where $K-{\rm Par}(G)$ is the set of parabolic subgroups of $G$ defined over $K$.
    
     \subsubsection{Filtrations on isocrystals}    \label{lyon-11} Consider the setup described in the introduction to Section \ref{BG}: $F$ is a finite extension of 
     $\Q_p$, etc.    
     Let $K$ be an extension of $\breve{F}$. An object of the category of 
     filtered isocrystals over $K$ (relative to $\breve{F}/F$) consists of an isocrystal 
     $(V,\varphi)$ over $\breve{F}$ together with a $\Q$-filtration $FV_K$ on
     $V_K$. This category has a good Harder-Narasimhan formalism for the slope function 
     $$\mu(V, \varphi, FV_K)=\frac{1}{\dim V}(\sum_{i \in \mathbf{Z}} i \dim_K {\rm gr}^i_F(V_K)-v_{F}(\det \varphi)),$$
     and its semistable objects of slope $0$ are the weakly admissible filtered isocrystals introduced by Fontaine, and they are stable under tensor products by the theorem of Faltings and Totaro. 
   
     The above slope function can also be described as 
     $$\mu(V,\varphi,FV_K)=\mu(V,FV_K)+\mu(V,F_0V),
$$
where $F_0V$ is the slope filtration (defined by $F^{x}_0V=\bigoplus_{\alpha\leq -x}V_{\alpha}$) associated to the slope decomposition 
 $(V,\varphi)=\bigoplus_{\alpha\in\Q}V_{\alpha}$ of the isocrystal $(V,\varphi)$. 
     
           \subsection{Period domains} We will define now the period domains we will be working with. Let the notation be as at the beginning of Section \ref{BG}.

\subsubsection{Flag varieties and  period domains} 

   Consider  a triple $(G, b, \{\mu\})$, where $G$ is a connected reductive group over $F$, 
        $b\in G(\breve{F})$, and $\{\mu\}$ is a conjugacy class of cocharacters of $G$ over $\overline{F}$. Let $E=E(G,\{\mu\})\subset \overline{F}$ be the associated reflex field\footnote{Recall that $E$ is simply the field of definition of $\{\mu\}$, a finite extension of $F$.}. Let  
        $\mathcal{F}=\mathcal{F}(G, \{\mu\})$ be the associated flag variety, a smooth projective variety over $E$, homogeneous under $G_E$ and whose 
        $\overline{E}= \overline{F}$-points are the par-equivalence classes of elements in 
        $\{\mu\}$. If $G$ is quasi-split, which will be the case in our applications, a result of Kottwitz \cite[Lemma 1.1.3]{K2} shows that $\{\mu\}$ contains elements $\mu$ defined over $E$, and then $\mathcal{F}=G_E/P(\mu)$ (recall that $P(\mu)$ is the associated parabolic subgroup of $G_E$).
       
       Let $\mu'$ be an element of $\{\mu\}$ defined over an extension $K$ of 
       $E$. We say that the {\em pair $(b,\mu^{\prime})$ is weakly admissible}
       if the filtered isocrystal
     $(N_b(V), F_{\rho\mu'}(V_K))$ is weakly admissible for all $(V,\rho)\in {\rm Rep}_F(G)$. By the tensor product theorem it suffices to check this for a single faithful 
     representation $(V,\rho)$.
  
       Let $\breve{E}=E\breve{F}$ and let $\breve{\mathcal{F}}$ be the adic  space attached to 
        $\mathcal{F}\otimes_{E} \breve{E}$. By \cite[Prop.\ 1.36]{RZ} there is a partially proper open subset $\breve{\mathcal{F}}^{\rm wa}=\breve{\mathcal{F}}(G, b, \{\mu\})^{\rm wa}$ of $\breve{\mathcal{F}}$ 
        such that $\breve{\mathcal{F}}^{\rm wa}(K)$  is the set of weakly admissible points in $x\in \breve{\mathcal{F}}(K)$, i.e., points with associated cocharacter $\mu_x\in \{\mu\}$ defined over $K$ and for which the pair $(b, \mu_x)$ is weakly admissible.
  This is the 
        {\it period domain attached to} $(G, b, \{\mu\})$. We will see in the next section, and this will be crucial for the computation of its \'etale cohomology, that the period domain  is of the form 
        $$
       \breve{\sff}^{\rm wa}=\breve{\sff}\setminus \bigcup_{i\in I}J_b(F)Z_i,
        $$
        where $\{Z_i\}_{i\in I}$ is an {\em explicit} finite set of Schubert varieties.

        Up to isomorphism, $\breve{\mathcal{F}}(G, b, \{\mu\})^{\rm wa}$ depends only on 
        $[b]\in B(G)$ (sending $\mu$ to ${\rm Int}(g)(\mu)$ yields an isomorphism 
  $\breve{\mathcal{F}}_b^{\rm wa}\simeq \breve{\mathcal{F}}_{gb\sigma(g)^{-1}}^{\rm wa}$). The group $J_b(F)$ acts naturally on the flag variety $\breve{\mathcal{F}}$ over $\breve{E}$ (via $J_b(F)\subset G(\breve{F})$) 
      and this action restricts to an action on 
        $\breve{\mathcal{F}}^{\rm wa}$. Moreover, if $F=\Q_p$ and $b$ is $s$-decent, the period domain has a canonical model $\mathcal{F}^{\rm wa}\subset \mathcal{F}\otimes_E E_s$ over 
        $E_s:=E\Q_{p^s}$.

         \begin{example} \label{Drinfeld1}
    Take $G=\mathbb{GL}_n$, $[b]=[1]$, and 
  $\{\mu\}=(n-1, -1,-1,\dots,-1)$. The corresponding period domain is 
  $\mathbb{P}^{n-1}_{F}\setminus \cup_{H\in \mathcal{H}} H$, where $\mathcal{H}$ is the set of $F$-rational hyperplanes, i.e., the Drinfeld symmetric space of dimension 
  $n-1$.        
     \end{example}
\subsubsection{Existence of weakly admissible filtrations} 
       Fontaine and Rapoport in \cite[Th.\ 3]{FR} found a 
        simple criterion for the existence of weakly admissible filtrations on isocrystals; this result was extended in \cite[Th.\ 9.5.10]{DOR} (see also \cite[Prop.\ 3.1]{RV}).   
          To present it, we assume, for simplicity, that $G$ is quasi-split (see \cite[Sec.\ 3]{RV} for the general case) and we recall that the set of acceptable elements for $\{\mu\}$ \cite[2.2]{RV} is 
        $$A(G, \{\mu\}):=\{[b]\in B(G)| \nu_b\leq \mu^{\diamond}\},$$
    a finite nonempty set, intersecting nontrivially with $B(G)_{\rm basic}$ (see
 \cite[Lemma 2.5]{RV}). Here $\mu^{\diamond}$ (denoted $\bar{\mu}$ in \cite{RV}) is the average of 
 the cocharacters in the $\mathcal{\rm Gal}(\bar{F}/F)$-orbit of $\mu$.

        The following result is a generalization (and a group-theoretical reformulation)  of Mazur's  ``the Hodge polygon lies above the Newton polygon'' property of isocrystals.
       \begin{theorem}[Fontaine-Rapoport, {\cite[Th.\ 9.5.10]{DOR}}]
   The space $\breve{\mathcal{F}}(G,b,\{\mu\})^{\rm wa}$ is nonempty if and only if $[b]\in A(G, \{\mu\})$.
     \end{theorem}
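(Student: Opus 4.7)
\medskip

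The plan is to handle the two implications separately, both relying on the Faltings--Totaro theorem on tensor products of semistable filtered isocrystals and on the group-theoretic version of Mazur's inequality due to Rapoport--Richartz.

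For the easy direction, namely that $\breve{\mathcal{F}}(G,b,\{\mu\})^{\rm wa}\neq\emptyset$ implies $[b]\in A(G,\{\mu\})$, I would take a weakly admissible point $x\in\breve{\mathcal{F}}^{\rm wa}(K)$ represented by some $\mu'\in\{\mu\}$ defined over $K$. By definition, for every $(V,\rho)\in{\rm Rep}_F(G)$ the filtered isocrystal $(N_b(V),F_{\rho\mu'}V_K)$ is weakly admissible, hence semistable of slope zero. Applied to a faithful $(V,\rho)$ and to all its tensor constructions (where semistability is preserved by Faltings--Totaro), the usual Mazur inequality for $\mathbb{GL}_n$ yields that the Hodge polygon attached to $\rho\circ\mu'$ lies above the Newton polygon attached to $\rho(b)$, with coinciding endpoints. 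Since this holds functorially in $(V,\rho)$, one gets the group-theoretic Mazur inequality $\nu_b\leq\mu^{\diamond}$ in $\mathcal{N}(G)$, i.e.\ $[b]\in A(G,\{\mu\})$.

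For the hard direction, assuming $[b]\in A(G,\{\mu\})$, I would construct a weakly admissible filtration on $N_b$. I would first reduce to the basic case: by \cite{RV}, the set $A(G,\{\mu\})\cap B(G)_{\rm basic}$ is nonempty, and one can relate the general case to the basic one by comparing the period domain attached to $[b]$ with that attached to a suitable basic class through the Levi $M_b$ (the centralizer of $\nu_b$), of which $J_b$ is an inner form. In the basic case, the key point is the interpretation of weak admissibility as a semistability condition for the natural $J_b(F)$-action on $\breve{\mathcal{F}}$ with linearization prescribed by $\{\mu\}$; by the Hilbert--Mumford--Kempf criterion, $x\in\breve{\mathcal{F}}$ is semistable iff the slope function $\mu(x,\lambda)\geq 0$ for every one-parameter subgroup $\lambda$ of $J_b$ defined over $F$. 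One must then exhibit a point $x$ of type $\{\mu\}$ satisfying this numerical condition, and Proposition~4.7 reduces the task to a bound on inner products $\langle F_{\mu_x},F'\rangle$ against all competing $\mathbf{Z}$-filtrations $F'$.

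The main obstacle is this last existence statement: one must translate the polygon inequality $\nu_b\leq\mu^{\diamond}$ into the pointwise Hilbert--Mumford positivity of $\mu(x,\lambda)$ at some single $x$. My approach would be to analyze $\breve{\mathcal{F}}$ stratum by stratum via its Bruhat decomposition under $J_b$, describing explicitly the Schubert-type closed subsets $Y_\lambda\subset\breve{\mathcal{F}}$ where a given $\lambda$ violates semistability, and to show by a convexity/counting argument on the cocharacter lattice (using that the slope function is affine on each chamber of the spherical building $\mathcal{B}(J_{\rm der})$, exactly as invoked later in the paper for the stratification of $\partial\breve{\mathcal{F}}^{\rm wa}$) that the union $\bigcup_\lambda J_b(F)\cdot Y_\lambda$ is a proper subset of $\breve{\mathcal{F}}$ precisely under the hypothesis $\nu_b\leq\mu^{\diamond}$. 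The complement then provides the desired weakly admissible point.
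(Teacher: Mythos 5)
The paper does not prove this theorem; it simply cites Fontaine--Rapoport \cite{FR} and \cite[Th.~9.5.10]{DOR}, so there is no in-paper proof to compare against. Evaluated on its own terms, your easy direction is broadly correct, though the step from the $\mathbb{GL}(V)$-Mazur inequality (applied to a faithful $\rho$ and its tensor constructions, via Faltings--Totaro) to $\nu_b\leq\mu^\diamond$ in $\mathcal{N}(G)$ invokes a nontrivial comparison of dominance orders due to Rapoport--Richartz, which should be cited rather than absorbed into ``functoriality.''

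Your hard direction has a genuine gap. The assertion that $\bigcup_\lambda J_b(F)\cdot Y_\lambda\subsetneq\breve{\mathcal{F}}$ precisely when $\nu_b\leq\mu^\diamond$ \emph{is} the theorem once the Hilbert--Mumford criterion is granted, so appealing to ``a convexity/counting argument on the cocharacter lattice'' is not a proof but a restatement of the goal. Affineness of the slope function on chambers of $\sbb(J_{\rm der})$ does reduce the Hilbert--Mumford test to the coweights $\omega_\alpha$, $\alpha\in\Delta$, and their $J_b(F)$-conjugates, but that alone does not produce a semistable point. What must be supplied is an explicit translation of $\nu_b\leq\mu^\diamond$ into positivity of $\mu^{\sll}(x,-)$ at some $x$: either a combinatorial construction of a weakly admissible cocharacter $\mu'\in\{\mu\}$ on $N_b$ (as Fontaine--Rapoport actually do), or the computation of $\mu^{\sll}(\eta,\omega_\alpha)$ at the generic point $\eta$ of $\mathcal{F}$ via the inner product formula $\mu^{\sll}(x,\lambda)=-(\langle F_x,F_\lambda\rangle+\langle F_0,F_\lambda\rangle)$, identifying the contribution of the longest Kostant representative and showing nonnegativity exactly under $\nu_b\leq\mu^\diamond$. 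Your sketch supplies neither. Finally, the opening of your ``reduction to the basic case'' is a red herring: the nonemptiness of $A(G,\{\mu\})\cap B(G)_{\rm basic}$ concerns a \emph{different} $\sigma$-conjugacy class and plays no role; what is actually used is that the given $b$ is already basic inside $M_b=Z_G(\nu_b)$, reducing the nonemptiness problem for $(G,b,\{\mu\})$ to the basic case for $M_b$.
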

      \subsubsection{Local Shtuka datum}    
  The following notion will be useful in the rest of the  paper. It is a modification of the notion of a local Shimura datum of Rapoport-Viehmann \cite[Def.\ 5.1]{RV}, where we drop the assumption that $\mu$ is minuscule and allow $[b]$ to be just acceptable (instead of neutral acceptable). 
 \begin{definition}
A {\em local Shtuka datum} over $F$ is a triple $(G,[b],\{\mu\})$ consisting of a connected reductive algebraic group $G$ over $F$, a $\sigma$-conjugacy class  $[b]\in B(G)$, and   a geometric conjugacy class $\{\mu\}$ of cocharacters of $G$ defined over $\overline{F}$. We assume that
  $[b]\in A(G,\{\mu\})$. 
 \end{definition} 
 As discussed above, associated to a local Shtuka datum are the following data:
 \begin{enumerate}
 \item the algebraic group $J=J_b$ over $F$, for $b\in [b]$,
 \item the reflex field $E=E(G,\{\mu\})$,
 \item the flag variety $\sff=\sff(G,\{\mu\})$ over $E$,
 \item the period domain $\breve{\sff}^{\rm wa}=\breve{\sff}(G,b,\{\mu\})^{\rm wa}$ over $\breve{E}$.
\end{enumerate}
 
    \begin{remark}
               As explained in \cite[Sec.\ IX.8]{DOR}, assuming $F=\Q_p$ is not really a restriction since Weil descent allows one to pass from a general $F$ to $\Q_p$ (this is similar to the situation for Shimura varieties, where restriction of scalars allows one to reduce the study to groups defined over $\Q$). In particular, we can get the Drinfeld symmetric space over general $F$ as a period domain in this context as well (see \cite[Example 9.8.9]{DOR}). {\em For simplicity, we will work from now on with $F=\Q_p$}.
               \end{remark}

   \section{The geometry of  complements of period domains} 
   Via the embeddings of  period domains into flag varieties, we will reduce the computation of the cohomology of a period domain to that of its complement in the flag variety. The period domain is a locus of semistability and its complement can be stratified by Schubert varieties given by the  degree to which this semistability fails\footnote{This stratification shares many properties with the Harder-Narasimhan stratification of the space of vector bundles over a Riemann surface.}. This section describes this stratification.
  \subsection{The Hilbert-Mumford criterion}\label{Hil-Mum}
 
    Suppose that a connected reductive group $G$ over a field $k$ acts on a proper
    algebraic variety $X$ over $k$. The Hilbert-Mumford  criterion \cite{GIT} describes the semistable points of this action via eigenvalues of $1$-parameter subgroups (1-{\rm PS}s, for short). We will review it briefly. 
    
    Let $\sll\in {\rm Pic}^G(X)$ be a $G$-equivariant  line bundle on $X$. 
   If $\lambda\in X_*(G)$ is defined over $k$, Mumford defined the {\em slope} of  $x\in X(k)$ with respect to $\lambda$  and $\sll$, denoted $\mu^{\sll}(x,\lambda)\in \Z$. If
   $x_0=\lim_{t\to 0} \lambda(t)x$ (this limit exists since, by properness of 
   $X$, the map $\mathbb{G}_m\to X, t\to \lambda(t)x$ extends to $\mathbb{A}^1$), then 
   $\mu:=\mu^{\sll}(x,\lambda)$ is characterized by the fact that $\lambda$ acts by the character 
   $t\to t^{-\mu}$ on the fiber $\sll_{x_0}$ (note that $x_0$ is fixed under the action of 
   $\mathbb{G}_m$, so this makes sense). 
   
      Since $$\mu^{\sll_1\otimes \sll_2}(x, \lambda^r)=r(\mu^{\sll_1}(x, \lambda)+\mu^{\sll_2}(x, \lambda))$$
      for $r\in \Z_{>0}$, 
 the previous definition extends 
   to $\sll\in {\rm Pic}^G(X)_{\Q}$ and $\lambda\in X_*(G)_{\Q}$. Moreover, we have 
      $\mu^{\sll}(gx, \lambda)=\mu^{\sll}(x, g^{-1}\lambda g)$ for $g\in G(k)$ and, most importantly  \cite[Prop.\ 2.7]{GIT}
$$\mu^{\sll} (\alpha x, \lambda)=\mu^{\sll}(x,\lambda),\quad \alpha\in P(\lambda)(k).$$
The construction has good functoriality properties: we have $
\mu^{f^*(\sm)} ( x, \lambda)=\mu^{\sm} (f(x), \lambda)
$ for a $G$-morphism $f:X\to Y$ of $G$-varieties and $\sm \in {\rm Pic}^{G}(Y)_{\Q}$. Moreover, 
 if $X=X_1\times\cdots\times X_m$ is a product of $G$-varieties and $\sll _i\in {\rm Pic}^G(X_i)_{\Q}$ then $$
\mu^{\sll_1\boxtimes\cdots\boxtimes\sll_m} ((x_1,\dots, x_m), \lambda)=\sum_{i=1}^m\mu^{\sll_i} (x_i, \lambda).
$$

      Let now $k$ be algebraically closed. Recall that the
      (open) semistable locus $X^{\rm ss}(\sll)$ in $X$ can be defined as the set of points $x\in X(k)$
   having an affine open neighborhood of the form $X_f=\{f\ne 0\}$ with $f\in \Gamma(X, \sll^{\otimes n})^G$ for some $n$.
   We have the  following Hilbert-Mumford  numerical criterion: 
   \begin{theorem}[Hilbert-Mumford]
   If $k$ is algebraically closed and $\sll$ is ample then, for $x\in X(k)$, 
   $$
   x\in X^{\rm ss}(\sll) \Longleftrightarrow \mu^{\sll}(x,\lambda)\geq 0 \text{ for all }
   \lambda\in X_*(G).
$$
   \end{theorem}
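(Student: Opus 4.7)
The plan is to split the equivalence into the two implications, of which the first is essentially formal and the second is the geometric heart of the theorem.

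For the direction $(\Rightarrow)$, I would argue directly from the definition of semistability. Given $x\in X^{\rm ss}(\sll)$, choose $n\geq 1$ and a $G$-invariant section $f\in \Gamma(X,\sll^{\otimes n})^G$ with $f(x)\neq 0$. Fix $\lambda\in X_*(G)$; by properness of $X$ the limit $x_0=\lim_{t\to 0}\lambda(t)x$ exists in $X(k)$, and the $G$-invariance of $f$ together with continuity force $f(x_0)\neq 0$. The point $x_0$ is $\mathbb{G}_m$-fixed via $\lambda$, and by the very definition of the slope $\lambda$ acts on the one-dimensional fiber $\sll^{\otimes n}_{x_0}$ by the character $t\mapsto t^{-n\mu^{\sll}(x,\lambda)}$. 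But $f(x_0)$ is a nonzero vector in this fiber fixed by $\mathbb{G}_m$ (since $f$ is invariant and $x_0$ is fixed), so the character must be trivial, giving $\mu^{\sll}(x,\lambda)=0\geq 0$.

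For the harder direction $(\Leftarrow)$, my plan is a three-step reduction following Mumford's original approach. First, using that $\sll$ is ample, after replacing $\sll$ by a positive power I embed $X\hookrightarrow \mathbb{P}(V)$ $G$-equivariantly for a finite-dimensional $G$-representation $V$, with $\sll\cong \so_{\mathbb{P}(V)}(1)|_X$; by the functoriality of slopes recorded above, it suffices to prove the implication for $X=\mathbb{P}(V)$. Second, I would lift $x=[v]$ with $v\in V\setminus\{0\}$ and invoke the classical identification
\[
x\in X^{\rm ss}(\so(1)) \ \Longleftrightarrow\ 0\notin \overline{G\cdot v}\subset V,
\]
which follows since $G$-invariant sections of $\so(n)$ are exactly $G$-invariant homogeneous polynomials on $V$, and any such polynomial separating $0$ from $Gv$ produces the required affine open neighborhood. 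Third, if $x\notin X^{\rm ss}$, so $0\in\overline{Gv}$, I invoke the Kempf--Rousseau--Hesselink theorem to produce a single 1-PS $\lambda\in X_*(G)$ with $\lim_{t\to 0}\lambda(t)v=0$ in $V$. Decomposing $v=\sum_{i}v_i$ along the $\lambda$-weight spaces, this limit condition forces every nonzero $v_i$ to satisfy $i>0$, and then the limit point $x_0\in\mathbb{P}(V)$ is the class of the smallest-weight component, on whose associated line of $\so(1)$ the 1-PS $\lambda$ acts with strictly negative character; after the sign convention matching the definition of the slope (replacing $\lambda$ by $\lambda^{-1}$ if needed), this yields $\mu^{\sll}(x,\lambda)<0$, contradicting the hypothesis.

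The main obstacle is unambiguously Step~3, the Kempf--Rousseau reduction, which is the real content of Hilbert--Mumford and cannot be proved by a soft argument. Its proof goes by equipping $X_*(G)_\Q$ with a Weyl-invariant length function and running an optimization that selects an \emph{optimal} destabilizing 1-PS among those $\lambda$ with $\lim_{t\to 0}\lambda(t)v=0$; the existence of a minimizer uses the properness of the flag variety $G/P(\lambda)$, and the parabolic $P(\lambda)$ built from the optimal $\lambda$ turns out to be canonically attached to $v$ (and in particular defined over $k$), so that $\lambda$ may be taken in $X_*(G)$ rather than in $X_*(G_{\overline{k}})$. Once this is granted the remaining steps are essentially convex analysis inside a maximal torus together with the functoriality properties of $\mu^{\sll}$ recalled in the preceding discussion.
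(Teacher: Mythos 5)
The paper states this theorem without proof, citing Mumford's book \cite{GIT}, so your sketch is on its own. The $(\Rightarrow)$ direction has a genuine gap: you claim that $G$-invariance of $f$ together with continuity force $f(x_0) \neq 0$. That would be true if $f$ were a function, but $f$ is a section of $\sll^{\otimes n}$: its values at distinct points live in distinct fibers, and the $G$-stable locus $X_f = \{f \neq 0\}$ is open, not closed, so the boundary point $x_0 \in \overline{\mathbb{G}_m \cdot x}$ can escape it. For instance take $X = \mathbb{P}^1$ with $G = \mathbb{G}_m$ acting on $V = k^2$ by $t\cdot(a,b) = (a,tb)$, $f$ the weight-zero coordinate of $V^*$, $x = [1:1]$, $\lambda(t) = t^{-1}$; then $x_0 = [0:1]$ and $f(x_0) = 0$, although $x$ is semistable. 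The correct argument is the contrapositive: if $\mu^{\sll}(x,\lambda) < 0$ then, in a $G$-equivariant projective embedding with a lift $v$ of $x$, every nonzero $\lambda$-weight component of $v$ has positive weight, so $\lim_{t\to 0}\lambda(t)v = 0$ in $V$ and every positive-degree $G$-invariant homogeneous polynomial vanishes at $v$, whence $x$ is unstable.

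Your $(\Leftarrow)$ reduction to the claim that $0 \in \overline{Gv}$ implies the existence of a 1-PS $\lambda$ with $\lim_{t\to 0}\lambda(t)v = 0$ is standard, and invoking Kempf--Rousseau--Hesselink as a black box for this existence is legitimate. But the proof sketch in your last paragraph is circular: the optimization over a Weyl-invariant norm picks a canonical \emph{optimal} $\lambda$ among the destabilizing ones, hence \emph{presupposes} that this set is non-empty --- and that non-emptiness is exactly what you are trying to show. The existence is Mumford's actual Hilbert--Mumford argument, proved via the valuative criterion of properness plus Iwahori's theorem (Cartan decomposition of $G(k((t)))$) to upgrade a formal arc reaching $0$ into an honest 1-PS; Kempf's optimization is a later refinement giving uniqueness and good rationality properties. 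Finally, since $k$ is algebraically closed in this statement, the rationality argument you sketch (ensuring $\lambda \in X_*(G)$ rather than $X_*(G_{\overline{k}})$) is moot; that issue is relevant to the refined Hilbert--Mumford criterion over non-closed fields, as in Theorem~\ref{totaro1} below.
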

  \subsection{A Hilbert-Mumford criterion for weak admissibility }   Since weak admissibility was defined as a semistability condition in Section \ref{lyon-11} we can test it using a Hilbert-Mumford criterion once the right linearization of the group action is defined. We will now describe it.
        For the rest of this section  
            $G$ will be a reductive group over a perfect field $k$. 
           
           \subsubsection{Invariant inner products}  An {\it invariant inner product on $G$} (IIP for short) 
            is the data of a positive-definite bilinear form
           on $X_*(T)_{\Q}$ for each maximal torus $T$ of $G$ (defined over $\bar{k}$), compatible\footnote{ That is,  such that the maps 
           ${\rm Int}(g): X_*(T)_{\Q}\to X_*(gTg^{-1})_{\Q}$ and 
           $\tau: X_*(T)_{\Q}\to X_*(^{\tau}T)_{\Q}$, $^{\tau}T=\tau T\tau^{-1}$,  induced by any $g\in G(\bar{k})$ and
           $\tau\in \sg_k$ are isometries.} with the action of 
           $G(\bar{k})$ and $\sg_k$. These objects are standard in GIT and they seem to go back at least to Kempf's celebrated paper 
           \cite{Kempf}. 
                      
             One can describe IIP's on $G$ in terms of a fixed 
            maximal torus $T_0$ of 
           $G$ (defined over $\bar{k}$) as follows. Let $W=W(G,T_0)$ be the 
            associated Weyl group and consider the {\it L-action}\footnote{Explicitly, pick a Borel subgroup
           $B_0$ of $G$ defined over $\bar{k}$ and containing $T_0$; if $\tau\in \sg_k$, one can find 
           $g\in G(\bar{k})$, unique up to left translation by $T_0(\bar{k})$, such that $g^{\tau}T_0g^{-1}=T_0$ and 
           $g^{\tau}B_0g^{-1}=B_0$, and then ${\rm Int}(g) \sigma$ is an automorphism of $X_*(T_0)$ independent of the choice of $g$ and $B_0$ and this defines the $L$-action.} of 
           $\sg_k$ on $X_*(T_0)$ (if $G$ is quasi-split over $k$ and $T_0$ is defined over $k$, this is the usual action of $\sg_k$ on $X_*(T_0)$). 
           The conjugacy of maximal tori in $G$ implies that 
            IIP's on 
           $G$ correspond to $\sg_k\ltimes W$-invariant inner products on $X_*(T_0)_{\Q}$, 
           hence any $G$ has an IIP (the action of $\sg_k$ factors through a finite quotient).
           
            When $G$ is semisimple there is  a natural choice of an IIP on $G$  corresponding to the inner product on $X_*(T_0)_{\Q}$    given by the Killing form:      
           $$\spp(\lambda, \lambda')=\sum_{\chi} \langle \lambda, \chi\rangle \langle \lambda', \chi\rangle,$$
           the sum being taken over the roots of $T_0$. In general, \cite[6.2.4]{DOR} shows that any 
           IIP on $G$ is the orthogonal direct sum of IIP's on the torus $Z(G)^0$ and on $G_{\rm der}$. Moreover, on 
           a $k$-simple semisimple group any IIP is a positive multiple of the Killing form; on the other hand, any 
           $\sg_k$-invariant inner product on 
           $X_*(T)_{\Q}$ is an IIP on the torus $T$, so there is no canonical choice in this case.  
           
           \subsubsection{A $\Q$-linearization of the $G$-action: the Hodge filtration}
           
            We will explain now how to construct the ample equivariant line bundle needed to apply the Hilbert-Mumford criterion. We start with a simple but crucial case. 
           
           \begin{example}\label{general}Let $G=\mathbb{GL}_n$, with diagonal torus $T$ and the standard IIP $\spp$ on $X_*(T)_{\Q}=\Q^n$.
           Let 
           $\mu(t)=(t^{a_1},\dots,t^{a_1},\dots,t^{a_r},\dots,t^{a_r})$, $t^{a_i}$ appearing $n_i$ times and 
             $a_1>\cdots >a_r$ being integers. Then $\mathcal{F}(G, \{\mu\})=G/P$, $P$ being the upper triangular parabolic with Levi $M=\prod_{i=1}^r \mathbb{GL}_{n_i}$. 
             We define a $G_{\overline{k}}$-equivariant line bundle on $\sff_{\overline{k}}$ by
             $$\mathcal{L}_{G,  \{\mu\},\spp}=G\times^{P} {\mathbb{G}_{a, \lambda}},$$
             where
             $\lambda\in X^*(M)=X^*(P)$ is defined by $\lambda(g_1,\dots,g_r)=\prod_{i=1}^r (\det g_i)^{-a_i}$ for $(g_1,\dots,g_r)\in M$.
             The minus sign appears here to get an ample line bundle. We can interpret this geometrically as follows. Let 
             $V=k^n$. Then $\mathcal{F}:=\mathcal{F}(G, \{\mu\})$ is the variety (over $k$) of partial flags of $V$, of type $(n_1, n_1+n_2,\dots, n_1+\cdots+n_r=n)$ and so $\mathcal{F}$ is naturally a closed subvariety 
             $\sff\hookrightarrow \prod_{i=1}^r X_i$ 
             of a product of Grassmanians
             $X_i:= \mathbb{G}\mathbbm{r}_{n_1+\cdots+n_i}(V).
       $ Each $X_i$ has a natural very ample $\mathbb{GL}(V)$-equivariant line bundle $\mathcal{L}_i$ 
        obtained from $\so(1)$ via the Pl\"ucker embedding $X_i\hookrightarrow \mathbb{P}(\wedge^{n_1+\cdots+n_i}(V))$. 
        Then $\mathcal{L}_{G,  \{\mu\},\spp}$ is isomorphic (as equivariant line bundle) to the restriction of $\mathcal{L}_1^{\otimes (a_1-a_2)}\boxtimes \mathcal{L}_2^{\otimes (a_2-a_3)}\boxtimes\cdots\boxtimes \mathcal{L}_r^{\otimes a_r}$. 
              If 
              $x\in \mathcal{F}(K)$ is a point corresponding to a filtration $F_x$ of 
              $V_K$, then the fiber of $\mathcal{L}=\mathcal{L}_{G,  \{\mu\},\spp}$ at $x$ is 
              $\otimes_{i} \det({\rm gr}^{a_i}(F_x))^{-\otimes a_i}$. We deduce immediately from this 
           (see \cite[Lemma 2.2.1, Lemma 2.2.2]{DOR}) that for any point $x\in \breve{\sff}(K)$ ($K$ being an extension of $\breve{E}$), and any 
                           $\lambda\in X_*(G)^{\sg_F}$ with associated filtration $F_{\lambda}$, we have  
              $$\mu^{\mathcal{L}}(x,\lambda)=-\langle F_x, F_{\lambda}\rangle.$$
                In particular the Hilbert-Mumford criterion is nothing but Corollary \ref{dublin1} in this special case.

\end{example}
      
            \begin{proposition} \label{kolo2} Given a connected reductive $k$-group $G$, an element $\{\mu\}\in X_*(G)_{\Q}/G$ and 
        an IIP $\spp$ on $G$, one can naturally construct a $\Q$-line bundle
            $$\mathcal{L}_{G,  \{\mu\},\spp}\in {\rm Pic}^{G, \mathrm{ample}} (\mathcal{F}(G,\{\mu\}))_{\Q}$$
                            defined over the reflex field of $(G,\{\mu\})$ and such that: 
             \begin{enumerate}
                       \item   If $\iota: G\to G'$ is a closed immersion defined over $k$ and 
             $\spp$ is induced by an IIP $\spp'$ on $G'$, then $\mathcal{L}_{G,  \{\mu\},\spp}$ is the restriction of $\mathcal{L}_{G', \iota\{\mu\},\spp^{\prime}}$ via the closed immersion 
             $\sff(G,\{\mu\})\hookrightarrow \sff(G', \iota \{\mu\})\otimes_{E'}E$, where $E,E'$ are the corresponding reflex fields.
             \item If $G=\mathbb{GL}_n$ and $\spp$ is the standard invariant inner product, $\mathcal{L}_{G,  \{\mu\},\spp}$ is the one in Example \ref{general}.
                              \end{enumerate}      
                         \end{proposition}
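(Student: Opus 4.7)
\smallskip

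\noindent\textbf{Proof proposal.} The plan is to build the line bundle from a canonical character of $P(\mu)$ manufactured out of the inner product $\mathfrak{p}$, and then to verify the listed properties. First I would pick a representative $\mu\in\{\mu\}$ and a maximal torus $T$ of $G$ (over $\bar k$) through which $\mu$ factors, so that $\mathcal{F}(G,\{\mu\})_{\bar k}=G_{\bar k}/P(\mu)$. The IIP restricts to a positive-definite form on $X_*(T)_{\mathbf Q}$, which yields a canonical $\mathbf Q$-linear identification $X_*(T)_{\mathbf Q}\iso X^*(T)_{\mathbf Q}$, and I would define
\begin{equation*}
\chi_\mu := -\mathfrak{p}(\mu,-)\in X^*(T)_{\mathbf Q},
\end{equation*}
the sign being chosen to obtain ampleness downstream (compare Example \ref{general}).

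Next I would extend $\chi_\mu$ to a rational character of $P(\mu)$. Since $\mathfrak{p}$ is invariant under the absolute Weyl group $W$ and $\mu$ is fixed by the stabilizer $W_\mu\subset W$ (which is the Weyl group of the Levi $M(\mu)=Z_G(\mu)$ of $P(\mu)$), the character $\chi_\mu$ is $W_\mu$-invariant, hence descends uniquely to an element of $X^*(M(\mu))_{\mathbf Q}\simeq X^*(P(\mu))_{\mathbf Q}$. Clearing denominators gives a genuine character of $P(\mu)$ whose associated line bundle on $G_{\bar k}/P(\mu)$ then defines $\mathcal{L}_{G,\{\mu\},\mathfrak{p}}\in\mathrm{Pic}^G(\mathcal{F}(G,\{\mu\})_{\bar k})_{\mathbf Q}$. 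Conjugating $\mu$ yields a canonically isomorphic datum, so the line bundle depends only on $\{\mu\}$. For the descent to the reflex field $E$, I would invoke Galois-equivariance: the action of $\sg_E$ preserves $\{\mu\}$ and (by definition of an IIP) is compatible with $\mathfrak{p}$, so the whole construction is $\sg_E$-equivariant and hence descends to $E$ by faithfully flat descent for equivariant line bundles on $\mathcal{F}(G,\{\mu\})$.

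The functoriality statement (1) would then be immediate from the construction: a closed immersion $\iota:G\hookrightarrow G'$ induces $\mathcal{F}(G,\{\mu\})\hookrightarrow\mathcal{F}(G',\iota\{\mu\})\otimes_{E'}E$ carrying $P(\mu)$ into $P(\iota\mu)$, and the defining characters match because $\mathfrak{p}(\mu,\nu)=\mathfrak{p}'(\iota\mu,\iota\nu)$ by hypothesis; hence the pullback of the $G'$-equivariant character equals the $G$-equivariant one, so the line bundles agree. Statement (2) is a direct comparison with Example \ref{general}: for $G=\mathbb{GL}_n$ with standard $\mathfrak{p}$ and $\mu(t)=\mathrm{diag}(t^{a_1},\dots,t^{a_n})$, one has $\chi_\mu(\mathrm{diag}(t_1,\dots,t_n))=\prod t_i^{-a_i}$, which is exactly the character of $P$ used in \ref{general}.

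The main obstacle is ampleness, which does not read off the character formula directly. The strategy I would use is to reduce to the $\mathbb{GL}_n$-case via (1): choose a faithful representation $\rho:G\hookrightarrow \mathbb{GL}(V)$ such that $\mathfrak{p}$ is the restriction of the standard IIP $\mathfrak{p}_{\mathrm{std}}$ on $\mathbb{GL}(V)$—such representations exist because any $\sg_k\ltimes W$-invariant inner product on $X_*(T_0)_{\mathbf Q}$ can be realized as the trace form of a suitable sum of irreducible $G$-representations with appropriate multiplicities (this is where genuine work is needed). By (1), $\mathcal{L}_{G,\{\mu\},\mathfrak{p}}$ is then the restriction of $\mathcal{L}_{\mathbb{GL}(V),\{\rho\mu\},\mathfrak{p}_{\mathrm{std}}}$ along the closed immersion of flag varieties; by (2) and Example \ref{general} the latter is very ample (restriction of a tensor product of Pl\"ucker polarizations), and restriction of ample to a closed subvariety is ample, so $\mathcal{L}_{G,\{\mu\},\mathfrak{p}}$ is ample. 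In the (unlikely) event that the above realization of $\mathfrak{p}$ is not available for a given $G$, I would argue directly from Borel--Weil using the positivity $\chi_\mu(\alpha^\vee)=-\mathfrak{p}(\mu,\alpha^\vee)$ for simple coroots $\alpha^\vee$ of the nilradical of the opposite parabolic of $P(\mu)$, whose sign is controlled by the Cauchy--Schwarz inequality applied to $\mu$ and $\alpha^\vee$.
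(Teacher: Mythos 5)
Your construction of $\chi_\mu=-\mathfrak{p}(\mu,-)$ via the IIP-induced identification $X_*(T)_{\mathbf Q}\cong X^*(T)_{\mathbf Q}$, the $W_\mu$-invariance argument giving descent to $X^*(P(\mu))_{\mathbf Q}$, and the Galois-equivariance for descent to the reflex field are all in line with what the paper does; the paper itself simply references the discussion in \cite[6.2]{DOR}, where the ``recipe'' is exactly $\lambda\mapsto G\times^P\mathbb G_{a,-\lambda^*}$ with $\lambda^*=\mathfrak{p}(\lambda,-)$, and your verifications of (1) and (2) match it. So the framework is the same.

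The gap is in your treatment of ampleness, and both of your two suggested arguments need repair. For the first: not every $\mathbf Q$-valued IIP on a reductive group is literally a trace form of a faithful representation — trace forms on a $k$-simple factor span only the (integer) multiples of the Killing form by Dynkin indices, and on a torus factor they land in a specific lattice — so you cannot in general choose $\rho:G\hookrightarrow\mathrm{GL}(V)$ such that $\mathfrak{p}=\rho^*\mathfrak{p}_{\mathrm{std}}$. What \emph{is} true is that some positive integer multiple $N\mathfrak{p}$ is a sum of trace forms (this uses the decomposition of any IIP as an orthogonal direct sum over $Z(G)^0$ and the $k$-simple factors, cf.\ \cite[6.2.4]{DOR}, together with the fact that Dynkin indices generate a finite-index subgroup of $\mathbf Z$), and since $\mathcal L_{G,\{\mu\},N\mathfrak{p}}=\mathcal L_{G,\{\mu\},\mathfrak{p}}^{\otimes N}$ and ampleness of $\mathbf Q$-line bundles is preserved under positive rational scaling, the reduction to $\mathrm{GL}_n$ via (1) then goes through. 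You should state this explicitly. For the second: the Cauchy--Schwarz inequality bounds $|\mathfrak{p}(\mu,\alpha^\vee)|$ in terms of $\mathfrak{p}(\mu,\mu)$ and $\mathfrak{p}(\alpha^\vee,\alpha^\vee)$; it says nothing about the \emph{sign}, which is what ampleness requires. The correct positivity argument is: after projecting $\mu$ to $X_*(T\cap G_{\mathrm{der}})_{\mathbf Q}$ (possible because the IIP splits orthogonally over $Z(G)^0$ and $G_{\mathrm{der}}$, so $\mathfrak{p}(\mu,\alpha^\vee)$ only sees the derived part), the restriction of $\mathfrak{p}$ is a positive linear combination of Killing forms on the $k$-simple factors, and the Killing form pairs $\mu$ with $\alpha^\vee$ with the same sign as $\langle\alpha,\mu\rangle$; hence $\chi_\mu(\alpha^\vee)=-\mathfrak{p}(\mu,\alpha^\vee)$ has the sign needed for ampleness precisely for those simple roots $\alpha$ in the nilradical of the opposite parabolic. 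That is a statement about the classification of Weyl-invariant inner products, not Cauchy--Schwarz.
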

                         
                         \begin{proof}
                          This follows from the discussion preceding 
                  \cite[Th.\ 6.2.8]{DOR} (for example,  \cite[Lemma 6.2.5]{DOR}). The key point is that an IIP $\spp$ on $G$ induces one on 
                           the Levi quotient $M$ of any parabolic subgroup $P$ of $G$ in a natural way (since any maximal torus of $M$ is an isomorphic image of a maximal torus of $G$ contained in $P$), which allows one to associate to 
                            any $\lambda\in X_*(G)$ an element $\lambda^*\in X^*(P(\lambda))_{\Q}$. The recipe is then induced by sending $\lambda$ to 
            $\mathcal{L}_{\lambda}=G\times^{P}{\mathbb{G}_{a, -\lambda^{*}}}$, the $G$-equivariant ${\Q}$-line bundle on 
            $G/P(\lambda)$ corresponding to $-\lambda^{*}$, the minus sign being chosen to ensure that 
            $\mathcal{L}_{\lambda}$ is ample.                     
                         \end{proof}
                         
                 \subsubsection{A $\Q$-linearization of the $J$-action: the Hodge and the slope filtrations}
               Let now $k=\Q_p$ and 
                consider a local shtuka datum $(G, [b], \{\mu\})$ over $F=\Q_p$.
            Fix an IIP $\spp$ on 
            $G$ and a {\em decent} $b\in G(\breve{\Q}_p)$, with associated automorphism group $J=J_b$ and slope morphism
            $\nu_b$.
            The element $\lambda_b:=-\nu_{b}\in {\rm Hom}_{\breve{\Q}_p}(\mathbb{D}_{\breve{\Q}_p}, G_{\breve{\Q}_p})$ gives rise to 
   an ample $G_{\breve{\Q}_p}$-equivariant $\Q$-line bundle $\sll_b:=\sll_{G,\{\lambda_b\},\spp}$ on $$\sff^b:=\sff(G_{\breve{\Q}_p},\{\lambda_b\})=G_{\breve{\Q}_p}/P(\lambda_b),$$
   which will be considered as a $J_{\breve{\Q}_p}$-equivariant line bundle via the natural map $J_{\breve{\Q}_p}\to G_{\breve{\Q}_p}$. By the same token we consider $\mathcal{L}_{G, \{\mu\}, \spp}$ as a $J_{\breve{E}}$-equivariant ${\Q}$-line bundle on 
       $\mathcal{F}_{\breve{E}}$ and define
       $$\mathcal{L}_{G, [b], \{\mu\}, \spp}:=i^*(\mathcal{L}_{G, \{\mu\}, \spp}\times\sll_b) \in {\rm Pic}^{J_{\breve{E}}, \mathrm{ample}}(\mathcal{F}_{\breve{E}})_{\Q},$$
      the closed embedding 
      $
      i:\sff_{\breve{E}}\hookrightarrow \sff_{\breve{E}}\times\sff^b_{\breve{E}}
      $ being 
      given by the identity on the first factor and by the $\breve{E}$-rational point $\lambda_b$ of $\sff^b_{\breve{E}}$ on the second factor. 
      The construction enjoys similar properties to the one from Proposition \ref{kolo2}, concerning functoriality with respect to closed embeddings $\iota: G\to G'$ (by taking $b'=\iota(b)$, of course).

  \subsubsection{A Hilbert-Mumford criterion for weak admissibility}  We keep the notations introduced in the previous paragraph and set         
            $\mathcal{L}=\mathcal{L}_{G,b, \{\mu\}, \spp}\in {\rm Pic}^{J_{\breve{E}}, \mathrm{ample}}(\breve{\mathcal{F}})_{\Q}$, where $\breve{\mathcal{F}}:=\mathcal{F}(G, \{\mu\})_{\breve{E}}$. 
            
                         \begin{theorem}[Totaro, {\cite[Th.\ 3]{Tot}}, {\cite[Th.\ 9.7.3]{DOR}}] \label{totaro1} Let $K/\breve{E}$ be a field extension and let 
                         $x\in \breve{\mathcal{F}}(K)$. Then $x\in \breve{\mathcal{F}}(G,b,\{\mu\})^{\rm wa}(K)$ 
           if and only if $\mu^{\mathcal{L}}(x,\lambda)\geq 0$ for all 
             $\lambda\in X_*(J)^{\sg_F}$. 
                          
             \end{theorem}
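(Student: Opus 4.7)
The strategy is to pick a faithful representation, translate $\mu^{\mathcal{L}}(x,\lambda)$ into inner products of filtrations via Example~\ref{general}, and match the result with the semistability criterion of the filtered isocrystal. By the Faltings--Totaro tensor-product theorem recalled in Section~\ref{lyon-11}, weak admissibility of $(b,\mu_x)$ is equivalent to semistability of slope zero of the filtered isocrystal $(N_b(V),F_{\mu_x}V_K)$ for a single faithful representation $(V,\rho)\in \mathrm{Rep}_F(G)$. I fix such a $V$ together with an IIP on $\mathbb{GL}(V)$ which restricts to $\spp$ on $G$ via $\rho$.

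Next I compute the Hilbert--Mumford slope. Writing $\mathcal{L}$ as the pullback of $\mathcal{L}_{G,\{\mu\},\spp}\boxtimes \sll_b$ along $x\mapsto (x,\lambda_b)$ and using additivity of $\mu^{\bullet}$ on external products gives
\[
\mu^{\mathcal{L}}(x,\lambda) \;=\; \mu^{\mathcal{L}_{G,\{\mu\},\spp}}(x,\lambda) + \mu^{\sll_b}(\lambda_b,\lambda).
\]
By the functoriality in Proposition~\ref{kolo2}, both terms reduce via $\rho$ to flag varieties of $\mathbb{GL}(V)$, where Example~\ref{general} identifies each as minus an inner product of filtrations on $V_K$. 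Since $\lambda_b=-\nu_b$ acts on each slope-$\alpha$ subspace $V_\alpha$ of the isocrystal with weight $-\alpha$, the filtration associated to $\lambda_b$ coincides with the slope filtration $F_0V$ of $N_b(V)$, yielding
\[
\mu^{\mathcal{L}}(x,\lambda) \;=\; -\langle F_{\mu_x},F_\lambda\rangle \;-\; \langle F_0V,F_\lambda\rangle.
\]

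To conclude, I would match this with the semistability criterion. A direct expansion in graded pieces shows that if $\lambda\in X_*(J)^{\sg_F}$ corresponds, via the inclusion $J\subset G_{\breve{F}}$, to a $\mathbb{Z}$-grading of $N_b(V)$ with sub-isocrystal $W$ at positive weights, then the right-hand side above equals $-\dim W\cdot\mu(W,\varphi,F_{\mu_x}|_W)$, using the slope decomposition $\mu(V,\varphi,F)=\mu(V,F)+\mu(V,F_0V)$ recalled in Section~\ref{lyon-11}. The HM inequality $\mu^{\mathcal{L}}(x,\lambda)\ge 0$ for all $\lambda\in X_*(J)^{\sg_F}$ thus reads $\mu(W,\varphi,F_{\mu_x}|_W)\le 0$ for every such $W$; taking central $\lambda$ recovers the total-slope-zero condition. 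Combined, these are the definition of weak admissibility.

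The main obstacle is the Galois-theoretic descent: in the converse direction one must show that every sub-isocrystal of $N_b(V)$, a priori defined only over $\breve{F}$ or some extension, is detected by some $\lambda\in X_*(J)^{\sg_F}$. Dieudonn\'e--Manin semisimplicity over $\breve{F}$ ensures every sub-isocrystal arises from a $\mathbb{Z}$-grading of $N_b(V)$; the decency of $b$, which puts $\nu_b$ and the slope decomposition over $\mathbf{Q}_{p^s}$, together with a standard Galois-averaging argument then descends such a grading to $F$. Once this rationality step is in place, the rest of the proof is formal manipulations of bilinear inner products of filtrations and the multiplicativity of $\mu^{\bullet}$ on external products.
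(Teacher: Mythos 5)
The paper does not prove Theorem \ref{totaro1}; it is imported wholesale from Totaro \cite[Th.\ 3]{Tot} and \cite[Th.\ 9.7.3]{DOR}, so there is no ``paper's own proof'' to compare against. What you have written is therefore a blind reconstruction of Totaro's argument, and it is worth assessing on its own terms.

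Your computational translation is sound and matches what the paper records in Lemma \ref{explicit}: fixing a faithful $V$, the Faltings--Totaro tensor-product theorem reduces weak admissibility to semistability of the single filtered isocrystal $(N_b(V), F_{\mu_x}V_K)$, and the additivity of the Hilbert--Mumford slope over the external product $\mathcal{L}_{G,\{\mu\},\spp}\boxtimes\sll_b$, together with the identification of the filtration attached to $\lambda_b=-\nu_b$ with the slope filtration $F_0V$, yields $\mu^{\mathcal{L}}(x,\lambda)=-\langle F_{\mu_x},F_\lambda\rangle-\langle F_0V,F_\lambda\rangle$. You also correctly identify the real content of the theorem: weak admissibility is a priori a statement about all sub-isocrystals over $\breve F$, whereas the criterion quantifies only over $F$-rational $1$-PS of $J$, so a rationality/descent step is unavoidable.

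The gap is precisely in how you treat that step. Two problems: first, Proposition \ref{dublin1} tests semistability against $\Z$-filtrations of $V_K$, while the weak admissibility condition is phrased in terms of $\varphi$-stable $\breve F$-subspaces with the induced filtration over $K$; matching the quantifier ``all sub-isocrystals'' with ``$1$-PS of $J$'' is not a direct expansion in graded pieces, because an arbitrary sub-isocrystal has no reason to be the non-negative part of a grading coming from $J$, which is a smaller group than $G_{\breve F}$. Second, ``a standard Galois-averaging argument'' is not the right mechanism and would in general destroy the destabilizing property, since the slope function is convex rather than linear on the building. The argument in Totaro and \cite[Ch.\ IX]{DOR} instead goes through the \emph{uniqueness} of an optimal destabilizing object --- either the Harder--Narasimhan maximal destabilizing sub-filtered-isocrystal, or the Kempf--Rousseau optimal $1$-PS in the spherical building $\sbb(J)$ --- and uniqueness, not averaging, is what forces Galois invariance and hence $F$-rationality. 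As written, the proposal names the right obstacle but does not actually resolve it.
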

          
            \begin{example} Suppose that $G=T$ is a torus over $\Q_p$. 
            We then have $J=T$ and $$\mu^{\mathcal{L}}(x,\lambda)=-(\spp(\lambda, \lambda_x)+\spp(\lambda, \nu_b)),
              $$
              for any IIP $\spp$ on $T$, any $\lambda\in X_*(T)$, and $\lambda_x\in X_*(T)$ attached to 
              $x$. Hence the Hilbert-Mumford inequality takes the form:  $\spp(\lambda, \lambda_x)+\spp(\lambda, \nu_b)\leq 0$ for all $\Q_p$-rational $1$-{\rm PS} $\lambda$ of $T$. Since we may replace $\lambda$ with $-\lambda$ this condition is satisfied if and only if $\lambda_x+\nu_b$ is orthogonal to all $\Q_p$-rational $1$-{\rm PS} of $T$. Since the inner product is $\sg_{\Q_p}$-invariant, 
the $\Q_p$-rational characters of $T$ correspond to the $\Q_p$-rational $1$-{\rm PS} and the Hilbert-Mumford criterion becomes the familiar criterion:  $(\mu,b)$ is weakly admissible if and only if 
   $\mu+\nu_b$ is orthogonal to all $\Q_p$-rational characters of $T$.  

   \end{example}

      \begin{example} \label{kolo15}
      Consider the set-up from Example \ref{general}. The data of $b$ is equivalent to that of an isocrystal $N=(V, \varphi)$ of dimension $n$ over $\breve{\mathbf{Q}}_p$, with slope decomposition 
            $V=\oplus_{\alpha\in \Q} V_{\alpha}$. Let $\alpha_1>\cdots>\alpha_t$ be the different slopes and let $m_i=\dim V_{\alpha_i}$, so that the Newton vector of 
            $N$ is 
                        $\nu(N)=(\alpha_1^{(m_1)},\dots, \alpha_t^{(m_t)})\in (\Q^n)_+$. 
                        Consider the filtration $F_0$ defined by $F_0^{\beta}=\oplus_{\alpha\leq -\beta} V_{\alpha}$ (its type is thus
                  $\nu_0=(-\alpha_t^{(m_t)},\dots, -\alpha_1^{(m_1)})\in (\Q^n)_+$). An immediate computation  \cite[Lemma 8.4.2]{DOR}
 shows that if 
                  $K/\breve{\mathbf{Q}}_p$ is an extension and $x\in \breve{\mathcal{F}}(K)$ with corresponding filtration $F_x$ of 
              $V_K$, then for all $\lambda\in X_*(J)^{\sg_F}$
                            $$\mu^{\mathcal{L}}(x,\lambda)=-(\langle F_x, F_{\lambda}\rangle+
              \langle F_0, F_{\lambda}\rangle).$$
                The Hilbert-Mumford inequality $\mu^{\mathcal{L}}(x,\lambda)\geq 0$ is thus equivalent to $
                \langle F_x, F_{\lambda}\rangle+
              \langle F_0, F_{\lambda}\rangle\leq 0
                $.                 \end{example}

                   The computations in Example \ref{kolo15} and the basic properties of the slope function recalled in Section \ref{Hil-Mum} yield the first part of the following lemma. The second part is an immediate calculation. 
            
                             \begin{lemma}[Orlik, {\cite[Lemma 2.2]{Ol}}, {\cite[Lemma 2.2]{Of}}]
                  \label{explicit}Let $V$ be a faithful $F$-rational representation of $G$ (defined over $F$) and consider the IIP $\spp$ on 
                  $G$ induced by the standard IIP on $\mathbb{GL}(V)$. 
                 \begin{enumerate}
                 \item Let $K/\breve{E}$ be an extension, $x\in\breve{\sff}(K)$ and $\lambda\in X_*(J)^{\sg_F}$. Let  $F_x$,  $F_{\lambda}$, and $F_0$ denote the filtrations on $V_{\breve{E}}$ induced by $x$, $\lambda$ and $\lambda_b=-\nu_b$, respectively. Then 
                 $$
                 \mu^{\sll}(x,\lambda)=-(\langle F_x, F_{\lambda}\rangle+\langle F_0, F_{\lambda}\rangle).
                 $$
                 \item If $T\subset G$ is a maximal torus and $\lambda, \lambda^{\prime}\in X_*(T)_{\Q}$, then 
                 $
              \langle F_{\lambda}, F_{\lambda^{\prime}}\rangle  = \spp(\lambda,\lambda^{\prime}).
                 $
                 \end{enumerate}
                 \end{lemma}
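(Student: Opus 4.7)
The plan is to deduce both parts by reduction to the case of $\mathbb{GL}(V)$ via the functoriality of all the constructions involved, and then to invoke the explicit computations of Examples \ref{general} and \ref{kolo15}. The key observation is that the faithful representation $\iota:G\hookrightarrow \mathbb{GL}(V)$ provides a closed immersion of reductive groups through which everything factors naturally.

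For part (1), set $b':=\iota(b)\in \mathbb{GL}(V)(\breve{\Q}_p)$; then $\nu_{b'}=\iota\circ \nu_b$, so $\iota$ induces a homomorphism $J_b\to J_{b'}$ and a closed immersion of flag varieties $\breve{\sff}\hookrightarrow \breve{\sff}(\mathbb{GL}(V),\iota\{\mu\})$. Since the IIP $\spp$ on $G$ is, by hypothesis, restricted from the standard IIP $\spp_{\rm std}$ on $\mathbb{GL}(V)$, Proposition \ref{kolo2}(1) applied to both $(G,\{\mu\})$ and $(G,\{\lambda_b\})$, combined with the compatibility with the product construction $\mathcal{L}_{G,[b],\{\mu\},\spp}=i^*(\mathcal{L}_{G,\{\mu\},\spp}\boxtimes \sll_b)$, yields an identification
\begin{equation*}
\mathcal{L}_{G,[b],\{\mu\},\spp}\;\simeq\;\iota^*\,\mathcal{L}_{\mathbb{GL}(V),[b'],\iota\{\mu\},\spp_{\rm std}}
\end{equation*}
as $J_b$-equivariant $\Q$-line bundles. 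By the functoriality $\mu^{f^*\mathcal{M}}(x,\lambda)=\mu^{\mathcal{M}}(f(x),\lambda)$ recalled in Section \ref{Hil-Mum}, we then have $\mu^{\sll}(x,\lambda)=\mu^{\sll'}(\iota(x),\iota\circ\lambda)$, where $\sll'$ denotes the $\mathbb{GL}(V)$-bundle. The right-hand side is computed in Example \ref{kolo15}: it equals $-(\langle F_{\iota(x)},F_{\iota\circ\lambda}\rangle+\langle F_0,F_{\iota\circ\lambda}\rangle)$ where $F_0$ is the slope filtration of the isocrystal $(V\otimes \breve{\Q}_p,b'\sigma)$. Faithfulness of $\iota$ identifies these three filtrations on $V_{\breve{E}}$ with $F_x$, $F_\lambda$, and the filtration attached to $\lambda_b=-\nu_b$, respectively, giving the formula in (1).

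For part (2), since $\lambda$ and $\lambda'$ both factor through the same torus $T\subset G$, we can choose a basis of $V$ that simultaneously diagonalizes $\iota\circ\lambda$ and $\iota\circ\lambda'$; writing these as $\mathrm{diag}(t^{a_1},\dots,t^{a_n})$ and $\mathrm{diag}(t^{b_1},\dots,t^{b_n})$ in this basis, the restriction of $\spp_{\rm std}$ to $X_*(T)_\Q$ gives $\spp(\lambda,\lambda')=\sum_i a_ib_i$. On the other hand, decomposing $V_{\breve{E}}=\bigoplus_{a,b} V_{a,b}$ into simultaneous weight spaces for $(\lambda,\lambda')$, the defining formula for $\langle -,-\rangle$ yields $\langle F_\lambda,F_{\lambda'}\rangle=\sum_{a,b}ab\dim V_{a,b}=\sum_i a_ib_i$, and the two quantities agree. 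The main technical point requiring care is the functorial identification of $\mathcal{L}_{G,[b],\{\mu\},\spp}$ with the restriction of its $\mathbb{GL}(V)$-analogue as a $J_b$-equivariant bundle (not merely as an abstract $\Q$-line bundle): this requires checking compatibility of the construction of Proposition \ref{kolo2} with closed immersions simultaneously for the Hodge cocharacter $\mu$ and the slope cocharacter $\lambda_b$, and with the product $\boxtimes$ and pullback along $i$. Once this functoriality is in hand, both parts are immediate consequences of known computations.
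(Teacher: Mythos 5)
Your proposal is correct and takes essentially the same route as the paper: the paper's stated proof for part (1) is precisely ``the computations in Example \ref{kolo15} and the basic properties of the slope function,'' which is exactly the reduction you carry out via the closed immersion $\iota:G\hookrightarrow\mathbb{GL}(V)$, the functoriality $\mu^{f^*\mathcal{M}}(x,\lambda)=\mu^{\mathcal{M}}(f(x),\lambda)$, and the compatibility of the $\Q$-line bundle $\mathcal{L}_{G,[b],\{\mu\},\spp}$ with closed embeddings, a property the paper explicitly records just after defining it. Your calculation for part (2) by diagonalizing $\iota\circ\lambda$ and $\iota\circ\lambda'$ simultaneously is the immediate verification the paper gestures at.
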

\subsection{A simplified Hilbert-Mumford criterion} We will show here that in the Hilbert-Mumford criterion from Theorem \ref{totaro1} it suffices to test the $1$-{PS} associated to the relative simple roots and their conjugates. This will follow from the fact that the slope function, a priori just convex on every chamber of the spherical building, is, in fact, affine. 

 \subsubsection{The criterion} \label{notation1}          {\it We will assume from now on that $(G, [b], \{\mu\})$ is a local Shtuka datum over $F=\Q_p$ with $G$ quasi-split over $\Q_p$ and $b\in [b]$ 
 basic and $s$-decent.} Let $J=J_b$, $\nu=\nu_b$, let $E=E(G,\{\mu\})$ be the reflex field and let 
 $E_s=E\Q_{p^s}$. The associated period domain has a canonical model over $E_s$,invariant under the natural action of
   $J(\qp)$ on $\mathcal{F}$.

             We fix an invariant inner product $\spp$ on $G$ and we note that it gives rise to an invariant inner product on $J$. 
Fix a maximal $\qp$-split torus $S\subset J_{\rm der}$, of $\Q_p$-rank $d$, and a minimal parabolic subgroup $P_0$ of $J$ defined over $\Q_p$.   Let $\Delta=\{\alpha_1,\dots,\alpha_d\}\subset X^*(S)$ be the corresponding set of relative simple roots. Let $\omega_{\alpha_1},\dots,\omega_{\alpha_d}\in X_*(S)_{\Q}$ be the associated dual basis for the natural pairing between $X^*(S)_{\Q}$ and 
   $X_*(S)_{\Q}$. Fix a maximal torus $T$ of $G$ containing $S$ and such that $\mu, \nu\in {\rm Hom}_{\breve{\mathbf{Q}}_p} (\mathbb{D}, T)\simeq X_*(T)_{\Q}$.

\begin{proposition}[Orlik, {\cite[Cor.\ 2.4]{Of}}] \label{dublin4}
Let $x\in\breve{\sff}(K)$, for a field extension $K$ of $\breve{E}$. Then $x$ is not weakly admissible if and only if there exists an element $g\in J(\Q_p)$ and a simple root $\alpha\in\Delta$ such that $\mu^{\sll}(x,{\rm Int}(g)\omega_{\alpha}) <0$. 
\end{proposition}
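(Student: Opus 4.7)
The ``if'' direction is immediate from Theorem \ref{totaro1}: for any $g\in J(\Q_p)$ and $\alpha\in\Delta$, the cocharacter ${\rm Int}(g)\omega_\alpha$ lies in $X_*(J)^{\sg_F}_\Q$ (since $g$ is $\Q_p$-rational and $S$ is $\Q_p$-split), and the slope function extends $\Q$-linearly to $X_*(J)_\Q$ by positive homogeneity, so $\mu^{\sll}(x,{\rm Int}(g)\omega_\alpha)<0$ contradicts weak admissibility.

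For the converse, suppose $x$ is not weakly admissible, and pick $\lambda\in X_*(J)^{\sg_F}$ with $\mu^{\sll}(x,\lambda)<0$ by Theorem \ref{totaro1}. The plan is to reduce $\lambda$ to a nonnegative combination of the $\omega_{\alpha_i}$ in $X_*(S)_\Q$ after a $J(\Q_p)$-conjugation, and then exploit the affineness of the slope function on each closed Weyl chamber to isolate a single $\omega_\alpha$ witnessing the failure. First I would reduce to $\lambda\in X_*(S)_\Q$: since $\lambda$ is Galois-fixed it factors through a $\Q_p$-split torus of $J$, and all maximal $\Q_p$-split tori of $J$ are $J(\Q_p)$-conjugate, so up to a central summand $\lambda_c\in X_*(Z(J))^{\sg_F}_\Q$ (absorbed as explained below) one may assume the derived component $\lambda_{\rm der}\in X_*(J_{\rm der})^{\sg_F}_\Q$ already lies in $X_*(S)_\Q$. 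The Borel--Tits theorem gives representatives of the relative Weyl group $N_J(S)(\Q_p)/Z_J(S)(\Q_p)$ in $J(\Q_p)$, so after a further $J(\Q_p)$-conjugation by some $g\in J(\Q_p)$ I may assume ${\rm Int}(g^{-1})\lambda_{\rm der}=\sum_i c_i\omega_{\alpha_i}$ with $c_i\geq 0$. The central contribution $\lambda_c$ gives a slope $\mu^{\sll}(-,\lambda_c)$ that depends only on the central character of $\sll$; the nonemptiness of $\sff^{\rm wa}$ (which holds since $[b]\in A(G,\{\mu\})$) forces $\mu^{\sll}(x,\lambda_c)\geq 0$, so linearity of the slope function in commuting cocharacters yields $\mu^{\sll}(x,\lambda_{\rm der})<0$ as well.

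The key step is the affineness of $\mu^{\sll}(x,-)$ on the closed positive chamber of $S$. Using Lemma \ref{explicit},
\[
\mu^{\sll}(x,\lambda')=-\bigl(\langle F_x,F_{\lambda'}\rangle+\langle F_0,F_{\lambda'}\rangle\bigr),
\]
the task reduces to showing $\Q$-linearity of $\lambda'\mapsto\langle F,F_{\lambda'}\rangle$ on $X_*(S)_\Q$ for any fixed filtration $F$ of $V_{\breve E}$. This follows from a direct computation: the coweights $\omega_{\alpha_1},\dots,\omega_{\alpha_d}$ commute, so $V_{\breve E}$ admits a joint weight decomposition $V_{\breve E}=\bigoplus_{\underline a}V_{\underline a}$; the filtration attached to $\sum c_i\omega_{\alpha_i}$ has jumps at the scalars $\sum_i c_i a_i$, and expanding the definition of the inner product of filtrations gives
\[
\langle F,F_{\sum c_i\omega_{\alpha_i}}\rangle=\sum_i c_i\,\langle F,F_{\omega_{\alpha_i}}\rangle.
\]
Combining this with the equivariance formula $\mu^{\sll}(x,{\rm Int}(g)\nu)=\mu^{\sll}(g^{-1}x,\nu)$ from Section \ref{Hil-Mum}, I obtain
\[
0>\mu^{\sll}(x,\lambda_{\rm der})=\sum_{i=1}^d c_i\,\mu^{\sll}(x,{\rm Int}(g)\omega_{\alpha_i}),
\]
so at least one summand on the right is negative, yielding the required $\alpha=\alpha_j$.

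The hardest step is the linearity statement in the second paragraph: it requires that the inner product of filtrations respects the additive structure of commuting cocharacters, which is essentially a bilinear-algebra computation on joint eigenspaces but crucially depends on staying inside a single Weyl chamber (outside a chamber, the ordering of the weights $\sum c_i a_i$ may change and linearity breaks). The reductions to $X_*(S)_\Q$ and to the closed positive chamber are then routine consequences of Borel--Tits theory over $\Q_p$ for the quasi-split group $J$, and the handling of the central component is governed by the nonemptiness of $\sff^{\rm wa}$.
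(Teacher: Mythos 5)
Your proposal reaches the right conclusion and takes essentially the same route as the paper: in both cases the argument is to reduce a rational $1$-PS with negative slope to the positive chamber of $S$ by a $J(\Q_p)$-conjugation, and then exploit the affineness of $\mu^{\sll}(x,-)$ on the (straightened) chamber to locate a vertex $\omega_\alpha$ witnessing the failure of semistability. Your write-up supplies some bookkeeping that the paper's terse proof leaves implicit — the reduction from $X_*(J)^{\sg_F}$ to $X_*(S)_\Q\subset X_*(J_{\rm der})_\Q$, the disposal of the central summand $\lambda_c$ via the fact that a central cocharacter of $J=J_b$ (for $b$ basic) acts trivially on $\sff$, so its slope is constant in $x$ and forced to vanish by nonemptiness of $\sff^{\rm wa}$ — and this is a genuine gain in explicitness over the one-paragraph argument in the paper, which tacitly identifies $\sbb(J_{\rm der})$ with $|\Delta(J)|$ and passes directly to chambers.

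The one place you should not try to ``prove it by hand'' is the affineness. Your explanation — that expanding $\langle F, F_{\sum c_i\omega_{\alpha_i}}\rangle$ over the joint $S$-eigenspaces yields linearity because the ordering of the $\lambda$-weights $\sum c_i a_i$ is constant inside a Weyl chamber — is not correct as stated. The ordering of the $S$-weights of $V$ is governed by the hyperplanes $\langle a-a',\lambda\rangle=0$ for \emph{weights} $a,a'$ of $V$, and since weight differences are arbitrary $\Z$-combinations of roots (not always of one sign), this fan is strictly finer than the Weyl chamber fan: the ordering can and does change inside a single closed chamber. Moreover, the Hodge filtration $F_x$ is in general not compatible with the $S$-eigenspace decomposition, so the naive expansion of $\sum_{x,y}xy\dim\gr^x_F\gr^y_{F_\lambda}$ does not split into a $c$-linear combination term by term. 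The linearity on chambers nevertheless holds — this is precisely Lemma \ref{deszcz1}, i.e., Orlik's \cite[Prop.\ 2.3]{Of}, and the cleanest reason is not the weight bookkeeping you describe but rather that the fixed point $x_0=\lim_{t\to 0}\lambda(t)x$ (equivalently the attracting Bialynicki-Birula cell of $x$) is constant on an open chamber, so $\mu^{\sll}(x,\lambda)$ is there literally the weight of $\lambda$ on the line $\sll_{x_0}$, which is linear, and one extends to the closed chamber by continuity. You should simply invoke Lemma \ref{deszcz1} at this step, as the paper does, rather than sketch a derivation that, read literally, would fail.

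Two further small remarks. Your ``if'' direction phrase ``extends $\Q$-linearly to $X_*(J)_\Q$ by positive homogeneity'' is a bit loose — the slope function is positively homogeneous but not $\Q$-linear on $X_*(J)_\Q$; what you actually need, and what is true, is only that Theorem \ref{totaro1} applies after scaling to $\Q$-valued cocharacters by homogeneity. And the claim that a $J(\Q_p)$-conjugation lands $\lambda$ in $X_*(S)_\Q$ is correct, but the relevant fact is simply that the maximal $\Q_p$-split tori of $J$ are $J(\Q_p)$-conjugate; Borel--Tits is invoked a second time, correctly, for representatives of the relative Weyl group.
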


 \subsubsection{The combinatorial and spherical buildings} \label{buildings1}
 
    If $G$ is a connected reductive group over a field $k$, one can associate to 
    $G$ two buildings, as follows. 
    
     The {\em combinatorial building} $\Delta(G)$ of $G$.  An abstract simplicial complex with 
    $G(k)$-action whose simplices are the proper $k$-parabolic subgroups of $G$ ordered by the opposite of inclusion (thus vertices are the  proper maximal $k$-parabolics and $(P_0,\dots,P_d)$ is a simplex if and only if $P_0\cap\dots\cap P_d$ is a parabolic subgroup).  If $n$ denotes the $k$-rank of the derived group of $G$ then $\Delta(G)$ has the homotopy type of a bouquet of $(n-1)$-spheres. 
         
     The {\em spherical building} $\sbb(G)$ of $G$. Unlike $\Delta(G)$ it takes into account the center of $G$.  If $m$ is the $k$-rank of $G$ the set $\sbb(G)$ is, in a $G(k)$-equivariant way, the $(m-n)$-fold suspension of $\Delta(G)$. Thus suitably topologized, it has the homotopy type of a bouquet of $(m-1)$-spheres. The building $\sbb(G)$ is  functorial for injective maps of reductive groups $f: G\to H$ over $k$: we have a natural embedding of topological spaces $\sbb(f): \sbb(G)\to \sbb(H)$.

     If $G=S$ is a split 
    $k$-torus, then $\sbb(S)$ is simply the sphere corresponding to half-lines in the vector space 
    $X_*(S)_{k,\mathbf{R}}$. In general, $\sbb(G)$ is obtained by gluing the different spheres $\sbb(S)$ (over all 
    maximal $k$-split tori $S\subset G$), where we identify $b$ and ${\rm Int}(g)b$ for 
    $b\in \sbb(S)$ and $g\in P(b)(k)$. Here $P(b)$ is a $k$-parabolic of $G$ naturally attached to 
    $b$ in a way compatible with the definition of $P(\lambda)$, for $\lambda\in X_*(S)_{\Q}$.
    There is a natural action of $G(k)$ on 
    $\sbb(G)$ and a natural map $b\to P(b)$ from $\sbb(G)$ to the set of $k$-parabolic subgroups of $G$ (we note  $P(b)(k)$ is the stabilizer of $b$ in $G(k)$). Moreover, if $S$ is a maximal 
    $k$-split torus of $G$, the map $\sbb(S)\to \sbb(G)$ is injective and consists precisely of 
    points $b$ of $\sbb(G)$ for which $S\subset P(b)$. We call the image the apartment attached to 
    $S$. Any two points of $\sbb(G)$ belong to a common apartment. 
    
      Assume that $G$ is semisimple.  Then, by a theorem of Curtis, Lehrer and Tits \cite[Prop.\ 6.1]{CLT},  there is a natural $G(k)$-equivariant bijection $\tau: |\Delta(G)|\to \sbb(G)$ between the geometric realization of $\Delta(G)$ and 
      $\sbb(G)$, such that for $b\in |\Delta(G)|$, the $k$-parabolic $P(\tau(b))$ is the one corresponding to the simplex of $\Delta(G)$ containing $b$ in its interior.  Hence the combinatorial building yields a triangulation of the spherical building. 
      
          For a $k$-rational parabolic subgroup $P\subset G$, we set
     $$
     \scc(P):=\{b\in \sbb(G):P(b)\supset P\}.
     $$
     We have $\scc(P)\subset \sbb(S)$, where $S$ is a maximal $k$-split torus contained in $P$.  We can think of $\scc(P)$ as parametrizing dominant $1$-{\rm PS} $\lambda$ of $P$ up to conjugation and ramification $\lambda\mapsto \lambda^n$. The map $\tau$ induces a homeomorphism between the closed simplex  of $|\Delta(G)|$ corresponding to $P$ and $\scc(P)$.
    If $P$ is a minimal $k$-parabolic than $\scc(P)$ is called a {\em chamber}; if $P$ is a proper maximal $k$-parabolic subgroup then $\scc(P)$ is called a vertex.  
    
\subsubsection{The slope function revisited}  
We return to the notation from Section \ref{notation1}. Let $S$ be a maximal $\Q_p$-split torus in $J_{\rm der}$. Fix $x\in\sff(K)$, for a field extension $K$ of $\breve{E}$. We can extend the slope function $\mu^{\sll}(x,\lambda)$  on $X_*({S})_{\Q}$ to a function on $X_*({S})_{\mathbf{R}}$ by using its description as  an infimum of values of certain rational  linear functionals \cite[Sec.\ 1]{RR1}. This description also implies  that the slope function $\mu^{\sll}(x,\lambda)$ is convex on the rational points of $\sbb(S)$ \cite[Cor.\ 2.15]{GIT}.

It turns out, as shown by Orlik, that the slope function  is, in fact (in a suitable sense), affine. To state  precisely what this means, for a chamber $\scc$ in $\sbb(J_{\rm der})$, we start with ``straightening it'', i.e., we deform it homeomorphically to the simplex
     $$
     \wtw{\scc}:=\{\sum_{\alpha\in\Delta}r_{\alpha}\lambda_{\alpha}| 0\leq r_{\alpha} \leq 1, \sum_{\alpha\in\Delta}r_{\alpha}=1\}\subset X_*(\wtw{S})_{\mathbf{R}},
     $$
    where the rational $1$-{\rm PS} $\lambda_{\alpha}\in X_*(\wtw{S})_{\Q}$, ${\alpha} \in\Delta$, for some maximal $\Q_p$-split torus $\wtw{S}\subset J_{\rm der}$, represent the vertices of $\scc$.   What we have gained doing this is  that the slope function $\mu^{\sll}(x,\lambda)$ on $X_*(\wtw{S})_{\mathbf{R}}$ while needing to be normalized to  $\nu^{\sll}(x,\lambda):=\mu^{\sll}(x,\lambda)/|\lambda|$ to descend to a function on $\sbb(\wtw{S})$ (after which it glues to a function on $\sbb(J_{\rm der})$) is defined on   $\wtw{\scc}$.  We say that $\mu^{\sll}(x,-)$ is {\em affine} on $\scc$ if it is affine on $\wtw{\scc}$, i.e., we have:
$$
\mu^{\sll}(x,\sum_{\alpha\in\Delta}r_{\alpha}\lambda_{\alpha})=\sum_{\alpha\in\Delta}r_{\alpha}\mu^{\sll}(x,\lambda_{\alpha}),\quad \text{ for all } \sum_{\alpha\in\Delta}r_{\alpha}\lambda_{\alpha}\in\wtw{\scc}.
$$
The following lemma is now a simple consequence of Lemma \ref{explicit}  that describes the slope function via the invariant inner product (which is linear in each variable):
\begin{lemma}[Orlik, {\cite[Prop.\ 2.3]{Of}}] \label{deszcz1}
Let $x\in \sff(K)$, for a field extension $K$ of $\breve{E}$. The slope function $\mu^{\sll}(x,-)$ is affine on each chamber of $\sbb(J_{\rm der})$.
\end{lemma}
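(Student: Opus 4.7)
The plan is to apply Lemma~\ref{explicit}(1), which gives the explicit formula
\begin{equation*}
\mu^{\sll}(x,\lambda) = -\langle F_x, F_\lambda\rangle - \langle F_0, F_\lambda\rangle,
\end{equation*}
and reduces affineness of $\mu^{\sll}(x,-)$ on a chamber $\scc$ of $\sbb(J_{\rm der})$ to linearity of $\lambda \mapsto \langle F, F_\lambda\rangle$ on the straightened simplex $\wtw{\scc}$ for each of $F = F_x$ and $F = F_0$. The crucial structural input is that both filtrations come from cocharacters of $G$: $F_x = F_{\lambda_x}$ for the cocharacter $\lambda_x \in \{\mu\}$ representing the point $x \in \sff(K)$, while $F_0$ is the slope filtration attached to the cocharacter $-\nu_b$.

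Since any chamber $\scc$ is contained in a single apartment of $\sbb(J_{\rm der})$, the vertex representatives $\lambda_\alpha$ lie in a common maximal $\qp$-split torus $\wtw{S} \subset J_{\rm der}$. Fix a maximal torus $T$ of $G$ containing $\wtw{S}$; the faithful representation $V$ decomposes over a splitting field as $V = \bigoplus_\chi V_\chi$ under $T$. For a filtration $F = F_{\lambda_F}$ attached to a cocharacter $\lambda_F$ of $G$, one exploits that $F$ depends on $\lambda_F$ only through its $P(\lambda_F)$-conjugacy class together with the $G(\breveqp)$-invariance of the IIP $\spp$, to reduce to the case where $\lambda_F \in X_*(T)_\Q$. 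Then both $F$ and $F_\lambda$ are compatible with the $T$-weight decomposition, and a direct computation (which amounts to Lemma~\ref{explicit}(2) extended by bilinearity) gives
\begin{equation*}
\langle F, F_\lambda\rangle = \sum_\chi \langle \chi, \lambda_F\rangle \langle \chi, \lambda\rangle \dim V_\chi = \spp(\lambda_F, \lambda),
\end{equation*}
which is manifestly bilinear, hence linear in $\lambda \in X_*(T)_{\mathbf{R}}$ and in particular on $\wtw{\scc}$.

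Applying this to $F_x$ (with $\lambda_F = \lambda_x$) and $F_0$ (with $\lambda_F = -\nu_b$) and summing yields the desired linearity of $\mu^{\sll}(x,-)$ on $\wtw{\scc}$, which is the affineness condition. The main obstacle is the reduction to the common-torus situation: a priori $\lambda_x$ and $-\nu_b$ need not lie in the chosen torus $T$, but the invariance of $F_{\lambda_F}$ under the parabolic $P(\lambda_F)$ together with the $G$-invariance of $\spp$ ensure that $\langle F, F_\lambda\rangle$ is computed by the above bilinear formula regardless of the initial representative chosen. This is the linear-algebraic incarnation of the standard GIT observation that the limit point $\lim_{t \to 0}\lambda(t) x$ is constant as $\lambda$ varies in the interior of a chamber, so that $\mu^{\sll}(x,\lambda)$ equals minus the weight of $\lambda$ on a \emph{fixed} fiber of $\sll$ there, and therefore is affine rather than merely convex.
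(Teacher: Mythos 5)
Your proposal follows the same route the paper gestures at (express $\mu^{\sll}(x,\lambda)$ via the filtration inner product, reduce to bilinearity of $\spp$), but the central reduction step is incorrect as written. You claim to ``reduce to the case where $\lambda_F \in X_*(T)_\Q$'' by using that $F_{\lambda_F}$ depends on $\lambda_F$ only through its $P(\lambda_F)$-conjugacy class. But finding a $P(\lambda_F)$-conjugate of $\lambda_F$ inside $X_*(T)$ requires $T \subset P(\lambda_F)$. For $F = F_0$ this is automatic, since $-\nu_b$ is central (hence lies in every maximal torus). For $F = F_x$ with $x$ a general point of $\sff(K)$, however, $Q_x = P(\lambda_x)$ is a parabolic defined over $K$ with no reason to contain the fixed torus $T$ --- indeed $T \subset Q_x$ would force $x$ to be a $T$-fixed point. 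Consequently the displayed identity $\langle F_x, F_\lambda\rangle = \spp(\lambda_x,\lambda)$ is not well-posed: $\spp$ is defined only for cocharacters of a common maximal torus, and distinct Weyl conjugates of $\lambda_x$ into $T$ give genuinely different values. There is no ``bilinear formula regardless of the initial representative,'' because the filtration inner product $\langle F_x, F_\lambda\rangle$ does \emph{not} factor through a well-defined pairing of conjugacy classes of cocharacters.

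What is actually required --- and what your closing appeal to the constant limit point $x_0 = \lim_{t\to 0}\lambda(t)x$ is implicitly invoking --- is to conjugate \emph{both} $\lambda_x$ and $\lambda$ into a common maximal torus $T'' \subset Q_x \cap P^G(\lambda)$, giving $\langle F_x, F_\lambda\rangle = \spp(\lambda_x', \mathrm{Int}(p)\lambda)$ for $\lambda_x' \in X_*(T'')$ and some $p \in P^G(\lambda)$. Linearity in $\lambda$ then hinges on being able to choose $T''$ and $p$ \emph{independently of $\lambda$} as $\lambda$ ranges over the open chamber, i.e.\ on the constancy of the $G$-parabolic $P^G(\lambda)$. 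This is not a generic GIT fact (the fan on which the Bialynicki--Birula decomposition is constant is in general strictly finer than the Weyl fan of a subgroup); it holds here because the nonzero $\wtw{S}$-weights on $\mathrm{Lie}\,G_{\breve{E}}$ coincide with the relative roots of $\wtw{S}$ in $J$, which uses that $b$ is basic (so $J_{\breve{\Q}_p} \cong G_{\breve{\Q}_p}$) and hence that the walls cutting out $P^G(\lambda)$ are exactly the walls of the chambers of $\sbb(J_{\rm der})$. Your proof neither conjugates $\lambda$ nor verifies this constancy, so both the linear-algebraic reduction and the GIT observation you cite are left unsupported at precisely the point where the structure of the situation does the real work.
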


     \subsubsection{Proof of Proposition \ref{dublin4}}
\begin{proof}
    Among the chambers of $\sbb(J_{{\rm der}})$ we will distinguish the {\em base chamber}
    $\scc_0:=\scc(P_0)$. We note here that $\Delta(J)=\Delta(J_{\rm der})$, hence $\sbb(J_{\rm der})$ is homeomorphic to $|\Delta(J)|$. The vertices of this chamber are
      the rational $1$-{\rm PS}  $\omega_{\alpha}$, $\alpha\in\Delta$; the corresponding parabolic subgroups
    \footnote{We have $P^J(\omega_{\alpha})(\overline{\Q}_p)=P(\omega_{\alpha})(\overline{\Q}_p)\cap J(\overline{\Q}_p)$.}  $P^J(\omega_{\alpha})$, $\alpha\in\Delta$, are the maximal $\Q_p$-rational parabolic subgroups that contain $P_0$.  If  $\scc=\scc(P)$ is a chamber in $\sbb(J_{\rm der})$ there exists a $g\in J(\Q_p)$,  unique up to multiplication by an element of $P_0(\Q_p)$ from the right, 
such that the conjugated $1$-{\rm PS} ${\rm Int}(g)\omega_{\alpha}$, $\alpha\in \Delta$,  are the vertices of $\scc$.      

  Proposition \ref{dublin4} follows now easily from Lemma \ref{deszcz1}.
\end{proof}
        
\subsubsection{Contractibility of a subcomplex of the spherical building}\label{contractibility}    Let $$Y:=\mathcal{F}\otimes_E E_s\setminus \mathcal{F}^{\rm wa}$$ be the complement of $\sff^{\rm wa}$. Let $x\in Y(K)$, for a field extension $K$ of $\breve{E}$. Consider the  subcomplex $T_x$ of the spherical buidling $\sbb(J_{\rm der})$ corresponding to the following set of vertices:
      $$
      \{gP(\omega_{\alpha})g^{-1}|g\in J(\Q_p),\alpha\in\Delta \mbox{ such that } \mu^{\sll}(x,{\rm Int}(g)\omega_{\alpha}) <0\}.
      $$
      We will need to know  that this subcomplex is contractible.  This follows from two facts:
      \begin{enumerate}
      \item Let 
      $$
      C_x:=\{\lambda\in \sbb(J_{\rm der})|\nu^{\sll}(x,\lambda) <0\}.
      $$
      This set is convex and the intersection of $C_x$ with each chamber in $\sbb(J_{\rm der})$ is convex \cite[Cor.\ 2.16]{GIT}. 
      \item Because the slope function $\mu^{\sll}(x,\lambda)$  is affine on every chamber of $\sbb(J_{\rm der})$ we have an inclusion $T_x\hookrightarrow C_x$. This is a deformation retract \cite[Lemma 3.4]{Of} (in fact,  we deform to a projection).
      \end{enumerate}              
                        
       \subsection{Stratification of the complement of $\mathcal{F}^{\rm wa}$} 
       The complement $Y$ of $\mathcal{F}^{\rm wa}$ in $\sff_{E_s}$ can be stratified using Schubert varieties, an essential result for the computation of its cohomology. In order to describe this stratification we will fully use the results presented above (we keep  the notation from Section \ref{notation1}).

    Each $\lambda\in X_*(J)_{\Q}$ determines a closed subvariety
 $$
 Y_{\lambda}:=\{x\in \sff|\,\mu^{\sll}(x,\lambda) <0\}
 $$
 of $\sff$, defined over $E_s$, consisting of points where $\lambda$ damages the semistability condition.
       In particular, each subset $I\subsetneq \Delta$ gives rise to a closed subvariety of 
       $\sff$, defined over $E_s$
 $$
 Y_I:=\bigcap_{\alpha\notin I} Y_{\omega_{\alpha}}
 $$
 and the properties of the slope function imply that 
the natural action of $J(\Q_p)$ on $\sff_{E_s}$ restricts to an action of $P_I(\Q_p)$ on $Y_I$ \cite[Lemma 3.1]{Ol},
 where
    $P_I=\cap_{\alpha\notin I} P^J(\omega_{\alpha})$. For example $P_{\Delta}=J$ and $P_{\emptyset}=P_0$, a fixed minimal $\qp$-parabolic subgroup of $J$ containing $S$.

   Let 
    $$X_I=J(\Q_p)/P_I(\Q_p),$$
    a compact $p$-adic analytic manifold, and let\footnote{The definition makes sense since 
     $P_I(\Q_p)$ preserves $Y_I$.}
     $$Z_I^{X_I}=\bigcup_{t\in X_I} tY_I^{\rm ad},$$
     a closed pseudo-adic subspace\footnote{We refer the reader to the appendix for the formalism of pseudo-adic spaces, due to Huber \cite{H1}.}  of $Y$ (since $X_I$ is compact, see \cite[Lemma 3.2]{Ol}). 
    Now, by Proposition \ref{dublin4} and the properties of the slope function $\mu^{\sll}(-,-)$ (see Section \ref{Hil-Mum}), we get the decomposition
     $$Y=\bigcup_{|\Delta\setminus I|=1} Z_I^{X_I}.$$
    
    \subsection{The cohomology of Schubert varieties} \label{Schubert1}
    
    Orlik used the results from GIT described in the previous sections to show (see \cite[Prop.\ 4.1]{Ol}, \cite[Prop.\ 4.1]{Of}) that  
    the varieties $Y_I$ are {\it Schubert varieties}, hence their cohomology is not difficult to compute.
            
    More precisely, fix 
    a Borel subgroup $B$ of $G$ contained in all the parabolics $P(\omega_{\alpha})$ and such that 
 $\mu$ belongs to the positive Weyl chamber with respect to $B$. Let $W=N_G(T)/T$ be the Weyl group, let $W_{\mu}\subset W$ be the stabilizer of $\mu$ and let $W^{\mu}$ be the set of Kostant representatives for $W/W_{\mu}$, i.e., representatives of shortest length in their cosets. The action of 
     $\sg_{E_s}$ on $W$ preserves $W^{\mu}$ (since $\mu$ is defined over $E_s$). 
     For $w\in W$, let $[w]\in W^{\mu}/\sg_{E_s}$ be its orbit. 
   
   Define ($\nu:=\nu_b$)
           $$\Omega_I=\{[w]\in W^{\mu}/\sg_{E_s}| \, \spp(w\mu-\nu, \omega_{\alpha})>0, \,\, \forall \alpha\notin I\},$$
            where $\spp$ is the fixed invariant inner product on $G$. Then $\Omega_{I\cap J}=\Omega_J\cap\Omega_J$ and a similar property holds for the varieties 
            $Y_I$. The Bruhat decomposition of the flag variety $\sff_{E_s}$ combined with the fact that, by Lemma \ref{explicit}, the semistability criterion can be expressed using the invariant inner product, yield the decomposition of $Y_I$ in terms of the Bruhat cells of $G$ with respect to $P(\mu)$:
 \begin{equation}\label{cells}
 Y_I=\bigcup_{[w]\in\Omega_I}BwP(\mu)/P(\mu).
 \end{equation}

     To each $\sg_{E_s}$-orbit 
  $[w]\in W^{\mu}/\sg_{E_s}$ we associate the following objects:

  $\bullet$ An integer $l_{[w]}$, the length of any element of $[w]$. 
  
  $\bullet$ The induced $\Z/p^n[\sg_{E_s}]$-module $\rho_{[w]}(\Z/p^n)$ of $\Z/p^n$-valued functions on the finite set
  $[w]$, with the natural $\sg_{E_s}$-action twisted (\`a la Tate) by $-l_{[w]}$.  Similarly, we define a  $\Z_p[\sg_{E_s}]$-module $\rho_{[w]}(\Z_p)$. 
     
     $\bullet$  The set
       $$I_{[w]}=\{\alpha\in \Delta| \, \, \spp(w\mu-\nu, \omega_{\alpha})\leq 0\}.$$
      It is the minimal subset of $\Delta$ such that $[w]\in\Omega_{I_{[w]}}$, thus we have 
      $[\omega]\in \Omega_I$ if and only if $I_{[w]}\subset I$.
 
          The Bruhat decomposition (\ref{cells}) yields the following computation of the \'etale  cohomology of the varieties $Y_I$. 
  \begin{corollary}\label{computation-cor}
 We have 
 \begin{align}
 \label{computation}
  & H^*_{\eet}(Y_{I,C},\Z/p^n)\simeq \bigoplus_{[w]\in\Omega_I}\rho_{[w]}(\Z/p^n)[-2l_{[w]}].
 \end{align}
 \end{corollary}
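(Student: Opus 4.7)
The plan is to deduce the result directly from the Bruhat-type decomposition \eqref{cells} by the classical affine-paving argument, keeping track of the Galois action orbit by orbit. First, observe that $Y_I$ is closed in the projective flag variety $\sff_{E_s}$, hence projective itself, so that $H^*_{\eet}(Y_{I,C},\Z/p^n)=H^*_{\eet,c}(Y_{I,C},\Z/p^n)$; this lets us work with compactly supported cohomology, where affine cells behave nicely. Each Bruhat cell $C_w:=BwP(\mu)/P(\mu)$ that appears in \eqref{cells} is locally closed in $\sff_{C}$ and isomorphic to $\mathbf{A}^{l_w}_C$, so
\begin{equation*}
H^i_{\eet,c}(C_{w,C},\Z/p^n)\simeq
\begin{cases}\Z/p^n(-l_w) & \text{if } i=2l_w,\\ 0 & \text{otherwise.}\end{cases}
\end{equation*}

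Next, order the cells by length and define, for each integer $j\geq 0$, the closed subvariety
\begin{equation*}
Y_I^{\leq j}:=\bigsqcup_{[w]\in\Omega_I,\,l_w\leq j}C_w\subset Y_I.
\end{equation*}
That $Y_I^{\leq j}$ is closed follows from the usual fact that $\overline{C_w}$ in $\sff$ is a union of cells $C_{w'}$ with $w'\leq w$ in Bruhat order (hence with $l_{w'}\leq l_w$), together with the observation that, since $Y_I$ is closed in $\sff$, the closure of $C_w$ inside $Y_I$ is a union of $C_{w'}$ with $[w']\in\Omega_I$. The open stratum $Y_I^{\leq j}\setminus Y_I^{\leq j-1}$ is the disjoint union of the cells $C_w$ with $[w]\in\Omega_I$ and $l_w=j$. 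The excision triangle
\begin{equation*}
\bigoplus_{[w]\in\Omega_I,\,l_w=j}\!\!\R\Gamma_{\eet,c}(C_{w,C},\Z/p^n)\to \R\Gamma_{\eet,c}(Y^{\leq j}_{I,C},\Z/p^n)\to \R\Gamma_{\eet,c}(Y^{\leq j-1}_{I,C},\Z/p^n)\to[+1]
\end{equation*}
combined with the vanishing above shows, by induction on $j$, that $H^{\bullet}_{\eet,c}(Y^{\leq j}_{I,C},\Z/p^n)$ is concentrated in even degrees: at each step the connecting map lands in an odd-degree group of an affine paving, which is zero, so the triangle splits into short exact sequences that collectively give
\begin{equation*}
H^{2k}_{\eet,c}(Y_{I,C},\Z/p^n)\simeq\bigoplus_{\substack{[w]\in\Omega_I\\ l_w=k}}\Z/p^n(-k),\qquad H^{2k+1}_{\eet,c}(Y_{I,C},\Z/p^n)=0.
\end{equation*}

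Finally, I would repackage this sum by $\sg_{E_s}$-orbits. The set of cells $\{C_w\}$ is permuted by $\sg_{E_s}$ via its action on $W^{\mu}$, and the sub-collection indexed by $\Omega_I$ is stable under this action (since $\Omega_I$ is a union of orbits by definition). The cells inside a single orbit $[w]$ all have the same length $l_{[w]}$ and are transitively permuted by $\sg_{E_s}$; the corresponding direct summand of $H^{2l_{[w]}}_{\eet,c}(Y_{I,C},\Z/p^n)$ is therefore the $\Z/p^n$-module of $\Z/p^n$-valued functions on $[w]$, with the natural permutation action twisted by $\Z/p^n(-l_{[w]})$ coming from the cycle class of a cell. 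This is exactly $\rho_{[w]}(\Z/p^n)$, yielding the desired isomorphism. The only nontrivial point in the argument is the closedness of the filtration $Y_I^{\leq j}$ (i.e.\ compatibility of \eqref{cells} with the Bruhat order restricted to $\Omega_I$), and the identification of the Galois-equivariant structure on the graded pieces; both are essentially bookkeeping once \eqref{cells} and standard purity for $H^*_{\eet,c}(\mathbf{A}^j,\Z/p^n)$ are in hand.
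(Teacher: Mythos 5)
Your proof is correct and follows essentially the same route as the paper: filtering $Y_I$ by the closed subvarieties $Y_I^{\leq j}$ built from the Bruhat decomposition \eqref{cells}, using the purity of $H^*_{\eet,c}(\mathbf{A}^j_C,\Z/p^n)$ to deduce vanishing of connecting maps and collapse the long exact sequences, then regrouping by $\sg_{E_s}$-orbits to obtain the summands $\rho_{[w]}(\Z/p^n)[-2l_{[w]}]$. The only difference is that you spell out the closedness of the filtration and the evenness/degeneration step, which the paper leaves implicit.
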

 \begin{proof}The computation in the $\ell$-adic setting in 
  \cite[Prop.\ 4.2]{Of} \cite[Prop.\ 7.1]{OS0} carries over to the $p$-adic setting, the key element being the standard form of the compactly supported cohomology of the affine space.
 
     The computation  goes as follows. For $0\leq i\leq m_I:=\max\{l_{[w]}|[w]\in\Omega_I\}$, set
 $$
 Y^i_I:=\bigcup_{[w]\in\Omega_I, l(w)\leq i}BwP(\mu)/P(\mu). $$
  We have a filtration by closed subvarieties
 $$
 Y_I=Y_I^{m_I}\supset Y_I^{m_I-1}\supset \cdots\supset Y^0_I\supset Y^{-1}_I:=\emptyset
 $$
such that 
 \begin{equation}
 \label{Bruhat}
 Y^i_I\setminus Y^{i-1}_I=\bigsqcup _{[w]\in\Omega_I, l(w)=i}BwP(\mu)/P(\mu).
 \end{equation}
  The Bruhat cell $BwP(\mu)/P(\mu)$ above is isomorphic to the affine space $\mathbb{A}^{i}_{E_s}$. Recall that
 $$
 H^j_{\eet, c}(\mathbb{A}^{i}_{C},\Z/p^n)\simeq\begin{cases}\Z/p^n(-i) & \mbox{for }  j=2i,\\
0 & \mbox{otherwise}. 
 \end{cases}
 $$
 The corollary follows now easily from  the long exact sequences (the induced Galois representations arise from the Bruhat decomposition (\ref{Bruhat}))
\begin{align*}
\cdots & \to H^{i-1}_{\eet, c}(Y^{j-1}_{I,C},\Z/p^n) \to H^i_{\eet, c}(Y^j_{I,C}\setminus Y^{j-1}_{I,C},\Z/p^n) \to H^i_{\eet, c}(Y^j_{I,C},\Z/p^n) \\
& \to H^i_{\eet, c}(Y^{j-1}_{I,C},\Z/p^n) \to \cdots \qedhere
\end{align*}
 
 \end{proof}
\begin{remark}\label{Bruhat2}The following observation will be useful later (see \cite[Sec.\ 4]{Of}). 
For  $[w]\in W^{\mu}/\sg_{E_s}$ and $I\subset \Delta$, let   $H(Y_I,[w])$ denote the part of  the direct sum (\ref{computation}), which comes from $[w]$, i.e.,
$$
H(Y_I,[w])=\begin{cases}
\rho_{[w]}(\Z/p^n)[-2l_{[w]}] &\mbox { if  } [w]\in \Omega_I,\\
0 &  \mbox{ if  } [w]\notin \Omega_I.
\end{cases}
$$
We have
$$
H^*_{\eet}(Y_{I,C},\Z/p^n)\simeq \bigoplus_{[w]\in W^{\mu}/\sg_{E_s}}H(Y_I,[w]).
$$

Let $I\subset J\subset \Delta$. Consider the projections
$$
p_{I,J}: H^*_{\eet}(Y_{J,C}, \Z/p^n)\to H^*_{\eet}(Y_{I,C}, \Z/p^n)
$$
induced by the closed embeddings $Y_I\hookrightarrow Y_J$.   The  proof of Corollary \ref{computation-cor} shows that they decompose into the direct sums:
$$
p_{I,J}\simeq \bigoplus_{([w],[w^{\prime}])\in (W^{\mu}/\sg_{E_s})^2}p_{[w],[w^{\prime}]}:\bigoplus_{[w]\in W^{\mu}/\sg_{E_s}}H(Y_J,[w])\to \bigoplus_{[w^{\prime}]\in W^{\mu}/\sg_{E_s}}H(Y_I,[w^{\prime}])
$$
with $
p_{[w],[w^{\prime}]}$ equal to the identity for 
$[w]=[w^{\prime}] $ and to zero otherwise.
\end{remark}

        \section{The main result} We are now ready to formulate and to prove the main result of this paper.
 
 \subsection{The setup} 
 Consider a local Shtuka datum $(G, [b], \{\mu\})$ over $F=\Q_p$, where 
$G$ is a {\it quasi-split} reductive group over $\Q_p$, 
$[b]\in B(G)$ is the $\sigma$-conjugacy class of a {\it basic} and $s$-decent\footnote{Recall that the hypothesis that $b$ is decent is harmless, since any $\sigma$-conjugacy class in $G(\breve{\Q}_p)$ contains a decent element.}         
 element 
$b\in G(\breveqp)$, and $\{\mu\}$ is a conjugacy class of cocharacters of $G$, with field of definition $E$ and associated flag variety $\mathcal{F}=\mathcal{F}(G, \{\mu\})$ defined over $E$. 

Letting $E_s=E\Q_{p^s}\subset \overline{\mathbf{Q}}_p$, the period domain  $\mathcal{F}^{\rm wa}=\mathcal{F}^{\rm wa}(G, [b], \{\mu\})$ 
is a nonempty\footnote{By the definition of a local Shtuka datum!} partially proper 
open subset of the adic space $\mathcal{F}_{\breve{E}}$, stable under the natural action of 
$J(\qp)\subset G(\breve{\Q}_p)$ on $\mathcal{F}_{\breve{E}}$ and having a canonical model over 
$E_s$. Thus the compactly  supported \'etale  cohomologies 
$H^*_{\eet,c}(\mathcal{F}^{\rm wa}_{C}, \Z/p^n)$ and  $H^*_{\eet,{\rm c}}(\mathcal{F}^{\rm wa}_{C}, \Z_p)$ are naturally 
 $J(\qp)\times \sg_{E_s}$-modules. Our goal is to describe these representations.

\subsubsection{Generalized Steinberg representations}  Fix a maximal $\Q_p$-split torus $S$ in $J=J_b$ and a minimal parabolic subgroup $B$ of $J$ defined over $\Q_p$ and having Levi component the centralizer of $S$.
This choice induces a parametrization $I \mapsto P_I$ of the standard parabolic subgroups of $J$ by subsets of $\Delta$, the set of simple roots associated to $B$.
In particular $P_{\emptyset}=B$ and $P_{\Delta}=J$.

We slightly change the notation introduced in Section \ref{extensions} and write simply
\begin{equation*}
X_I = J(\Q_p) / P_I(\Q_p), \quad i_I(A) = \LC(X_I, A), \quad  A = \Z/p^n, \Z_p, \Q_p,
\end{equation*}
where $\LC(X_I, A)$ is the space of locally constant (automatically with compact support since $X_I$ is compact) functions on $X_I$ with values in $A$.
We also let
\begin{equation*}
i^{\cont}_I(\Z_p) := \scc(X_I, \Z_p) = \varprojlim_{n} i_I(\Z/p^n),
\end{equation*}
where $\scc(X_I, \Z_p)$ is the space of continuous functions on $X_I$ with values in $\Z_p$.
The associated generalized smooth and continuous Steinberg representations of $J(\qp)$ will be denoted
\begin{gather*}
v_I(A) := i_I(A) / \sum_{I \subsetneq I'} i_{I'}(A), \quad A = \Z/p^n, \Z_p, \Q_p, \\
v^{\cont}_I(\Z_p) := i^{\cont}_I(\Z_p) / \sum_{I \subsetneq I'} i^{\cont}_{I'}(\Z_p) = \varprojlim_{n} v_I(\Z/p^n),
\end{gather*}
where the last equality follows from \cite[Lemme 3.3.3]{JHC} (in loc.\ cit.\ $J$ is split, but the results extend verbatim to any $J$).
Note that $i_{\Delta}=v_{\Delta}$ is simply the trivial representation of $J(\Q_p)$.

For two subsets $I \subset I' \subset \Delta$ with $|I' \setminus I| = 1$, we let $p_{I,I'} : i_{I'}(\Z/p^n) \to i_I(\Z/p^n)$ be the natural map induced by the surjection $X_I \to X_{I'}.$
For arbitrary subsets $I, I' \subset \Delta$ with $|I'| - |I| = 1$, we fix a numbering $I' = \{\beta_1,\ldots,\beta_r\}$ and we let
\begin{equation*}
d_{I,I'} =
\begin{cases}
(-1)^i p_{I,I'} & \text{if $I' = I \cup \{\beta_i\}$,} \\
0 & \text{if $I \not\subset I'$.}
\end{cases}
\end{equation*}
The following result is standard for complex coefficients.

\begin{proposition} \label{complex2}
Let $I \subset \Delta$ and consider the complex whose first term is in degree $-1$
\begin{equation*}
C_I(\Z/p^n) : i_{\Delta}(\Z/p^n) \to \bigoplus_{\substack{I \subset K \subset \Delta \\ |\Delta\setminus K| = 1}} i_K(\Z/p^n) \to \bigoplus_{\substack{I \subset K \subset \Delta \\ |\Delta\setminus K| = 2}} i_K(\Z/p^n) \to \cdots \to \bigoplus_{\substack{I \subset K \subset \Delta \\ |K\setminus I| = 1}} i_K(\Z/p^n) \to i_I(\Z/p^n)
\end{equation*}
with differentials induced by the $d_{K,K'}$.
Then $C_I(\Z/p^n)$ is a resolution of $v_{I}(\Z/p^n)$ by $J(\Q_p)$-modules.
\end{proposition}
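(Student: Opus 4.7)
The complex $C_I(\Z/p^n)$ has the combinatorial shape of the augmented cellular cochain complex of the simplex on the vertex set $\Delta \setminus I$: to each subset $S \subset \Delta \setminus I$ (viewed as a face of the simplex) one attaches the module $i_{\Delta \setminus S}(\Z/p^n)$, the differentials are signed sums of the natural inclusions $i_{K'} \hookrightarrow i_K$ for $K \subsetneq K'$ (arising from pullback along $X_K \twoheadrightarrow X_{K'}$), and the augmentation at the top is the canonical surjection $i_I \twoheadrightarrow v_I$.

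The plan is to prove by induction on $n := |\Delta \setminus I|$, uniformly over all connected reductive groups (so that the inductive hypothesis is available both in $J$ and in its Levi subgroups), that $C_I$ resolves $v_I$. The base cases are immediate: for $n = 0$ one has $v_\Delta = i_\Delta$ trivially, and for $n = 1$ with $\Delta = I \sqcup \{\alpha\}$ the complex $C_I$ is just the defining short exact sequence $0 \to i_\Delta \to i_I \to v_I \to 0$. For the inductive step, I would pick any $\alpha \in \Delta \setminus I$ and invoke the fundamental short exact sequence \eqref{dev}, namely
$$0 \to v_{I \cup \{\alpha\}} \to \Ind_{P^\alpha}^J \bigl(v_{M^\alpha \cap P_I}^{M^\alpha}\bigr) \to v_I \to 0,$$
which is exact by exactness and transitivity of parabolic induction (valid with any ring of coefficients, in particular $\Z/p^n$). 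The induction hypothesis applies both to $v_{I \cup \{\alpha\}}$ in $J$ (the complement has size $n-1$) and to $v_{M^\alpha \cap P_I}^{M^\alpha}$ in the Levi $M^\alpha$ (whose root system is $\Delta \setminus \{\alpha\}$, and the complement of $I$ in it has size $n-1$), yielding resolutions $C_{I \cup \{\alpha\}}^J$ and $C_{M^\alpha \cap P_I}^{M^\alpha}$ respectively.

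I would then apply the exact functor $\Ind_{P^\alpha}^J$ to the Levi-side resolution and invoke the transitivity $\Ind_{P^\alpha}^J \circ \Ind_{P_K \cap M^\alpha}^{M^\alpha} \simeq \Ind_{P_K}^J$ (valid for $K \subset \Delta \setminus \{\alpha\}$, i.e.\ $\alpha \notin K$, via the identification $P_K = (P_K \cap M^\alpha) \cdot U^\alpha$ inside $P^\alpha$) to turn it into a resolution of $\Ind_{P^\alpha}^J v_{M^\alpha \cap P_I}^{M^\alpha}$ by representations $i_K$ indexed by subsets $K \supset I$ with $\alpha \notin K$. Splicing the two resolutions via \eqref{dev} then yields a resolution of $v_I$ whose terms, indexed by all subsets $K \supset I$, split naturally into those not containing $\alpha$ (coming from the Levi side, resolving the middle term of \eqref{dev}) and those containing $\alpha$ (coming from $C_{I \cup \{\alpha\}}^J$, shifted by one in degree so as to resolve the sub-object on the left). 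The main obstacle is combinatorial: one must check that the spliced complex agrees, including signs, with the prescribed $C_I(\Z/p^n)$, which amounts to a careful bookkeeping exercise against the sign convention $d_{K,K'} = (-1)^i p_{K,K'}$ fixed by the ordering of $\Delta$ and the choice of $\alpha$ at each inductive step. Once the signs are reconciled, the induction closes.
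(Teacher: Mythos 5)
Your argument takes a genuinely different route from the paper's proof.  The paper reduces the acyclicity of $C_I(\Z/p^n)$ to a lattice-theoretic criterion of Schneider--Stuhler (\cite[Sec.\ 2, Prop.\ 6]{SS}): a Koszul-type complex built from a family of submodules $(A_i)$ of an ambient module resolves $\sum_i A_i$ once the lattice they generate is distributive, and this is verified by showing that each $i_K(\Z/p^n)$ is ``isotypically closed'' in $i_\emptyset(\Z/p^n)$, which rests on the mod-$p$ multiplicity-one theorem of Grosse-Kl\"onne, Herzig and Ly (Proposition~\ref{AHV}).  You propose instead an induction on $|\Delta\setminus I|$, over all reductive groups at once, splicing two resolutions along the short exact sequence \eqref{dev}.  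This is a perfectly viable alternative organisation.  A slightly cleaner way to carry out the splicing step, which also dispenses with the need to construct an explicit chain map and makes the sign check transparent, is the following: the summands $i_K$ with $\alpha\notin K$ form a subcomplex $D^\bullet\subset C_I(\Z/p^n)$ isomorphic, up to a sign of the differential, to $\Ind_{P^\alpha}^{J}\bigl(C_{I}^{M^\alpha}(\Z/p^n)\bigr)[-1]$ (the analogous complex for the Levi $M^\alpha$), with quotient $C_I(\Z/p^n)/D^\bullet\cong C_{I\cup\{\alpha\}}(\Z/p^n)$; the long exact cohomology sequence of $0\to D^\bullet\to C_I(\Z/p^n)\to C_{I\cup\{\alpha\}}(\Z/p^n)\to 0$, together with the inductive hypothesis and the identification of the connecting map with the inclusion in \eqref{dev}, closes the induction.

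Two caveats.  First, \eqref{dev} is not quite ``exact by exactness and transitivity of parabolic induction'' alone.  The surjectivity on the right and the vanishing of the composite are formal, but the left-exactness --- that $v_{I\cup\{\alpha\}}\to\Ind_{P^\alpha}^{J}(v_{M^\alpha\cap P_I}^{M^\alpha})$ is injective with image the full kernel of the surjection onto $v_I$ --- amounts to the distributivity $i_{I\cup\{\alpha\}}\cap\sum_{J\supsetneq I,\,\alpha\notin J}i_J=\sum_{J\supsetneq I\cup\{\alpha\}}i_J$ inside $i_I$, which is exactly the kind of identity the paper extracts from Proposition~\ref{AHV}.  Your induction therefore does not bypass the multiplicity-one input; it concentrates its use inside \eqref{dev}.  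That is legitimate if \eqref{dev} is treated as already established, but the route is not more elementary than the paper's.  Second, the degree shift in your sketch is reversed: relative to $C_I(\Z/p^n)$ it is the Levi-side resolution that is shifted by one, since $i_K$ with $\alpha\notin K$ sits in degree $|\Delta\setminus K|-2$ in $\Ind_{P^\alpha}^{J}\bigl(C_{I}^{M^\alpha}(\Z/p^n)\bigr)$ but in degree $|\Delta\setminus K|-1$ in $C_I(\Z/p^n)$, whereas for $\alpha\in K$ the degree $|\Delta\setminus K|-1$ is the same in $C_{I\cup\{\alpha\}}(\Z/p^n)$ and in $C_I(\Z/p^n)$.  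This is only a slip in the sketch, but it should be fixed before the bookkeeping you defer is attempted.
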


\begin{proof}
The proof for complex coefficients goes over in our setup thanks to a multiplicity one result of Grosse-Kl\"onne \cite{GK}, Herzig \cite{Her}, and Ly \cite{LySt}, as we explain below.

Recall first the following simple fact from \cite[Sec.\ 2, Prop.\ 6]{SS}: if $(A_1,\ldots, A_m)$ is a family of subgroups of an abelian group $A$ such that
\begin{equation}
\label{condition1}
\big(\sum_{i \in T} A_i \big) \cap \big( \bigcap_{j \in S} A_j \big) = \sum_{i \in T} \big( A_i \cap \big( \bigcap_{j \in S} A_j \big) \big)
\end{equation}
for all subsets $T, S\subset \{1,\dots, m\}$, then the natural complex
\begin{equation*}
\cdots \to \bigoplus_{1 \leq i < j < k \leq m} A_i \cap A_j \cap A_k \to \bigoplus_{1 \leq i < j \leq m} A_i \cap A_j \to \bigoplus_{1 \leq i \leq m} A_i \to A
\end{equation*}
is a resolution of the subgroup $\sum_i A_i$ of $ A$.

Now assume that the abelian group $A$ has finite length and say that a subgroup $A' \subset A$ is \emph{isotypically closed} if each irreducible constituent of $A'$ has the same multiplicity in $A'$ and $A$.
Such a subgroup is uniquely determined by its irreducible constituents.
If two subgroups $A', A'' \subset A$ are isotypically closed, then $A' \cap A''$ and $A' + A''$ are isotypically closed too, and their irreducible constituents are, respectively, the intersection and the union of the irreducible constituents of $A'$ and $A''$.
We deduce that if $A_i$ is isotypically closed for all $i$, then the left and right sides of \eqref{condition1} are isotypically closed and have the same irreducible constituents (as union distributes over intersection), hence they are equal.

We apply this fact with $A = i_{\emptyset}(\Z/p^n)$ and the $J(\Q_p)$-submodules $i_K(\Z/p^n)$ for $I \subset K \subset \Delta$, $|\Delta \setminus K| = 1$.
The latter are isotypically closed in $i_{\emptyset}(\Z/p^n)$ by Proposition \ref{AHV} below.
\end{proof}

\begin{proposition}[Grosse-Kl\"onne, Herzig, Ly] \label{AHV}
The irreducible constituents of $i_K(\Z/p^n)$ are the representations $v_I(\Z/p)$ for $K \subset I \subset \Delta$, each occurring with multiplicity $n$.
\end{proposition}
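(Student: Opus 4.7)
The strategy is a simple devissage reducing from coefficients in $\Z/p^n$ to coefficients in $\Z/p = \F_p$, where the statement is the known multiplicity-one result of Grosse-Kl\"onne, Herzig, and Ly. First I would recall that any irreducible smooth $J(\Q_p)$-representation with coefficients in $\Z/p^n$ is annihilated by $p$, so that the irreducible constituents of any $\Z/p^n$-representation of finite length coincide with those of its ``mod $p$'' pieces; hence the assertion only involves the mod $p$ composition factors of $i_K(\Z/p^n)$.

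Next I would consider the $p$-adic filtration
\begin{equation*}
0 = p^n \cdot i_K(\Z/p^n) \subset p^{n-1} \cdot i_K(\Z/p^n) \subset \cdots \subset p \cdot i_K(\Z/p^n) \subset i_K(\Z/p^n),
\end{equation*}
whose successive quotients are all $J(\Q_p)$-equivariantly isomorphic to $i_K(\Z/p)$; this is immediate from the fact that $\LC(X_K,-)$ is exact and that multiplication by $p^j$ induces isomorphisms $\Z/p^{n-j} \iso p^j\Z/p^n$ of abelian groups. Equivalently one can argue by induction on $n$ from the short exact sequence of $J(\Q_p)$-representations
\begin{equation*}
0 \to i_K(\Z/p^{n-1}) \to i_K(\Z/p^n) \to i_K(\Z/p) \to 0
\end{equation*}
obtained by applying $\LC(X_K,-)$ to $0 \to \Z/p^{n-1} \to \Z/p^n \to \Z/p \to 0$.

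Finally I would invoke the case $n = 1$, which is exactly the theorem of Grosse-Kl\"onne \cite{GK}, Herzig \cite{Her}, and Ly \cite{LySt} asserting that $i_K(\F_p)$ has irreducible constituents $v_I(\F_p)$ for $K \subset I \subset \Delta$, each appearing with multiplicity one. Combining this with the filtration above yields the desired description of the constituents of $i_K(\Z/p^n)$ with multiplicity $n$. The only nontrivial input here is the mod $p$ multiplicity-one result, which is imported as a black box; the rest is formal devissage, so I do not expect any genuine obstacle.
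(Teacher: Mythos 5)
Your proposal is correct and takes essentially the same approach as the paper: the paper's proof is a one-line devissage to the case $n=1$ followed by a citation of Grosse-Kl\"onne, Herzig, and Ly, and you have simply spelled out the standard filtration argument that the word ``devissage'' abbreviates.
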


\begin{proof}
By devissage, the result reduces to the case $n = 1$, which is due to  Grosse-Kl\"onne \cite{GK} and Herzig \cite{Her} when $\Gb$ is split, and to Ly \cite{LySt} in general.
\end{proof}

\subsubsection{The main result}
          
          The following theorem is the principal result of this paper.
                    
          \begin{theorem}\label{main}
           Let $(G,[b], \{\mu\})$ be a local Shtuka datum with $G/\qp$ reductive and quasi-split, $b\in G(\breve{\Q}_p)$ basic and $s$-decent. Suppose that $p \neq 2$.
           \begin{enumerate}
           \item There is an isomorphism of 
           $\sg_{E_s}\times J(\qp)$-modules 
          \begin{equation}
          \label{main-1}
 H^*_{\eet,c}(\sff^{\rm wa}_{C},\Z/p^n)\simeq \bigoplus_{[w]\in W^{\mu}/\sg_{E_s}} v_{{I_{[w]}}}(\Z/p^n)\otimes \rho_{[w]}(\Z/p^n)[-n_{[w]}],
 \end{equation}
 where $n_{[w]}=2l_{[w]}+|\Delta\setminus I_{[w]}|$.
  \item There is an isomorphism of 
           $\sg_{E_s}\times J(\qp)$-modules 
          \begin{equation}
          \label{main-3}
 H^*_{\eet,{\rm c}}(\sff^{\rm wa}_C,\Z_p)\simeq \bigoplus_{[w]\in W^{\mu}/\sg_{E_s}} v^{\cont}_{{I_{[w]}}}(\Z_p)\otimes \rho_{[w]}(\Z_p)[-n_{[w]}].
 \end{equation}
 \end{enumerate}
          \end{theorem}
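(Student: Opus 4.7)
The plan is to follow Orlik's geometric strategy from the $\ell\neq p$ case, with the mod $p$ $\Ext^1$ vanishing of Theorem \ref{theo:vanishing} replacing the Jacquet-exactness arguments used in the $\ell$-adic setting. The continuous assertion (\ref{main-3}) will follow from the torsion case (\ref{main-1}) by applying $\R\varprojlim_n$: the transition maps on each summand are surjective, so Mittag-Leffler holds, and $\varprojlim_n v_{I_{[w]}}(\Z/p^n)=v^{\cont}_{I_{[w]}}(\Z_p)$ by definition. Hence the heart of the argument is the torsion case.

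The first step is to reduce, via the distinguished triangle
$$\rg_{\eet,c}(\sff^{\rm wa}_C,\Z/p^n)\to \rg_{\eet}(\sff_C,\Z/p^n)\to \rg_{\eet}(\partial \sff^{\rm wa}_C,\Z/p^n)$$
and the Bruhat description of $\rg_{\eet}(\sff_C,\Z/p^n)$, to the computation of $H^*_{\eet}(\partial\sff^{\rm wa}_C,\Z/p^n)$ together with the boundary map. By Theorem \ref{totaro1} and Proposition \ref{dublin4}, $\partial \sff^{\rm wa}=\bigcup_{|\Delta\setminus I|=1}Z_I^{X_I}$ where $Z_I^{X_I}=\bigcup_{t\in X_I}tY_I^{\rm ad}$ and $Y_I=\bigcap_{\alpha\notin I}Y_{\omega_\alpha}$ is the Schubert locus on which all of the one-parameter subgroups $\omega_\alpha$, $\alpha\notin I$, simultaneously destabilize. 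A closed Mayer-Vietoris procedure applied to this covering produces the acyclic fundamental complex of sheaves
$$0\to \Z/p^n\to \bigoplus_{|\Delta\setminus I|=1}(\Z/p^n)_I\to\cdots\to (\Z/p^n)_\emptyset\to 0$$
on $\partial \sff^{\rm wa}_C$, giving rise to a spectral sequence
$$E_1^{i,j}=\bigoplus_{|\Delta\setminus I|=i+1}H^j_{\eet}(Z^{X_I}_{I,C},\Z/p^n)\Rightarrow H^{i+j}_{\eet}(\partial \sff^{\rm wa}_C,\Z/p^n).$$

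Next I would compute $E_1$ and $E_2$. Since $P_I(\Q_p)$ stabilizes $Y_I$ and $X_I=J(\Q_p)/P_I(\Q_p)$ is compact, one has $H^j_{\eet}(Z^{X_I}_{I,C},\Z/p^n)\simeq i_I(\Z/p^n)\otimes H^j_{\eet}(Y_{I,C},\Z/p^n)$; Corollary \ref{computation-cor} and the functoriality of Remark \ref{Bruhat2} then decompose this as a direct sum over $[w]\in W^\mu/\sg_{E_s}$ that is preserved by the differentials, the $[w]$-summand being $i_I(\Z/p^n)\otimes\rho_{[w]}(\Z/p^n)[-2l_{[w]}]$ if $I\supset I_{[w]}$ and zero otherwise. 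For each fixed $[w]$ the corresponding row of $E_1$ is, up to the shift by $\rho_{[w]}(\Z/p^n)[-2l_{[w]}]$, the resolution $C_{I_{[w]}}(\Z/p^n)$ of Proposition \ref{complex2}, whose cohomology is concentrated in column $|\Delta\setminus I_{[w]}|-1$ and equal to $v_{I_{[w]}}(\Z/p^n)$. A Galois-theoretic weight argument based on the distinct Tate twists appearing in the $\rho_{[w]}(\Z/p^n)$ then implies degeneration at $E_2$, yielding a $J(\Q_p)\times\sg_{E_s}$-equivariant filtration on $H^*_{\eet}(\partial\sff^{\rm wa}_C,\Z/p^n)$ with the predicted graded pieces.

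The final and most delicate step is to split this filtration and combine the result with $H^*_{\eet}(\sff_C,\Z/p^n)$ through the long exact sequence of the boundary map to produce (\ref{main-1}). Extensions between successive graded pieces $v_{I_{[w]}}(\Z/p^n)\otimes\rho_{[w]}(\Z/p^n)$ are controlled by $\Ext^1_{J(\Q_p)}(v_{I_{[w]}}(\Z/p^n),v_{I_{[w']}}(\Z/p^n))$ tensored with Galois-equivariant Homs between the $\rho$-factors; Theorem \ref{theo:vanishing} (requiring $p\neq 2$) kills these whenever $|(I_{[w]}\cup I_{[w']})\setminus (I_{[w]}\cap I_{[w']})|\geq 2$, and the remaining ``adjacent'' extensions are eliminated by the same weight argument since distinct $[w],[w']$ carry different Tate twists. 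Combining with the Bruhat description of the flag variety via the boundary map then absorbs the residual trivial contributions and produces the clean formula (\ref{main-1}); (\ref{main-3}) follows as explained above. The main obstacle is this splitting step: in Orlik's $\ell\neq p$ setting it is handled by Jacquet-functor exactness, whereas in the mod $p$ setting it rests on the delicate $\Ext^1$ computations of Section \ref{extensions}, which themselves reduce to the calculation of $H^1(G(\Q_p),\St)$.
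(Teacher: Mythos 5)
Your proposal follows the same overall strategy as the paper, but it glosses over three technical points that the paper has to handle with some care, and one of your justifications is incorrect as stated even though the conclusion survives.

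First, to invoke Theorem \ref{theo:vanishing} you must know that $H^*_{\eet}(\partial\sff^{\rm wa}_C,\Z/p^n)$ is a \emph{smooth} $J(\Q_p)$-module, since the $\Ext^1$ vanishing lives in the category of smooth representations. This is not automatic: $\partial\sff^{\rm wa}$ is only a pseudo-adic space, so Berkovich's smoothness theorem does not apply to it directly. The paper gets around this by using the distinguished triangle for $(\sff^{\rm wa},\sff,\partial\sff^{\rm wa})$, Berkovich's result for the partially proper adic space $\sff^{\rm wa}_C$, and an auxiliary lemma showing that an extension of smooth representations by a finitely generated smooth one is smooth (using the uniform pro-$p$ structure, hence the $p$-adic analytic hypothesis on $J(\Q_p)$). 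This step is a genuine prerequisite that your proposal does not address.

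Second, your treatment of the ``adjacent'' case is both misattributed and unnecessary. You claim that extensions with $||I_{[w]}|-|I_{[w']}||=1$ are killed by a Tate-weight argument on the $\rho$-factors. That reasoning is shaky over $\Z/p^n$ (Galois homs between $\Z/p^n(a)$ and $\Z/p^n(b)$ need not vanish for $a\neq b$; the paper's weight argument is only used for $E_2$-degeneration, and there it crucially exploits the compatible projective system over $n$). In fact no weight argument is needed here: if $[w]\neq[w']$ contribute to the same cohomological degree $j$ with $i>0$, the constraint $2l_{[w]}+|\Delta\setminus I_{[w]}|-1=j=2l_{[w']}+|\Delta\setminus I_{[w']}|-1$ together with $l_{[w]}\neq l_{[w']}$ forces $||I_{[w]}|-|I_{[w']}||\geq 2$, so Theorem \ref{theo:vanishing} applies directly and the adjacency case simply never occurs. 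The Galois-equivariance of the resulting $J(\Q_p)$-splitting is then obtained from the vanishing of $\Hom_{J(\Q_p)}(v_{I},v_{I'})$ for $I\neq I'$ (Proposition \ref{prop:HomSt}), not from a weight argument.

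Third, the column $i=0$ of $E_2$ produces graded pieces involving $i_{I_{[w]}}(\Z/p^n)$ (for $|\Delta\setminus I_{[w]}|=1$) and $i_\Delta(\Z/p^n)$ rather than generalized Steinberg representations; splitting off these pieces requires additional $\Ext^1$ computations, namely $\Ext^1_{J(\Q_p)}(i_{I_{[w]}},v_{I_{[w']}})=0$ and $\Ext^1_{J(\Q_p)}(i_\Delta,v_{I_{[w']}})=0$ for the relevant pairs, which the paper deduces via the short exact sequence $0\to i_{\Delta}\to i_{I_{[w]}}\to v_{I_{[w]}}\to 0$ and two more applications of Theorem \ref{theo:vanishing}. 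Your ``absorbs the residual trivial contributions'' is too quick; without carrying out this $i=0$ analysis one cannot pass from the computation of $H^*_{\eet}(\partial\sff^{\rm wa}_C,\Z/p^n)$ to the asserted form of $H^*_{\eet,c}(\sff^{\rm wa}_C,\Z/p^n)$.
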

          \begin{remark}
          
          \begin{enumerate}

         \item  Orlik shows in \cite[Th.\ 1.1]{Ol}  that the isomorphism (\ref{main-1}) 
          holds for  compactly supported \'etale cohomology with coefficients $\Z/\ell^n$, $\ell\neq p$ (there are additional assumptions on $\ell$ needed, see \cite[Sec.\ 1]{Ol}). This implies (by the same argument as we use below) the isomorphism (\ref{main-3}) for compactly supported \'etale cohomology with coefficients $\Z_{\ell}$. 
       \item Just as in \cite[Cor.\ 10.3.7]{DOR} one obtains strong vanishing results for 
       $ H^*_{\eet,{c}}(\sff^{\rm wa}_C,\Z/p^n)$ from the above theorem, as well as a simple description of the 
       top-degree cohomology. 
       
       \end{enumerate}
       \end{remark}
          
        \subsubsection{The case of the Drinfeld space} \label{main-ex} Let us discuss an example. Let ${\mathbb H}^d_{\Q_p}$ be  the Drinfeld symmetric space of dimension 
  $d$ over $\Q_p$.     Recall that 
  ${\mathbb H}^d_{\Q_p}=\mathbb{P}^{d}_{\Q_p}\setminus \cup_{H\in \mathcal{H}} H$, where $\mathcal{H}$ is the set of $\Q_p$-rational hyperplanes. Set $G:=\mathbb{GL}_{d+1,\Q_p}$ (as we will see in the proof, Theorem \ref{main} holds true for $p=2$ in this case).
  \begin{corollary}
  \begin{enumerate}
   \item There is an isomorphism of 
           $\sg_{\Q_p}\times G(\qp)$-modules 
          \begin{equation*}
 H^*_{\eet,c}({\mathbb H}^d_{C},\Z/p^n)\simeq \bigoplus_{i=0}^{d} {\rm Sp}_{d-i}(\Z/p^n)(-i)[-d-i].
 \end{equation*}
  \item There is an isomorphism of 
           $\sg_{\Q_p}\times G(\qp)$-modules 
          \begin{equation*}
 H^*_{\eet,{\rm c}}({\mathbb H}^d_C,\Z_p)\simeq \bigoplus_{i=0}^{d} {\rm Sp}^{\cont}_{d-i}(\Z_p)(-i)[-d-i].
 \end{equation*}
 \end{enumerate}
 Here the generalized Steinberg representations ${\rm Sp}_{d-i}(\Z/p^n)$ and ${\rm Sp}^{\cont}_{d-i}(\Z_p)$ are  as defined in the proof.
          \end{corollary}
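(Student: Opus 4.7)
The plan is to apply Theorem~\ref{main} to the local Shtuka datum $(G,[b],\{\mu\})=(\mathbb{GL}_{d+1,\Q_p},[1],\{(d,-1,\ldots,-1)\})$, which, by Example~\ref{Drinfeld1}, has $\mathbb{H}^d_{\Q_p}$ as its period domain. Here $J=J_1=G$ so $\nu_b=0$, and since $\mu$ is defined over $\Q_p$ with $b=1$ being $1$-decent, one has $E=E_s=\Q_p$. Consequently the Galois action on $W^{\mu}$ is trivial, every orbit $[w]\in W^{\mu}/\sg_{E_s}$ is a singleton, and $\rho_{[w]}(\Z/p^n)=\Z/p^n(-l_{[w]})$.

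Next I would enumerate $W^{\mu}$ explicitly. With $W=S_{d+1}$ and $\Stab(\mu)=S_d$ (the permutations fixing the first coordinate), the $d+1$ Kostant representatives are $w_0=e,\,w_1,\ldots,w_d$, where $w_i$ is the cycle $(1\;2\;\cdots\;i{+}1)$ of length $i$, so that $w_i\mu$ has $d$ in position $i+1$ and $-1$ elsewhere. I would then compute $I_{[w_i]}$ using the invariant inner product coming from the standard representation of $\mathbb{GL}_{d+1}$ (which restricts to the usual dot product on $X_*(T)_{\Q}=\Q^{d+1}$) and the dual basis $\omega_{\alpha_j}=(1,\ldots,1,0,\ldots,0)-\tfrac{j}{d+1}(1,\ldots,1)\in X_*(S)_{\Q}$ (with $j$ initial ones, where $S\subset J_{\rm der}=\SL_{d+1}$ is the diagonal torus). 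Since $\sum_k(w_i\mu)_k=0$, this reduces to $(w_i\mu,\omega_{\alpha_j})=\sum_{k=1}^j(w_i\mu)_k$, and a short case analysis yields $I_{[w_i]}=\{\alpha_1,\ldots,\alpha_i\}$ (read as $\emptyset$ for $i=0$ and as $\Delta$ for $i=d$). Hence $|\Delta\setminus I_{[w_i]}|=d-i$ and $n_{[w_i]}=2i+(d-i)=d+i$.

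Finally, I would define $\mathrm{Sp}_k(\Z/p^n):=v_{\{\alpha_1,\ldots,\alpha_{d-k}\}}(\Z/p^n)$ and $\mathrm{Sp}_k^{\cont}(\Z_p):=\varprojlim_n\mathrm{Sp}_k(\Z/p^n)$ (so that $\mathrm{Sp}_0=1$ and $\mathrm{Sp}_d=\St$), observe that $v_{I_{[w_i]}}(\Z/p^n)=\mathrm{Sp}_{d-i}(\Z/p^n)$, and substitute into Theorem~\ref{main} to get the first displayed isomorphism after relabelling the summation index; the continuous statement follows by passing to the inverse limit (the transition maps are surjective, so no $\mathrm{R}^1\!\varprojlim$ intervenes). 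The only subtle point is the assertion that the result holds for $p=2$: inspecting the proof of Theorem~\ref{main}, the hypothesis $p\ne2$ enters solely via Theorem~\ref{vanishing-intro}, and Remark~\ref{vanishing-GLn} supplies exactly the required $\Ext^1$-vanishing at $p=2$ for $G=\mathbb{GL}_{d+1,\Q_p}$ (an inner form of itself), so the argument goes through unchanged. The main obstacle is thus purely combinatorial bookkeeping: matching the parabolics $I_{[w_i]}$ with the classical ones defining $\mathrm{Sp}_{d-i}$ and confirming that the Tate twist produced by $\rho_{[w_i]}$ agrees with the $(-i)$ in the statement.
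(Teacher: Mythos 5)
Your proof is correct and follows the same route as the paper: specialize Theorem~\ref{main} to $(G,[b],\{\mu\})=(\mathbb{GL}_{d+1,\Q_p},[1],(d,(-1)^d))$, note $E=E_s=\Q_p$ and $J=G$, enumerate the $d+1$ Kostant representatives of length $0,\ldots,d$, compute $I_{[w_i]}=\{\alpha_1,\ldots,\alpha_i\}$ and $\rho_{[w_i]}=\Z/p^n(-i)$, and identify $v_{I_{[w_i]}}$ with $\mathrm{Sp}_{d-i}$. One small notational slip: under the standard reading $(1\,2\,\cdots\,i{+}1)$ sends $1\mapsto 2$, so this cycle is actually the inverse of the paper's $w_i=s_i\cdots s_1$ (which sends $1\mapsto i{+}1$); since the facts you actually use --- that $w_i\mu$ has $d$ in slot $i+1$, that $\ell(w_i)=i$, and the resulting $I_{[w_i]}$ --- are the correct ones, this does not affect the argument.
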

\begin{proof}By Example \ref{Drinfeld1},  ${\mathbb H}^d_{\Q_p}$ is the period domain corresponding to $b=1, \{\mu\}=(d, (-1)^d\}$. We have $E=E_s=\Q_p$ and $J=G$.

Let $B$ be the upper triangular Borel subgroup of $G$ and 
   let $\Delta=\{\alpha_1,\alpha_2,\dots,\alpha_d\}$ be the set of relative simple roots: $\alpha_i({\rm diag}(t_1,\ldots, t_{d+1}))=t_it_{i+1}^{-1}$. 
   We identify the Weyl group $W$ of $G$ with the group of permutations of $\{1,2,\dots,d+1\}$ and with the subgroup of permutation matrices
   in $G$. Then $W$ is generated by the elements $s_i=(i, i+1)$ for $0\leq i\leq d+1$.  
  The  set $W^{\mu}$ of Konstant representatives for $W/W_{\mu}$ consists of: 
  $$ w_0=1, w_1=s_1, w_2=s_2s_1, \dots, w_d=s_ds_{d-1}\cdots s_1,\quad l_{[w_i]}=i.
   $$
  We have 
 $\Delta\setminus I_{[w_i]}=\{\alpha_{i+1},\ldots, \alpha_d\}, |\Delta\setminus I_{[w_i]}|=d-i .$ And $\rho_{[w_i]}(\Z/p^n)=\Z/p^n(-i)$, $\rho_{[w_i]}(\Z_p)=\Z_p(-i)$.

Set ${\rm Sp}_{d-i}(\Z/p^n):=v^{G}_{I_{[w_i]}}(\Z/p^n)$ and  ${\rm Sp}^{\cont}_{d-i}(\Z_p):= v^{G,\cont}_{I_{[w_i]}}(\Z_p)$.
Then our corollary follows from Theorem \ref{main}.
\end{proof}
  We computed  that: 
  \begin{align*}
  H^i_{\eet,c}({\mathbb H}^d_{C},\Z/p^n) \simeq {\rm Sp}_{2d-i}(\Z/p^n)(d-i),\quad H^i_{\eet,{\rm c}}({\mathbb H}^d_{C},\Z_p) \simeq {\rm Sp}^{\cont}_{2d-i}(\Z_p)(d-i).
  \end{align*}
  In particular: 
  \begin{align*}
   & H^i_{\eet,c}({\mathbb H}^d_{C},\Z/p^n) \simeq H^i_{\eet,{\rm c}}({\mathbb H}^d_C,\Z_p)=0,\quad 0\leq i\leq d-1,\\
      & H^{2d}_{\eet,c}({\mathbb H}^d_{C},\Z/p^n) \simeq \Z/p^n(-d),\quad H^{2d}_{\eet,{\rm c}}({\mathbb H}^d_C,\Z_p)\simeq \Z_p(-d).
  \end{align*}
  Recall that in \cite[Th.\ 1.1]{CDN4} we have computed that:
   $$
   H^i_{\eet}({\mathbb H}^d_{C},\Z/p^n)\simeq  {\rm Sp}_i(\Z/p^n)^*(-i),\quad   H^{i}_{\eet}({\mathbb H}^d_{C},\Z_p)\simeq  {\rm Sp}^{\cont}_i(\Z_p)^*(-i).
   $$
      Hence (${}^*$ denotes the linear dual):
      
      \begin{corollary} There is a duality isomorphism of  $\sg_{\Q_p}\times G(\qp)$-modules
        $$
       H^i_{\eet}({\mathbb H}^d_{C},\Z/p^n)(d)\simeq  H^{2d-i}_{\eet,c}({\mathbb H}^d_{C},\Z/p^n)^*,\quad  H^i_{\eet}({\mathbb H}^d_{C},\Z_p)(d)\simeq  H^{2d-i}_{\eet,{\rm c}}({\mathbb H}^d_{C},\Z_p)^*.
        $$
        \end{corollary}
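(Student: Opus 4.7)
The plan is to verify the duality by direct comparison of the two explicit formulas for the cohomologies, without invoking any cup-product pairing (as the paper itself notes that a geometric proof via cup-product is not attempted here). Specifically, both sides will be computed in terms of the same generalized Steinberg representations, and their Galois twists will be seen to coincide.

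First, I would invoke the preceding corollary, which yields
\begin{equation*}
H^i_{\eet,c}({\mathbb H}^d_{C},\Z/p^n) \simeq {\rm Sp}_{2d-i}(\Z/p^n)(d-i),
\end{equation*}
and in particular substituting $2d-i$ for $i$,
\begin{equation*}
H^{2d-i}_{\eet,c}({\mathbb H}^d_{C},\Z/p^n) \simeq {\rm Sp}_{i}(\Z/p^n)(i-d).
\end{equation*}
Taking the $R$-linear dual (which commutes with Tate twists in the expected way since a Tate twist is a multiplicative shift of the Galois action by a character) gives
\begin{equation*}
H^{2d-i}_{\eet,c}({\mathbb H}^d_{C},\Z/p^n)^{*} \simeq {\rm Sp}_{i}(\Z/p^n)^{*}(d-i)
\end{equation*}
as $\sg_{\Q_p}\times G(\qp)$-modules.

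Next I would quote the computation \cite[Th.\ 1.1]{CDN4}, recalled in the excerpt, which provides
\begin{equation*}
H^i_{\eet}({\mathbb H}^d_{C},\Z/p^n)\simeq {\rm Sp}_i(\Z/p^n)^{*}(-i),
\end{equation*}
so that twisting by $(d)$ yields ${\rm Sp}_i(\Z/p^n)^{*}(d-i)$. Comparing the two expressions produces the desired isomorphism of $\sg_{\Q_p}\times G(\qp)$-modules for $\Z/p^n$-coefficients. The $\Z_p$-version follows by taking the inverse limit over $n$: the transition maps on both sides are compatible (the compactly supported side by construction of $H^{*}_{\eet,\mathrm{c}}(-,\Z_p)$ as $\R\varprojlim_n H^{*}_{\eet,c}(-,\Z/p^n)$, the non-compactly supported side by \cite{CDN4}), and one replaces ${\rm Sp}_{i}(\Z/p^n)$ by ${\rm Sp}^{\cont}_{i}(\Z_p)$ throughout, using the fact that the continuous Steinberg is the inverse limit of the smooth ones.

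The only step requiring a small check is that no nontrivial $\varprojlim^1$ obstruction appears when passing to $\Z_p$-coefficients; this is harmless because at each finite level the cohomologies are free of finite rank over $\Z/p^n$ with surjective transition maps (as is visible from the explicit description as generalized Steinberg representations tensored with a finite constant), so Mittag-Leffler holds. There is no genuine obstacle here: the corollary is essentially a bookkeeping consequence of the preceding main computation together with the result of \cite{CDN4}, and the main work has already been done in establishing Theorem~\ref{main} and its specialization to ${\mathbb H}^d$.
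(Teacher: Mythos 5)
Your argument is exactly the paper's: the corollary follows by equating the two displayed computations, $H^{2d-i}_{\eet,c}({\mathbb H}^d_{C},\Z/p^n) \simeq {\rm Sp}_{i}(\Z/p^n)(i-d)$ (after reindexing) and $H^i_{\eet}({\mathbb H}^d_{C},\Z/p^n)\simeq {\rm Sp}_i(\Z/p^n)^*(-i)$, then dualizing and twisting; the $\Z_p$ case passes to the limit using Mittag-Leffler. This matches the paper's one-line ``Hence'' derivation, which invokes the same two formulas.
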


 \subsection{Orlik's fundamental complex}    In this section we will define a resolution of the constant sheaf $\Z/p^n$ on the complement of the period domain $\sff^{\rm wa}_C$ and compute the cohomology of its terms. The key geometric ideas are due to Orlik, see \cite{Ol}.
 
 \subsubsection{Stratification of the constant sheaf}\label{strat}
         Let ${\rm Sh}(Y_{C,\eet})$ denote the topos of \'etale sheaves on $Y_C$. We refer the reader to the appendix for the review of the formalism of \'etale cohomology of pseudo-adic spaces developed by Huber in 
         \cite{H1}. If $X$ is an algebraic variety over $C$ (for instance $Y_{I,C}$), let $X^{\rm ad}$ be the associated adic space over 
         ${\rm Spa}(C,\mathcal{O}_C)$.

                 If $Z$ is a closed pseudo-adic subspace of 
          $Y_C$ and $i: Z\to Y_C$ is the inclusion, define 
          $F_Z:=i_*i^*F\in {\rm Sh}(Y_{C,\eet})$ for any sheaf 
  $F\in  {\rm Sh}(Y_{C,\eet})$. 
   Fix a subset $I\subset \Delta=\{\alpha_1,\ldots,\alpha_d\}$. If $T$ is a compact open subset 
          of $X_I=J(\qp)/P_I(\qp)$, then $$Z^T_{I,C}:=\bigcup_{t\in T} tY_{I,C}^{\rm ad}$$ is a closed pseudo-adic subspace of $Y_C$ thanks to the compactness of $T$ (see \cite[Lemma 3.2]{Ol}) and for any $F\in  {\rm Sh}(Y_{C,\eet})$ we have a natural injection 
          $F_{Z^T_{I,C}}\subset \prod_{t\in T} F_{tY_{I,C}^{\rm ad}}$. Therefore any partition 
          $X_I=\coprod_{a\in A} T_a$ by (nonempty) compact open subsets\footnote{Note that $A$ is necessarily finite since 
          $X_I$ is compact.} induces an embedding $$\bigoplus_{a\in A} F_{Z^{T_a}_{I,C}}\subset
          \prod_{x\in X_{I}} F_{xY_{I,C}^{\rm ad}}.$$ 
          
          \begin{definition}
           If $F\in {\rm Sh}(Y_{C,\eet})$, define $F_I\in {\rm Sh}(Y_{C,\eet})$ as the {\em subsheaf of locally constant sections} of $\prod_{x\in X_I} F_{xY_{I,C}^{\rm ad}}$, i.e., 
           $$F_I=\varinjlim_{c\in \scc_I} F_c,$$
           the limit being taken over the (pseudo-filtered) category $\scc_I$ of compact open disjoint coverings of 
           $X_I$ ordered by refinement and $F_c$ for $c=\{T_j\}_{j\in A}\in \scc_I$ being the image of the natural embedding $
 \bigoplus_{j\in A}F_{Z_{I,C}^{T_j}}\hookrightarrow \prod_{x\in X_I} F_{xY_{I,C}^{\rm ad}}.
 $
            
          \end{definition}
          
          The following computation will be essential for us later.
           \begin{proposition} 
          \label{locally-constant}
\begin{enumerate}
\item  If $\bar{x}$ is a geometric point of $Y_C$ with support $x\in Y_C$, then, 
          for all $F\in {\rm Sh}(Y_{C,\eet})$, we have a natural isomorphism
          $$(F_I)_{\bar{x}}\simeq \LC(X_I(x), F_{\bar{x}}),\,\, \text{where}\,\, 
        X_I(x)=\{g\in X_I| \, x\in gY^{\rm ad}_{I,C}\}.$$
  \item Let $i\in\N$. We have
 $$
 H^i_{\eet}(Y_C,(\Z/p^n)_{I})\simeq\LC(X_I,H^i_{\eet}(Y_{I,C},\Z/p^n))\simeq i_{P_I}(\Z/p^n)\otimes H^i_{\eet}(Y_{I,C},\Z/p^n).
 $$

  \end{enumerate}        
           \end{proposition}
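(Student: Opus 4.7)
My plan is as follows. Statement (1) is a direct stalk computation using the definition of $F_I$ as a filtered colimit: since stalks commute with filtered colimits, I would reduce to computing $(F_c)_{\bar x}$ for a fixed compact open partition $c = \{T_j\}_{j \in A}$ of $X_I$. The injectivity of the map $\bigoplus_{j} F_{Z^{T_j}_{I,C}} \hookrightarrow \prod_{x' \in X_I} F_{x' Y^{\rm ad}_{I,C}}$ gives $F_c \simeq \bigoplus_j F_{Z^{T_j}_{I,C}}$, and each summand is the pushforward of $F$ along a closed immersion, hence has stalk at $\bar x$ equal to $F_{\bar x}$ if $x \in Z^{T_j}_{I,C}$ and $0$ otherwise. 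Since $x \in Z^{T_j}_{I,C}$ is equivalent to $T_j \cap X_I(x) \neq \emptyset$, the stalk $(F_c)_{\bar x}$ identifies with those $F_{\bar x}$-valued functions on $X_I(x)$ that are constant on the induced pieces $T_j \cap X_I(x)$; passing to the colimit over refinements then exhausts all locally constant $F_{\bar x}$-valued functions on $X_I(x)$, yielding $(F_I)_{\bar x} \simeq \LC(X_I(x), F_{\bar x})$.

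For (2), the strategy is first to commute \'etale cohomology with the filtered colimit $(\Z/p^n)_I = \varinjlim_c (\Z/p^n)_c$, which is legitimate since $Y_C$ is quasi-compact and quasi-separated and the coefficients are torsion; this gives
\[
H^i_{\eet}(Y_C, (\Z/p^n)_I) \simeq \varinjlim_c \bigoplus_{j \in A} H^i_{\eet}(Z^{T_j}_{I,C}, \Z/p^n).
\]
I would then prove that for sufficiently small compact open $T \subset X_I$ --- concretely of the form $T = t_0 K \cdot P_I(\Q_p)/P_I(\Q_p)$ with $K \subset J(\Q_p)$ a sufficiently small compact open subgroup --- the $J(\Q_p)$-equivariance of \'etale cohomology produces a natural isomorphism
\[
H^i_{\eet}(Z^T_{I,C}, \Z/p^n) \simeq \LC(T, H^i_{\eet}(Y_{I,C}, \Z/p^n)),
\]
reflecting that for $K$ small enough the translates $tY^{\rm ad}_{I,C}$, $t \in T$, contribute independent copies of $H^i_{\eet}(Y_{I,C}, \Z/p^n)$. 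Taking the colimit over refinements of $X_I$ then yields $\LC(X_I, H^i_{\eet}(Y_{I,C}, \Z/p^n))$. The remaining identification with $i_{P_I}(\Z/p^n) \otimes H^i_{\eet}(Y_{I,C}, \Z/p^n)$ is immediate from the finite generation of $H^i_{\eet}(Y_{I,C}, \Z/p^n)$ over $\Z/p^n$ (Corollary \ref{computation-cor}) together with the definition $i_{P_I}(\Z/p^n) = \LC(X_I, \Z/p^n)$.

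The hard part will be the cohomological computation on $Z^T_{I,C}$ for small $T$. Within Huber's pseudo-adic formalism from \cite{H1}, one has to verify that $Z^T_{I,C}$ splits, in a cohomologically precise sense, as a bundle of translates of $Y^{\rm ad}_{I,C}$ parametrized by $T$, so that its \'etale cohomology decomposes as a direct sum of copies of $H^i_{\eet}(Y_{I,C}, \Z/p^n)$ indexed by the components of $T$, and compatibly with refinement maps so that the colimit is transparent. Once this local model is in place, the rest of the argument is essentially formal sheaf theory.
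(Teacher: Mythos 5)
Your treatment of part (1) matches the paper's (the paper simply says it follows from the definition; your expansion is correct), and the opening and closing steps of part (2) — commuting $H^*_{\eet}$ with the filtered colimit over covers via Huber's \cite[2.3.13]{H1}, and the final bookkeeping via finite generation of $H^i_{\eet}(Y_{I,C},\Z/p^n)$ — also match. But the middle step is wrong, and it is where all the substance lives.

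You claim that for a sufficiently small compact open $T\subset X_I$ containing $[P_I]$ one has a direct isomorphism
$H^i_{\eet}(Z^T_{I,C},\Z/p^n)\simeq\LC(T,H^i_{\eet}(Y_{I,C},\Z/p^n))$, the idea being that the translates $tY^{\rm ad}_{I,C}$, $t\in T$, ``contribute independent copies.'' This is false for every fixed $T$: the translates are never disjoint (they are Schubert varieties sharing the low-dimensional cells, and $Z^T_{I,C}$ is connected), so already in degree $0$ one gets $H^0_{\eet}(Z^T_{I,C},\Z/p^n)=\Z/p^n$, whereas $\LC(T,\Z/p^n)$ is a free $\Z/p^n$-module of infinite rank. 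No shrinking of $T$ fixes this; the $\LC$ structure only appears after the colimit is taken, not at any finite stage.

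What the paper actually does instead is compute the colimit of the cohomologies of the tubes, not any individual tube. One writes $Y^{\rm ad}_{I,C}=\bigcap_s Z^{T_s}_{I,C}$ for a decreasing family of compact open neighborhoods $T_s$ of $[P_I]$ with $\bigcap_s T_s=\{[P_I]\}$ (Orlik's \cite[Lemma 4.4]{Ol}), applies Huber's continuity theorem \cite[2.4.6]{H1} to obtain
$\varinjlim_s H^i_{\eet}(Z^{T_s}_{I,C},\Z/p^n)\simeq H^i_{\eet}(Y^{\rm ad}_{I,C},\Z/p^n)$, and then uses the algebraic–analytic comparison \cite[Th.\ 3.7.2]{H1} to replace $Y^{\rm ad}_{I,C}$ by the variety $Y_{I,C}$. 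Only then does the doubly indexed colimit over covers $c$ and pieces $T\in c$, combined with $J(\Q_p)$-equivariance at each point $g\in X_I$, reassemble into $\LC(X_I,H^i_{\eet}(Y_{I,C},\Z/p^n))$. Your proposal omits both the continuity theorem and the comparison theorem, which are the key $p$-adically-nontrivial inputs the paper explicitly flags as the points that must be checked to carry over Orlik's $\ell$-adic argument.
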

 \begin{proof} The first claim follows from the definition of $F_I$.
 Orlik's proof \cite[Prop.\ 4.3]{Ol} of the second claim for $\ell$-adic sheaves goes through in our case, the key points of commuting with inductive and projective limits as well as the comparison algebraic-analytic being also valid $p$-adically. We describe briefly the essential steps of the proof. Since $Y$ is quasi-compact and 
 $\scc_I$ is pseudo-filtered, \cite[2.3.13]{H1} yields an isomorphism
  $$
 H^i_{\eet}(Y_C, (\Z/p^n)_{I})\simeq\varinjlim_{c\in\scc_I}H^i_{\eet}(Y_C,(\Z/p^n)_c)\simeq \varinjlim_{c\in\scc_I}(\bigoplus_{T\in c}H^i_{\eet}(Z^{T}_{I,C}, (\Z/p^n))).
 $$

    We can write $Y_{I,C}^{\rm ad}=\bigcap_{s\in\N}Z_{I,C}^{T_s}$ \cite[Lemma 4.4]{Ol}, for a family of compact open neighborhoods of the point $[P_I]$ of $X_I$ such that $\cap_{s\in\N}T_s=[P_I]$. Hence, by \cite[2.4.6]{H1},
    $$
    \varinjlim_{s\in\N}H^i_{\eet}(Z_{I,C}^{T_s}, \Z/p^n)\simeq H^i_{\eet}(Y_{I,C}^{\rm ad}, \Z/p^n)\simeq H^i_{\eet}(Y_{I,C}, \Z/p^n),
    $$
    the last isomorphism being a consequence of Huber's comparison Theorem \cite[Th.\ 3.7.2]{H1}.
Combining the above we get
$$
H^i_{\eet}(Y_C,F_I)\simeq \varinjlim_{c\in\scc_I}(\bigoplus_{T\in c}H^i_{\eet}(Z^{T}_{I,C}, \Z/p^n))\simeq \LC(X_I,H^i_{\eet}(Y_{I,C},\Z/p^n)),
$$ as desired.
\end{proof}

          \subsubsection{Acyclicity of the fundamental complex}
            We will explain now how to create a complex out of the various $F_I$ for $I\subset \Delta$. 
            Choose an ordering on $\Delta$ and fix $F\in {\rm Sh}(Y_{C,\eet})$. We will construct (following Orlik) maps $d_{I,I'}: F_{I'}\to F_I$ for all subsets 
            $I,I'$ of $\Delta$ with $|I'|-|I|=1$, inducing 
            the {\it fundamental complex}\footnote{Check \cite[Ch.\ XI]{DOR} for a ``geometric'' construction of this complex.}: 
            \begin{equation}
            \label{complex1}
            C(F): \quad 0\to F\to \bigoplus_{|\Delta\setminus I|=1} F_I\to \bigoplus_{|\Delta\setminus I|=2} F_I\to\cdots
              \to \bigoplus_{|\Delta\setminus I|=|\Delta|-1} F_I\to F_{\emptyset}\to 0.
              \end{equation}
            We set $d_{I,I'}=0$ when  
            $I$ is not a subset of $I'$, so suppose that $I\subset I'$.
             We have a natural surjective map $p_{I,I'}: X_I\to X_{I'}$ (induced by $P_I\subset P_{I'}$), and for all 
             $x\in X_{I'}, y\in X_I$ we have a natural map $F_{xY_{I'}}\to F_{yY_I}$, namely the zero map if $p_{I,I'}(y)\ne x$ and the map induced by the closed embedding $yY_{I}\to 
             xY_{I'}$ otherwise. Unwinding  the definitions of $F_I$ and $F_{I'}$, we obtain a natural map $p_{I,I'}: F_{I'}\to F_I$, and we  set $d_{I,I'}=(-1)^i p_{I,I'}$ if $I'=\{\alpha_1<\dots<\alpha_r\}$ and $I=I'\setminus \{\alpha_i\}$.

         Recall that a sheaf $F$ on $Y_{C,\eet}$ is called {\it overconvergent} if the specialization morphism $F_{\xi_1}\to F_{\xi_2}$ is an isomorphism
               for any specialization of geometric points 
         $\xi_2\to \xi_1$. If $F$ is an overconvergent  sheaf, then it is easy to see that the terms of $C(F)$ are overconvergent \cite[Lemma 3.4]{Ol}.

             \begin{theorem} 
              If $F$ is an overconvergent sheaf on $Y_{C, \eet}$, the fundamental complex 
              $C(F)$ is  acyclic. 
                           \end{theorem}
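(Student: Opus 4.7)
My plan is to reduce acyclicity of $C(F)$ to the contractibility of the subcomplex $T_x$ of the spherical building $\sbb(J_{\rm der})$ introduced in Section~\ref{contractibility}, by identifying the stalk complex at any geometric point with the augmented simplicial cochain complex of $T_x$.

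First I would reduce the problem to stalks. Since $F$ is overconvergent and the construction $F \mapsto F_I$ preserves overconvergence, every term of $C(F)$ is overconvergent, so exactness can be checked on stalks at geometric points $\bar{x}$ of $Y_C$ (and even only at rank-$1$ geometric points). By Proposition~\ref{locally-constant}(1), the stalk of $F_I$ at $\bar{x}$ is
\[
(F_I)_{\bar{x}} \simeq \LC(X_I(x), F_{\bar{x}}), \qquad X_I(x) = \{g \in X_I \mid x \in g Y_{I,C}^{\rm ad}\}.
\]
Unwinding the definition of $Y_I = \bigcap_{\alpha\notin I} Y_{\omega_\alpha}$ and using the equivariance property $\mu^{\sll}(gy,\lambda) = \mu^{\sll}(y, g^{-1}\lambda g)$, the condition $g \in X_I(x)$ becomes: $\mu^{\sll}(x, {\rm Int}(g)\omega_\alpha) < 0$ for every $\alpha \in \Delta \setminus I$. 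In other words, $g \in X_I(x)$ if and only if the simplex of the combinatorial building $|\Delta(J)|$ with vertices $\{g P(\omega_\alpha) g^{-1} : \alpha \in \Delta \setminus I\}$ (a simplex of dimension $|\Delta \setminus I|-1$) is a face of $T_x$.

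With this identification, the stalk complex at $\bar{x}$ becomes, up to signs, the augmented cochain complex
\[
0 \to F_{\bar{x}} \to \bigoplus_{v \in T_x^{(0)}} F_{\bar{x}} \to \bigoplus_{\sigma \in T_x^{(1)}} F_{\bar{x}} \to \cdots \to \bigoplus_{\sigma \in T_x^{(|\Delta|-1)}} F_{\bar{x}} \to 0
\]
of the simplicial complex $T_x$ with constant coefficients $F_{\bar{x}}$ (the fact that $X_I(x)$ is totally disconnected means $\LC(X_I(x),F_{\bar{x}})$ is just the module of all $F_{\bar{x}}$-valued functions on $X_I(x)$). The differentials $d_{I,I'}$ are, by construction, the usual alternating-sum simplicial coboundaries with respect to the fixed ordering on $\Delta$, so this really is the augmented Čech/simplicial cochain complex of $T_x$, computing $\widetilde{H}^*(T_x; F_{\bar{x}})$ (shifted by one).

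To conclude, note that $x \in Y_C$ means $x$ is not weakly admissible, so by Proposition~\ref{dublin4} the subcomplex $T_x$ is nonempty, and by Section~\ref{contractibility} it is contractible (as a deformation retract of the convex set $C_x$). Hence $\widetilde{H}^*(T_x; F_{\bar{x}}) = 0$ and the stalk complex is exact, proving acyclicity of $C(F)$. The main obstacle in writing this cleanly is the bookkeeping in step two: carefully matching the combinatorial indexing of the cosets in $X_I(x)$ with the faces of $T_x$, checking that the signs from the ordering of $\Delta$ produce the standard simplicial differential, and handling the passage from the continuous setting (compact open subsets of $X_I$) to the discrete simplicial picture via the total disconnectedness of $X_I$.
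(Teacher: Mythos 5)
Your overall strategy matches the paper's: reduce to stalks by overconvergence, use Proposition~\ref{locally-constant}(1) to identify $(F_I)_{\bar{x}}$ with $\LC(X_I(x),F_{\bar{x}})$, recognize the sets $X_I(x)$ as parametrizing faces of the subcomplex $T_x$ of the building, and conclude from the contractibility established in Section~\ref{contractibility}. The combinatorial dictionary (the coset $g P_I(\Q_p)$ lies in $X_I(x)$ iff the corresponding simplex of $\Delta(J)$ is a face of $T_x$) is exactly what the paper uses to pass from the stalk complex to the building.

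The gap is in the step where you assert that because $X_I(x)$ is totally disconnected, $\LC(X_I(x),F_{\bar x})$ is \emph{all} $F_{\bar x}$-valued functions, so that the stalk complex is the ordinary augmented simplicial cochain complex of $T_x$ with constant coefficients. That inference is false in general: on a compact totally disconnected space with non-discrete topology (e.g.\ $\Z_p$), locally constant functions form a proper subspace of all functions. The sets $X_I(x)$ inherit the $p$-adic topology from $X_I=J(\Q_p)/P_I(\Q_p)$ and need not be discrete, so you cannot simply replace the locally constant cochain complex by the full simplicial cochain complex and invoke ordinary reduced cohomology of a contractible complex. You flag this issue yourself at the end as ``bookkeeping,'' but it is the actual content of the step. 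The paper avoids this by not making the replacement: it treats the stalk complex as a complex of locally constant functions on the (possibly infinite, locally finite) simplicial complex $T_{x_\eta}$, and invokes the specific result of Schneider--Stuhler \cite[Rem.\ 66]{SS}, which states precisely that such a locally-constant-function complex is acyclic when the simplicial complex is contractible. To close your argument you either need to prove discreteness of the $X_I(x)$ (which is not claimed in the paper) or appeal to that Schneider--Stuhler statement, or reprove its limit/approximation argument.
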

             \begin{proof}
             The arguments from the proofs of \cite[Th.\ 3.3]{Ol} and \cite[Th.\ 2.1]{Oc}, for $\ell$-adic sheaves, also work in this setting. We sketch them briefly. By overconvergence, it suffices to check the acyclicity of the stalk $C(F_{\eta})$ of 
             $C(F)$ at a maximal geometric point 
             $\eta: {\rm Spa}(K, \mathcal{O}_K)\to \mathcal{F}_C$. If $x_{\eta}\in \sff(K)$ is the induced point, the complex 
             $C(F_{\eta})$ is given, thanks to Proposition \ref{locally-constant}, by 
             \begin{equation}
\label{kicius1}
0\to F_{x_{\eta}}\to \bigoplus_{|\Delta\setminus I|=1} \LC(X_I(x_{\eta}),F_{{x_{\eta}}})\to \cdots
              \to \bigoplus_{|\Delta\setminus I|=|\Delta|-1} \LC(X_I(x_{\eta}),F_{x_{\eta}})\to \LC(X_{\emptyset}(x_{\eta}),F_{{x_{\eta}}})\to 0
\end{equation}
where we recall that $X_I=J(\Q_p)/P_I(\Q_p)$, $X_I(x_{\eta})=\{g\in X_I| \, x_{\eta}\in g Y_I(K)\}$, and $P_{\emptyset}=P_0$. 
                     But this is a complex of locally constant functions (with values in $F_{x_{\eta}}$)   on  a subcomplex of the combinatoral  building of $J$, whose simplices are given by 
          $$
          \{gP_Ig^{-1}|g\in J(\Q_p), x_{\eta}\in gY_I(K), I \subsetneq \Delta\}.
          $$
          Its geometric realization (via the map $\tau$ from Section \ref{buildings1}) is the subcomplex $T_{x_\eta}$ of the spherical building $\sbb(J_{\rm der})$ from Section \ref{contractibility}. Since the complex $T_{x_\eta}$ is contractible, by \cite[Rem.\ 66]{SS}, the complex (\ref{kicius1}) is acyclic.
                       \end{proof}

\subsection{The key spectral sequence} We will now  evaluate the spectral sequence  $E_1$ induced by the acyclic complex (\ref{complex1}) for $F=\Z/p^n$: 
 $$
 E_1^{i,j}=H^j_{\eet}(Y_{C},\bigoplus_{|\Delta\setminus I|=i+1}(\Z/p^n)_{I})\Rightarrow H^{i+j}_{\eet}(Y_{C},\Z/p^n).
 $$
 In order to simplify some of the rather complicated formulae below we introduce the shorthands: 
$$i_{I,n}:=i_{I}(\Z/p^n),\,\, v_{I,n}:=v_I(\Z/p^n),\,\, \rho_{[w],n}:=\rho_{[w]}(\Z/p^n).$$

 \begin{lemma}\label{degeneration}
The above spectral sequence degenerates at $E_2$.
\end{lemma}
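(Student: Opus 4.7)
First I would compute the $E_1$-page explicitly. Combining Proposition~\ref{locally-constant}(2) with Corollary~\ref{computation-cor}, we get
\begin{equation*}
E_1^{i,j} \simeq \bigoplus_{\substack{I \subset \Delta \\ |\Delta\setminus I| = i+1}} i_I(\Z/p^n) \otimes \!\!\bigoplus_{\substack{[w] \in \Omega_I \\ 2l_{[w]} = j}}\!\! \rho_{[w]}(\Z/p^n).
\end{equation*}
In particular $E_1^{i,j} = 0$ unless $j$ is even, and on each summand the Galois group $\sg_{E_s}$ acts through an unramified permutation of the orbit $[w]$ twisted by the Tate character $\chi^{-l_{[w]}} = \chi^{-j/2}$. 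The key point is then to invoke Remark~\ref{Bruhat2}: the restriction maps $H^*_{\eet}(Y_{J,C},\Z/p^n)\to H^*_{\eet}(Y_{I,C},\Z/p^n)$ are diagonal with respect to the index set $W^\mu/\sg_{E_s}$. Since the $d_1$-differential is built from these maps tensored with maps between the $i_I$-factors, it preserves the direct-sum decomposition
\begin{equation*}
E_1 = \bigoplus_{[w]\in W^\mu/\sg_{E_s}} E_1([w]), \qquad E_1([w])^{i,j}\neq 0 \Longrightarrow j = 2l_{[w]},
\end{equation*}
and this decomposition descends to $E_2$.

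Now I would show that $d_r = 0$ for $r \geq 2$. When $r$ is even the target $E_r^{i+r,j-r+1}$ has $j-r+1$ odd and hence vanishes by the parity observation above. When $r\geq 3$ is odd, the source and target live in distinct rows, and thus in distinct Tate weights: via the $[w]$-decomposition, $E_r^{i,j}$ (resp.\ $E_r^{i+r,j-r+1}$) is a subquotient of a sum of terms with Galois acting by $\chi^{-j/2}$ (resp.\ $\chi^{-j/2 + (r-1)/2}$). Since the $p$-adic cyclotomic character $\chi$ has image of infinite order in $\Z_p^*$ and the differentials are $\sg_{E_s}$-equivariant, there are no nontrivial $\sg_{E_s}$-equivariant maps between these pieces after inverting $p$. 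This gives the vanishing of $d_r$ integrally.

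To make the last step rigorous at the level of $\Z/p^n$-coefficients, I would run the same construction with $\Z_p$-coefficients (taking $\varprojlim_n$): the fundamental complex is acyclic integrally, the $E_1$-terms are $\Z_p$-flat (locally constant functions with values in $\Z_p$ tensored with free $\Z_p$-modules $\rho_{[w]}(\Z_p)$), and $d_1$ is compatible with reduction mod $p^n$. The $\Z_p$-spectral sequence degenerates at $E_2$ by the Tate-twist argument above, and the $\Z/p^n$-spectral sequence is obtained from it by tensoring with $\Z/p^n$; in particular its higher differentials reduce from zero and therefore vanish.

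The main obstacle is the last step: in general $\Hom_{\sg_{E_s}}(\Z/p^n(a),\Z/p^n(b))$ need not vanish for $a\neq b$, so one cannot argue the degeneration directly over $\Z/p^n$ by a pure Galois-weight argument. The trick is to exploit the very explicit integral structure of the $E_1$-page (a sum of concrete $\Z_p$-lattices with Tate-twisted unramified-permutation Galois action), which lets the $\Z_p$-vanishing descend cleanly to $\Z/p^n$.
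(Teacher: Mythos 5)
Your proposal follows the paper's argument: decompose the $E_1$-page by orbits $[w]\in W^\mu/\sg_{E_s}$ using Remark~\ref{Bruhat2}, note that each block is concentrated in the single row $j=2l_{[w]}$, and kill the higher differentials by a Galois Tate-weight argument made rigorous by exploiting the compatibility of the spectral sequences over varying $\Z/p^n$ (the paper works with the compatible family as $n$ varies; you lift to $\Z_p$ and reduce, which amounts to the same thing). Your extra observation that $d_r$ vanishes for even $r$ by a parity argument is a harmless special case of the row-shift argument that the paper subsumes into the weight argument, and unlike the paper you do not pass through the explicit computation of $E_2$ via the acyclic complexes $C_I(\Z/p^n)$ — correct, since degeneration itself only needs the decomposition to persist to $E_2$, not the precise form of its terms.
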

\begin{proof} 
 By Proposition \ref{locally-constant} we have
 $$
 E_1^{i,j}\simeq\bigoplus_{ |\Delta\setminus I|=i+1}\LC(X_I,H^{j}_{\eet}(Y_{I,C},\Z/p^n))\simeq \bigoplus_{ |\Delta\setminus I|=i+1}  i_{I,n}\otimes H^j_{\eet}(Y_{I,C},\Z/p^n).
 $$
From the Bruhat decomposition of $H^{j}_{\eet}(Y_{I,C},\Z/p^n)$ obtained  in Corollary \ref{computation-cor}, using Remark \ref{Bruhat2}, we obtain a decomposition of the spectral sequence $E_1$:
  $$
  E_1=\bigoplus_{[w]\in W^{\mu}/\sg_{E_s}}E_{1,[w]},
  $$
  where $E_{1,[w]}$ is the complex living just in row $2l_{[w]}$:
  $$ E_{1,[w]}=(\bigoplus_{\substack{I_{[w]}\subset I \\ |\Delta\setminus I |=1}}  i_{I,n}\otimes\rho_{[w],n}\to \bigoplus_{\substack{I_{[w]}\subset I \\ |\Delta\setminus I|=2}}i_{I,n}\otimes\rho_{[w],n}\to \cdots \to i_{{I_{[w]},n}}\otimes\rho_{[w],n})[-2l_{[w]}].$$
  We get an exact sequence of complexes\footnote{Recall that the complexes 
  $C_I(\Z/p^n)$ are defined in Proposition \ref{complex2}.}
  $$
  0\to i_{\Delta, n}\otimes\rho_{[w],n}[-2l_{[w]}+1]\to C_{I_{[w]}}(\Z/p^n)\otimes\rho_{[w],n}[-2l_{[w]}]\to E_{1,[w]}\to 0.
  $$
    
By Proposition \ref{complex2} and noting that $\rho_{[w],n}$ is a free $\Z/p^n$-module, the only nonzero terms of $ E_{2,[w]}$ are given as follows: for $|\Delta\setminus I_{[w]}|=1$ 
$$E_{2,[w]}^{0,2l_{[w]}} \simeq i_{{I_{[w]},n}}\otimes\rho_{[w],n}$$
and for $|\Delta\setminus I_{[w]}|>1$
$$E_{2,[w]}^{0,2l_{[w]}} \simeq i_{\Delta,n}\otimes\rho_{[w],n} \quad \text{and} \quad E_{2,[w]}^{i,2l_{[w]}}\simeq v_{{I_{[w]},n}}\otimes\rho_{[w],n}, \ i=|\Delta\setminus I_{[w]}|-1.$$

To see that $E_2=E_{\infty}$ note that the nontrivial differentials on $E_i$, $i\geq 2$, can only go from $E_{i,[w]}$ to $E_{i,[w^{\prime}]}$ for $[w]\neq [w^{\prime}]$, $l_{[w]}\neq l_{[w^{\prime}]}$. But such maps have to be trivial by a weight argument.
 Indeed,  all the terms of $E_2$ are free modules over  $\Z/p^n$ and are given by the tensor product of a $J(\Q_p)$-module and a $\sg_{E_s}$-module. We also have $E_2(\Z/p^n)\otimes_{\Z/p^n}\Z/p^m\simeq E_2(\Z/p^m)$ for any integer $n>m$. Thus the nontrivial  differentials of the spectral sequence $E_i$, $i\geq 2$, would yield nontrivial maps between projective systems $(\Z/p^n(a))_n$ and $(\Z/p^n(b))_n$, $a\neq b$. 
 And this is not possible (see \cite[Sec.\ 5]{Ol} for details).
\end{proof}

   The next proposition crucially uses the results of Section \ref{extensions}. 
   
 \begin{proposition}\label{cohomology}
We have
 \begin{align*}
 H^*_{\eet}(Y_C,\Z/p^n)\simeq & 
\big (\bigoplus_{|\Delta\setminus I_{[w]}|=1}i_{{I_{[w]},n}}\otimes\rho_{[w],n} [-2l_{[w]}]\big )\bigoplus\\
 & \bigoplus_{|\Delta\setminus I_{[w]}|>1}\big ((i_{\Delta,n}\otimes\rho_{[w],n}[-2l_{[w]}])
\oplus (v_{{I_{[w]},n}}\otimes\rho_{[w],n}[-2l_{[w]}-|\Delta\setminus I_{[w]}|+1]) \big ).
\end{align*}

 \end{proposition}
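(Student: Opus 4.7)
By Lemma \ref{degeneration} the spectral sequence degenerates at $E_2$, so for each $n$ we obtain a finite decreasing filtration $F^\bullet$ on $H^n_{\eet}(Y_C,\Z/p^n)$ whose associated graded is exactly the degree-$n$ part of the right-hand side of the proposition. The plan is to show this filtration splits $J(\Q_p)\times \sg_{E_s}$-equivariantly.

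The first key observation is a \emph{parity gap}: each nonzero $E_{\infty,[w]}^{p,q}$ is supported on the row $q = 2l_{[w]}$ with $l_{[w]}\in\mathbf{Z}$, so the nonzero steps of $F^\bullet$ on a given $H^n$ occur only at $p$ of the same parity as $n$, hence at least two apart. In particular, whenever two distinct orbits $[w]\ne [w']$ contribute to the same $H^n$, the shape constraints for the three cases in the statement force $\bigl||I_{[w]}|-|I_{[w']}|\bigr|\geq 2$ (for instance, comparing the case $|\Delta\setminus I_{[w]}|=1$ at level $p=0$ with the case $|\Delta\setminus I_{[w']}|>1$ at level $p'\geq 2$, one has $|\Delta\setminus I_{[w']}|=p'+1\geq 3$).

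The second ingredient is the vanishing of the relevant $\Hom_{J(\Q_p)}$ and $\Ext^1_{J(\Q_p)}$ between graded pieces. Since each $\rho_{[w]}$ is free over $\Z/p^n$, these reduce to the same vanishings for the underlying $J(\Q_p)$-modules, which are $v_{I_{[w]},n}$, $i_{I_{[w]},n}$ with $|\Delta\setminus I_{[w]}|=1$, or $i_{\Delta,n}=1$. For pure generalized Steinberg pairs one invokes Proposition \ref{prop:HomSt} and Theorem \ref{theo:vanishing}; the parity gap precisely supplies the hypothesis $\bigl||I|-|J|\bigr|\geq 2$ required by Theorem \ref{theo:vanishing} even when $p=3$. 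For pairs involving $i_{I_{[w]},n}$ with $|\Delta\setminus I_{[w]}|=1$, the short exact sequence
\begin{equation*}
0\to i_{\Delta,n}\to i_{I_{[w]},n}\to v_{I_{[w]},n}\to 0
\end{equation*}
and its induced Hom–Ext long exact sequence reduce these vanishings to the generalized Steinberg case already handled (both $i_{\Delta,n}=v_{\Delta,n}$ and $v_{I_{[w]},n}$ being of generalized Steinberg type).

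Equipped with these vanishings, one splits the filtration inductively: at each step the short exact sequence $0\to F^{p'}\to F^p\to F^p/F^{p'}\to 0$ admits a splitting as $J(\Q_p)$-modules (by the $\Ext^1_{J(\Q_p)}$ vanishing), and this splitting is \emph{unique} (by the $\Hom_{J(\Q_p)}$ vanishing), hence automatically $\sg_{E_s}$-equivariant. Assembling the splittings gives the claimed decomposition as $J(\Q_p)\times \sg_{E_s}$-modules. The main obstacle, and the heart of the paper, is Theorem \ref{theo:vanishing} itself, whose proof occupies all of Section \ref{extensions}; once it is in hand, the present argument is essentially formal. This is precisely where the $p$-adic argument parts ways with Orlik's $\ell$-adic one, in which the analogous splitting comes for free from a Galois weight argument using the genericity of $\ell$.
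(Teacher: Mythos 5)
Your overall strategy matches the paper's: degeneration at $E_2$, identification of the graded pieces, then splitting the abutment filtration using the $\Hom$ and $\Ext^1$ vanishings of Section~\ref{extensions}, with the parity of $q=2l_{[w]}$ forcing the numerical gap $\bigl||I_{[w]}|-|I_{[w']}|\bigr|\geq 2$ that makes Theorem~\ref{theo:vanishing} applicable, and the short exact sequence $0\to i_{\Delta,n}\to i_{I_{[w]},n}\to v_{I_{[w]},n}\to 0$ handling the $i_{I_{[w]},n}$-pieces via the induced Hom--Ext sequence. All of that is correct and is also how the paper argues.

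However, there is a genuine gap. All the $\Hom$ and $\Ext^1$ vanishings from Section~\ref{extensions} are computed in the category of \emph{smooth} $J(\Q_p)$-representations, and you never verify that $H^i_{\eet}(Y_C,\Z/p^n)$ is a smooth $J(\Q_p)$-module. This is not automatic: $Y$ is only a pseudo-adic space, so one cannot quote Berkovich's smoothness result directly for its cohomology. Without smoothness, an extension of two smooth graded pieces sitting inside $H^i_{\eet}(Y_C,\Z/p^n)$ need not be an object of the smooth category, and the vanishing $\Ext^1_{\text{smooth}}=0$ gives you nothing. The paper resolves this with two auxiliary lemmas: first, $H^i_{\eet,c}(\sff^{\rm wa}_C,\Z/p^n)$ is smooth by Berkovich (applied to the genuine adic space $\sff^{\rm wa}_C$), and $H^i_{\eet}(\sff_C,\Z/p^n)$ is smooth and finitely generated over $\Z/p^n$; second, an elementary group-theoretic lemma shows that any extension $0\to\sigma\to\pi\to\tau\to 0$ of $\Z/p^n[G]$-modules with $\sigma,\tau$ smooth and $\sigma$ finitely generated over $\Z/p^n$ has $\pi$ smooth (using uniform pro-$p$ subgroups and the binomial formula). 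Feeding these into the long exact sequence of the triple $(\sff^{\rm wa},\sff,Y)$ yields smoothness of $H^i_{\eet}(Y_C,\Z/p^n)$ by induction on $i$. You need this input before the splitting argument can be launched; the rest of your proof then goes through as written.

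A minor additional remark: the claim that the $J(\Q_p)$-section is ``unique, hence Galois-equivariant'' deserves a word more. Uniqueness is not of the section as a map to $F^p$, but of the induced map modulo automorphisms of $F^{p'}$; what one actually shows, as in the paper, is that for any two $J(\Q_p)$-sections $s,gs$ the difference $s-gs$ is a map between sums of generalized Steinberg representations with $||I|-|I'||\geq 2$, which vanishes by Proposition~\ref{prop:HomSt} (and, for the pieces involving $i_{I_{[w]},n}$, by the same Hom long exact sequence you used for the $\Ext^1$ vanishing). Your write-up gestures at this but should make the $\Hom$ vanishing for the $i_{I_{[w]},n}$-pieces explicit rather than only for the pure Steinberg pairs.
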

\begin{proof}
 Fix $j\in\N$. Using Lemma \ref{degeneration} and its proof we can compute the grading induced by the spectral sequence $E_1$:
 \begin{align*}
 & \gr^i(H^j_{\eet}(Y_C,\Z/p^n))=E^{i,j-i}_{\infty}=E^{i,j-i}_{2}\simeq \bigoplus_{[w]\in W^{\mu}/\sg_{E_s}}E^{i,j-i}_{2,[w]}\\
&  \simeq \begin{cases}\big (\bigoplus_{[w]\in T_1}i_{{I_{[w]},n}}\otimes\rho_{[w],n}\big )\bigoplus\big (\bigoplus_{[w]\in T_2}i_{\Delta,n}\otimes\rho_{[w],n}\big ) & \mbox{ if } i=0,\\
\bigoplus_{[w]\in T_3}v_{{I_{[w]},n}}\otimes\rho_{[w],n} & \mbox{ if } i>0,
 \end{cases}
 \end{align*}
 where we set 
 \begin{align*}
 & T_1:=\{[w]\in W^{\mu}/\sg_{E_s}| |\Delta\setminus I_{[w]}|=1, 2l_{[w]}=j\}, \quad T_2:=\{[w]\in W^{\mu}/\sg_{E_s}| |\Delta\setminus I_{[w]}|>1, 2l_{[w]}=j\},\\
 & T_3:=\{[w]\in W^{\mu}/\sg_{E_s}| 2l_{[w]}+|\Delta\setminus I_{[w]}|-1=j, i=|\Delta\setminus I_{[w]}|-1\}.
 \end{align*}

 It suffices to show that this grading splits. We start by proving the following:
 
 \begin{lemma}
  The cohomology groups $H^i_{\eet}(Y_C,\Z/p^n)$ are smooth $J(\Q_p)$-modules.
 \end{lemma}
 
 \begin{proof} We start by observing that 
  $Y_C$ is proper, being quasi-compact and closed in $\sff_C$, which is proper (see \cite[Lemma 5.3.3]{H1}). 
  It follows that $ \rg_{\eet,c}(Y_C,\Z/p^n)\simeq  \rg_{\eet}(Y_C,\Z/p^n)$ and the  
   distinguished triangle associated to the triple $(\sff^{\rm wa},\sff,Y)$ becomes 
 \begin{equation}
 \label{zima1}
  \rg_{\eet,c}(\sff^{\rm wa}_C,\Z/p^n)\lomapr{ } \rg_{\eet}(\sff_C,\Z/p^n)\lomapr{i^*} \rg_{\eet}(Y_C,\Z/p^n).
  \end{equation}

   Consider the induced long exact sequence of cohomology groups.  By a result of Berkovich \cite[Cor.\ 7.8]{B}, we know that 
   $H^i_{\eet,c}(\mathcal{F}_C^{\rm wa}, \Z/p^n)$ is a smooth $J(\Q_p)$-module. We note  that one cannot apply this result directly  to $Y$, which is only a pseudo-adic space. And, of course,  $H^i_{\eet}(\mathcal{F}_C, \Z/p^n)$ is a smooth   representation of $J(\Q_p)$, of finite type over $\Z/p^n$. Our lemma follows then by induction on $i$, using the next lemma.
    \end{proof}
 
 \begin{lemma}
  Let $G$ be a $p$-adic analytic group and let $\pi$ be a representation of $G$ over 
  $\Z/p^n$ living in an exact sequence 
  $$0\to \sigma\to \pi\to \tau\to 0,$$
  with $\sigma$ and $\tau$ smooth representations of $G$ over $\Z/p^n$ and 
  $\sigma$ finitely generated as $\Z/p^n$-module. Then 
  $\pi$ is a smooth representation of $G$.
 \end{lemma}
 
 \begin{proof}  Since $\sigma$ is finitely generated over 
 $\Z/p^n$ and smooth, there is an open subgroup $H$ of 
 $G$ acting trivially on $\sigma$. Replacing $G$ by $H$ we may thus assume that 
 $G$ acts trivially on $\sigma$. Let $v\in \pi$. Since 
 $\tau$ is smooth, there is an open subgroup $K$ of $G$ fixing the image of 
 $v$ in $\tau$. Shrinking $K$, we may assume that $K$ is a uniform pro-$p$ group. Since $kv-v\in \sigma$, for $k\in K$, we have 
 $(g-1)(k-1)v=0$ for $g\in G$ and $k\in K$. Using the binomial formula and the fact that 
 $p^n$ kills $\pi$, it follows that 
 $k^{p^n}v=v$ for $k\in K$. Since $K^{p^n}$ is open in $K$ and thus in $G$, we are done.  
 \end{proof}

 To show that the filtration for $i>0$ splits, consider the equation 
 $$
 2l_{[w]} + |\Delta\setminus I_{[w]}|-1=j= 2l_{[w^{\prime}]} + |\Delta\setminus I_{[w^{\prime}]}|-1
 $$
 with $[w],[w^{\prime}]\in W^{\mu}/\sg_{E_s}$. If $l_{[w]}\neq l_{[w^{\prime}]}$ this equation implies that  $|\Delta\setminus I_{[w]}|$ and $|\Delta\setminus I_{[w^{\prime}]}|$ differ by at least two. Hence $|I_{[w]}|$ and $|I_{[w^{\prime}]}|$ differ by at least two as well. Since $H^j_{\eet}(Y,\Z/p^n)$ are smooth $J(\Q_p)$-modules and the $\Ext$ group $\Ext^1_{J(\Q_p)}(v_{{I_{[w]}}},v_{{I_{[w^{\prime}]}}} )$
 in the category of smooth representations is trivial by Theorem \ref{vanishing-intro} (here we use the hypothesis $p \neq 2$), we have a splitting of $J(\Q_p)$-modules.
In the case $G = \mathbb{GL}_{n,\Q_p}$ the result holds true for $p=2$ as well (see Remark \ref{vanishing-GLn}).
  
 This splitting is automatically Galois equivariant: call the section $s$ and consider $gs$, $g\in \sg_{E_s}$. The map $t:=s-gs$ decomposes into a direct sum of maps between generalized Steinberg representations $v_{I}$ and $v_{{I^{\prime}}}$ with $I$ and $I^{\prime}$ differing by at least two elements. Hence, by Proposition \ref{prop:HomSt}, all these maps are trivial and thus  $t=0$, as wanted. 
 
 To include $i=0$, we start by showing that 
 \begin{gather}
\Ext^1_{J(\Q_p)}(i_{{I_{[w]},n}}, v_{{I_{[w^{\prime}],n}}})=0,\quad w\in T_1, w^{\prime}\in T_3, \label{vanishing1}\\
\Ext^1_{J(\Q_p)}(i_{\Delta,n}, v_{{I_{[w^{\prime}],n}}})=0,\quad   w^{\prime}\in T_3. \label{vanishing2}
 \end{gather}
 For the first equality, consider the exact sequence (recall that $|\Delta\setminus I_{[w]}|=1$)
 \begin{equation}
 \label{kolo-kwak}
 0\to i_{\Delta,n}\to i_{{I_{[w]},n}}\to v_{{I_{[w]},n}}\to 0.
 \end{equation}
 It yields the exact sequence
 $$
\Ext^1_{J(\Q_p)}(v_{{I_{[w]},n}},v_{{I_{[w^{\prime}],n}}})\to \Ext^1_{J(\Q_p)}( i_{{I_{[w]},n}}, v_{{I_{[w^{\prime}],n}}})\to \Ext^1_{J(\Q_p)}(i_{\Delta,n}, v_{{I_{[w^{\prime}],n}}}).
 $$
 Since
 $$
 |\Delta\setminus I_{[w^{\prime}]}|=-2l_{[w^{\prime}]}+1+j=-2l_{[w^{\prime}]}+ 2l_{[w]}+1\geq 3
 $$
 and $ |\Delta\setminus I_{[w]}|=1$,  $|I_{[w]}|$ and $|I_{[w^{\prime}]}|$ differ by at least two and the first term in the above sequence is zero by Theorem \ref{vanishing-intro}; similarly, since $|\Delta|$ and $|I_{[w^{\prime}]}|$ differ by at least two, the right term of the above sequence is zero by Theorem \ref{vanishing-intro}.
 Here we used again the hypothesis $p \neq 2$ (which is not needed in the case $G = \mathbb{GL}_{n,\Q_p}$, see Remark \ref{vanishing-GLn}).
 Hence we have obtained \eqref{vanishing1} and along the way we have shown \eqref{vanishing2} as well.
 
   Moreover, the $J(\Q_p)$-equivariant sections are automatically Galois equivariant: one argues as above using in addition the exact sequence
   $$
   \Hom_{J(\Q_p)}(v_{{I_{[w]},n}},v_{{I_{[w^{\prime}],n}}})\to \Hom_{J(\Q_p)}( i_{{I_{[w]},n}}, v_{{I_{[w^{\prime}],n}}})\to \Hom_{J(\Q_p)}(i_{\Delta,n}, v_{{I_{[w^{\prime}],n}}}),
$$
   induced by the exact sequence (\ref{kolo-kwak}), which shows that the middle term is trivial since so are the other two terms (by Proposition \ref{prop:HomSt}).
 \end{proof}
 \subsection{End of the proof of Theorem \ref{main}} 
 
  (1) {\em Torsion compactly supported \'etale cohomology.} In order to prove claim (1), we use the distinguished triangle (\ref{zima1}).   An argument based on the Bruhat decomposition of $\sff\simeq G_{E_s}/P(\mu)$, as in the proof of Corollary \ref{computation-cor}, shows that 
 $$
 \rg_{\eet}(\sff_C,\Z/p^n)\simeq  \bigoplus_{[w]\in W^{\mu}/\sg_{E_s}}\rho_{[w],n}[-2l_{[w]}] :=\wt{E}_1.$$

     The map $$i^*: \rg_{\eet}(\sff_C,\Z/p^n)\to \rg_{\eet}(Y_C,\Z/p^n)$$ can be represented by the map of complexes $ \wt{E}_1\to E_{1}$
 induced by the canonical maps
 $$
 \rho_{[w],n}=\iota_{\Delta,n}\otimes  \rho_{[w],n}\to \iota_{{I},n}\otimes \rho_{[w],n},\quad I_{[w]}\subset I, |\Delta\setminus I|=1.
 $$
 By Propositions \ref{complex2} and \ref{cohomology},
 $$
 \Cone(i^*)[-1]\simeq 
 \bigoplus_{[w]\in W^{\mu}/\sg_{E_s}}v_{{I_{[w]},n}}\otimes \rho_{[w],n}[-2l_{[w]}-|\Delta\setminus I_{[w]}|],
 $$
as wanted.

(2) {\em $p$-adic compactly supported \'etale cohomology.}  For claim (2), take the exact sequence (see Section \ref{compact2}):
$$
0\to\R^1 \varprojlim_n H^{i-1}_{\eet,c}(\sff^{\rm wa}_C,\Z/p^n)\to H^i_{\eet,c}(\sff^{\rm wa}_C,\Z_p)\to \varprojlim_n H^i_{\eet,c}(\sff^{\rm wa}_C,\Z/p^n)\to 0.
$$
Since 
$$
v_{{I_{[w]},n}}\simeq v_{{I_{[w]}, n+1}}\otimes_{\Z/p^{n+1}}\Z/p^n,\quad v^{\cont}_{{I_{[w]}}}(\Z_p)\simeq \varprojlim_nv_{{I_{[w]},n}},
$$
the pro-system $ \{H^{i-1}_{\eet,c}(\sff^{\rm wa}_C,\Z/p^n)\}_n$ is Mittag-Leffler hence 
the above exact sequence and claim (1) yield the isomorphism
$$
H^i_{\eet,c}(\sff^{\rm wa}_C,\Z_p)\stackrel{\sim}{\to} \varprojlim_n H^i_{\eet,c}(\sff^{\rm wa}_C,\Z/p^n)
$$
and claim (2).
\begin{remark} In \cite{Oc} Orlik computed $\ell$-adic compactly supported \'etale cohomology for $\ell\neq p$. In the context of Theorem \ref{main}  and for $\ell$ sufficiently generic with respect to $G$ he obtained an isomorphism of $\sg_{E_s}\times J(\qp)$-modules
\begin{align}
\label{kwaku-kwaku}
 H^*_{\eet,{\rm c, Hu}}(\sff^{\rm wa}_C,\Z_\ell) & \simeq \bigoplus_{[w]\in W^{\mu}/\sg_{E_s}} v^J_{{I_{[w]}}}(\Z_\ell)\otimes \rho_{[w]}(\Z_{\ell})[-n_{[w]}],
\end{align}
where   $H^*_{\eet,{\rm c, Hu}}$ denotes  Huber's compactly supported cohomology. The proof follows the proof for torsion coefficients with two main differences:
\begin{enumerate}
\item It starts with the  
   distinguished triangle associated to the triple $(\sff^{\rm wa},\sff,Y)$:
 $$
  \rg_{\eet,{\rm c, Hu}}(\sff^{\rm wa}_C,\Z_p)\lomapr{ } \rg_{\eet}(\sff_C,\Z_p)\lomapr{i^*} \rg_{\eet}(Y_C,\Lambda), \quad \Lambda=i^*\R\pi_*(\Z/p^n)_n.
$$
\item  It uses the fact (from \cite[Appendix B.2]{Dat}) that the representations 
$$
H^*_{\eet, {\rm c, Hu}}(\sff^{\rm wa}_C,\Z_p), \rg_{\eet}(Y_C,\Lambda), \ldots
$$
are smooth $J(\Q_p)$-modules.
\end{enumerate}
As we have mentioned in the introduction the $p$-adic analog of the isomorphism (\ref{kwaku-kwaku}) is false and the smoothness property mentioned above does not hold.
\end{remark}
  \appendix
 \section{Adic potpourri}
 We gather here, as a reference, some basic facts concerning pseudo-adic spaces and compactly supported \'etale cohomology. 
    \subsection{Pseudo-adic spaces} We start with pseudo-adic spaces. 
         Recall that, Huber defines in \cite{H1} the category PPA of pre-pseudo-adic spaces, consisting of 
            pairs $X=(\underline{X}, |X|)$, where $\underline{X}$ is an adic space and $|X|$ a subset of $\underline{X}$, morphisms 
            $X\to Y$ being morphisms of adic spaces $\underline{X}\to \underline{Y}$ that send $|X|$ into $|Y|$. 
            A morphism $f: X\to Y$ induces therefore a morphism of adic spaces $\underline{f}: \underline{X}\to 
            \underline{Y}$ and a map of topological spaces $|f|: |X|\to |Y|$ (we endow $|X|$ with the topology induced from 
            $\underline{X}$). We say that $f$ is {\em \'etale} if 
            $\underline{f}$ is \'etale and if $|X|$ is open in $\underline{f}^{-1}(|Y|)$ (this implies that 
            $|f|$ is an open map). The \'etale site $X_{\eet}$ of $X$ is the category of pre-pseudo-adic spaces 
            $Y$ \'etale over $X$ with the topology such that a family of morphisms $f_i: Y_i\to Y$ in 
            this category is a covering if $|Y|=\cup_{i} |f_i|(|Y_i|)$. 
            
            We mention the following properties of this construction, which we need, and we refer the reader to Huber's book \cite{H1} for the proofs and details (see especially Sections 1.10, 2.3):
            
            \begin{enumerate}
            \item The category PPA contains (as full subcategory) the category of adic spaces (via $X\mapsto (X, |X|)$)
                        and the \'etale topoi of $X$ and $(X,|X|)$ are equivalent.

              \item If $X$ is an adic space and $S\subset T$ are subsets of $X$, the natural morphism
                        $i: (X,S)\to (X,T)$ in PPA satisfies $i^*i_*F\simeq F$, for all $F\in {\rm Sh}((X,S)_{\eet})$, thus 
                        $i_*: {\rm Sh}((X,S)_{\eet})\to {\rm Sh}((X,T)_{\eet})$ is fully faithful. Moreover, if 
                        $S$ is closed in $T$, then $i_*$ is exact and identifies ${\rm Sh}((X,S)_{\eet})$ with the full subcategory of 
                        ${\rm Sh}((X,T)_{\eet})$ consisting of sheaves $F$ whose restriction to $(T-S)_{\eet}$ is the final object of 
                        ${\rm Sh}((T-S)_{\eet})$ (\cite[Lemma 2.3.11]{H1}).
                        
                       \item   Let 
            PA be the full subcategory of PPA consisting of {\it pseudo-adic spaces}, i.e., those $X$ for which $|X|$ is convex and locally pro-constructible in 
            $\underline{X}$. An object $X$ of PA is called quasi-compact/quasi-separated if $|X|$ is so, and a map 
            $f: X\to Y$ in PA is called quasi-compact/quasi-separated if $|f|$ is so. If 
            $f: X\to Y$ is a quasi-compact quasi-separated morphism in PA and if 
            $f$ is adic (i.e., $\underline{f}$ is adic), then $\R^nf_*$ commutes with pseudo-filtered inductive limits. 
            If $X\in PA$ is quasi-compact quasi-separated, then $H^n_{\eet}(X,-)$ commutes with pseudo-filtered inductive limits.
            (\cite[Lemma 2.3.13]{H1}). 
          
          \item If $x$ is a point of an adic space $X$ and if $K$ is the henselization of the residue class field
          $k(x)$ with respect to the valuation ring $k(x)^+$, there is a natural equivalence of categories 
          ${\rm Sh}((X,\{x\})_{\eet})\simeq {\rm Sh}({\rm Spec}(K)_{\eet})$ (\cite[Prop.\ 2.3.10]{H1}). 
            
            \item 
                      Let $P$ be one of the properties ``open, closed, locally closed''. 
            A $P$-subspace of $X\in PPA$ is 
            an object $Y\in PPA$ for which $\underline{Y}$ is a $P$-subspace of 
            $\underline{X}$ and $|Y|$ is a $P$-subspace of $|X|$. The notion of $P$-embedding in PPA is defined  in the obvious way. 
            If $i: X\to Y$ is a locally closed embedding in PPA then $i$ induces an equivalence 
            ${\rm Sh}(X_{\eet})\simeq {\rm Sh}((\underline{Y}, i(|X|))_{\eet})$ (\cite[Cor.\ 2.3.8]{H1}). In particular if 
            $i: X\to Y$ is a locally closed embedding of adic spaces then 
            ${\rm Sh}(X_{\eet})\simeq {\rm Sh}((Y, i(|X|))_{\eet})$.   
                        
           \item           
           A morphism $f: X\to Y$ in PPA is finite if $\underline{f}$ is finite and $|X|$ is closed 
            in $\underline{f}^{-1}(|Y|)$. If $f: X\to Y$ is a finite morphism in PA, then 
            $f_*: {\rm Sh}(X_{\eet})\to {\rm Sh}(Y_{\eet})$ is exact and commutes with any base change in PA
            $Y'\to Y$ (\cite[Prop.\ 2.6.3]{H1}).

            \item A {\it geometric point} (in the category PPA) is an object $S\in PA$ such that $\underline{S}$ is the adic spectrum
             of a separably algebraically closed affinoid field (\cite[1.1.5]{H1}) and $|S|=\{s\}$, where $s$ is the closed point of 
             $\underline{S}$ (\cite[1.1.6]{H1}). For a geometric point $S$, the functor $\Gamma(S,-)$ induces an equivalence 
             ${\rm Sh}(S_{\eet})\simeq {\rm Sets}$.
                A {\it geometric point of $X\in PPA$} is a morphism
             $u: S\to X$ in PPA, where $S$ is a geometric point. The stalk of 
             $F\in {\rm Sh}(X_{\eet})$ at $S$ is then $F_{S}=\Gamma(S, u^*F)$. 
             Somewhat more explicitly, $F_{S}\simeq \varinjlim_{(V,v)} F(V)$, the limit being over 
             the cofiltered category $C_S$ of pairs $(V,v)$, where $V$ is \'etale over $X$ and 
             $v: S\to V$ is an $X$-morphism.  
                                     
                      The support of 
             $u$ is by definition $u(|S|)\in |X|$. Two geometric points with the same support yield isomorphic stalk functors. Moreover, each $x\in X$ induces a geometric point 
             $\bar{x}$ of $X$ with support $x$ and the family of functors ${\rm Sh}(X_{\eet})\to {\rm Sets}, F\to F_{\bar{x}}$, for $x\in |X|$, is conservative (\cite[2.5.5]{H1}).             
             If $f: X\to Y$ is a morphism of analytic pseudo-adic spaces (i.e., 
            $\underline{X}, \underline{Y}$ are analytic adic spaces) and $f$ is of weakly finite type and quasi-separated, then, 
            for any maximal point $y$ of $|Y|$ and any 
            $F\in {\rm Sh}(X_{\eet})$, we have 
            a natural isomorphism (\cite[Th.\ 2.6.2]{H1})
            $$(\R^n f_* F)_{\bar{y}}\simeq H^n_{\eet}(X\times_Y \bar{y}, F).$$
            
            \item One can define (see \cite[Sec.\ 2.5]{H1}), for each geometric point $\xi$ of $X\in PPA$, the 
            {\it strict localization $X(\xi)$ of $X$ at $\xi$}. It comes with an $X$-morphism
            $\xi\to X(\xi)$, and the isomorphism class of $X(\xi)$ as $X$-space  
            depends only on 
            the support of $\xi$. If $X\in PA$ and $\xi,\xi'$ are geometric points of 
            $X$, a {\it specialization morphism} $u:\xi\to \xi'$ is an $X$-morphism in PA 
            $X(\xi)\to X(\xi')$. It induces functorial maps $u^*(F): F_{\xi'}\to F_{\xi}$ for 
            $F\in {\rm Sh}(X_{\eet})$, via the natural isomorphisms 
            $\Gamma(X(\xi), F|X(\xi))\simeq F_{\xi}$ and 
            $\Gamma(X(\xi'), F|X(\xi'))\simeq F_{\xi'}$.
        \end{enumerate}     
        \subsection{Compactly supported  cohomology} We survey Huber's compactly supported \'etale cohomology and introduce continuous compactly supported \'etale cohomology.
\subsubsection{Huber's compactly supported \'etale cohomology} \label{compact1}
Huber defined compactly supported \'etale cohomology of analytic pseudo-adic spaces in \cite[Ch.\ 5]{H1}; in \cite{H2} he extended this definition to $\ell$-adic sheaves. We will briefly recall its properties.

   Fix a prime $\ell$. Let $X$ be a taut separated pseudo-adic space locally of $^{+}$weakly finite type over $C$ (i.e., over $\Spa(C,\so_C)$).  For $ i\geq 0$, we set
   $$
   H^i_{\eet,{\rm c,Hu}}(X,\Z_{\ell}):=H^i\R\Gamma_{{\rm c,Hu}}(X_{\eet},(\Z/\ell^n)_n),\quad    H^i_{\eet,{\rm c,Hu}}(X,\Q_\ell)   :=H^i_{\eet,{\rm c,Hu}}(X,\Z_\ell)\otimes\Q_\ell,
   $$
where the functor $\R\Gamma_{{\rm c,Hu}}$ is defined in the following way. 
   
   If $X$ is partially proper, then it is the right derived functor of 
   $\Gamma_{{\rm c,Hu}}$, i.e., of the left exact functor
   $$
   \Gamma_{\rm c,Hu}: {\rm mod}(X_{\eet}-\Z_\ell^{\jcdot})\to {\rm mod}(\Z_\ell),\quad (F_n)_n\mapsto\Gamma_c(X_{\eet},\varprojlim_n F_n).
   $$
   Here ${\rm mod}(X_{\eet}-\Z^{\jcdot}_\ell)$ is the category of projective systems $(F_n)_n$ of $\Z_\ell$-modules on $X_{\eet}$ such that $p^nF_n=0$, $n\in\N$. Recall that, for an \'etale sheaf $F$,    $\Gamma_c(X_{\eet},F)$ denotes the abelian group of global sections whose support is proper.

      In general one sets
      $$
      \R\Gamma_{{\rm c,Hu}}(X_{\eet},(F_n)_n):=\R\Gamma_{{\rm c,Hu}}(\overline{X}_{\eet},(i_!F_n)_n),
      $$
      where $i:X\hookrightarrow \overline{X}$ is a locally closed embedding and $\overline{X}$ is partially proper. This definition is, of course, independent of the chosen partially proper compactification.  We have $$\Gamma_{\rm c,Hu}(X_{\eet},(F_n)_n)=\{(s_n)_n\in\varprojlim_n\Gamma(X_{\eet},F_n)|\overline{\cup_n{\rm supp}(s_n)} \text{ is proper}\}.   
   $$

   We list the following properties:
   \begin{enumerate}
   \item If $X$ is proper then $$ \R\Gamma_{{\rm c,Hu}}(X_{\eet},(F_n)_n)\simeq  \R\Gamma(X_{\eet},(F_n)_n).
   $$ 
   \item An isomorphism \cite[Lemma 2.3]{H2} of exact functors from $D^+({\rm mod}(X_{\eet}-\Z^{\jcdot}_\ell))$ to $D^+({\rm mod}(\Z_\ell))$:
   $$
   \R\Gamma_{\rm c,Hu}=\R\Gamma_{!}\circ\R\pi_*,
   $$
   for the {\em discretization} functor
   $$
   \pi_*: {\rm mod}(X_{\eet}-\Z^{\jcdot}_\ell)\to {\rm mod}(X_{\eet}-\Z_\ell),\quad (F_n)_n\mapsto \varprojlim_nF_n,
   $$
   and the functor
   $$
   \Gamma_{!}: {\rm mod}(X_{\eet}-\Z_\ell)\to {\rm mod}(\Z_\ell),\quad F\mapsto \Gamma_{c}(X_{\eet},F).
   $$
   \item If $X$ is quasi-compact, there is an exact sequence \cite[Cor.\ 2.4]{H2}
   $$
   0\to \R^1\varprojlim_nH^{i-1}_{\eet,c}(X,F_n)\to H^i_{\eet,{\rm c,Hu}}(X,(F_n)_n)\to \varprojlim_nH^i_{\eet,c}(X,F_n)\to 0
   $$
   \item Let $U$ be a taut open subspace of $X$, let $Z=X\setminus U$, and let $i: Z\hookrightarrow X$ be the inclusion. Assume that $X,U$ are partially proper. Then we have a distinguished triangle
   \begin{align*}
  \rg_{{\rm c,Hu}}(U_{\eet},(F_n|U)_n)  \to \rg_{{\rm c,Hu}}(X_{\eet}, (F_n)_n)\to \rg_{c}(Z_{\eet}, i^*\R\pi_*(F_n)_n)
   \end{align*}
   \item Let ${\mathbb U} $ be an open covering of $X$ such that every $U \in {\mathbb U} $ is taut and, for every
$U, V\in {\mathbb U}$, there exists a $W \in {\mathbb U}$ such that  $U \cup V\subset W$. Then,  the map
$$
\varinjlim_{U\in {\mathbb U}}
H^i_{\eet,{\rm c,Hu}} (U,(F_n|U)_n)\to H^i_{\eet, {\rm c,Hu}} (X,(F_n)_n),\quad i\geq 0,$$
is an isomorphism \cite[Prop.\ 2.1.]{H2}.
\item Let $X$ be adic and partially proper and let $G$ be a locally profinite group acting continuously on $X$. 
Then $H^i_{\eet,c}(X,\Z/{\ell}^n)$, $i\geq 0$, is a smooth $G$-module \cite[Cor.\ 7.8]{B}. 
\end{enumerate}
   
   Let us now distinguish two cases.\vspace{2mm}
   
\noindent   (i) {\bf The case $\ell\neq p$.} We can say more in this case. 
   \begin{enumerate}
   \item If $X$ is as at the beginning of this section and of finite type over $C$ and if  $(F_n)_n$ is a quasi-constructible $\Z_\ell^{\jcdot}$-module on $X_{\eet}$ then the natural map 
   $$
   H^i_{\eet,{\rm c,Hu}}(X,(F_n)_n)\to \varprojlim_n H^i_{\eet,c}(X,F_n)
   $$
   is a bijection. Moreover, the projective system $(H^i_{\eet,c}(X,F_n))_n$ is $\ell$-adic and every $H^i_{\eet,c}(X,F_n)$ is a finitely generated $\Z_{\ell}$-module (hence also $H^i_{\eet,{\rm c,Hu}}(X,(F_n)_n)$ is a finitely generated $\Z_{\ell}$-module).
   \item If $X=Y^{\ad}$, for a separated scheme $Y$ of finite type over $C$, and if   $(F_n)_n$ is a constructible $\Z_\ell^{\jcdot}$-module on $Y_{\eet}$, there is a natural isomorphism 
   $$
    H^i_{\eet,c}(Y,(F_n)_n)\stackrel{\sim}{\to}  H^i_{\eet,{\rm c,Hu}}(X,(F_n)_n).
   $$
   \item Let $X$ be adic and partially proper  and  let $G$ be a locally profinite group, with an open pro-$p$ subgroup, acting continuously on $X$. Let $(F_n)_n$ be a locally constant overconvergent $\Z_{\ell}^{\jcdot}$-module equipped with a compatible discrete $G$-action (see \cite[B.1.3]{Dat} for the definition). 
Then $H^i_{\eet,{\rm c,Hu}}(X,(F_n)_n)$, $i\geq 0$, is a smooth $G$-module \cite[Prop.\ B.2.5]{Dat}, \cite[4.1.19]{Fl}. 
\end{enumerate}
   
 \noindent    (ii) {\bf The case $\ell= p$.} In this case cohomology with compact support behaves very differently. We will discuss an example. 
\begin{example} Let ${\mathbb A}^1_C$ be the adic affine space of dimension $1$; 
this is a period domain, with $G:={\mathbb G}_{m,\Q_p}\times {\mathbb G}_{m,\Q_p}$ the relevant reductive group \cite[5.3.1, 4.2.2]{And}. 
We have the exact sequence
\begin{align*}
0 & \to H^1_{\eet}(x_{\infty},i^*_{\infty}\R\pi_*(\Z/p^n(1))_n) \to H^2_{\eet,{\rm c,Hu}}({\mathbb A}^1_C,\Z_p(1)) \to H^2_{\eet}({\mathbb P}^1_C,\Z_p(1)) \\
& \to H^2_{\eet}(x_{\infty},i^*_{\infty}\R\pi_*(\Z/p^n(1))_n),
\end{align*}
where $i_{\infty}: x_{\infty}\hookrightarrow {\mathbb P}^1_C$ is the point at infinity. Picking the fundamental neighborhoods of $x_{\infty}$ consisting of closed balls, we compute easily that
$$
i^*_{\infty}\R^i\pi_*(\Z/p^n(1))_n\simeq \begin{cases}\Z_p(1) & \mbox{ if } i=0,\\
\varinjlim_j (\varprojlim_nH^1_{\eet}(E(j),\Z/p^n(1))) & \mbox { if } i=1,\\
0 & \mbox{ if } i\geq 2,
\end{cases}
$$ 
where $E(j)$ is the closed ball centered at $x_{\infty}$ and of radius $p^{-j}$.  We used here the fact that  $H^i_{\eet}(E(j),\Z/p^n(1))=0, i\geq 2$. Since ${\rm Pic}(E(j))=0$, the Kummer exact sequence implies that 
$$H^1_{\eet}(E(j),\Z/p^n(1))\simeq C\{T^{-1}\}^*/C\{T^{-1}\}^{*p^n}.$$
Hence $H^2_{\eet}(x_{\infty},i^*_{\infty}\R\pi_*(\Z/p^n(1))_n)=0$. We also claim  that $$H^1_{\eet}(x_{\infty},i^*_{\infty}\R\pi_*(\Z/p^n(1))_n) \simeq\varinjlim_j C\{(p^jT)^{-1}\}^{*{\wedge}}.$$
Since $x_{\infty}$ is simply a geometric point (thus \'etale sheaves are acyclic on it), we have 
(using the local-global spectral sequence relating $H^i_{\eet}(x_{\infty}, H^j(K))$ and $H^{i+j}_{\eet}(x_{\infty}, K)$ for a complex of sheaves $K$, as well as the computation above)
\begin{align*}
H^1_{\eet}(x_{\infty},i^*_{\infty}\R\pi_*(\Z/p^n(1))_n) &\simeq H^0_{\eet}(x_{\infty},
i^*_{\infty}\R^1\pi_*(\Z/p^n(1))_n) 
\simeq \varinjlim_j (\varprojlim_n H^1_{\eet}(E(j),\Z/p^n(1))) \\
&\simeq \varinjlim_j (\varprojlim_n C\{T^{-1}\}^*/C\{T^{-1}\}^{*p^n}) 
\simeq 
\varinjlim_j C\{(p^jT)^{-1}\}^{*{\wedge}}.
\end{align*}
 It follows that 
     \begin{equation*}
    H^2_{\eet,{\rm c,Hu}}({\mathbb A}^1_C,\Z_p(1))\simeq  
(\varinjlim_n C\{(p^nT)^{-1}\}^{*{\wedge}} )\oplus \Z_p.
    \end{equation*}
 In the case $\ell\neq p$, the same computation gives $\Z_\ell$ as a result
since $C\{(p^n T)^{-1}\}^{*{\wedge}}$ is $\ell$-divisible.
Note that $C\{T^{-1}\}^*/C^*=1+T^{-1}{\mathfrak m}_C\{T^{-1}\}$ and that
its image by the logarithm satisfies
$$p T^{-1}{\mathfrak m}_C\{T^{-1}\}\subset \log\big(
1+T^{-1}{\mathfrak m}_C\{T^{-1}\}\big)
\subset (pT)^{-1}{\mathfrak m}_C\{(pT)^{-1}\}.$$
One gets the same inclusions for the $p$-adic completion.  Hence the above
inductive limit is isomorphic, via the logarithm, to the inductive
limit of the $(p^nT)^{-1}{\mathfrak m}_C\{(p^nT)^{-1}\}$ and so
$$H^2_{\eet,{\rm c,Hu}}({\mathbb A}^1_C,\Z_p(1))\simeq
\big(\so_{{\mathbb P}^1,\infty}/C\big)\oplus \Z_p.$$
     
Hence the $\ell$-adic compactly supported cohomology groups behave 
very differently in the cases $\ell=p$ and $\ell\neq p$, where $H^2_{\eet,{\rm c,Hu}}({\mathbb A}^1_C,\Z_{\ell}(1))\simeq \Z_{\ell}$.    
Note also that the action of $G(\Q_p)$ on 
$H^2_{\eet,{\rm c,Hu}}({\mathbb A}^1_C,\Z_p(1))$ is not smooth, 
contrary to the case $\ell\neq p$.
     \end{example}

      \subsubsection{Continuous compactly supported \'etale cohomology} \label{compact2} We will also study  a different version of Huber's compactly supported cohomology: For $X$ as in Section \ref{compact1}, we define its (continuous) compactly supported cohomology by: 
   $$
   \rg_{\rm c}(X_{\eet},(F_n)_n):= \R\varprojlim_n\rg_c(X_{\eet},F_n).
   $$
     We have $$\Gamma_{\rm c}(X_{\eet},(F_n)_n)=\{(s_n)_n\in\varprojlim_n\Gamma(X_{\eet},F_n)|{\rm supp}(s_n) \text{ is proper}\}.
   $$

   The following properties are obtained directly from the definition and the corresponding properties for the compactly supported cohomology of $F_n$'s.
     \begin{enumerate}
   \item There is an exact sequence 
   $$
   0\to \R^1\varprojlim_nH^{i-1}_{\eet,c}(X,F_n)\to H^i_{\eet,{\rm c}}(X,(F_n)_n)\to \varprojlim_nH^i_{\eet,c}(X,F_n)\to 0.
   $$
     \item Let $U$ be a taut open subspace of $X$, let $Z=X\setminus U$, and let $i: Z\hookrightarrow X$ be the inclusion. Then we have a distinguished triangle
   \begin{align*}
  \rg_{\rm c}(U_{\eet},(F_n|U)_n)  \to \rg_{\rm c}(X_{\eet}, (F_n)_n)\to \rg_{\rm c}(Z_{\eet}, i^*(F_n)_n).
   \end{align*}

\end{enumerate}
To lighten the notation, for $i\geq 0$, we will set 
$$
H^i_{\eet,{\rm c}}(X,\Z_p):=H^i_{\eet,{\rm c}}(X,(\Z/p^n)_n),\quad H^i_{\eet,{\rm c}}(X,\Q_p):=H^i_{\eet,{\rm c}}(X,\Z_p)\otimes_{\Z_p}\Q_p.
$$

\end{document}